\documentclass[11pt,english]{amsart}
\usepackage[T1]{fontenc}
\usepackage[latin9]{inputenc}
\usepackage[a4paper]{geometry}
\usepackage{color}
\usepackage{babel}
\usepackage{amstext}
\usepackage{amsthm}
\usepackage{amssymb}
\usepackage{stmaryrd}
\usepackage{url}
\usepackage{hyperref}
\usepackage{multirow}

\hypersetup{
    colorlinks,
    linkcolor={red!50!black},
    citecolor={blue!50!black},
    urlcolor={blue!80!black}
}

\usepackage{caption} 
\usepackage{autobreak}
\usepackage{float}

\usepackage{dynkin-diagrams}

\usepackage{tikz}
\usetikzlibrary{arrows.meta}%
\usetikzlibrary{bending}%

\makeatletter

\numberwithin{equation}{section}
\numberwithin{figure}{section}

\theoremstyle{plain}
\newtheorem{thm}{\protect\theoremname}[section]
\newtheorem{prop}[thm]{\protect\propositionname}
\newtheorem{cor}[thm]{\protect\corollaryname}
\newtheorem{conj}[thm]{Conjecture}

\newtheorem{theintro}{Theorem}

\theoremstyle{plain}
\newtheorem{lem}[thm]{\protect\lemmaname}

\theoremstyle{definition}

\newtheorem{defn}[thm]{\protect\definitionname}

\theoremstyle{remark}
\newtheorem{rem}[thm]{\protect\remarkname}

\usepackage{babel}

\usepackage{verbatim}
\usepackage{xcolor}
\usepackage[all,cmtip]{xy}
\usepackage{bm}
\usepackage{etoolbox}
\usepackage{dynkin-diagrams}

\@namedef{subjclassname@2020}{%
  \textup{2020} Mathematics Subject Classification}

\title{Stringy invariants for abelian character varieties}

\subjclass[2020]{Primary: 14M35. Secondary: 14L24, 14D20, 14C15, 14J33, 17B22}
\keywords{Character varieties, Higgs bundles, abelian varieties, root data, Geometric Invariant Theory, Langlands duality, exceptional groups, topological mirror symmetry, Batyrev's positivity conjecture}

\author[C. Florentino]{Carlos Florentino}
 \address{Departamento de Matem\'atica, Faculdade de Ci\^encias, Univ.\ de Lisboa, Edf.\ C6, Campo Grande 1749-016 Lisboa, Portugal}
  \email{caflorentino@fc.ul.pt}

\author[\'A. Gonz\'alez-Prieto]{\'Angel Gonz\'alez-Prieto}
 \address{Departamento de
  \'Algebra, Geometr\'ia y Topolog\'ia, Facultad de Ciencias Matem\'aticas, Universidad Complutense de Madrid, Plaza Ciencias 3, 28040 Madrid, Spain}
  \address{Instituto de Ciencias Matem\'aticas (CSIC-UAM-UCM-UC3M), C/ Nicol\'as Cabrera 13-15, 28049 Madrid, Spain}
  \email{angelgonzalezprieto@ucm.es}
  
\author[A. Zamora]{Alfonso Zamora}
 \address{Departamento de Matem\'atica Aplicada a las TIC, ETSI Inform\'aticos, Universidad Polit\'ecnica de Madrid, Campus de Montegancedo, 28660 Madrid, Spain}
  \email{alfonso.zamora@upm.es}

\DeclareMathOperator{\image}{Im}

\newcommand{\ZZ}{\mathbb{Z}}
\newcommand{\NN}{\mathbb{N}}
\newcommand{\CC}{\mathbb{C}}

\newcommand{\QQ}{\mathbb{Q}}

\newcommand{\Spec}{\operatorname{Spec}}

\newcommand{\coker}{\operatorname{coker}}
\newcommand{\codim}{\operatorname{codim}}
\newcommand{\Hilb}{\operatorname{Hilb}}
\newcommand{\Hom}{\operatorname{Hom}}

\newcommand{\Pic}{\operatorname{Pic}}
\newcommand{\Sch}{\operatorname{Sch}}

\newcommand{\im}{\operatorname{im}}

\newcommand{\rk}{\operatorname{rk}}

\newcommand{\tr}{\operatorname{tr}}
\newcommand{\GL}{\operatorname{GL}}
\newcommand{\SL}{\operatorname{SL}}
\newcommand{\SO}{\operatorname{SO}}

\newcommand{\PGL}{\operatorname{PGL}}

\newcommand{\G}{\operatorname{G}}
\newcommand{\F}{\operatorname{F}}
\newcommand{\E}{\operatorname{E}}
\newcommand{\Sp}{\operatorname{Sp}}

\newcommand{\age}{\operatorname{age}}
\newcommand{\str}{\textup{str}}
\newcommand{\Ab}{\textup{Ab}}
\newcommand{\Conj}{\textup{Conj}}

\makeatother

\providecommand{\corollaryname}{Corollary}
\providecommand{\examplename}{Example}
\providecommand{\propositionname}{Proposition}
\providecommand{\remarkname}{Remark}
\providecommand{\theoremname}{Theorem}
\providecommand{\lemmaname}{Lemma}
\providecommand{\definitionname}{Definition}

\begin{document}
\begin{abstract}
We compute all stringy invariants, encoding orbifold cohomology numbers, of (the normalization of) the identity component of $G$-character varieties of free abelian groups and of moduli spaces of $G$-Higgs bundles on abelian varieties, for all complex connected reductive groups $G$. As an application, we provide a direct proof of a topological mirror symmetry statement: the equality of the stringy invariants for Langlands dual groups. In the framework of Ginzburg--Kaledin local resolutions, we discuss the meaning of these stringy invariants, showing that symplectic resolutions of these moduli spaces can only exist in the cases of groups of Dynkin type $A$, $B$, or $C$. When such resolutions exist, the computed stringy Hodge numbers agree with the actual Hodge numbers of the resolution.
\end{abstract}

\maketitle

\section{Introduction}

Let $G$ be a complex connected reductive group with maximal torus $T \subseteq G$ and associated Weyl group $W$, with its natural action on $T$. In this work, we study the stringy invariants of the quotient variety $T^r/W$, where $W$ acts diagonally.

This variety plays a fundamental role in moduli space theory due to its close relation with character varieties of abelian varieties. Concretely, given a smooth compact manifold $M$, its $G$-character variety is the (affine) geometric invariant theory quotient
$$
    \mathfrak{X}_M(G) = \Hom(\pi_1(M), G) \sslash G,
$$
where $G$ acts on $\Hom(\pi_1(M), G)$ by conjugation. The geometry of these spaces is very involved and has been the object of intense research in the last decades, due to both its relation with moduli spaces of Higgs bundles through the celebrated non-abelian Hodge correspondence \cite{simpson1992higgs}, and its fundamental role in knot theory and geometric topology \cite{culler1983varieties,Si}.

In the particular case that $M = A$ is a complex abelian variety (or, more generally, a complex torus) of complex dimension $d$, its fundamental group is $\pi_1(A) = \ZZ^{2d}$ and thus the corresponding character variety is $\mathfrak{X}_A(G) = \mathcal{C}_{2d}(G) \sslash G$, where $\mathcal{C}_{2d}(G)$ is the commuting variety whose points are tuples of $2d$ pairwise commuting elements of $G$. In this case, there is a natural injective morphism $\psi: T^{2d}/W \to \mathfrak{X}_A(G)$,
which becomes a normalization of the connected component of the identity representation. In general, $\psi$ is not an isomorphism \cite{florentino2021singular,BFN}, but it induces an isomorphism of mixed Hodge structures \cite{florentino2024mixed} with the connected component of the identity. For this reason, studying the geometry of $T^{2d}/W$ is equivalent to studying the geometry of the distinguished connected component of the identity in $\mathfrak{X}_A(G)$. More generally, for $r \geq 1$ not necessarily even, the space $\mathfrak{X}_r(G):=\mathcal{C}_{r}(G) \sslash G$ can be identified with the character variety of representations of the free abelian group $\ZZ^r$ into $G$, and the natural morphism 
\[T^r/W \to \mathfrak{X}_r(G) = \mathcal{C}_{r}(G) \sslash G\]
identifies $T^r/W$ with the normalization of the connected component of the identity of $\mathfrak{X}_r(G)$.
This important morphism can be viewed as a higher dimensional commuting Lie group Chevalley map (the well-known identification between $\mathfrak t/W$ and $\mathfrak g \sslash G$, with $\mathfrak g$ and $\mathfrak t$ the Lie algebras of $G,T$,  respectively) and generalizes the case of a $d$-dimensional complex abelian variety $A$, as $\mathfrak{X}_{2d}(G) = \mathfrak{X}_A(G)$.

\subsection*{Stringy invariants.} Despite the seemingly simple structure of $T^r/W$, its singularities and geometry can be very intricate for higher $r$, as it corresponds to the associated character variety. For this reason, more relevant than its usual (co)homology (in the Euclidean topology), is its Chen--Ruan cohomology \cite{CR}, also known as \emph{orbifold cohomology}, given by
\begin{equation}\label{eq:CR-intro}
H^\bullet_{\text{CR}}(T^r/W; \mathbb{C}) := \bigoplus_{[w] \in \text{Conj}(W)} H^{\bullet}((T^r)^w / C(w); \mathbb{C})[2 \, \age(w)]\; .
\end{equation}
Here, $\text{Conj}(W)$ is the set of conjugacy classes of $W$, $w \in [w]$ is any representative of the class, $(T^r)^w$ is the locus fixed by $w\in W$ and $C(w)$ is the centralizer of $w$ in $W$, acting as the residual symmetry group on the fixed locus. The shift $2\,\age(w)$ in the cohomology complex is the so-called \emph{fermionic shift} \cite{fantechi2003orbifold} that captures the codimension of $(T^r)^w$ inside $T^r$ (see Section \ref{subsec:age-shift}). This orbifold cohomology inherits a natural mixed Hodge structure, whose associated mixed Hodge polynomial is known as the \emph{stringy mixed Hodge polynomial} defined by:
$$
\mu^{\str}(T^r/W)(t, u, v) = \sum_{k, p, q} h_{k,p,q}^\str(T^r/W)\, t^k\,u^p\,v^q,
$$
where $h_{k,p,q}^\str(T^r/W) = \dim \left(H^k_{\text{CR}}(T^r/W; \mathbb{C})\right)^{p,q}$ are the \emph{stringy Hodge numbers}. Since fermionic shifts are not generally integers, Chen-Ruan cohomology is not necessarily integer graded, but our computations will show that $\mu^{\str}(T^r/W)$ is actually a polynomial in $\ZZ[t,(uv)^{1/2}]$ with non-negative coefficients.

The central result of this work is an explicit formula for $\mu^{\str}(T^r/W)$ purely in terms of the action of $W$ on the character lattice $\Lambda$ of $G$. Let us briefly introduce some notation; for more details, please see Section \ref{sec:stringy-mhpoly}. Let $I_{\Lambda}$ be the identity map on $\Lambda$ and, given $w \in W$, decompose the cokernel of $I_{\Lambda}-w: \Lambda \to \Lambda$ into its free and torsion parts:
$$
    \coker(I_{\Lambda}-w) = \Phi_w \oplus D_w\; ,
$$
where $\Phi_w$ is the free part and $D_w$ the torsion part. Since the action of any other element  $g\in C(w)$ (the centralizer of $w$) descends to $\coker(I_{\Lambda}-w)$, it induces morphisms on the free and torsion parts, denoted by
$$
    \phi_w(g): \Phi_w \to \Phi_w\quad , \quad \tau_w(g): D_w \to D_w\; .
$$
Denote by $n_w(g) = |\coker(I_{D_w}-\tau_w(g))|$ the cardinality of the cokernel (a finite group) of the map 
$$I_{D_w}-\tau_w(g): D_w \to D_w\; .$$ 
With these notions at hand, we have the following main result.

\begin{theintro}[Theorem \ref{thm:algorithm_CV}]\label{thm:main-intro}
    The stringy mixed Hodge polynomial of $T^r/W$ is given by
    \[
        \mu^{\str}(T^r/W)(t,u,v)=\sum_{[w] \in \textup{Conj}(W)}
        \frac{(t^{2}uv)^{\frac{r}{2}\rk \image(I_\Lambda -w)}}{|C(w)|}\left(\sum_{g \in C(w)} \, n_{w}(g)^r \det\left(I_{\Phi_w} + tuv\,\phi_w(g)\right)^r \right).
    \]
\end{theintro}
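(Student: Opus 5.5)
Starting from the definition \eqref{eq:CR-intro}, the stringy mixed Hodge polynomial splits as a sum over $[w]\in\Conj(W)$ of $(t^2uv)^{\age(w)}$ times the mixed Hodge polynomial of $(T^r)^w/C(w)$. The proof then has three parts: identify the fermionic shift, describe the fixed locus $(T^r)^w$ together with its $C(w)$-action, and compute the cohomology of the finite quotient as $C(w)$-invariants.

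\textbf{The age.} Write $T=\Hom(\Lambda,\CC^*)$, so $T^r=\Hom(\Lambda,(\CC^*)^r)$ and $w$ acts diagonally. At a point of $(T^r)^w$ the tangent space is $\mathfrak t^{\oplus r}$ with $w$ acting diagonally. The age of $w$ on $\mathfrak t$ plus the age of $w^{-1}$ on $\mathfrak t$ equals $\codim \mathfrak t^w=\rk\image(I_\Lambda-w)$. Since $w$ and $w^{-1}$ are conjugate in a Weyl group (they are real reflection groups), each age equals $\tfrac12\rk\image(I_\Lambda-w)$. The age on $\mathfrak t^{\oplus r}$ is therefore $\tfrac r2\rk\image(I_\Lambda-w)$, which gives the prefactor $(t^2uv)^{\frac r2\rk\image(I_\Lambda-w)}$.

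\textbf{The fixed locus.} We have $T^w=\Hom(\coker(I_\Lambda-w),\CC^*)=\Hom(\Phi_w,\CC^*)\times\Hom(D_w,\CC^*)$. This is a torus $S_w$ of rank $\rk\Phi_w$ times the finite group $\widehat{D_w}$, the Pontryagin dual of $D_w$. Hence $(T^r)^w=(T^w)^r=S_w^r\times\widehat{D_w}^{\,r}$. The group $C(w)$ acts on $\coker(I_\Lambda-w)$, but the splitting into free and torsion parts need not be $C(w)$-equivariant. I expect this to be the main obstacle. I would handle it as follows. The torsion submodule $D_w$ is canonical, so the identity component $S_w=\Hom(\Phi_w,\CC^*)$, with $\Phi_w$ the quotient by torsion, is $C(w)$-stable. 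The component group is canonically $\widehat{D_w}$. Each $g\in C(w)$ then acts on $(T^r)^w$ as an affine map: a permutation $\tau_w(g)^\vee$ of the components composed with translations and the automorphism $\phi_w(g)^\vee$ of the torus.

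\textbf{Cohomology of the quotient.} Because $C(w)$ is finite, $H^\bullet((T^r)^w/C(w))=H^\bullet((T^r)^w)^{C(w)}$, compatibly with mixed Hodge structures. Averaging over the group gives
\[
\mu\bigl((T^r)^w/C(w)\bigr)=\frac1{|C(w)|}\sum_{g\in C(w)}\sum_{k}\tr\bigl(g^*\mid H^k\bigr)\,t^k(\text{Hodge grading}).
\]
The cohomology decomposes as $H^\bullet(S_w^r)\otimes\CC[\widehat{D_w}^{\,r}]$. On the second factor, $g$ acts by a permutation whose trace is the number of fixed points of $\tau_w(g)^\vee$ on $\widehat{D_w}^{\,r}$. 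That number is $|\ker(I-\tau_w(g)^\vee)|^r=|\coker(I_{D_w}-\tau_w(g))|^r=n_w(g)^r$, by duality of finite abelian groups. On a fixed component, $g$ acts on the torus by a translation composed with $\phi_w(g)^\vee$. Translations act trivially on cohomology, so only the linear part matters. For a torus, $H^\bullet(S_w)=\Lambda^\bullet H^1$, where $H^1\cong\Phi_w\otimes\CC$ (up to dual/transpose, which does not affect the determinant) is pure of weight $2$, type $(1,1)$, and degree $1$. The graded trace of $\phi_w(g)$ on $\Lambda^\bullet H^1$ with these weights is therefore $\det(I_{\Phi_w}+tuv\,\phi_w(g))$.

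Taking $r$ copies gives the $r$-th power of this determinant. Multiplying by $n_w(g)^r$, averaging over $C(w)$, and inserting the age shift yields exactly the formula in Theorem \ref{thm:main-intro}. A Künneth and Lefschetz-type trace argument over the components justifies the product of the two traces. Independence of the choice of representative $w$ follows because conjugation transports all the data.
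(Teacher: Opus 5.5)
Your proposal is correct and follows essentially the same route as the paper's proof of Theorem~\ref{thm:algorithm_CV}: decompose into twisted sectors, average over $C(w)$ (Burnside), count the $g$-fixed components of $T^w=\Hom(K_w,\CC^*)$ as $n_w(g)$ by duality of finite abelian groups, use that translations act trivially on the cohomology of each torus component, and read off the exterior-algebra trace $\det(I_{\Phi_w}+tuv\,\phi_w(g))$, raised to the $r$-th power. The differences are minor. You obtain $\age(w)=\tfrac12\rk\image(I_\Lambda-w)$ from $\age(w)+\age(w^{-1})=\codim\mathfrak{t}^w$ together with $w\sim w^{-1}$, whereas the paper uses the symplectic Lemma~\ref{lem:fermionic-shift}; the conjugacy step could even be replaced by noting that $w$ is a real matrix, so $w$ and $w^{-1}$ have the same eigenvalues. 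You also handle the non-equivariance of the free/torsion splitting more carefully than the paper, which only remarks that the maps on different fixed components differ by a translation.
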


Analogously to Theorem \ref{thm:main-intro}, we can also consider orbifold cohomology with compact support, and we obtain an analogous formula for the stringy mixed Hodge polynomial (see Corollary \ref{cor:compact-support}). From these polynomials, we can readily obtain other algebraic invariants of $T^r/W$, such as its stringy Poincar\'e polynomial (corresponding to setting $u = v = 1$ in the stringy mixed Hodge polynomial). Also of particular interest is the specialization $t = -1$ for compactly supported orbifold cohomology,  which is known as the stringy $E$-polynomial $E^{\str}(T^r/W) \in \ZZ[(uv)^{1/2}]$, and the Euler characteristic (corresponding to $u = v = -t = 1$).

\subsection*{Mirror symmetry.} All the information appearing in Theorem \ref{thm:main-intro} is written purely in terms of Lie-theoretic data of $G$, and the action of $W$ on $\Lambda$.
In particular, this description allows us to provide a simple and direct proof of topological mirror symmetry for these varieties. First observed by Thaddeus in \cite{Th}, it claims that the stringy mixed Hodge polynomials agree for a connected reductive group $G$ and its Langlands dual group $^LG$. In \cite{Th}, such an equality is obtained by identifying the structures of the $C(w)$-modules $H^{\bullet}((T_G^r)^w / C(w))$ and $H^{\bullet}((T_{^LG}^r)^w / C(w))$ and the corresponding fermionic shifts ($T_G$ and $T_{^LG}$ are maximal tori of $G$ and $^LG$, respectively).

In our approach, due to the formula provided by Theorem \ref{thm:main-intro}, it is possible to directly prove (see Theorem \ref{thm:equality-terms}) that, for any $w \in W$ and $g \in C(w)$, both $n_w(g)$ and $\det\left(I_{\Phi_w} + tuv\,\phi_w(g)\right)$ are equal for $G$ and $^LG$ because, roughly speaking, they correspond to transpose maps in an appropriate representation. We point out that in \cite{Th}, this mirror symmetry is only proven for the $E$-polynomial, whereas our proof works directly with mixed Hodge numbers and, therefore, all stringy invariants, in the usual and compactly supported versions.

\subsection*{Computed examples.} The most important feature of Theorem \ref{thm:main-intro} is that it is completely explicit. In particular, it provides a way to compute stringy mixed Hodge polynomials for arbitrary groups $G$, including the exceptional ones. The classification of conjugacy classes in the Weyl group $W$ relies on Carter's graph-theoretic classification via Dynkin-like diagrams \cite{Carter1972}. In Sections \ref{ssec:SO7} and \ref{ssec:G2}, we exemplify this procedure in the case $G = \SO_7$ (with Dynkin diagram of type $B_3$) and $G = \G_2$. In particular, we describe how to perform these calculations effectively, and have implemented the aforementioned algorithm in the symbolic algebra system SageMath (see \cite{code_repository}).

Table \ref{tab:summary} records the results for an elliptic curve (the case $d=1$), for several classical and exceptional groups of rank up to $4$. For completeness, in Appendix \ref{app:calculations}, we provide a full description of the stringy mixed Hodge polynomials for $T^r/W$ for arbitrary $r$ and all semisimple exceptional $G$. It is worth pointing out that the algorithm obtained with this procedure is very efficient: all these calculations were performed in less than one minute, so we can expect higher rank cases to be addressed similarly.

\subsection*{Numerical properties and Batyrev's conjecture.}

The stringy $E$-polynomial was introduced in the framework of motivic integration and mirror symmetry by Batyrev \cite{Ba} to generalize classical Hodge--Deligne polynomials to algebraic varieties with mild singularities, such as log-terminal or Gorenstein canonical singularities. Parallel to Batyrev's birational-geometric invariant, Chen and Ruan introduced its orbifold cohomology framework $H^\bullet_{\mathrm{CR}}$ on orbifolds. Yasuda \cite{Ya} proved, using motivic integration over Deligne--Mumford stacks, that for a quotient variety $X/F$ with Gorenstein singularities, where $F$ is a finite group, the motivic stringy $E$-function of the quotient stack $[X/F]$ coincides with the orbifold Hodge function defined via Chen--Ruan orbifold cohomology.
This identification interprets the stringy invariants as actual dimensions of a graded, pure cohomology space.
In searching for a cohomology theory for general singular spaces, Batyrev formulated his positivity conjecture (see \cite[Conjecture 3.10]{Ba}) stating that when the stringy $E$-function of a projective Gorenstein canonical variety $X$ is actually a polynomial, its stringy Hodge numbers are non-negative.

The varieties $T^{2d}/W$ that we consider are not projective as in Batyrev's conjecture. On the other hand, the stringy mixed Hodge numbers of the quotient singularities appearing here are automatically non-negative because they coincide with the true dimensions of Chen--Ruan orbifold cohomology.
In Section \ref{sec:batyrev}, in the spirit of Batyrev's conjecture, we conjecture the non-negativity of the coefficients of the stringy $E$-polynomial of $T^{2d}/W$, for $d=1$ (corresponding to the elliptic curve) and all semisimple groups, providing strong positive evidence for it, based on a number of computed examples (see Conjecture \ref{conj:positivity}). We also prove it to be true (even for $d>1$) when $G=\SO_7$ and $G=\G_2$ (see Corollary \ref{cor:G2_SO7_conj}), and show that the hypotheses of the conjecture are sharp (being false for $d=2$ and $G=\SL_3$; and also for $G = \GL_2$, and general $d$). We also show in Proposition \ref{prop:palindromic} that the stringy $E$-polynomial of $T^{2d}/W$ is palindromic for every reductive $G$ and all $d\geq 1$.

\subsection*{Moduli spaces of Higgs bundles on abelian varieties.} Despite the main focus of this paper being on the character variety side, it is straightforward to translate these results to the moduli space of $G$-Higgs bundles (of trivial topological type) on an abelian variety $A$ of complex dimension $d$. In that case, it can be proven that the normalization of the moduli space $\mathcal{M}_A(G)$ of $G$-Higgs bundles on $A$ is isomorphic to the quotient $\mathcal{M}_A(T) / W$ of the moduli space of $T$-Higgs bundles over $A$ modulo the action of the Weyl group $W$. It turns out that $\mathcal{M}_A(T)$ can also be understood in terms of the character lattice of $G$ so, arguing as for character varieties, we obtain the following result.

\begin{theintro}[Section \ref{sec:mixedHodge-Higgs}]\label{thm:algorithm_Higgs-intro}
    The stringy mixed Hodge polynomial of the normalization $\hat{\mathcal{M}}_A(G) = \mathcal{M}_A(T) / W$ is given by
\[
        \mu^{\str} (\mathcal{M}_A(T) / W)(t,u,v) 
        =\sum_{[w] \in \textup{Conj}(W)}
        \frac{(t^{2}uv)^{d\rk \image(I_\Lambda -w)}}{|C(w)|}\left(\sum_{g \in C(w)} \, n_{w}(g)^{2d}\, p_{g,w}(tu)^d\, p_{g,w}(tv)^d \right),
\]
with $p_{g,w}(x):=\det (I_{\Phi_w} + x\,\phi_w(g))$.
\end{theintro}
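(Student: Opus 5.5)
The argument runs parallel to Theorem \ref{thm:main-intro}, replacing the tori $T^r$ by $\mathcal{M}_A(T)$. First, identify $\mathcal{M}_A(T)$ in terms of the cocharacter lattice. Write $T = \Lambda^\vee\otimes\CC^*$, where $\Lambda^\vee$ is the cocharacter lattice (dual to $\Lambda$). A $T$-Higgs bundle on $A$ is a pair consisting of a topologically trivial $T$-bundle, i.e.\ a point of $\Pic^0(A)\otimes\Lambda^\vee = \hat A\otimes_\ZZ\Lambda^\vee$, and a Higgs field in $H^0(A,\Omega^1_A)\otimes\mathfrak t = \CC^d\otimes\Lambda^\vee$. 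Hence $W$-equivariantly
\[
\mathcal{M}_A(T)\cong (\hat A\times\CC^d)\otimes_\ZZ\Lambda^\vee \cong T^*\hat A\otimes_\ZZ \Lambda^\vee ,
\]
with $W$ acting only on the lattice factor. This has the same shape as $T^r = (\CC^*)^r\otimes\Lambda^\vee$. Orbifold cohomology is computed by the same formula \eqref{eq:CR-intro}, so for each conjugacy class $[w]$ we need the fixed locus $\mathcal{M}_A(T)^w$, its $C(w)$-equivariant mixed Hodge structure, and the age of $w$.

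For the fixed locus, set $Y = T^*\hat A$, a connected abelian group variety of dimension $2d$. Then $(Y\otimes\Lambda^\vee)^w = \Hom(\coker(I_\Lambda - w), Y)$, by the same duality used for $T^r$ (where $Y=(\CC^*)^r$). Using $\coker(I_\Lambda-w)=\Phi_w\oplus D_w$, this splits $C(w)$-equivariantly as
\[
\Hom(\Phi_w,Y)\times\Hom(D_w,Y).
\]
The first factor is connected, isomorphic to $Y^{\rk\Phi_w}$. The second is the finite set $\Hom(D_w, Y[\text{tors}])\cong \Hom(D_w,(\QQ/\ZZ)^{4d})$, with $4d$ being the rank of $H_1(Y)$. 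The components form a torsor over this finite group, and $C(w)$ permutes them through $\tau_w$. As in the torus case, we then average over $C(w)$: the invariant cohomology is $\frac{1}{|C(w)|}\sum_g \operatorname{tr}(g)$.

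For each $g$, the trace splits into two contributions.
\begin{itemize}
\item Permutation of components. The number of components fixed by $g$ is $|\Hom(\coker(I-\tau_w(g)),(\QQ/\ZZ)^{4d})| = n_w(g)^{4d}$. There is a subtlety here: the stated exponent is $2d$, not $4d$. The resolution is that $Y=T^*\hat A$ deformation retracts onto $\hat A$, whose $H_1$ has rank $2d$. The torsion of $\CC^d$ is trivial, so the torsion points of $Y$ are really those of $\hat A$, and the count is $n_w(g)^{2d}$. One must also check that on a $g$-stable component, $g$ acts on cohomology as it does on the identity component. This holds because $g$ acts by a translate of a group automorphism, which is homotopic to that automorphism.
\item Action on the connected component. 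We have $H^\bullet(\Hom(\Phi_w,Y)) = \Lambda^\bullet(H^1(\hat A)\otimes\Phi_w)$, where $H^1(\hat A)$ is pure of weight $1$, with $d$ classes of type $(1,0)$ and $d$ of type $(0,1)$; the $\CC^d$ factor contributes nothing. The graded trace of $g$ on an exterior algebra $\Lambda^\bullet(V\otimes\Phi_w)$ is $\det(I+x\,\phi_w(g))^{\dim V}$. This yields $\det(I+tu\,\phi_w(g))^d\det(I+tv\,\phi_w(g))^d = p_{g,w}(tu)^d\,p_{g,w}(tv)^d$. It may be necessary to replace $\phi_w(g)$ by its transpose; the determinant is unaffected.
\end{itemize}

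Next, the age. The group $\mathcal{M}_A(T)$ is a holomorphic symplectic variety, and $w$ acts on its tangent space $\CC^{2d}\otimes\mathfrak t$ as $I_{2d}\otimes w$. The eigenvalues $\ne 1$ of $w$ on $\mathfrak t$ come in conjugate pairs, and there are $\rk\image(I_\Lambda-w)$ of them. Each such eigenvalue occurs $2d$ times, and each conjugate pair contributes age $1$ per copy. This gives $\age(w) = d\,\rk\image(I_\Lambda-w)$. The shift $[2\age(w)]$ then contributes $(t^2uv)^{d\,\rk\image(I_\Lambda-w)}$, because the fermionic shift has Hodge type $(\age,\age)$.

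Summing over conjugacy classes gives the stated formula. I expect the main obstacle to be the careful bookkeeping in two places: identifying the component group of the fixed locus (which lattice, $\Lambda$ or $\Lambda^\vee$, and the resulting exponent $2d$), and confirming that $g$ acts on cohomology compatibly with the trace formula. In both places I would reuse the corresponding lemmas from the proof of Theorem \ref{thm:algorithm_CV}, substituting $\hat A$ for $(\CC^*)^r$.
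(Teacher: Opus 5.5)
Your proposal is correct and follows essentially the same route as the paper's proof of Theorem \ref{thm:sector-Higgs}. You write $\mathcal{M}_A(T)^w=\Hom_{\Ab}(K_w, A^\vee\times H^0(A,\Omega^1_A))$, count the $g$-fixed components as $n_w(g)^{2d}$ via $|A^\vee[n]|=n^{2d}$, retract the identity component onto $(A^\vee)^{s_w}$ to obtain $p_{g,w}(tu)^d\,p_{g,w}(tv)^d$, and compute $\age(w)=\frac{1}{2}\codim_\CC \mathcal{M}_A(T)^w=d\rk \image(I_\Lambda-w)$.

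Two small corrections. The detour through $4d$ was never needed: $H_1(T^*\hat A)$ has rank $2d$, and $\Hom_{\Ab}(D_w,\CC^d)=0$ from the outset, as the paper notes. In the age count, the eigenvalue $-1$ is self-conjugate rather than paired, but it still contributes $\frac{1}{2}$ per copy, so your total $d\,\rk\image(I_\Lambda-w)$ stands.
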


Here the fermionic shift is always an integer, so the stringy mixed Hodge polynomial is in $\ZZ[t,u,v]$. Using Theorem \ref{thm:algorithm_Higgs-intro}, analogous conclusions can be readily obtained for the moduli space of Higgs bundles over an abelian variety, such as topological mirror symmetry (Corollary \ref{cor:mirror-symmetry-Higgs}) and explicit calculations for any reductive group $G$, including the exceptional ones.

\subsection*{Symplectic resolutions.} The orbifold cohomology studied in this paper has a deep interpretation in terms of symplectic resolutions. Let us consider a quotient $X/F$ of a variety $X$ under the action of a finite group $F$, and suppose that $X/F$ is a symplectic variety, i.e. $X/F$ admits a holomorphic symplectic form on its regular locus that extends to a, possibly degenerate, holomorphic form on any resolution of singularities. It is well-known (\cite[Theorem 3.12]{Ba}) that if the singular space $X/F$ admits a symplectic resolution $Z \to X/F$ (i.e, one in which the holomorphic form on $Z$ is a true symplectic form), then the stringy $E$-polynomial coincides with the (standard) $E$-polynomial of $Z$. From this perspective, the stringy invariants of $X/F$ can be seen as a sort of ``virtual invariants'' of a potential symplectic resolution (if that resolution exists).

This is particularly interesting in the case of character varieties of abelian varieties, since $T^{2d}$ admits a natural $W$-invariant holomorphic symplectic form, so that $T^{2d}/W$ are symplectic varieties. It makes sense, thus, to discuss whether $T^{2d}/W$ admits a symplectic resolution. In this direction, we prove (Corollary \ref{cor:existence-crepant}) that a symplectic resolution can only exist if $d = 1$ (i.e.\ for character varieties of an elliptic curve). Furthermore, following Ginzburg and Kaledin \cite{GK04}, we also prove that, even for $T^2/W$, such a symplectic resolution can only exist when all components of the Dynkin diagram of $G$ are of type $A$, $B$ or $C$ (Theorem \ref{thm:resolution_ABC}). In particular, no such resolution exists for exceptional groups, and thus the stringy mixed Hodge polynomial is a subtle invariant that captures ``virtual Hodge numbers''. This also highlights the exceptionality of the positivity of the stringy Hodge numbers and the palindromic $E$-polynomial in these cases, since they are not secretly justified by an underlying symplectic resolution.

\subsection*{Structure of the paper} In Section \ref{sec:Chen-Ruan} we review the notion of Chen--Ruan cohomology and the corresponding stringy invariants. In Section \ref{sec:Betti-Dolbeault} we review the fundamentals of $G$-character varieties of free abelian groups and of moduli spaces of $G$-Higgs bundles over an abelian variety, explaining how, in both cases, the (normalization of the) interesting connected components, are quotients under the Weyl group of $G$. In Section \ref{sec:symplectic-resolution} we discuss the existence of symplectic resolutions for these moduli spaces depending on $G$. Section \ref{sec:stringy-mhpoly} proves the main results of this paper, providing explicit closed formulas for the stringy mixed Hodge polynomials of abelian character varieties and moduli spaces of Higgs bundles over an abelian variety. In Section \ref{sec:topological-mirror-symmetry}, we exploit this description to provide a direct alternative proof of Thaddeus' topological mirror symmetry for these spaces, whose details are in Appendix \ref{app:proof-mirror-symmetry}. As an application, in Section \ref{sec:examples} we explicitly compute these stringy invariants for $G = \SO_7$ and $G = \G_2$, and list them for $d=r/2=1$ and several groups of rank less than $5$. In Section \ref{sec:batyrev} we propose and provide evidence for a conjecture inspired by Batyrev's positivity conjecture, and show that our stringy $E$-polynomials are palindromic. Appendix \ref{app:calculations} contains a full description of the stringy mixed Hodge polynomials of semisimple exceptional groups for arbitrary $r$.

\subsection*{Notations and conventions}
Throughout this article, we work over the field $\CC$ of complex numbers. Given a connected reductive group $G$, we shall fix a maximal torus $T \subseteq G$, which automatically defines an associated Weyl group $W$. The character lattice of $T$ is the free abelian group $\Lambda = \Hom(T, \CC^*)$ of algebraic group homomorphisms, whose rank is called the rank of $T$ (and of $G$), and contains the root lattice $Q \subseteq \Lambda$. The geometry of the root system $R \subseteq Q$ defines the Dynkin diagram of $G$.

Analogously, we can also consider the cocharacter lattice $\Lambda^\vee = \Hom(\CC^*, T)$, with associated coroot system $R^\vee \subseteq \Lambda^\vee$. The tuple $(\Lambda, R, \Lambda^\vee, R^\vee)$ is called the root datum of $G$ and characterizes $G$ up to isomorphism. Its Langlands dual group $^L G$ is then the unique connected reductive group with root datum $(\Lambda^\vee, R^\vee, \Lambda, R)$.

\subsection*{Acknowledgements}

CF wishes to thank Universidad Complutense de Madrid and ICMAT for hospitality during his visit while part of this work was completed, and has been partially supported by CEMS.UL - Center for Mathematics Studies at the University of Lisbon, and FCT projects     UID/04561/2025  (doi.org/10.54499/UID/04561/2025). AGP has been partially supported by project PID2024-156578NB-I00 funded by MICIU /AEI /10.13039/501100011033 / FEDER, EU, as well as the bilateral AEI-DFG project Celestial Mechanics, Hydrodynamics, and Turing Machines / Himmelsmechanik, Hydrodynamik und Turing-Maschinen (AQUACELL), with reference codes PCI2024-155042-2 and PCI2024-155062-2. AZ has been partially supported by Spanish Ministerio de Ciencia, Innovaci\'on y Universidades project PID2022-142024NB-I00.

\section{Chen-Ruan cohomology and polynomial invariants}\label{sec:Chen-Ruan}

In this section we recall the ideas behind Chen--Ruan cohomology and the corresponding stringy invariants. We also recall the polynomials defined from mixed Hodge structures on complex algebraic varieties.

\subsection{Hodge structures and virtual classes}

Throughout this paper, we work over $\mathbb{C}$. Every algebraic variety $X$ (smooth or not) admits a mixed Hodge structure (MHS) on its cohomology groups $H^\bullet(X,\CC)$, 
which allows one to define the mixed Hodge vector spaces $H^{k,p,q}(X,\mathbb{C})$, where $k$ is the degree, and $p,q$ are the weights (see \cite[Theoreme 3.2.5]{De1} and \cite[Proposition 8.3.9]{De2}). In turn, these define the \emph{mixed Hodge numbers}
\[
h^{k,p,q}(X):=\dim_{\mathbb{C}}H^{k,p,q}(X,\mathbb{C})\; ,
\]
and the three-variable \emph{mixed Hodge polynomial}
\[
\mu(X)(t,u,v):=\sum_{k,p,q\geq0}h^{k,p,q}(X)\,t^{k}u^{p}v^{q} \in \ZZ[t,u,v]\; .
\]
If $X$ satisfies that $h^{k,p,q}(X) = 0$ for $p \neq q$, the variety $X$ is said to be \emph{Hodge-Tate} or of \emph{balanced type}. 

Analogously, the compactly supported cohomology groups $H_c^\bullet(X,\CC)$ also admit a natural MHS defining the compactly supported mixed Hodge numbers
$h_c^{k,p,q}(X)$ (the dimensions of the corresponding vector spaces
$H_c^{k,p,q}(X,\mathbb{C})$),
as well as the \emph{compactly supported mixed Hodge polynomial}
\[
\mu_c(X)(t,u,v):=\sum_{k,p,q\geq0}h_c^{k,p,q}(X)\,t^{k}u^{p}v^{q} \in \ZZ[t,u,v]\; .
\]

\begin{rem}
\label{rem:mu-and-muc} When $X$ is irreducible of complex dimension $d$,
and its mixed Hodge structure satisfies Poincar\'e duality (eg.\ when $X$ is smooth
or $X=Y/F$, the quotient of a smooth irreducible variety $Y$ by a finite
group $F$) the compactly supported mixed Hodge polynomial can be obtained from the usual one as (see \cite[Remark 3.10]{FS})
\[
\mu_c(X)(t,u,v)=(t^{2}uv)^{d}\,\mu(X)(t^{-1},u^{-1},v^{-1})\; .
\]

\end{rem}

An important specialization of the mixed Hodge polynomial is the \emph{$E$-polynomial} (also called \emph{Serre polynomial}), which can be obtained from $\mu_{c}(X)$
by plugging
$t=-1$, that is 
\[
E(X)(u,v):=\mu_c(X)(-1,u,v) \in \ZZ[u,v]\; .
\]
From it, we can recover the topological Euler characteristic
of $X$ as 
\[
\chi(X)=E(X)(1,1)=\mu_c(X)(-1,1,1)=\sum_{k}(-1)^{k}\dim_{\CC}H_{c}^{k}(X,\CC)\; .
\]

\begin{rem}
(1) Note that the Euler characteristic $\chi(X)$ and the compactly supported 
one $\chi_c(X)$ agree on complex varieties. Indeed, they agree on smooth varieties, 
by Poincar\'e duality and the fact that the real dimension is even. Moreover, if the singular locus $X^{\textup{sing}}\subseteq X$ of $X$ is smooth (and again $X^{\textup{sing}}$ has even real dimension),
then the additivity of $\chi$ and of $\chi^c$ implies 
\[
\chi(X)=\chi(X^{\textup{sing}})+\chi(X\setminus X^{\textup{sing}})=
\chi^c(X^{\textup{sing}})+\chi^c(X\setminus X^{\textup{sing}})=\chi^c(X)\; .
\]
Finally, the same holds in general, by using the finite singular stratification.

(2) The $E$-polynomial can also be seen as a specialization of a more general
invariant, known as the virtual class. Let ${\rm KVar}$ be the Grothendieck
ring of algebraic varieties, generated by isomorphism classes $[X]$ of algebraic varieties
modulo cut-and-paste relations $[X]=[Y]+[X-Y]$, $Y\subseteq X$ being
a closed subvariety, and where the ring multiplication comes from
the cartesian product $[X\times Z]=[X]\cdot[Z]$. The elements in
this ring are called virtual classes of the varieties and the $E$-polynomial
factors through the Grothendieck ring as a ring homomorphism $E:{\rm KVar}\rightarrow\mathbb{Z}[u,v]$. 
\end{rem}

\subsection{Orbifold cohomology}

Chen--Ruan cohomology \cite{CR} provides an algebro-geometric foundation for the ``orbifold quantum cohomology'' used in string theory. Let $F$ be a finite group acting on a smooth complex variety $X$ and let $\mathcal{X} = [X/F]$ be the corresponding orbifold or \emph{global quotient stack}. The ordinary cohomology of the quotient, $H^*(X/F)$ captures only the ground states of the ``untwisted sector'', while string theory requires including ``twisted sectors'' which are closed string states that are only periodic up to the action of a group element $g \in F$. Chen--Ruan cohomology achieves this by decomposing the underlying vector space as a direct sum over the conjugacy classes of the group $F$.

\begin{defn}[{\cite[Definition 3.1.2]{CR}}]\label{def:CR-cohomology}
The Chen--Ruan cohomology, also known as the orbifold cohomology, of $[X/F]$ is
\begin{equation}\label{eq:CR}
H^\bullet_{\text{CR}}([X/F]; \mathbb{C}) := \bigoplus_{[g] \in \text{Conj}(F)} H^{\bullet}(X^g / C(g); \mathbb{C})[2 \, \age(g)]\; .
\end{equation}
Here, $\text{Conj}(F)$ is the set of conjugacy classes of $F$, $g \in [g]$ is any representative of the class, $X^g$ is the fixed locus of the action of $g\in F$ and $C(g)$ is the centralizer of $g$ in $F$, acting as the residual symmetry group on the fixed locus $X^g$. The shifting  $2\,\age(g)$ in the cohomology complex is the so-called \emph{fermionic shift} (cf.\ \cite[Definition 3.1.1]{CR}), and its definition is recalled in subsection \ref{subsec:age-shift}.
\end{defn}

In parallel to the usual definition, we can also consider the compactly supported Chen--Ruan cohomology, defined as
\begin{equation*}
H^\bullet_{\text{CR},c}([X/F]; \mathbb{C}) := \bigoplus_{[g] \in \text{Conj}(F)} H^{\bullet}_c(X^g / C(g); \mathbb{C})[2 \, \age(g)]\; .
\end{equation*}

\begin{rem}
    From a modern stack-theoretic perspective, the Chen--Ruan cohomology is equivalent to the ordinary singular cohomology of the \emph{inertia stack $\mathcal{I}(\mathcal{X})$} associated to the orbifold $\mathcal{X}$. For a global quotient stack $\mathcal{X} = [X/F]$, the inertia stack parameterizes pairs of points and group elements that fix them, decomposing topologically into the disjoint union of the twisted sectors $X^g / C(g)$:
    \begin{equation}
    \mathcal{I}([X/F]) \cong \coprod_{[g] \in \text{Conj}(F)} [X^g / C(g)]\; 
    \end{equation}
    Consequently, Chen--Ruan cohomology can be interpreted as the cohomology of the inertia stack, further equipped with a fundamentally modified grading defined by the fermionic shift.
\end{rem}

\subsection{Stringy mixed Hodge polynomial}\label{sec:stringy-MH-poly}

From its very description, the Chen--Ruan cohomology admits a natural mixed Hodge structure inherited from the one of $X$. Explicitly, formula (\ref{eq:CR}) admits an interpretation as direct sum of mixed Hodge structures
$$
H^\bullet_{\text{CR}}([X/F]; \mathbb{C}) = \bigoplus_{[g] \in \text{Conj}(F)} H^{\bullet}(X^g / C(g); \mathbb{C}) \otimes_{\CC} \CC(\age(g))\; ,
$$
where $\CC(m) = \CC(1)^{\otimes m}$ is the $m$-Tate twist of weight $-2m$, and $H^{\bullet}(X^g / C(g); \mathbb{C})$ is equipped with its natural MHS coming from the complex variety $X^g/C(g)$. Taking into account that the Hodge structure on $\CC(m)$ satisfies $\CC(m)^{-m,-m} = \CC$, and $\CC(m)^{p,q}=0$
otherwise, we can consider the associated mixed Hodge polynomial.

\begin{defn}
\label{def:str_hodge_pol}
    Let $X$ be a smooth complex variety acted by a finite group $F$. The \emph{stringy mixed Hodge polynomial} of $[X/F]$ is 
$$
    \mu^{\str}(X/F)(t, u, v) = \sum_{[g] \in \text{Conj}(F)} \mu(X^g / C(g))\, (t^{2}uv)^{\age(g)}.
$$
\end{defn}
In principle, the fermionic shift may not be an integer (see Subsection  \ref{subsec:age-shift}), so $\mu^{\str}(X/F)$ is not a polynomial in general. A special case when fermionic shift is indeed an integer is when $X/F$ is Gorenstein, and in this case $\mu^{\str}(X/F) \in \ZZ[t,u,v]$. As we will see, for the cases studied in this work, the fermionic shifts will be half-integers, and thus the stringy mixed Hodge polynomial takes values in $\ZZ[t,u^{1/2},v^{1/2}]$.

In particular, the specialization $u = v = 1$ of the stringy mixed Hodge polynomial is the so-called \emph{stringy Poincar\'e polynomial}
\begin{equation}
    \label{eq:str_Poincare_pol}
    P^{\str}(X/F)(t) = \sum_{[g] \in \text{Conj}(F)} P(X^g / C(g))\, t^{2\,\age(g)} \in \ZZ[t]\; .
\end{equation}
In a similar vein, using compactly supported Chen--Ruan cohomology, we can consider the \emph{compactly supported stringy mixed Hodge polynomial}
\begin{equation}
    \label{eq:str_compact_hodge_pol}
    \mu_c^{\str}(X/F)(t, u, v) = \sum_{[g] \in \text{Conj}(F)} \mu_c(X^g / C(g))\, (t^{2}uv)^{\age(g)}.
\end{equation}
Analogously, the specialization $t = -1$ of the compactly supported stringy mixed Hodge polynomial is called the \emph{stringy $E$-polynomial}
\begin{equation}
    \label{eq:str_e_pol}
    E^\str(X/F) =\mu_c^{\str}(X/F)(-1, u, v) = \sum_{[g] \in \text{Conj}(F)}(-1)^{2\,\age(g)} E(X^g / C(g))\, (uv)^{\age(g)}.
\end{equation}
Notice that when $\age(g)$ is an integer, the leading sign is $(-1)^{2\,\age(g)} = 1$.

\begin{rem}
The above formula shows that, for $X/F$ to be balanced, it is enough that all quotients $X^g/C(g)$ are balanced. In this case, it is customary to write the stringy mixed Hodge polynomial in the variable $q := uv$ as
$$
    \mu^{\str}(X/F)(t, q) = \sum_{[g] \in \text{Conj}(F)} \mu(X^g / C(g))\, (t^2q)^{\age(g)}.
$$
\end{rem}

\section{Betti and Dolbeault moduli spaces of abelian varieties}\label{sec:Betti-Dolbeault}

Here we recall the two moduli over abelian varieties linked by the non-abelian Hodge correspondence: the Betti and the Dolbeault moduli spaces. 

\subsection{Character varieties of abelian groups}

Let $G$ be a connected complex reductive group and consider a finitely generated
group $\Gamma$. The \emph{representation variety} is the set of group
homomorphisms 
\[
R_{\Gamma}(G)=\Hom(\Gamma,G)\; .
\]
After considering a presentation of $\Gamma$ with $n$ generators, it inherits the structure of an algebraic subset of $G^{n}$ (whose algebraic structure is independent of the presentation). The
group $G$ acts on $R_{\Gamma}(G)$ by conjugation as $(h\cdot\rho)(\gamma)=h\rho(\gamma)h^{-1}$
for $h\in G$ and $\gamma\in\Gamma$, and the (affine) Geometric Invariant
Theory (GIT) quotient 
\[
\mathfrak{X}_{\Gamma}(G)=R_\Gamma(G)\sslash G
\]
is known as the $G$-\emph{character variety} of $\Gamma$ (also called the $G$-\emph{Betti moduli space of $\Gamma$}, in case $\Gamma$ is the fundamental group of a K\"ahler manifold).

For the rest of this paper, we will focus on the case in which $\Gamma=\ZZ^{r}$
is the free abelian group of rank $r\in\NN$.
We will use the simplified notation $\mathfrak{X}_{r}(G):=\mathfrak{X}_{\ZZ^{r}}(G)$ for the associated character
variety, and call it \emph{the abelian character variety of rank $r$}. There is a natural regular injective morphism 
\[
\psi: T^{r}/W \longrightarrow \mathfrak{X}_r(G)\; ,
\]
which is a kind of Chevalley map, where $T\subseteq G$ is a maximal torus and $W=N_{G}(T)/T$ is the
Weyl group acting on $T$ by conjugation and on $T^{r}$ diagonally (see \cite{Si,GPZ}). In principle, depending on the group $G$, the character variety $\mathfrak{X}_r(G)$ may have several connected components, and the morphism $\psi$ is bijective onto the connected component of the identity representation, denoted by $\mathfrak{X}_r^\circ(G) \subseteq \mathfrak{X}_r(G)$.

This morphism $\psi: T^{r}/W \to \mathfrak{X}_r^\circ(G)$ is an isomorphism for $r=1,2$ and every (connected) reductive group (see \cite{Si} and \cite[Theorem 6.3.1]{LNY24}), and for all $r$ and some classical groups, such as for $G = \GL_n, \SL_n, \Sp_n$ and $\SO_n$. However, for general $G$ and $r>2$ it is only a bijective normalization map \cite[Theorem 2.1]{Si}. In particular, $\psi$ is proper and thus is a homeomorphism, so $T^{r}/W$ and $\mathfrak{X}_r^\circ(G)$ have the same cohomology. Furthermore, since $\psi$ is algebraic, it induces an isomorphism of mixed Hodge structures (see also \cite[Section 5.1]{FS}), as well as an equality of virtual classes \cite{GPZ}. In general, let us denote the normalization of $\mathfrak{X}_r^\circ(G) \subseteq \mathfrak{X}_r(G)$ by $\hat{\mathfrak{X}}_r^\circ(G) = T^r/W$.

\begin{rem}
Using the arguments of \cite[Corollary 4.16]{GPZ}, the description
of $\mathfrak{X}_r^\circ(G)$ can also be extended for non-connected
groups $G$, yielding an isomorphism of MHSs and of virtual classes 
\[
Z_{G}(T)^{r}/W \to \mathfrak{X}_r^\circ(G)\; ,
\]
where $Z_{G}(T)$ is the centralizer of the maximal torus $T$ and
now we take $W=N_{G}(T)/Z_{G}(T)$. Notice that, when $G$ is connected,
$Z_{G}(T)=T$, recovering the previous formula. 
\end{rem}

\subsection{Higgs bundles on abelian varieties}

Let $A$ be a complex abelian variety of complex dimension $d$ and
denote by $0$ the origin of $A$. In this situation, the (holomorphic) cotangent bundle $\Omega^1_A = T^*A$
is a trivial vector bundle on $A$.
In this setting, a principal $G$-Higgs bundle on $A$ is a pair $(E,\varphi)$, where $E$ is a principal $G$-bundle over $A$, and $\varphi$ is a Higgs field
\[
\varphi\in H^{0}(A,E(\mathfrak{g})\otimes \Omega^1_A)\; ,
\]
where $E(\mathfrak{g})$ is the adjoint bundle (i.e.\ the vector bundle
on $A$ with fiber the Lie algebra $\mathfrak{g}=\textup{Lie}(G)$ given by
the adjoint action of $G$). Furthermore, if we consider the homomorphism 
\[
\eta:(E(\mathfrak{g})\otimes \Omega^1_A)^{\otimes2}\longrightarrow E(\mathfrak{g})\otimes \Omega^{2}_A
\]
given by the composition of the Lie bracket on $E(\mathfrak{g})$
with the exterior product of $1$-forms, the Higgs field must also satisfy $\eta(\varphi\otimes\varphi)=0$.

By choosing an ample line bundle on $A$, it is possible to define a degree for
coherent sheaves on $A$, leading to the usual definition of semistable $G$-Higgs bundles
\cite{ramanathan1975moduli, simpson1994moduli1}. Then, there exists a moduli space $\mathcal{M}_{A}(G)$ of
\emph{semistable $G$-Higgs bundles on $A$} with trivial topological type \cite{simpson1994moduli2, simpson1995moduli2}, also known as the \emph{Dolbeault moduli space of $A$}. Since $\pi_{1}(A)=\mathbb{Z}^{2d}$, by the non-abelian
Hodge theory \cite{corlette1988flat, hitchin1987self, simpson1992higgs} there exist homeomorphisms between the connected components of the identity in the character
varieties $\mathfrak{X}_{2d}^\circ(G)$ and the moduli spaces of Higgs
bundles $\mathcal{M}_{A}(G)$.

As in the case of character varieties, in this abelian setting, it is possible to give a more explicit description of the moduli space $\mathcal{M}_A(G)$, as described in \cite{florentino2013commuting, franco2014higgs, franco2019higgs, BFN}. We briefly sketch this identification here for completeness. First, analogously to the case of character varieties, we have a reduction to the maximal torus via a regular bijective normalization map
$$
    \psi: \mathcal{M}_A(T)/W \to \mathcal{M}_A(G)\; ,
$$
where $T \subseteq G$ is the maximal torus, with the induced action of the Weyl group $W$. As in the case of character varieties, the morphism $\psi$ induces an equality of Hodge structures and virtual classes.

Now, recall that a topologically trivial principal $T$-bundle on $A$ is the same as a group homomorphism $\Lambda \to A^\vee$, where $A^\vee = \Pic^0(A)$ is the group of line bundles on $A$ with vanishing first Chern class and $\Lambda$ is the character lattice of $T$ (equivalently, of $G$). Indeed, given a principal $T$-bundle $P \to A$, for any character $\lambda: T \to \CC^* = \GL_1(\CC)$ in $\Lambda$, we can define the associated line bundle
$$
    P_\lambda := P \times_{\lambda} \CC = (P \times \CC)/T\; ,
$$
with $T$ acting by $t \cdot (p, z) = (t^{-1} \cdot p, \lambda(t)z)$ for $t \in T$, $p \in P$ and $z \in \CC$. Additionally, it is straightforward to check that $P_{\lambda_1 + \lambda_2} = P_{\lambda_1} \otimes P_{\lambda_2}$, so any such principal $T$-bundle $P$ defines a group homomorphism $\Lambda \to A^\vee$, $\lambda \mapsto P_\lambda$. Reciprocally, any such family of line bundles induced by a homomorphism $\Lambda \to A^\vee$ determines a topologically trivial principal bundle.

Regarding the Higgs field $\varphi$, it takes values in the bundle $E(\mathfrak{t}) \otimes \Omega^1_A$, with $\mathfrak{t}$ the Lie algebra of $T$. In particular, since $\mathfrak{t}$ is abelian, we have that the associated bundle is trivial $E(\mathfrak{t}) = A \times \mathfrak{t}$, and therefore
$$
    \varphi \in H^0(A, E(\mathfrak{t}) \otimes \Omega^1_A) = \mathfrak{t} \otimes H^0(A, \Omega^1_A) = \Hom_{\Ab}(\Lambda, H^0(A, \Omega^1_A))\; .
$$
In the last equality, we have used that $\mathfrak{t} \cong \Hom_\Ab(\Lambda, \CC)$ via the natural perfect pairing $\mathfrak{t} \otimes_{\ZZ} \Lambda \to \CC$ given by $(t, \lambda) \mapsto d_1\lambda(t)$, where $d_1\lambda: \mathfrak{t} = T_1T \to \CC = T_1 \mathbb{C}^*$ is the derivative of the character $\lambda: T \to \mathbb{C}^*$. Observe that, in this setting, it is automatic that $\eta(\varphi \otimes \varphi) = 0$ since $T$ is abelian.

Summarizing, the previous discussion implies that
$$
    \mathcal{M}_A(T) = \Hom_{\Ab}(\Lambda, A^\vee \times H^0(A, \Omega^1_A))\; ,
$$
where the first factor keeps track of the principal bundle, whereas the latter encodes its Higgs field.

Therefore, we have that the normalization of $\mathcal{M}_A(G)$ is
\begin{equation}\label{eq:Higgs}
    \hat{\mathcal{M}}_A(G) = \Hom_{\Ab}(\Lambda, A^\vee \times H^0(A, \Omega^1_A)) / W\; ,
\end{equation}
where $W$ acts on $\mathcal{M}_A(T)$ through its natural action on $\Lambda$, i.e. $(w \cdot \chi)(\lambda) = \chi(w^{-1} \cdot \lambda)$ for $w \in W$, $\chi: \Lambda \to A^\vee \times  H^0(A, \Omega^1_A)$ and $\lambda \in \Lambda$. In particular, since $\dim A^\vee = \dim H^0(A, \Omega^1_A) = \dim A = d$, we have $\dim \mathcal{M}_A(G) = 2d\rk G$, where $\rk G = \rk \Lambda$ is the rank of $G$.

\begin{rem}
Although the stringy invariants in this article are considered for character varieties and moduli spaces of Higgs bundles associated with abelian varieties, all constructions depend purely on the local quotient singularity structure and the action of the Weyl group $W$ on the character lattice. Therefore, all results and formulae apply verbatim when replacing the abelian variety $A$ with a complex torus $\mathbb{C}^d/\mathbb{Z}^{2d}$, not  necessarily satisfying Riemann's positivity conditions.
\end{rem}

\section{Symplectic resolutions}\label{sec:symplectic-resolution}

Let $X$ be a normal and irreducible complex algebraic variety. We
say that $X$ is a \emph{symplectic variety} if its open smooth locus
$X^{\textrm{sm}} \subseteq X$ admits a holomorphic symplectic form $\omega$ (i.e.\ closed and
nondegenerate) whose pullback to any resolution $\pi: Z \to X$ extends
to a holomorphic 2-form $\tilde{\omega}$ on $Z$ (not necessarily
non-degenerate). In this setting, an automorphism $f: X \to X$ is said to be \emph{symplectic} if $f^*\omega = \omega$.

A resolution of singularities $\pi: Z\to X$ of a symplectic variety
$X$ is then called a {\em symplectic resolution} if $\pi$ is projective and the above extended
form $\tilde{\omega}$ on $Z$ is non-degenerate (so that it is a
{\em de facto} symplectic form on $Z$). By \cite[Proposition 1.1]{Fu},
a resolution $\pi: Z \to X$ of a symplectic variety is a symplectic resolution if and
only if it is a projective crepant resolution, i.e.\ if $\pi^{\ast}K_{X}=K_{Z}$,
where $K_{X}$ and $K_{Z}$ are the canonical bundles on $X$ and
$Z$, respectively.

A standard example of a singular symplectic variety arises as the quotient $X/\Gamma$ of a smooth symplectic variety $X$ under the action of a finite group $\Gamma$ by symplectic automorphisms. The following result is scattered in the literature,
so we provide a proof here for convenience. 
\begin{thm}
\label{thm:crepant-resolution}
Suppose a finite group $\Gamma$ acts effectively
by symplectic automorphisms on a smooth symplectic variety $X$. For $g \in \Gamma$, let $X^g \subseteq X$ be the locus of fixed points of $g$, and consider the union
$$
    X_0 = \bigcup_{g \neq 1} X^g \subseteq X\; .
$$
If $X_0 \neq \emptyset$ and the quotient $X/\Gamma$ admits a symplectic resolution, then $X_0$ has codimension 2 in $X$. 
\end{thm}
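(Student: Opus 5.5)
Two separate things have to be shown. First, every fixed locus $X^g$ with $g\neq 1$ has codimension at least $2$. Second, if $X/\Gamma$ has a symplectic resolution, then some component of $X_0$ has codimension exactly $2$. Throughout, $X_0$ is a finite union of the closed smooth subvarieties $X^g$, so its codimension is the minimum of the codimensions of the components of the $X^g$.

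For the lower bound, fix $g\neq 1$ and a point $x\in X^g$. By Cartan linearization, $g$ acts linearly on $T_xX$, and $T_x(X^g)=(T_xX)^g$. Since $g$ preserves $\omega_x$, the eigenspaces $V_\lambda$ and $V_\mu$ of $g$ are $\omega_x$-orthogonal unless $\lambda\mu=1$. So $\omega_x$ restricts to a nondegenerate form on the fixed space $(T_xX)^g$ and also on its $g$-stable complement $N_x$. Hence $N_x$ is symplectic, so its dimension is even. Effectiveness (and connectedness of $X$, or applying this componentwise) gives $N_x\neq 0$, so $\codim_x X^g\geq 2$. In particular the quotient map $X\to X/\Gamma$ is étale in codimension one. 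Moreover $\det(g|_{T_xX})=1$, because $g$ preserves $\omega_x^{\dim X/2}$, so the quotient is Gorenstein with $K_{X/\Gamma}$ pulling back to $K_X$.

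For the upper bound, assume $\pi:Z\to X/\Gamma$ is a symplectic resolution, hence crepant by \cite[Proposition 1.1]{Fu}. Suppose for contradiction that every component of $X_0$ has codimension $\geq 4$; by the evenness above, this is the only alternative to codimension exactly $2$. Then the singular locus of $X/\Gamma$, which is the image of $X_0$, also has codimension $\geq 4$. Since $X_0\neq\emptyset$, some point $p$ of $X/\Gamma$ is singular, with local model $\CC^{n}/\Gamma_x$, where $\Gamma_x\subseteq \Sp$ is a nontrivial stabilizer. The key step is to show that a crepant resolution must have an exceptional divisor, and I expect this to be the main obstacle. My first approach uses the stringy $E$-function or McKay correspondence (Batyrev \cite{Ba}, Yasuda \cite{Ya}). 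For a crepant resolution, the stringy $E$-polynomial of the quotient equals $E(Z)$. The contribution of $g$ is weighted by $(uv)^{\age(g)}$, and the junior elements ($\age=1$) correspond bijectively to crepant exceptional divisors. For symplectic $g$ we have $\age(g)=\tfrac12\codim X^g$, since eigenvalues pair as $\lambda,\lambda^{-1}$. So if all codimensions are $\geq 4$, every $g\neq 1$ has $\age\geq 2$, and there are no junior classes, hence no crepant divisors.

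To conclude, if there are no exceptional divisors, then the exceptional locus of $\pi$ has codimension $\geq 2$ in $Z$. But $X/\Gamma$ is $\QQ$-factorial, being a quotient singularity. For a birational projective morphism onto a $\QQ$-factorial variety, the exceptional locus is purely divisorial (van der Waerden purity). So $\pi$ would be an isomorphism, contradicting the singularity of $X/\Gamma$ at $p$. Equivalently, one can argue via Reid–Tai terminality: with $\age\geq 2>1$ for all $g$, the quotient is terminal, and a crepant resolution of a terminal variety must be small, hence trivial by $\QQ$-factoriality. Either way, some $X^g$ has codimension $2$, and combined with the lower bound, $X_0$ has codimension exactly $2$.
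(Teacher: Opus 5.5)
Your argument is correct, but it gets the upper bound by a different mechanism from the paper.

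\textbf{The paper's route.} It uses $\QQ$-factoriality and van der Waerden purity to see that the exceptional locus $E$ of the symplectic resolution is a divisor. It then applies Kaledin's lemma that symplectic resolutions are semismall: $2=2\codim E\ge\codim\pi(E)$, together with normality, forces $\codim\pi(E)=2$. Since $\pi(E)\subseteq p(X_0)$ with $p$ finite, this gives $\codim X_0\le 2$, and evenness of the codimension of symplectic fixed loci finishes.

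\textbf{Your route.} You use only crepancy, via Fu. Assuming all fixed loci have codimension $\ge 4$, the identity $\age(g)=\tfrac12\codim X^g$ (the computation of Lemma \ref{lem:fermionic-shift}) makes every age $\ge 2$. By Reid--Tai the quotient is then terminal, so a crepant resolution has no exceptional divisors. Purity then forces the exceptional locus to be empty, so $\pi$ is an isomorphism, contradicting the singularity along $p(X_0)$.

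\textbf{Comparison.} Both proofs share the purity step. The paper's is shorter and shows directly that the exceptional divisor lies over a codimension-two locus, at the cost of the deeper semismallness input. Yours works for any crepant resolution. It also exhibits the obstruction as the absence of junior (age-one) elements, which are exactly the age-one twisted sectors of Chen--Ruan cohomology used later in the paper. Your explicit linear-algebra proof of the lower bound (the fixed and moving parts of $T_xX$ are both symplectic) also fills in what the paper compresses into one sentence.

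\textbf{Two minor points.} First, the bijection between junior classes and crepant exceptional divisors in your first approach is the Ito--Reid theorem. It is not a consequence of the Batyrev--Yasuda equality of $E$-functions, so keep the Reid--Tai version as your actual argument. Second, the singularity of $X/\Gamma$ along $p(X_0)$ needs a justification; the paper also uses it implicitly, since it needs $E\neq\emptyset$. It follows from Chevalley--Shephard--Todd, because symplectic elements have fixed loci of codimension $\ge 2$ and so are never pseudo-reflections.
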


\begin{proof}
Let $\pi:Z\to X/\Gamma$ be a resolution of singularities of $X/\Gamma$, and consider $E\subseteq Z$ the exceptional locus of the resolution.
By definition $Z\setminus E$ is the largest open set where we have
a local isomorphism
\[
\pi|_{Z\setminus E}:Z\setminus E\to(X/\Gamma)\setminus\pi(E)\; .
\]
Since $X$ is smooth, the quotient $X/\Gamma$ is normal and $\mathbb{Q}$-factorial.
Hence, by van der Waerden's theorem (see \cite[Thm 21.12.12]{EGA}),
$E$ has pure codimension $1$ in $Z$.

Now, if $\pi$ is also symplectic, then it is a semismall morphism
(see \cite[Lemma 2.11]{kaledin2006symplectic}), that is, 
\[
2=2\,\text{codim}\,E\geq\text{codim}\,\pi(E)\; .
\]
Furthermore, since $X/\Gamma$ is normal, its singular locus $\pi(E)$
has codimension $\geq2$. This implies that $\textrm{codim}\,\pi(E)=2$.

Now, consider the projection
$p:X\to X/\Gamma$. Notice that any singular point of $X/\Gamma$
appears at a fixed point, which implies that $\pi(E)\subseteq p(X_{0})$,
and thus $\textrm{codim}\,p(X_{0})\leq\textrm{codim}\,\pi(E)=2$.
As $p$ is finite, we also have $\textrm{codim}\,X_{0}\leq2$. Finally,
since $\Gamma$ acts by symplectic automorphisms, each irreducible
component of $X^{g}$ is a symplectic subvariety, and thus it has
even codimension. This implies that the codimension of $X_0$ is $2$.
\end{proof}

\begin{rem}
The proof of Theorem \ref{thm:crepant-resolution} goes along the same lines as in \cite{Fu} and in \cite{BFN},
where it was stated only for $G=\GL_{n}$.
Note also that this corrects a typo in the loc.\ cit.\ since, in principle, for $g\neq h\in \Gamma$, the (co)dimensions of $X^g$ and $X^h$ might be different, as the following lemma indicates (see also Section \ref{sec:examples}).
\end{rem}

\begin{lem}
Let $V$ be a finite dimensional complex symplectic vector space, and $\Gamma \subseteq \textup{Sp}(V)$ a finite subgroup. For $g,h\in \Gamma$, we have
\[
V^{g\cdot h} \cap (V^g+V^h) = V^g \cap V^h.
\]
\end{lem}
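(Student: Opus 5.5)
The inclusion $V^g\cap V^h\subseteq V^{gh}\cap(V^g+V^h)$ is immediate: a vector fixed by both $g$ and $h$ is fixed by $gh$ and lies in $V^g\subseteq V^g+V^h$. So everything rests on the reverse inclusion. The plan is to use only that $\Gamma$ is finite; I do not expect the symplectic structure to be needed. The tool is a $\Gamma$-invariant positive definite Hermitian form $H$ on $V$, obtained by averaging any Hermitian inner product over $\Gamma$. With respect to $H$ every element of $\Gamma$ is unitary.

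Take $x\in V^{gh}\cap(V^g+V^h)$ and write $x=a+b$ with $a\in V^g$ and $b\in V^h$. Rewrite $ghx=x$ as $hx=g^{-1}x$. Expanding both sides with $ga=a$ and $hb=b$ gives $ha+b=a+g^{-1}b$, that is,
\[
y:=(h-I)a=(g^{-1}-I)b .
\]
The aim is to show $y=0$.

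For a unitary $k$, the image of $k-I$ is $H$-orthogonal to $V^k$. Indeed, if $kv=v$ then $H(ku-u,v)=H(ku,kv)-H(u,v)=0$. From the second expression, $y\in\image(g^{-1}-I)$ is orthogonal to $V^{g^{-1}}=V^g\ni a$, so $H(y,a)=0$. This reads $H(ha,a)=H(a,a)$. Since $h$ is unitary, $H(ha,ha)=H(a,a)$, and therefore
\[
H(ha-a,ha-a)=2H(a,a)-2\,\mathrm{Re}\,H(ha,a)=0 .
\]
Positive definiteness then gives $ha=a$, so $y=0$.

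Hence $a\in V^h$, and $g^{-1}b=b$, so $b\in V^g$. It follows that $a,b\in V^g\cap V^h$, and so $x=a+b\in V^g\cap V^h$. The only step requiring an idea is producing the orthogonality relation $H(y,a)=0$ from the equation $ghx=x$. Once $y$ is written in the two forms above, this is exactly what makes the norm computation collapse.
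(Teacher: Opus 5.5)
Your proof is correct, and every step checks out. The paper gives no argument of its own here. It only remarks that the statement slightly generalizes \cite[Lemma A.16]{GK04} and that the same proof applies.

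Your write-up is a complete, self-contained replacement. From $x=a+b$ and $ghx=x$ you get $y=(h-I)a=(g^{-1}-I)b$. The relation $\image(g^{-1}-I)\perp V^{g}$, for a $\Gamma$-invariant Hermitian form $H$, gives $H(ha,a)=H(a,a)$. Unitarity of $h$ then forces $ha=a$, hence $y=0$.

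What this route makes transparent is that the symplectic form plays no role. You only use that $g$ and $h$ are unitary for a common positive definite Hermitian form. So the identity holds for any finite (or compact) subgroup of $\mathrm{GL}(V)$.

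Some hypothesis of this kind is genuinely needed. On $\mathbb{C}^2$, take $g=\mathrm{diag}(1,2)$ and $h$ the transvection $e_1\mapsto e_1+e_2$, $e_2\mapsto e_2$. Then $V^g\cap V^h=0$ and $V^g+V^h=\mathbb{C}^2$, while $gh$ fixes $e_1-2e_2$. Your argument thus isolates exactly the input the lemma rests on (finiteness, through averaging), whereas the paper's citation leaves the reader to reconstruct it from the Ginzburg--Kaledin setting.
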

\begin{proof}
This is a slightly more general statement than \cite[Lemma A.16]{GK04}, but the proof is the same.
\end{proof}

\subsection{Symplectic resolutions in abelian character varieties} \label{ssec:symplectic_resolutions}
The discussion of the previous section has a direct consequence in the geometry of character varieties of abelian groups. Given a complex connected reductive group $G$ with maximal torus $T$, recall that $T^2$ admits a natural symplectic structure given by left translation of the natural symplectic form on the cotangent bundle $\mathfrak{t}^2 \cong \mathfrak{t} \oplus \mathfrak{t}^* = T^*\mathfrak{t}$. Furthermore, since the Weyl group $W$ of $G$ is generated by reflections acting on $\mathfrak{t}$, its action on $T^{2d}$ is symplectic for all $d \geq 1$.

In this manner, the normalization of the connected component of the identity $\hat{\mathfrak{X}}_{2d}(G) = T^{2d}/W$ of the character variety $\mathfrak{X}_{2d}(G)$ of representations of $\ZZ^{2d}$ into $G$ has naturally a (singular) symplectic variety structure for any $d \geq 1$. It thus makes sense to consider which of these varieties may admit a symplectic resolution. In this direction, we get the following result. 

\begin{cor}\label{cor:existence-crepant}
For any connected reductive group $G$ with positive semisimple rank, the symplectic variety $T^{2d}/W$ does not admit a symplectic resolution for $d \geq 2$.
\end{cor}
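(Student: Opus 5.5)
The plan is to apply Theorem \ref{thm:crepant-resolution} to $X = T^{2d}$ with the diagonal action of $\Gamma = W$. We will show that for $d \geq 2$ the locus $X_0 = \bigcup_{w \neq 1} (T^{2d})^w$ is nonempty but has codimension at least $2d \geq 4$. The theorem forbids this when a symplectic resolution exists, so no such resolution exists.

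First, the hypotheses. The action of $W$ on $T^{2d}$ is symplectic, as explained in Subsection \ref{ssec:symplectic_resolutions}. It is also effective, since $W$ acts faithfully on $\Lambda$ and hence on $T$. Positive semisimple rank guarantees $W \neq 1$, so there is a reflection $s \in W$. Moreover $X_0 \neq \emptyset$, because the identity tuple $(1,\dots,1)$ is fixed by every element of $W$.

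Next, the codimension. For $w \in W$, the fixed locus $(T^{2d})^w$ equals $(T^w)^{2d}$, the $2d$-fold product of the fixed subgroup $T^w \subseteq T$. The identity component of $T^w$ has Lie algebra $\mathfrak{t}^w$, so $\dim T^w = \dim \mathfrak{t}^w = \rk G - \rk \image(I_\Lambda - w)$; here we use that the torsion part of the cokernel, i.e.\ the component group, does not affect dimension. It follows that
$$\codim_{T^{2d}} (T^{2d})^w = 2d\,\rk \image(I_\Lambda - w).$$
For $w \neq 1$ we have $\rk\image(I_\Lambda - w) \geq 1$, because $w$ acts faithfully on $\Lambda$. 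Hence every component of $(T^{2d})^w$ has codimension at least $2d$, with equality exactly for reflections.

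Since $X_0$ is a finite union of these loci, $\codim X_0 = 2d \geq 4 \neq 2$ when $d \geq 2$, which contradicts Theorem \ref{thm:crepant-resolution}. The only point needing care is the dimension count for $T^w$, namely that $\dim T^w$ equals the multiplicity of the eigenvalue $1$ of $w$ on $\mathfrak{t}$. This is a standard fact, so I expect no real obstacle.
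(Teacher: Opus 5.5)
Your proposal is correct and follows the paper's proof essentially verbatim. Both apply Theorem \ref{thm:crepant-resolution} to $T^{2d}$ and show that every fixed locus $(T^{2d})^w$ with $w\neq 1$ has codimension $2d\,\rk\image(I_\Lambda-w)\geq 2d$, with equality for reflections, so $\codim X_0=2d\neq 2$ when $d\geq 2$. The differences are cosmetic: you compute $\dim T^w$ through $\mathfrak{t}^w$ rather than through $T^w=\Hom_{\Ab}(\coker(I_\Lambda-w),\CC^*)$, and you check effectiveness and $X_0\neq\emptyset$ explicitly, which the paper leaves implicit.
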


\begin{proof}
Notice that, since $G$ has positive semisimple rank, its Weyl group $W$ is non-trivial. 
Recall that $T = \Hom_{\Ab}(\Lambda, \CC^*)$, where $\Lambda$ is the character lattice of $G$, and the action of $W$ on $T$ is the one induced by $W$ acting on $\Lambda$, so in particular $\dim T = \rk \Lambda$. Furthermore, for any $w \in W$, its fixed locus is given by $T^w = \Hom_{\Ab}(\coker(I_\Lambda - w), \CC^*)$, where $I_\Lambda$ is the identity map on $\Lambda$ (see Section \ref{section:stringy-CV}). In particular, for $w$ a simple reflection, we have that $I_{\Lambda} - w$ has rank $1$, and thus $\dim T^w = \rk \Lambda - 1$.

Applying this to $T^{2d} = \Hom_{\Ab}(\Lambda^{\oplus 2d}, \CC^*)$, we get that for a reflection $w$ we have $\dim (T^{2d})^w = 2d \rk \Lambda - 2d$, and therefore $\codim (T^{2d})^w = 2d$. Furthermore, since any $w \in W$ with $w \neq 1$ is a non-trivial product of reflections, $\rk \image(I_\Lambda - w) \geq 1$, so $\dim (T^{2d})^w \leq 2d \rk \Lambda - 2d$ and thus $\codim (T^{2d})^w \geq 2d$. Therefore, we get that
$$
	\codim \bigcup_{w \neq 1} (T^{2d})^w = 2d\; .
$$
Hence, by Theorem \ref{thm:crepant-resolution}, $T^{2d}/W$ can only admit a symplectic resolution if $d=1$.
\end{proof}

\subsection{Non-existence of projective symplectic resolutions in abelian character varieties}

By Corollary \ref{cor:existence-crepant}, the only abelian character varieties that may admit a symplectic resolution are the varieties $\hat{\mathfrak{X}}_{2}(G) = T^2/W$, corresponding to representations of the fundamental group of an elliptic curve into $G$. However, such a result does not imply that these resolutions exist in general.

In the case that $G = \GL_n$, it was proven that the Hilbert scheme provides a projective symplectic resolution (see \cite[Theorem 13]{BFN}). More precisely, if $\Hilb^n(X)$ denotes the Hilbert scheme of $n$-points of $X$, then the Hilbert-Chow morphism
$$
	\Hilb^n((\CC^*)^{2}) \to \textup{Sym}^n((\CC^*)^{2})
$$
is projective and provides the desired resolution. Indeed, the maximal torus of $\GL_n$ is $T = (\CC^*)^n$ and the Weyl group is the symmetric group $S_n$ acting by permutation, so $\hat{\mathfrak{X}}_2(\GL_n) = ((\CC^*)^2)^n / S_n = \textup{Sym}^n((\CC^*)^2)$. From this description, we also get that such a projective symplectic resolution exists for $G=\SL_n$, since it corresponds to the pullback of $\Hilb^n((\CC^*)^{2}) \to \textup{Sym}^n((\CC^*)^{2})$ under the inclusion $T_{\SL_n}^2/S_n \hookrightarrow T_{\GL_n}^2/S_n = \textup{Sym}^n((\CC^*)^{2})$ of the maximal torus $T_{\SL_n}$ of $\SL_n$ inside the maximal torus of $\GL_n$.

Similar results can be obtained for other groups of type $B$ and $C$, such as $G = \Sp_{2n}$ whose Weyl group is $S_n \ltimes (\ZZ_2)^n$, as in \cite{BDL}. Explicitly, a symplectic resolution of $T^2/(S_n \ltimes (\ZZ_2)^n)$ is constructed by resolving the singularity $Z \to (\CC^*)^2/\ZZ_2$, and then considering the $n$-points Hilbert scheme of $Z$. However, in other cases, such a symplectic resolution may not exist, such as for $G = \PGL_3$ (see \cite[Proposition 4.5]{Th}). 

The aim of this section is to prove that these symplectic resolutions cannot exist for irreducible groups of type $D$ or exceptional groups. Concretely, we will prove the following.

\begin{thm}
\label{thm:resolution_ABC}
Let $G$ be a connected reductive group with irreducible root system, maximal torus $T$ and Weyl group $W$. If the symplectic variety $T^2/W$ admits a symplectic resolution, then $G$ has a root system of Dynkin type $A$, $B$ or $C$.
\end{thm}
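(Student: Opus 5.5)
The plan is to reduce to a local linear question and then apply the Ginzburg--Kaledin criterion for symplectic quotient singularities. A symplectic resolution of $T^2/W$ restricts, over an analytic (or étale) neighbourhood of the image of a point $x\in T^2$, to a symplectic resolution of a neighbourhood of the origin in $V/W_x$. Here $W_x\subseteq W$ is the stabilizer of $x$ and $V = T_x(T^2)\cong \mathfrak t\oplus\mathfrak t^*$ is the tangent space with its linear symplectic $W_x$-action. Since $V/W_x$ is conical, a local symplectic resolution near $0$ should globalize to a projective symplectic resolution of $V/W_x$; this is the standard reduction in \cite{GK04}, e.g.\ via the $\CC^*$-action. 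The key point is that the stabilizers $W_x$ can be larger than parabolic subgroups of $W$. Because $T$ is a torus rather than a vector space, the stabilizer of a pair $x=(s_1,s_2)\in T^2$ is the subgroup of $W$ fixing both $s_1$ and $s_2$. By Borel--de Siebenthal type results, these include Weyl groups of pseudo-Levi subgroups, i.e.\ of the centralizers $Z_G(s)^\circ$, whose root systems come from proper subdiagrams of the extended Dynkin diagram.

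The main input is Ginzburg--Kaledin's necessary condition, strengthened by Verbitsky and Bellamy. If $V/\Gamma$ with $\Gamma\subseteq \Sp(V)$ admits a symplectic resolution, then $\Gamma$ is generated by symplectic reflections. Moreover, for $\Gamma=W'$ a Weyl group acting on $\mathfrak h\oplus\mathfrak h^*$, Bellamy's classification shows that $(\mathfrak h\oplus\mathfrak h^*)/W'$ admits a symplectic resolution only for $W'$ of type $A$, $B=C$, or $G_2$. (More precisely, this holds for irreducible factors; the product case reduces to each factor.) So the strategy is as follows: for each irreducible type $D_n$ ($n\ge4$), $E_6,E_7,E_8$, $F_4$, $G_2$, exhibit a point $x\in T^2$ whose stabilizer $W_x$ has an irreducible factor acting on its reflection representation doubled, and with this factor outside $\{A,B,C\}$ and not of type $G_2$. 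Alternatively, we can use the whole group at $x=(1,1)$, where $W_x=W$ and $V=\mathfrak t\oplus\mathfrak t^*$. For $D_n$, $E_{6,7,8}$ and $F_4$ the point $x=(1,1)$ already suffices by Bellamy's theorem, since $W$ itself is then a bad Weyl group.

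The main obstacle is $G_2$. Here $(\mathfrak h\oplus\mathfrak h^*)/W(G_2)$ does admit a symplectic resolution, so the identity point gives no obstruction. Instead we use a torsion point: the extended Dynkin diagram of $G_2$ contains the subdiagram $A_2$ (the centralizer of an element of order $3$) and $A_1\times A_1$ (order $2$). Neither of these is bad, so a different argument is needed. The first thing I would try is a pair $x=(s_1,s_2)$ whose joint stabilizer is not a reflection group on $V$. For instance, choose $s_1,s_2$ such that $W_x$ is generated by elements acting on $V$ without fixed hyperplanes of codimension $2$; this would violate the Ginzburg--Kaledin generation criterion. For $G_2$ one can also use that $T^2/W$ is then a rank-$2$ torus quotient, i.e.\ a $4$-dimensional symplectic orbifold. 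A symplectic resolution would then be a crepant resolution of a $4$-dimensional quotient with isolated-type transversal singularities, and one can compare orbifold Euler numbers via Batyrev's theorem \cite[Theorem 3.12]{Ba} with the computed stringy invariants. A fractional age, or a negative stringy Hodge number in the appropriate degree, would give the contradiction.

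I expect checking the $G_2$ case, and confirming that the local-to-global step is compatible with projectivity over the local base, to be the delicate parts. For $D$, $E$ and $F$, the statement reduces cleanly to \cite{GK04} plus Bellamy's classification applied at the identity.
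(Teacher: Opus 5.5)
\textbf{Where the proposal stands.} For $D_n$, $E_{6,7,8}$ and $F_4$ your argument is the paper's. You work at the identity point and identify neighbourhoods of $1\in T^2$ and $0\in\mathfrak t^2$ $W$-equivariantly via the exponential. You then globalize, which is exactly what the paper takes from Kaledin \cite{kaledin2003crepant}, and apply the obstruction for the linear quotient $\mathfrak t^2/W$. The genuine gap is $G_2$.

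\textbf{The $G_2$ premise is false.} You assert that $(\mathfrak h\oplus\mathfrak h^*)/W(G_2)$ admits a symplectic resolution. It does not. $W(G_2)$ is the dihedral group $G(6,6,2)$, whose Calogero--Moser spaces are singular for every parameter (Bellamy). By Ginzburg--Kaledin this linear quotient therefore has no symplectic resolution, and among irreducible Weyl groups only types $A$ and $B=C$ survive. So the identity point already disposes of $G_2$, and this is how the paper treats all non-$ABC$ types at once.

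As written, though, you leave $G_2$ unproved. Your second fallback cannot succeed. For $r=2$ the ages $\rk\image(I_\Lambda-w)$ are integers. Stringy Hodge numbers are dimensions of Chen--Ruan cohomology, hence non-negative. So neither a fractional age nor a negative stringy Hodge number can occur. Indeed $E^{\str}(T^2_{\G_2}/W)=q^4+2q^3+12q^2+2q+1$ looks exactly like the $E$-polynomial of a resolution.

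\textbf{Completing your first fallback.} This route gives a self-contained argument that avoids Calogero--Moser theory, but you must actually exhibit the point. The stabilizers you list ($A_2$ and $A_1\times A_1$, stabilizers of single elements) are reflection groups and give nothing.
\begin{enumerate}
\item Take $x=(s_1,s_2)$ with $s_1\neq s_2$ two non-trivial points of $T[2]=\Hom(\Lambda/2\Lambda,\mu_2)$.
\item The image of $W$ in $\GL(\Lambda/2\Lambda)\cong\GL_2(\mathbb F_2)\cong S_3$ is all of it: the reduction of $\rho$ has order $3$ and that of $\tau$ has order $2$ (compare Table~\ref{tab:G2_ii}). The kernel is $\{\pm1\}$.
\item Since $s_1,s_2$ form a basis of the dual space, $W_x=\{\pm1\}$. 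It acts by $-1$ on $T_x(T^2)\cong\CC^4$.
\item Hence $[x]$ is locally $\CC^4/\pm1$. This is a $\QQ$-factorial terminal singularity (age $2$), and the group contains no symplectic reflections.
\item Such a singularity has no crepant, hence no symplectic, resolution even locally. This contradicts the existence of one on $T^2/W$, with no globalization step needed.
\end{enumerate}
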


\begin{proof}
Taking into account that the center component is acted trivially, without loss of generality we can suppose that $G$ is semisimple. By a result of Ginzburg and Kaledin \cite[Theorem 1.1]{GK04}, the quotient $\mathfrak{t}^2 /W$ admits a symplectic resolution only if $G$ has type $A$, $B$ or $C$, where $\mathfrak{t}$ is the Lie algebra of $T$ and $W$ acts on $\mathfrak{t}^2 = \mathfrak{t} \oplus \mathfrak{t}$ diagonally. We will thus prove that if $T^2/W$ admits a symplectic resolution, then $\mathfrak{t}^2/W$ also admits such a resolution, leading to the desired result.

In order to do that suppose that $Z \to T^2/W$ is a symplectic resolution. The exponential map of the abelian group $T^2$, $\exp: \mathfrak{t}^2 \to T^2$, is a $W$-equivariant analytic local isomorphism. In particular, it defines a $W$-equivariant isomorphism $\widehat{\exp}: \widehat{\mathfrak{t}^2_{0}} \to \widehat{T^2_{1}}$ between the formal completions $\widehat{\mathfrak{t}^2_{0}}$ of $\mathfrak{t}^2$ at $0 \in \mathfrak{t}^2$ and $\widehat{T^2_{1}}$ of $T^2$ at $1 \in T^2$. Since $0$ and $1$ are fixed points of the action of $W$, this isomorphism descends to an isomorphism
$$
\widehat{\exp}_W: \widehat{(\mathfrak{t}^2/W)}_{[0]} \to \widehat{(T^2/W)}_{[1]}
$$
between the corresponding formal completions of $\mathfrak{t}^2/W$ and $T^2/W$.

In particular, the restriction of $Z \to T^2/W$ to the formal completion gives a symplectic formal resolution
$$
    Z|_{\widehat{(T^2/W)}_{[1]}} \to \widehat{(T^2/W)}_{[1]}\; ,
$$
and its pullback $\widehat{\exp}_W^*(Z|_{\widehat{(T^2/W)}_{[1]}}) \to \widehat{(\mathfrak{t}^2/W)}_{[0]}$ is a symplectic resolution of $\widehat{(\mathfrak{t}^2/W)}_{[0]}$. By \cite[Theorem 1.4]{kaledin2003crepant}, this local symplectic resolution can be extended to a global symplectic resolution of $\mathfrak{t}^2/W$, as we wanted to prove.
\end{proof}

\begin{rem}
Under the additional assumption that $G$ is an almost simple group, Shu has recently obtained a complete classification of the connected components of the $G$-character variety of an elliptic curve that admit a symplectic resolution \cite[Theorem C]{shu2024singularities}. In particular, for the identity component $T^2/W$, his results show that the necessary restriction to Dynkin types $A$, $B$, and $C$ established in Theorem \ref{thm:resolution_ABC} can be sharpened considerably: existence of a symplectic resolution depends also on the isogeny type of $G$ (\cite[Propositions 7.9, 7.11, and 7.13]{shu2024singularities}). Shu's analysis of the genus one case uses Borel--Friedman--Morgan theory to describe the connected components and the corresponding Weyl group actions. By contrast, our argument relies on the Ginzburg--Kaledin obstruction and yields a uniform necessary condition for every reductive group, without any almost-simplicity assumption.
\end{rem}

\section{Stringy mixed Hodge polynomials of character varieties of abelian groups}\label{sec:stringy-mhpoly}

In this section, we prove a formula for the stringy mixed Hodge polynomial of a quotient of the form $\hat{\mathfrak{X}}_r(G) = T^r/W$, where $r\in {\mathbb N}$, $T$ is a maximal torus of a connected reductive group $G$, and $W$ is the Weyl group. 
As mentioned before, this corresponds to the normalization of the connected component of the identity of the character variety $\mathfrak{X}_{r}(G) = R_{\ZZ^r}(G)\sslash G$.

\subsection{Mixed Hodge structures on quotients and the fermionic shift}\label{sec:mhs-quotients}

We start by discussing how to compute the mixed Hodge polynomial of the quotient $X/F$ of a complex variety $X$ acted by a finite group $F$, in terms of the characters of the action. Formulae of this type, which are complex analogues of the well-known Burnside formula for point-counting, are scattered in the literature (see for instance \cite{batyrev1996strong}), and we include its proof here for completeness.

\begin{prop}[Burnside formula for mixed Hodge polynomials]\label{prop:Burnside}
Let $X$ be a complex algebraic variety and let $F$ be a finite group acting algebraically on $X$. Then, the mixed Hodge polynomial of $X/F$ is
$$
    \mu(X/F) = \frac{1}{|F|}\sum_{g \in F}\sum_{k,p,q} \chi_{k,p,q}(g) \,t^ku^pv^q\; ,
$$
where $\chi_{k,p,q}(g) =\tr(g: H^{k,p,q}(X) \to H^{k,p,q}(X))$ is the character of the action of $g \in F$ on $H^{k,p,q}(X)$, i.e.\ the trace of the $\CC$-linear map induced in cohomology by $g$.
\end{prop}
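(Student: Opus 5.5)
The plan is to reduce the statement to the classical fact that, for a finite group acting on a space, the rational (or complex) cohomology of the quotient is the invariant part of the cohomology of the space, and then to pass to the invariant part degree by degree and Hodge-type by Hodge-type.

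\textbf{Step 1 (transfer).} Let $p: X \to X/F$ be the quotient map. Since $F$ is finite and we work with $\CC$-coefficients, the pullback $p^*: H^k(X/F;\CC) \to H^k(X;\CC)$ is injective with image exactly $H^k(X;\CC)^F$. To see this, one uses the transfer map $p_!$ with $p_! p^* = |F|$ and $p^* p_! = \sum_{g\in F} g^*$; this is the standard Grothendieck/Borel result for finite quotients. Moreover, $p^*$ is induced by an algebraic morphism, so it is a morphism of mixed Hodge structures. Each $g \in F$ acts by an algebraic automorphism, hence by MHS automorphisms, so $H^k(X)^F$ is a sub-MHS.

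\textbf{Step 2 (compatibility with the Hodge decomposition).} Morphisms of MHS are strict with respect to both the weight and Hodge filtrations. Hence the $F$-action preserves the (noncanonical but functorial, e.g.\ Deligne's splitting) graded pieces $H^{k,p,q}$, or at least acts on $\operatorname{Gr}^W\operatorname{Gr}_F$. By exactness of taking invariants in characteristic zero, we get
\[
\dim H^{k,p,q}(X/F) = \dim H^{k,p,q}(X)^F .
\]
This is the subtle point: one must argue that invariants commute with the functors $\operatorname{Gr}^W_{p+q}$ and $\operatorname{Gr}_F^p$. I would handle it by noting that the averaging operator $e = \frac{1}{|F|}\sum_g g^*$ is an idempotent endomorphism of the MHS. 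Its image is therefore a direct summand in the abelian category of MHS, and the functors $\operatorname{Gr}$ are exact. Hence $\operatorname{Gr}(\operatorname{im} e) = \operatorname{im}(\operatorname{Gr} e) = \operatorname{Gr}(H)^F$.

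\textbf{Step 3 (character formula).} For any finite-dimensional representation $V$ of $F$, we have
\[
\dim V^F = \tr(e) = \frac{1}{|F|}\sum_g \chi_V(g).
\]
Applying this to $V = H^{k,p,q}(X)$, multiplying by $t^ku^pv^q$ and summing gives the formula.

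I expect the main obstacle to be Step 2, namely the rigorous identification of the mixed Hodge numbers of the quotient with those of the invariant sub-MHS. The idempotent/strictness argument should settle it.
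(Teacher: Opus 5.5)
Your proposal is correct and follows the same route as the paper: identify $H^{k,p,q}(X/F)$ with $H^{k,p,q}(X)^F$ and compute the dimension of the invariants by averaging characters. The differences are minor. You justify the identification of the Hodge pieces via the transfer and the averaging idempotent, which the paper simply asserts, and you obtain $\dim V^F$ as the trace of that idempotent rather than through orthogonality of irreducible characters.
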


\begin{proof}
In every degree, the cohomology of $X/F$ is the $F$-fixed part $H^k(X)^F = H^k(X/F)$.
In particular, since $F$ preserves the bigrading of the mixed Hodge structure, its $(p,q)$-pieces are
$$
    H^{k,p,q}(X/F) = H^{k,p,q}(X)^F.
$$
Recall that if $H$ is any $F$-module, then
$$
    \dim H^F = \frac{1}{|F|} \sum_{g \in F} \chi_H(g)\; ,
$$
where $\chi_H$ is the character associated to $H$. This follows directly from decomposing $H$ into its irreducible components $H = H_0^{\oplus d_0} \oplus \ldots \oplus H_n^{\oplus d_n}$, with $H_0$ being the trivial representation. Now, considering the irreducible characters $\chi_0, \ldots, \chi_n$ of $H_0, \ldots, H_n$, respectively, and using the orthogonality of characters we get
$$
    \dim H^F = d_0 = \left\langle \sum_i d_i \chi_i, \chi_0 \right\rangle = \left\langle \chi_H, \chi_0 \right\rangle = \frac{1}{|F|} \sum_{g\in F} \chi_H(g)\overline{\chi}_0(g) = \frac{1}{|F|} \sum_{g\in F} \chi_H(g)\; .
$$

Therefore, applying this to $H = H^{k,p,q}(X)$, we get
$$
    \mu(X/F) = \sum_{k,p,q} \dim H^{k,p,q}(X)^F\,t^ku^pv^q = \frac{1}{|F|} \sum_{k,p,q} \sum_{g \in F} \chi_{k,p,q}(g)\,\,t^ku^pv^q\; ,
$$
as we wanted to prove.
\end{proof}

We recall that the calculation of the character $\chi$ for an $F$-module $M$ can be easily simplified in the following scenarios:
\begin{enumerate}
    \item\label{item:disjoint-union} Let $H = H_1 \oplus \ldots \oplus H_n$, and let $F$ acting on $H$, not necessarily diagonally. Then, for any $g \in F$ and $1 \leq i \leq n$, consider the ``diagonal components''
    $$
        g_i: H_i \to H/\left({\displaystyle \oplus_{j\neq i} H_j}\right) \cong H_i\; .
    $$
    Then, since the off-diagonal entries do not contribute to the trace, we have that $\chi_H(g) = \sum_i \tr(g_i: H_i \to H_i)$. In particular, let $X_1, \ldots, X_n$ be connected complex varieties and consider $X = X_1 \sqcup \ldots \sqcup X_n$, so that $H^\bullet(X) = H^\bullet(X_1) \oplus \ldots \oplus H^\bullet(X_n)$ and any $F$-action on $X$ induces an $F$-action on cohomology as above. Furthermore, the action of $g \in F$ sends connected components into connected components, so we have
    $$
        \chi_{k,p,q}(g) = \sum_{\{i \,\mid\, g(X_i) = X_i\}} \tr\left(g_i: H^{k,p,q}(X_i) \to H^{k,p,q}(X_i)\right),
    $$
    where the sum runs over the indices $i$ such that $g$ preserves the $i$-th component $X_i$.
    \item\label{item:determinant} Consider the exterior algebra $\bigwedge^\bullet H$ associated to an $F$-module $H$, with the natural action. Then, for any $g \in F$, the generating function of the characters of $\bigwedge^\bullet H$ can be identified as the determinant $\sum_{k\geq 0} \chi_{\bigwedge^k H}(g)x^k = \det(I_H+xg)$, where $I_H: H \to H$ is the identity map and $g: H \to H$ is the action of $g$. Furthermore, if $H$ is equipped with a mixed Hodge structure then, for the natural mixed Hodge structure induced on $\bigwedge^\bullet H$, we have
    $$
        \sum_{k, p, q} \chi_{(\bigwedge^k H)^{p,q}}(g)\,t^ku^p v^q = \prod_{p,q} \det(I_{H^{p,q}}+t u^p v^q g_{p,q})\; ,
    $$
    where $g_{p,q}: H^{p,q} \to H^{p,q}$ is the action of $g \in F$ in the $(p,q)$-piece of $H$ and the product runs over all the pairs $(p,q)$ such that $H^{p,q} \neq 0$. In particular, if $X$ is an algebraic variety whose cohomology (and its MHS) is an exterior algebra generated by $H^1(X)$, $H^\bullet(X) = \bigwedge^\bullet H^1(X)$ (for example, when $X$ is an abelian variety), then (see also \cite{FS}):
    $$
    \mu(X/F) = \frac{1}{|F|}\sum_{g \in F}\sum_{k,p,q} \chi_{k,p,q}(g) \,t^ku^pv^q = \frac{1}{|F|}\sum_{g \in F} \prod_{p,q}\det(I_{H^{1, p, q}(X)} + tu^pv^q\,g_{p,q})\; ,
    $$
    where again $g_{p,q}: H^{1, p, q}(X) \to H^{1, p, q}(X)$ is the induced action on the $(p,q)$-piece. 
    \item\label{item:product} If $H = H_0^{\otimes r}$ with the natural tensor product action, then $\chi_H(g) = \chi_{H_0}(g)^r$. In particular, if $X_0$ is a complex variety whose cohomology and mixed Hodge structure are given as the exterior algebra of $H^1(X_0)$, and we consider $X = X_0^r$ with the diagonal action, then
    $$
    \mu(X_0^r/F) = \frac{1}{|F|}\sum_{g \in F}\left(\sum_{k,p,q} \chi_{k,p,q}(g) \,t^ku^pv^q\right)^r = \frac{1}{|F|}\sum_{g \in F} \prod_{p,q}\det(I_{H^{1, p, q}(X_0)} + tu^pv^q\,g_{p,q})^r,
    $$
    with $g_{p,q}: H^{1,p,q}(X_0) \to H^{1,p,q}(X_0)$, and $\chi_{k,p,q}(g)=\chi_{H^{k,p,q}(X_0)}(g)$.
\end{enumerate}

\subsubsection{Fermionic shift}
\label{subsec:age-shift}
Again, let $F$ be a finite group acting on a complex variety $X$. Recall from Definition \ref{def:CR-cohomology} that the Chen-Ruan cohomology of the orbifold $X/F$ is given by:
\begin{equation}\label{eq:CR1}
H^\bullet_{\text{CR}}([X/F]) := \bigoplus_{[g] \in \text{Conj}(F)} H^{\bullet}(X^g / C(g))[2 \, \age(g)]\; .
\end{equation}
For each $g\in F$, the corresponding fermionic shift $\age(g)$ is defined as follows. Suppose that $g$ acts on the complex tangent bundle of $X$ with eigenvalues $\lambda_j = e^{2\pi i q_j}$, satisfying $0 \le q_j < 1$, then we define $$\age(g) = \sum_{j} q_j\; .$$ 
This number is constant on each connected component of $X^{g}$, making it a well-defined topological grading.
In general, $\age(g)$ is a half-integer and, when the singularities of $X/F$ are Gorenstein, it can be shown that $\age(g)$ is a non-negative integer, so we get an 
upward shift in the degree of the cohomology classes of that sector in Chen--Ruan cohomology by exactly $2\,\age(g)$. 

\begin{rem}\label{rem:first-term}
Let us look at the term corresponding to $g = 1$ in (\ref{eq:CR1}). In this case, $C(g) = F$ and the fermionic shift of the identity is zero. In this manner, this term actually corresponds to the usual cohomology $H^\bullet(X/F)$ of the quotient $X/F$ and, in this sense, the Chen--Ruan cohomology can be understood as a ``fermionic correction'' to this regular cohomology. 
\end{rem}

It is worth pointing out that, in the case that the action is symplectic, the computation of the fermionic shift is much simpler. Recall that a complex variety $X$ is \emph{complex symplectic} if it admits a closed non-degenerate holomorphic $2$-form $\omega$. The action of $F$ on $X$ is said to be symplectic if the action preserves $\omega$. 

\begin{lem}\label{lem:fermionic-shift}
Let a finite group $F$ act symplectically and diagonally on the $r$-fold cartesian product $X^{r}$ for some $r \geq 1$ (we do not require the action to be symplectic on $X$). Then, the fermionic shift of $g \in F$ on $X^{r}$ satisfies 
$$\age_{X^{r}}(g) = \frac r2\codim_\CC X^g.$$
\end{lem}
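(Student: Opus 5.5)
Fix $g \in F$ and a point $x=(x_1,\dots,x_r)\in (X^r)^g = (X^g)^r$. Work on the tangent space $T_x X^r = \bigoplus_{i=1}^r T_{x_i}X$, where $g$ acts diagonally through its linearized actions $dg_{x_i}: T_{x_i}X \to T_{x_i}X$. The age is additive over direct sums of $g$-stable subspaces, since it is a sum over eigenvalues. So
\[
\age_{X^r}(g) = \sum_{i=1}^r \age_{T_{x_i}X}(g).
\]
The plan is therefore to compute the age on the full tangent space $V := T_xX^r$ using the symplectic form, rather than factor by factor.

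First, $g$ has finite order, so $dg_x$ is diagonalizable with eigenvalues that are roots of unity. Because $g$ preserves the symplectic form $\omega_x$ on $V$, the eigenvalues of $dg_x$ are stable under $\lambda \mapsto \lambda^{-1}$. Moreover, for $\lambda\neq \lambda'^{-1}$ the eigenspaces $V_\lambda$ and $V_{\lambda'}$ are $\omega$-orthogonal, so $\omega$ pairs $V_\lambda$ perfectly with $V_{\lambda^{-1}}$, and $\dim V_\lambda = \dim V_{\lambda^{-1}}$. Write $\lambda = e^{2\pi i q}$ with $0<q<1$ for $\lambda\ne 1$. Then $\lambda^{-1}=e^{2\pi i(1-q)}$, and the paired eigenvalues contribute $q+(1-q)=1$ per pair. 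The $\lambda=-1$ eigenspace is self-paired and contributes $\tfrac12$ per dimension, which is consistent. Hence
\[
\age(g) = \tfrac12 \dim_\CC (V/V_1) = \tfrac12 \codim_\CC \big((T_xX^r)^{g}\big).
\]

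It remains to identify the fixed subspace with the tangent space to the fixed locus. For a finite group acting on a smooth variety, the fixed locus is smooth and $T_y(Y^g) = (T_yY)^g$; this follows by linearizing the action, either via Cartan's lemma analytically or via the averaging argument. Thus $(T_xX^r)^g = T_x (X^g)^r = \bigoplus_i T_{x_i}X^g$. This gives $\codim_\CC$ of $(X^r)^g$ in $X^r$ equal to $r\,\codim_\CC X^g$, computed at points of the relevant components. Combining, $\age_{X^r}(g) = \frac r2 \codim_\CC X^g$.

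The main subtlety is that the action need not be symplectic on $X$ itself, so we cannot pair eigenvalues factor by factor. We must apply the pairing argument on the whole $T_xX^r$, which is exactly why we work with $V$ globally. One should also note the codimension is taken componentwise; the age is constant on components, as recalled in Section~\ref{subsec:age-shift}. In our applications (tori with $W$ acting via a lattice) the fixed loci are equidimensional up to translation, so the statement is unambiguous.
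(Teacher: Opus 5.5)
Your proof is correct and follows the same route as the paper. You use the symplectic form on the whole tangent space of $X^r$ to pair the eigenvalues $\lambda,\lambda^{-1}$, which gives $\age_{X^r}(g)=\frac12\codim_\CC (X^r)^g$, and then conclude with $(X^r)^g=(X^g)^r$; the paper phrases this last step as the eigenvalues on $X$ being repeated $r$ times. Your explicit treatment of the $\lambda=-1$ eigenspace and your caveat about components of $X^g$ of different codimension make your version slightly more careful than the paper's.
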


\begin{proof} If $F$ acts symplectically on a complex symplectic manifold $Y$, the linearization of the action of $g \in F$ on its fixed point locus is a symplectic matrix. Hence, if $\lambda \in \CC$ is an eigenvalue of the linearization of $g$, then so is $\lambda^{-1}$. In particular, if 
$\lambda = e^{2\pi i q} \neq 1$, then $0 < q < 1$ thus $\lambda^{-1} = e^{-2 \pi i q} = e^{2\pi i (1-q)}$, where now $0 < 1-q < 1$; so this pair of eigenvalues contributes to the fermionic shift by $q + (1-q) = 1$. However, this symmetry does not happen when $\lambda = 1$ since in this case the phase of both $\lambda$ and $\lambda^{-1}$ is $q = 0$, so this pair does not contribute to the shift. The number of eigenvalues $\lambda = 1$ is equal to the dimension of the fixed locus $Y^g$, so we have
\begin{equation}\label{eq:age}
    \age_Y(g) = \frac{1}{2}\codim_\CC Y^g.
\end{equation}

Finally, if $Y=X^r$ and the action is diagonal, we have that the eigenvalues of the action of $g$ on $Y$ are the same as those of the action on $X$, but repeated $r$ times. Therefore, we get $\age_Y(g) = r\, \age_X(g)$, finishing the proof.
\end{proof}

\subsection{Stringy mixed Hodge polynomials of $T^r/W$}\label{section:stringy-CV}

We are now ready to state and prove a formula for the stringy mixed Hodge polynomial of the quotient $T^r/W$, which is the normalization of the connected component of the identity of the character variety $\mathfrak{X}_{r}(G) = R_{\ZZ_r}(G)\sslash G$.

For this purpose, let us fix some notation. Let $G$ be a connected reductive group over $\CC$ and fix a maximal torus $T \subseteq G$, with associated character lattice $\Lambda$ and Weyl group $W$. Scheme-theoretically, recall that
$T = \Spec\CC[\Lambda]$, where $\CC[\Lambda]$ is the $\CC$-algebra generated by $\Lambda$. In particular, its complex closed points are
$$
    T(\CC) = \Hom_{\Sch}(\Spec \CC, \Spec \CC[\Lambda]) = \Hom_{\CC\text{-Alg}}(\CC[\Lambda], \CC) = \Hom_{\Ab}(\Lambda, \CC^*)\; .
$$

With this description, the action of $W$ on $T(\CC)$ is the induced one from the action of $W$ on the character lattice $\Lambda$. Explicitly, given an element $\chi \in T(\CC)$, corresponding to an abelian group homomorphism $\chi: \Lambda \to \CC^*$, and $w \in W$, we have $(w \cdot \chi)(\lambda) = \chi(w^{-1}\cdot \lambda)$ for all $\lambda \in \Lambda$. This implies that $w \cdot \chi = \chi$ if and only if $\chi(\lambda - w\cdot \lambda) = 1$ for all $\lambda \in \Lambda$, or equivalently, if $\chi(\image(I_{\Lambda} - w)) = 1$, where $I_{\Lambda}: \Lambda \to \Lambda$ is the identity and $w: \Lambda \to \Lambda$ is the homomorphism induced by the element $w$. In other words, $w \cdot \chi = \chi$ if and only if $\chi$ factors through the cokernel of the homomorphism $I_\Lambda -w: \Lambda \to \Lambda$, as
$$
\xymatrix{
    \Lambda \ar[r]^{\chi}\ar[d] & \CC^* \\
    \coker(I_\Lambda - w) \ar@{--{>}}[ru] &
}
$$
Denoting $A_w := \image(I_\Lambda - w)$ and $K_w := \coker(I_\Lambda - w) = \Lambda/A_w$, which is a finitely generated group, we get that the fixed locus of $T(\CC)$ under the action of $w$ is
$$
    T(\CC)^w = \Hom_{\Ab}(K_w, \CC^*)\; .
$$
In particular, the codimension of $T(\CC)^w$ in $T(\CC)$ is $\codim T(\CC)^w = \rk A_w$, and recall that $\frac{1}{2}\rk A_w = \age(w)$, by (\ref{eq:age}). The finitely generated abelian group $K_w$ decomposes as 
\begin{equation}\label{eq:free-torsion}
    K_w \cong D_w \oplus \Phi_w\; ,
\end{equation}
with $D_w \subseteq K_w$ the torsion subgroup, and $\Phi_w \cong \ZZ^{s_w}$ a free abelian group of rank $s_w:=\rk K_w$. In particular, $T(\CC)^w = \Hom_{\Ab}(D_w \oplus \Phi_w, \CC^*) = \Hom_{\Ab}(D_w, \CC^*) \times (\CC^*)^{s_w}$ is the disjoint union of $|D_w|$ tori of rank $s_w$.
Henceforth, we will make use of the relation
\begin{equation}
    \label{eq:ranks_lattices}
    \rk \Lambda = \rk A_w+\rk \Phi_w=2\age(w)+s_w\; .
\end{equation}

\begin{rem}
    The term $\Hom_{\Ab}(D_w, \CC^*)$ can actually be described in very simple terms. In fact, if $D_w = \ZZ/(k_1\ZZ) \times \cdots \times \ZZ/(k_l\ZZ)$, with $k_1, \ldots, k_l$ the invariant factors of $D_w$, then $\Hom_{\Ab}(D_w, \CC^*) = \mu_{k_1} \times \cdots \times \mu_{k_l}$, where $\mu_n \subseteq \CC^*$ is the set of $n$-th roots of unity. However, we will not need this description in the following.
\end{rem}

Now, observe that for any other $g \in W$, we have $g \cdot T(\CC)^w = T(\CC)^{gwg^{-1}}$. In particular, if $g$ belongs to the centralizer $C(w)$ of $w$, then $g \cdot T(\CC)^w = T(\CC)^w$. Therefore, for any $g \in C(w)$, we have that the action on the character lattice $g: \Lambda \to \Lambda$ descends to an automorphism on the cokernel
$$
    g_w: K_w \to K_w\; .
$$
From this morphism, we can obtain the `torsion' and `free' parts of $g_w$, denoted by
\begin{equation}\label{g_t-g_f}
    \tau_w(g): D_w \to D_w\quad , \quad \phi_w(g): \Phi_w \cong  K_w/D_w \to K_w/D_w \cong \Phi_w\; .
\end{equation}
Notice that the torsion part $\tau_w(g)$ is a homomorphism of finite groups, whereas the free part \linebreak $\phi_w(g): \Phi_w \to \Phi_w$ is a homomorphism of lattices. Finally, denote by $n_{w}(g) := |D_w/\image(I_{D_w} - \tau_w(g))|$ the order of the cokernel of $I_{D_w} - \tau_w(g)$ in $D_w$.

\begin{thm}\label{thm:algorithm_CV}
    The stringy mixed Hodge polynomial of $T^r/W$ under the diagonal action of $W$ is:
    \[
        \mu^{\str}(T^r/W)(t,u,v)=\sum_{[w] \in \textup{Conj}(W)}
        \frac{(t^{2}uv)^{r\age(w)}}{|C(w)|}\left(\sum_{g \in C(w)} \, n_{w}(g)^r \det\left(I_{\Phi_w} + tuv\,\phi_w(g)\right)^r \right)
        ,
    \]
    where $\age(w) = \frac{1}{2}\rk \image(I_\Lambda -w)$.
\end{thm}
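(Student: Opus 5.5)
We begin from Definition \ref{def:str_hodge_pol}, which expresses $\mu^{\str}(T^r/W)$ as a sum over $[w]\in\Conj(W)$ of $\mu\big((T^r)^w/C(w)\big)\,(t^2uv)^{\age_{T^r}(w)}$. The fermionic shift is handled first. The $W$-action on $T^r$ is diagonal, and for $r=2d$ it is symplectic. In general we view $T^r$ inside $T^{2r}$, or argue directly from the eigenvalues: $w$ has finite order and acts on $\mathfrak t$ by a real matrix (it comes from $\Lambda$), so its nontrivial eigenvalues occur in conjugate pairs $e^{\pm 2\pi i q}$. The same pairing used in Lemma \ref{lem:fermionic-shift} therefore gives $\age_T(w)=\frac12\codim T^w=\frac12\rk A_w$, and hence $\age_{T^r}(w)=\frac r2\rk A_w=r\age(w)$. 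This produces the prefactor $(t^2uv)^{r\age(w)}$.

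Next we compute $\mu((T^r)^w/C(w))$. Since the action is diagonal, $(T^r)^w=(T^w)^r$ with $C(w)$ acting diagonally. The Künneth decomposition is compatible with the mixed Hodge structure and with the group action, so item (\ref{item:product}) gives $\chi_{(T^w)^r}(g)=\chi_{T^w}(g)^r$ for the generating trace series. By Proposition \ref{prop:Burnside} we then have
\[
\mu\big((T^w)^r/C(w)\big)=\frac1{|C(w)|}\sum_{g\in C(w)}\Big(\sum_{k,p,q}\chi_{k,p,q}^{T^w}(g)\,t^ku^pv^q\Big)^r .
\]
What remains is to show $\sum_{k,p,q}\chi^{T^w}_{k,p,q}(g)t^ku^pv^q=n_w(g)\det(I_{\Phi_w}+tuv\,\phi_w(g))$.

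The main step is computing this trace. We use $T^w=\Hom(K_w,\CC^*)$ with $K_w=D_w\oplus\Phi_w$. The splitting is not $g$-equivariant, but $g_w$ is block upper triangular with respect to $D_w\subseteq K_w$. The components of $T^w$ are the cosets of the identity component $T^w_0=\Hom(\Phi_w,\CC^*)$, and they are indexed by $\pi_0(T^w)=\Hom(D_w,\CC^*)$, the Pontryagin dual $\widehat{D_w}$. The element $g$ acts on $\pi_0$ through the dual of $\tau_w(g)$. By item (\ref{item:disjoint-union}), only the components fixed by $g$ contribute. The number of such components is $|\widehat{D_w}^{\,g}|=|\Hom(\coker(I_{D_w}-\tau_w(g)),\CC^*)|=n_w(g)$.

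For each fixed component $X_i$, which is a translate of $(\CC^*)^{s_w}$, the map $g:X_i\to X_i$ is an affine map: a homomorphism whose linear part is induced by $\phi_w(g)$, composed with a translation. Translations are homotopic to the identity, so on cohomology $g$ acts as that linear part does on $H^\bullet(T_0^w)=\bigwedge^\bullet H^1$. Here $H^1((\CC^*)^{s_w})\cong\Phi_w\otimes\CC$ is pure of type $(1,1)$, and $g$ acts on it through $\phi_w(g)$ (or its inverse transpose; this does not affect the determinant, since $g$ has finite order and its eigenvalues are closed under inversion). Item (\ref{item:determinant}) then gives trace series $\det(I_{\Phi_w}+tuv\,\phi_w(g))$ for each fixed component. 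Summing over the $n_w(g)$ fixed components yields the claimed factor.

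We expect the main obstacle to be making this component analysis rigorous: checking that the permutation action of $g$ on $\pi_0(T^w)$ is exactly dual to $\tau_w(g)$ despite the non-equivariant splitting, and that the induced map on each fixed component has linear part $\phi_w(g)$. To handle this we will work with the $g$-stable filtration $D_w\subseteq K_w$. Dualizing the exact sequence $0\to D_w\to K_w\to\Phi_w\to0$ gives $0\to T^w_0\to T^w\to\widehat{D_w}\to0$, which is equivariant and settles both points. Substituting everything into Definition \ref{def:str_hodge_pol} then gives the theorem.
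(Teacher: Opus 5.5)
Your proposal is correct and follows the paper's proof essentially step by step. Both arguments go through the sector decomposition, compute the age as $\frac12\rk A_w$ scaled by $r$, combine Burnside with the Künneth product rule, count the $g$-fixed components of $T^w$ as $n_w(g)$, use that translations are homotopic to the identity, and finish with the exterior-algebra determinant on $H^1\cong\Phi_w\otimes\CC$. Your variations are only more careful versions of the same steps: you derive the age from the conjugation symmetry of the eigenvalues of a real matrix (where the eigenvalue $-1$, e.g.\ for a reflection, is self-conjugate and contributes $\frac12$ on its own rather than as part of a pair), you use the equivariant dual sequence $0\to T_0^w\to T^w\to\widehat{D_w}\to 0$ in place of the paper's non-canonical splitting, and you note that passing to the inverse or transpose does not change the determinant.
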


\begin{proof}
    By Definition \ref{def:str_hodge_pol} of the stringy mixed Hodge polynomial, we have
$$
    \mu^{\str}(T^r/W) = \sum_{[w] \in \text{Conj}(W)} \mu(T^r(\CC)^w / C(w))\, (t^{2}uv)^{\age_{T^r}(w)},
$$
where $\age_{T^r}(w) = r \age(w)$ is the fermionic shift of $w$ on $T^r$ by Lemma \ref{lem:fermionic-shift}, being $\age(w)$ the shift on $T$. For this shift, recall that the action of $W$ on $T^2$ is symplectic (cf.\ subsection \ref{ssec:symplectic_resolutions}). Therefore, by (\ref{eq:age}) and the previous discussion, for the action of $w \in W$ on $T$ we have $\age(w) = \frac{1}{2}\codim T(\CC)^w = \frac{1}{2}\rk A_w$. Thus, for the action of $W$ on $T^r$ we have $\age_{T^r}(w) = r \age(w) = \frac{r}{2}\rk A_w$.

Therefore, using the Burnside formula from Proposition \ref{prop:Burnside}, the fact that $T^r(\CC)^w = (T(\CC)^w)^r$ and observation (\ref{item:product}) from Section \ref{sec:mhs-quotients}, it is enough to prove that, for any $w \in W$ and $g \in C(w)$, we have
$$
    \sum_{k,p,q} \chi_{H^{k,p,q}(T(\CC)^w)}(g) \,t^ku^pv^q = n_{w}(g) \det\left(I_{\Phi_w} + tuv\,\phi_w(g)\right)\; .
$$

For this purpose, recall that $T(\CC)^w$ is isomorphic to the disjoint union of $|\Hom_{\Ab}(D_w, \CC^*)|$ copies of the torus $(\CC^*)^{s_w}$, with $s_w= \rk K_w$. Therefore, by observation (\ref{item:disjoint-union}) of Section \ref{sec:mhs-quotients}, we have that
$$
\chi_{H^{k,p,q}(T(\CC)^w)}(g) = \sum_{\rho \in \Hom_{\Ab}(D_w, \CC^*)^g} \tr\left(g_{\rho}: H^{k,p,q}(\{\rho\} \times (\CC^*)^{s_w}) \to H^{k,p,q}(\{\rho\} \times (\CC^*)^{s_w})\right),
$$
where the sum runs over those $\rho \in \Hom_{\Ab}(D_w, \CC^*)$ such that $g \cdot \rho = \rho$, the torus $\{\rho\} \times (\CC^*)^{s_w}_\rho$ is the copy of $(\CC^*)^{s_w}$ corresponding to $\rho$, and
$g_{\rho}: H^{k,p,q}(\{\rho\} \times (\CC^*)^{s_w})\to H^{k,p,q}(\{\rho\} \times (\CC^*)^{s_w})$ is the associated automorphism on the cohomology of the torus induced by $\phi_w(g): \Phi_w \to \Phi_w$ using that $(\CC^*)^{s_w} = \Hom_{\Ab}(\Phi_w, \CC^*)$.

The number of homomorphisms $\rho \in \Hom_{\Ab}(D_w, \CC^*)$ with $g \cdot \rho = \rho$ can be easily calculated as above, since it corresponds to those $\rho: D_w \to \CC^*$ with $\rho(\image(I_{D_w} - \tau_w(g))) = 1$. Hence, $\Hom_{\Ab}(D_w, \CC^*)^g = \Hom_{\Ab}(D_w/ \image(I_{D_w} - \tau_w(g)), \CC^*)$ and thus its cardinality is $|D_w/ \image(I_{D_w} - \tau_w(g))| = n_w(g)$. Now, observe that the maps $g_\rho: \{\rho\} \times (\CC^*)^{s_w} \to \{\rho\} \times (\CC^*)^{s_w}$ for different $\rho \in \Hom_{\Ab}(D_w, \CC^*)^g$ are all homotopic, since they differ by a translation in the torus. Therefore, we get that
\begin{align*}
\sum_{k,p,q} \chi_{H^{k,p,q}(T(\CC)^w)}(g) \,t^ku^pv^q & = \sum_{k,p,q}  n_w(g) \tr\left(g: H^{k,p,q}((\CC^*)^{s_w}) \to H^{k,p,q}((\CC^*)^{s_w})\right) \,t^ku^pv^q \\
& = n_w(g) \sum_{k,p,q} \chi_{H^{k,p,q}((\CC^*)^{s_w})}(g)\,t^ku^pv^q\; .
\end{align*}

Now, observe that the cohomology of $(\CC^*)^{s_w}$ is an exterior algebra generated by the degree one elements, i.e.\ $H^\bullet((\CC^*)^{s_w}) = \Lambda^\bullet H^1((\CC^*)^{s_w}) = \Lambda^\bullet(\CC(-1)^{\oplus s_w})$, with $\CC(-1)$ the Tate twist of weight $2$. The mixed Hodge structure on the basic piece is $(\CC(-1)^{\oplus s_w})^{1,1} = \CC^{s_w}$ and $(\CC(-1)^{\oplus s_w})^{p,q} = 0$ otherwise, so by observation (\ref{item:determinant}) we have
$$
\sum_{k,p,q} \chi_{H^{k,p,q}((\CC^*)^{s_w})}(g)\,t^ku^pv^q = \det(I_{\CC^{s_w}} + tuv\,g)\; ,
$$
where $g$ is acting on the cohomology $H^{1}((\CC^*)^{s_w})=H^{1,1,1}((\CC^*)^{s_w}) = \CC^{s_w}$. The proof is thus completed by noticing that $H^{1}((\CC^*)^{s_w}) = \Phi_w \otimes_{\ZZ} \CC$ and that the action of $\phi_w(g)$ preserves the lattice, so \linebreak $\det(I_{\CC^{s_w}} + tuv\,g) = \det(I_{\Phi_w} + tuv\,\phi_w(g))$, with $\phi_w$ acting on $\Phi_w$.
\end{proof}

\begin{rem}\label{rmk:rational-det}
    The determinant $\det\left(I_{\Phi_w} + tuv\,\phi_w(g)\right)$ on the free part can be alternatively computed using its rational presentation. More explicitly, recall that $\Phi_w \otimes_\ZZ \QQ = K_w \otimes_\ZZ \QQ$, and the determinant is independent of extension of scalars. Therefore, we have
    $$\det\left(I_{\Phi_w} + tuv\,\phi_w(g)\right) = \det\left(I_{K_{w} \otimes_\ZZ \QQ} + tuv\,(g_{w})_\QQ\right),
    $$
    where $(g_w)_\QQ: K_w \otimes_\ZZ \QQ \to K_w \otimes_\ZZ \QQ $ is the induced $\QQ$-linear map.
\end{rem}

\begin{rem}\label{rmk:w-1}
As explained in Remark \ref{rem:first-term}, in the formula of Theorem \ref{thm:algorithm_CV}, the term corresponding to $w = 1$ corresponds to the usual non-stringy mixed Hodge polynomial of $T^r/W$. In that case, $K_{w} = \Lambda$ and thus its torsion part $D_w = 0$, the centralizer is $C(w)=W$, and the fermionic shift is $\rk (I-w) = 0$. In this manner, from Theorem \ref{thm:algorithm_CV}, we directly get
    \[
        \mu(T^r/W)(t,u,v)=
        \frac{1}{|W|}\sum_{g \in W} \, \det\left(I_{{\Lambda}} + tuv\,\phi_1(g)\right)^r.
    \]
This recovers the formula of \cite[Theorem 5.2]{FS} by noticing that $\phi_1(g): \Lambda \to \Lambda$ is just denoted by $g$ in that work.
\end{rem}

\begin{cor}\label{cor:compact-support}
    The stringy mixed Hodge polynomial with compact support of the diagonal action of the Weyl group on $T^r$ is given by 
    \[
        \mu^{\str}_c(T^r/W)=\sum_{[w] \in \textup{Conj}(W)}
        \frac{t^{r\,\rk \Lambda}(uv)^{r\, \age(w)}}{|C(w)|}\left(\sum_{g \in C(w)} \, n_{w}(g)^r \det\left(tuv\,I_{\Phi_w} + \phi_w(g)\right)^r \right)
        .
    \]
\end{cor}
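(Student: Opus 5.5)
The plan is to deduce the compactly supported formula from Theorem \ref{thm:algorithm_CV} sector by sector, using Poincaré duality on each twisted sector as in Remark \ref{rem:mu-and-muc}. By definition (\ref{eq:str_compact_hodge_pol}),
$$
\mu^{\str}_c(T^r/W) = \sum_{[w]\in\textup{Conj}(W)} \mu_c\big((T^r)^w/C(w)\big)\,(t^2uv)^{r\age(w)},
$$
where $(T^r)^w=(T^w)^r$ is a disjoint union of $n$-dimensional tori with $n = r s_w$, $s_w=\rk\Phi_w$, and $C(w)$ acts by finite-group symmetries. Remark \ref{rem:mu-and-muc} applies to quotients of smooth varieties by finite groups. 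The one subtlety is that $(T^w)^r$ need not be irreducible. So the first step is to apply duality componentwise: the $C(w)$-orbits of components each give a quotient $Y/\mathrm{Stab}$ of an irreducible smooth variety of pure dimension $rs_w$. Since the formula $\mu_c = (t^2uv)^n\mu(t^{-1},u^{-1},v^{-1})$ is additive over disjoint unions of equidimensional pieces, we get
$$
\mu_c\big((T^r)^w/C(w)\big)(t,u,v)=(t^2uv)^{rs_w}\,\mu\big((T^r)^w/C(w)\big)(t^{-1},u^{-1},v^{-1}).
$$

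The second step substitutes the expression from the proof of Theorem \ref{thm:algorithm_CV}:
$$
\mu\big((T^r)^w/C(w)\big)=\frac{1}{|C(w)|}\sum_{g\in C(w)} n_w(g)^r\det\big(I_{\Phi_w}+tuv\,\phi_w(g)\big)^r.
$$
After inversion of variables, each factor becomes $(tuv)^{s_w}\det(I+(tuv)^{-1}\phi_w(g)) = \det(tuv\,I_{\Phi_w}+\phi_w(g))$. Raising to the $r$-th power absorbs $(tuv)^{rs_w}$ out of $(t^2uv)^{rs_w}=t^{rs_w}(tuv)^{rs_w}$. This leaves an extra factor $t^{rs_w}$.

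The final step is bookkeeping of exponents. We have $t^{rs_w}(t^2uv)^{r\age(w)} = t^{r(s_w+2\age(w))}(uv)^{r\age(w)} = t^{r\rk\Lambda}(uv)^{r\age(w)}$ by the relation (\ref{eq:ranks_lattices}), which gives exactly the stated formula. I expect the only real obstacle to be justifying the duality step for a non-connected fixed locus with a non-free action. I would handle it by decomposing into $C(w)$-orbits of components and using rational Poincaré duality for finite quotients of smooth varieties, which is compatible with mixed Hodge structures. Note that all components have the same dimension $rs_w$, so a single shift works.
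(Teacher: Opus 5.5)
Your proposal is correct and follows the paper's proof: apply the Poincar\'e duality of Remark \ref{rem:mu-and-muc} to each twisted sector $T^r(\CC)^w/C(w)$ of dimension $rs_w$, substitute the sector formula from the proof of Theorem \ref{thm:algorithm_CV}, absorb $(tuv)^{rs_w}$ into the determinants, and combine the leftover $t^{rs_w}$ with the shift $(t^2uv)^{r\age(w)}$ via (\ref{eq:ranks_lattices}). Applying duality orbit-by-orbit on the disconnected but equidimensional fixed locus is a detail the paper leaves implicit, and you handle it correctly.
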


\begin{proof}
    We shall apply Poincar\'e duality as described in Remark \ref{rem:mu-and-muc} to each term $T^r(\CC)^w/C(w)$. Recall that, by the proof of Theorem \ref{thm:algorithm_CV} the Hodge polynomial for the action of each fixed torus is
    $$
        \mu(T^r(\CC)^w/C(w))(t,u,v) = \frac{1}{|C(w)|}\sum_{g \in C(w)} \, n_{w}(g)^r \det\left(I_{\Phi_w} + tuv\,\phi_w(g)\right)^r.
    $$
    Therefore, using that $\dim T^r(\CC)^w/C(w) = \dim T^r(\CC)^w = r\rk K_w=rs_w$, we have
    \begin{align*}
        \mu_c(T^r(\CC)^w/C(w))(t,u,v) &= (t^2uv)^{rs_w}\mu(T^r(\CC)^w/C(w))(t^{-1},u^{-1},v^{-1}) \\
        &= \frac{1}{|C(w)|}\sum_{g \in C(w)} \, n_{w}(g)^r (t^2uv)^{rs_w}\det\left(I_{\Phi_w} + t^{-1}u^{-1}v^{-1}\,\phi_w(g)\right)^r\\
        &= \frac{1}{|C(w)|}\sum_{g \in C(w)} \, n_{w}(g)^r t^{rs_w}\det\left(tuv\,I_{\Phi_w} + \phi_w(g)\right)^r.
        \end{align*}
    Now, the result follows from shifting each contribution by $(t^2uv)^{r\,\age(w)}$, as prescribed by Lemma \ref{lem:fermionic-shift}, and the relation (\ref{eq:ranks_lattices}). 
\end{proof}

\begin{rem}\label{rem:form-CV-conjugacy-classed}
    If $g,g' \in C(w)$ are conjugate elements in $C(w)$, then $n_{w}(g) = n_{w}(g')$ and \linebreak $\det(I_{\Phi_w}+ tuv\,\phi_w(g)) = \det(I_{\Phi_w}+ tuv\,\phi_w(g'))$. Hence, in the inner sum in Theorem \ref{thm:algorithm_CV}, we can group together the elements in the same conjugacy class to get the more compact expressions
    \begin{align}\label{eq:magic-formula}
    \begin{split}
        \mu^{\str}(T^r/W)&= \sum_{[w] \in \textup{Conj}(W)}
        \frac{(t^2uv)^{r\age(w)}}{|C(w)|} \left(\sum_{[g] \in \textup{Conj}(C(w))} c_{w}(g)\, n_{w}(g)^r \det\left(I_{\Phi_w}+ tuv\,\phi_w(g)\right)^r \right),\\
        \mu^{\str}_c(T^r/W)&= \sum_{[w] \in \textup{Conj}(W)}
        \frac{t^{r \rk \Lambda}(uv)^{r\age(w)}}{|C(w)|} \left(\sum_{[g] \in \textup{Conj}(C(w))} c_{w}(g)\, n_{w}(g)^r \det\left(tuv\,I_{\Phi_w}+ \phi_w(g)\right)^r \right),
    \end{split}
    \end{align}
    where $c_w(g)$ is the number of elements of the conjugacy class of $g$ in $C(w)$.
\end{rem}

\subsection{Stringy mixed Hodge polynomials of moduli spaces of $G$-Higgs bundles}\label{sec:mixedHodge-Higgs}

The calculation performed above can be repeated for the moduli space of $G$-Higgs bundles on an abelian variety $A$ of dimension $d$, in order to prove Theorem \ref{thm:algorithm_Higgs-intro}. We first note that, as in (\ref{eq:Higgs}), the normalization of $\mathcal{M}_A(G)$ is given by
$$
    \hat{\mathcal{M}}_A(G) = \mathcal{M}_A(T) / W\; ,
$$
where $\mathcal{M}_A(T) = \Hom_{\Ab}(\Lambda, A^\vee \times H^0(A, \Omega^1_A))$.  
As above, we can decompose the stringy mixed Hodge polynomial of $\mathcal{M}_A(T)/W$ into its sectors
$$
    \mu^{\str}(\mathcal{M}_A(T)/W) = \sum_{[w] \in \text{Conj}(W)} \mu(\mathcal{M}_A(T)^w / C(w))\, (t^{2}uv)^{\age(w)}.
$$

Thus, following the same notation of Section \ref{section:stringy-CV}, Theorem \ref{thm:algorithm_Higgs-intro} is equivalent to the following.

\begin{thm}\label{thm:sector-Higgs}
    Let $A$ be an abelian variety of dimension $d$. For every $w\in W$, \linebreak $\age(w)=d \rk \image(I_\Lambda - w)$, and the mixed Hodge polynomial of its corresponding twisted sector is:
$$
    \mu(\mathcal{M}_A(T)^w / C(w)) = \frac{1}{|C(w)|} \sum_{g \in C(w)}
   n_w(g)^{2d} \det(I_{\Phi_w} + tu\,\phi_w(g))^d\det(I_{\Phi_w} + tv\,\phi_w(g))^d.
$$
\end{thm}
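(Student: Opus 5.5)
We follow the proof of Theorem \ref{thm:algorithm_CV} closely, replacing $\CC^*$ by the group $Y := A^\vee \times H^0(A,\Omega^1_A)$. Write $V = H^0(A,\Omega^1_A)\cong \CC^d$. Then $\mathcal{M}_A(T) = \Hom_{\Ab}(\Lambda, Y)$, and $W$ acts through its action on $\Lambda$.

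\textbf{Fixed locus and fermionic shift.} As in Section \ref{section:stringy-CV}, $\chi$ is $w$-fixed if and only if it kills $A_w = \image(I_\Lambda - w)$. Hence
$$
\mathcal{M}_A(T)^w = \Hom_{\Ab}(K_w, Y) = \Hom_{\Ab}(D_w, Y)\times \Hom_{\Ab}(\Phi_w, Y).
$$
Since $V$ is torsion-free and divisible, $\Hom(D_w,V)=0$. The remaining torsion factor $\Hom(D_w,A^\vee)$ is finite, because $A^\vee[k]\cong(\ZZ/k)^{2d}$ gives $\Hom(D_w,A^\vee)\cong \Hom(D_w,(\QQ/\ZZ)^{2d})$ non-canonically. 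So the fixed locus is a disjoint union of finitely many copies of $(A^\vee)^{s_w}\times V^{s_w}$, and it has codimension $2d\,\rk A_w$. The $W$-action preserves the natural symplectic form (the holomorphic symplectic structure pairing $A^\vee$ with its cotangent directions $V$, on which $W$ acts through $\Lambda$). Therefore \eqref{eq:age} gives $\age(w) = \frac12\cdot 2d\,\rk A_w = d\,\rk \image(I_\Lambda-w)$.

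\textbf{Character computation.} By the Burnside formula (Proposition \ref{prop:Burnside}) and observation (\ref{item:disjoint-union}), for $g\in C(w)$ the trace only involves components indexed by $g$-fixed $\rho\in\Hom(D_w,A^\vee)$. As before, these are exactly $\Hom(D_w/\image(I_{D_w}-\tau_w(g)), A^\vee)$. For a finite abelian group $M$ one has $|\Hom(M,A^\vee)| = |M|^{2d}$, since $\Hom(\ZZ/k,A^\vee)=A^\vee[k]$ has order $k^{2d}$. Hence the count is $n_w(g)^{2d}$. The maps $g_\rho$ on the various components differ by translations and so act identically on cohomology. It therefore remains to compute the graded trace of $g$ on $H^\bullet((A^\vee)^{s_w}\times V^{s_w})=H^\bullet((A^\vee)^{s_w})$, where $V^{s_w}$ is contractible.

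Now $(A^\vee)^{s_w} = \Hom(\Phi_w, A^\vee) = \Phi_w^\vee\otimes_\ZZ A^\vee$, so
$$
H^1 = \Phi_w\otimes_\ZZ H^1(A^\vee) = \bigl(\Phi_w\otimes H^{1,0}(A^\vee)\bigr)\oplus\bigl(\Phi_w\otimes H^{0,1}(A^\vee)\bigr).
$$
Here $g$ acts by $\phi_w(g)\otimes \mathrm{id}$, up to a transpose/inverse convention, which does not affect the determinant since $\phi_w(g)$ has finite order and $\det(I+x\phi)=\det(I+x\phi^{-1})$ for characteristic polynomials of finite-order integer matrices (real eigenvalues come in conjugate pairs). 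The cohomology is the exterior algebra on $H^1$, so observation (\ref{item:determinant}) yields $\det(I_{\Phi_w}+tu\,\phi_w(g))^d\det(I_{\Phi_w}+tv\,\phi_w(g))^d$. Averaging over $C(w)$ gives the claimed formula.

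\textbf{Main obstacle.} The step needing most care is the identification of the Hodge pieces and of the $g$-action on $H^1(\Hom(\Phi_w,A^\vee))$, including the duality convention. A secondary point is justifying that the symplectic form used for the age computation is $W$-invariant, so that Lemma \ref{lem:fermionic-shift}-type reasoning applies. I would settle the first by working in a basis of $\Phi_w$, where the fixed torus is literally $(A^\vee)^{s_w}$ and $g$ acts by an integer matrix.
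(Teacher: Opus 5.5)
Your proposal is correct and follows the paper's proof essentially step by step. Both arguments decompose the fixed locus as $\Hom(D_w,A^\vee)\times\Hom(\Phi_w,A^\vee\times H^0(A,\Omega^1_A))$, count $n_w(g)^{2d}$ fixed components, discard the contractible vector-space factor, read off the determinants from the exterior algebra on $H^1=\Phi_w^\CC\otimes H^1(A^\vee)$, and take the age to be half the codimension $2d\,\rk\image(I_\Lambda-w)$. Your remarks on the inverse/transpose convention and on the $W$-invariant symplectic form just make explicit what the paper leaves implicit; for the age, no particular symplectic form is even needed, since the tangent action is the complexification of a real representation of a finite-order element.
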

\begin{proof}
The action of $W$ on $\mathcal{M}_A(T) = \Hom_{\Ab}(\Lambda, A^\vee \times H^0(A, \Omega^1_A))$ is induced by its natural action on $\Lambda$. This implies that the proof of Theorem \ref{thm:algorithm_CV} can be repeated verbatim in this setting to show that
$$
    \mu(\mathcal{M}_A(T)^w / C(w)) = \frac{1}{|C(w)|} \sum_{g \in C(w)}
    \sum_{k,p,q} \chi_{H^{k,p,q}(\mathcal{M}_A(T)^w)}(g) \,t^ku^pv^q\; .
$$
Arguing as for character varieties in subsection \ref{section:stringy-CV} we have that
\begin{align*}
    \mathcal{M}_A(T)^w = \Hom_{\Ab}(K_w, A^\vee \times H^0(A, \Omega^1_A)) = \Hom_{\Ab}(D_w, A^\vee) \times \Hom_{\Ab}(\Phi_w, A^\vee \times H^0(A, \Omega^1_A))\; ,
\end{align*}
where we have used that $\Hom_{\Ab}(D_w, H^0(A, \Omega^1_A)) = 0$ since $D_w$ is a finite group and $H^0(A, \Omega^1_A)$ is a complex vector space (and thus is torsion free). 

The fixed components are now parametrized by $\Hom_{\Ab}(D_w/\image(I_{D_w} - \tau_w(g)), A^\vee)$. If we write $D_w/\image(I_{D_w} - \tau_w(g)) \cong \prod_i \ZZ_{n_i}$, then we have $\Hom_{\Ab}(D_w/\image(I_{D_w} - \tau_w(g)), A^\vee) = \prod_i A^\vee[n_i]$, where $A^\vee[n_i]$ denotes the elements of $n_i$-torsion in $A^\vee$. Using that $|A^\vee[n_i]| = n_i^{2d}$, we get that the number of fixed components is
$$
    |\Hom_{\Ab}(D_w/\image(I_{D_w} - \tau_w(g)), A^\vee)| = |D_w/\image(I_{D_w} - \tau_w(g))|^{2d} = \prod_i n_i^{2d} = n_w(g)^{2d},
$$
where recall that, by definition, $n_w(g) = |D_w/\image(I_{D_w} - \tau_w(g))|$. Furthermore, all these components are isomorphic to $\Hom_{\Ab}(\Phi_w, A^\vee \times H^0(A, \Omega^1_A))$, and the induced maps are isomorphic. Therefore, for any $g \in C(w)$, we have 
$$
 \sum_{k,p,q} \chi_{H^{k,p,q}(\mathcal{M}_A(T)^w)}(g) \,t^ku^pv^q = n_w(g)^{2d} \sum_{k,p,q} \chi_{H^{k,p,q}(\Hom_{\Ab}(\Phi_w, A^\vee \times H^0(A, \Omega^1_A)))}(g)\,t^ku^pv^q\; ,
$$
where $g: H^{k,p,q}(\Hom_{\Ab}(\Phi_w, A^\vee \times H^0(A, \Omega^1_A))) \to H^{k,p,q}(\Hom_{\Ab}(\Phi_w, A^\vee \times H^0(A, \Omega^1_A)))$ is the induced map in cohomology by the free part $\phi_w(g): \Phi_w \to \Phi_w$ of $g$. Since $H^0(A, \Omega^1_A)$ is contractible, we have that $\Hom_{\Ab}(\Phi_w, A^\vee \times H^0(A, \Omega^1_A))$ has the same homotopy type as the space $Z_w := \Hom_{\Ab}(\Phi_w, A^\vee)$. Notice that $Z_w \cong (A^\vee)^{s_w}$, with $s_w = \rk \Phi_w$. %

The cohomology of $A^\vee$ is an exterior algebra generated by the degree one elements, so $H^\bullet(A^\vee) = \bigwedge^\bullet H^1(A^\vee)$. Now, observe that $A^\vee$ is a smooth projective variety, so $H^1(A^\vee)$ is a pure Hodge structure of weight $1$ given by $H^1(A^\vee) = H^{1,0}(A^\vee) \oplus H^{0,1}(A^\vee)$ with $H^{1,0}(A^\vee) = H^{0,1}(A^\vee) = \CC^d$. In this way, the cohomology of $Z_w$ is also an exterior algebra of the form
    $$
H^\bullet(Z_w) = \bigwedge^\bullet H^1(Z_w), \textrm{ with } H^1(Z_w) = \Phi_w^\CC \otimes_\CC H^1(A^\vee)\; , 
$$
where $\Phi_w^\CC = \Phi_w \otimes_\ZZ \CC$ is the complexification of the lattice $\Phi_w$. Again, the Hodge structure on $H^1(Z_w)$ is pure of weight $1$, with
$$
    H^{1,0}(Z_w) = \Phi_w^\CC \otimes_\CC H^{1,0}(A^\vee) = \Phi_w^\CC \otimes_\CC \CC^d\quad , \quad H^{0,1}(Z_w) = \Phi_w^\CC \otimes_\CC H^{0,1}(A^\vee) = \Phi_w^\CC \otimes_\CC \CC^d\; .
$$

From this cohomological calculation, we thus get
$$
    \sum_{k, p, q} \chi_{H^{k,p,q}(Z_w)}(g)\,t^ku^p v^q = \det(I_{H^{1,0}(Z_w)} + tu\,g^{1,0})\det(I_{H^{0,1}(Z_w)} + tv\,g^{0,1})\; ,
$$
where we are considering the maps in each piece of the Hodge structure $g^{1,0}: H^{1,0}(Z_w) \to H^{1,0}(Z_w)$ and $g^{0,1}: H^{0,1}(Z_w) \to H^{0,1}(Z_w)$.

Actually, using that $H^{1,0}(Z_w) = H^{0,1}(Z_w) = \Phi_w^\CC \otimes_\CC \CC^d$ and that $g$ only acts on the lattice part, we observe that $g^{1,0} = g^{0,1} = \phi_w(g)_\CC \otimes I_{\CC^d}$, where $\phi_w(g)_\CC: \Phi_w^\CC \to \Phi_w^\CC $ is the complexification of the free part $\phi_w(g): \Phi_w \to \Phi_w$.
This implies that $\det(I_{H^{1,0}(Z_w)} + tu\,g^{1,0}) = \det(I_{\Phi_w^\CC} + tu\,\phi_w(g)_\CC)^d = \det(I_{\Phi_w} + tu\,\phi_w(g))^d$, and analogously $\det(I_{H^{0,1}(Z_w)} + tv\,g^{0,1}) = \det(I_{\Phi_w} + tv\,\phi_w(g))^d$. Hence, we have
$$
    \mu(\mathcal{M}_A(T)^w / C(w)) = \frac{1}{|C(w)|} \sum_{g \in C(w)}
   n_w(g)^{2d} \det(I_{\Phi_w} + tu\,\phi_w(g))^d\det(I_{\Phi_w} + tv\,\phi_w(g))^d\; .
$$

Finally, with respect to the fermionic shift of $w \in W$ when acting on $\mathcal{M}_A(T)$, recall that
\begin{align*}
    \age(w) &= \frac{1}{2}\codim_\CC \mathcal{M}_A(T)^w = \frac{1}{2}\codim_\CC \Hom_{\Ab}(K_w, A^\vee \times H^0(A, \Omega^1_A)) \\
    &= \frac{2d\rk \Lambda - 2d \rk K_w}{2} = d\rk A_w\; ,
\end{align*}
where recall that $A_w = \image(I_\Lambda - w)$. This completes the proof of Theorem \ref{thm:sector-Higgs} and thus also of Theorem \ref{thm:algorithm_Higgs-intro}.
\end{proof}

Analogous formulae can be deduced for the compactly supported stringy mixed Hodge polynomial of $\mathcal{M}_A(T) / W$, as in Corollary \ref{cor:compact-support} and Remark \ref{rem:form-CV-conjugacy-classed}, obtaining the following.

\begin{thm}\label{thm:algorithm_Higgs_compact}
    Let $A$ be an abelian variety of dimension $d$. The compactly supported stringy mixed Hodge polynomial of $\hat{\mathcal{M}}_A(G) = \mathcal{M}_A(T) / W$ is given by
\[
        \mu_c^{\str} (\mathcal{M}_A(T) / W)(t,u,v) 
        =\sum_{[w] \in \textup{Conj}(W)}
        \frac{(t^{2}uv)^{d\rk\Lambda }}{|C(w)|}\left(\sum_{g \in C(w)} \, n_{w}(g)^{2d}\, q_{g,w}(tu)^d\, q_{g,w}(tv)^d \right),
\]
where $q_{g,w}(x):=\det (x\, I_{\Phi_w} + \phi_w(g))$.
\end{thm}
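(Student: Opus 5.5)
The plan is to mirror the proof of Corollary \ref{cor:compact-support}: apply Poincar\'e duality sector by sector to the formula of Theorem \ref{thm:sector-Higgs}, then reinsert the fermionic shifts. By definition (\ref{eq:str_compact_hodge_pol}) of the compactly supported stringy mixed Hodge polynomial, we have
$$
\mu_c^{\str}(\mathcal{M}_A(T)/W)=\sum_{[w]\in\Conj(W)}\mu_c(\mathcal{M}_A(T)^w/C(w))\,(t^2uv)^{\age(w)},
$$
where Theorem \ref{thm:sector-Higgs} gives $\age(w)=d\rk A_w$.

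First I need the complex dimension of each twisted sector. By the proof of Theorem \ref{thm:sector-Higgs}, $\mathcal{M}_A(T)^w$ is a finite disjoint union of copies of $\Hom_{\Ab}(\Phi_w, A^\vee\times H^0(A,\Omega^1_A))$. Hence it is smooth and equidimensional of dimension $2d s_w$, where $s_w=\rk\Phi_w$. Its quotient by the finite group $C(w)$ therefore satisfies Poincar\'e duality, in the sense of Remark \ref{rem:mu-and-muc}.

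That remark is stated for irreducible $X$, while our sectors may be disconnected. I will handle this componentwise. The $C(w)$-orbits of components give a decomposition of $\mathcal{M}_A(T)^w/C(w)$ into a disjoint union of pieces. Each piece is a quotient of a smooth irreducible variety of the same dimension $2ds_w$ by the finite stabilizer of a component. Both $\mu$ and $\mu_c$ are additive over disjoint unions, so the relation $\mu_c(X)=(t^2uv)^{2ds_w}\mu(X)(t^{-1},u^{-1},v^{-1})$ holds for the whole sector. I expect this bookkeeping to be the only point that needs any care.

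Next I substitute Theorem \ref{thm:sector-Higgs} and simplify each determinant factor. Since $\phi_w(g)$ is an endomorphism of a lattice of rank $s_w$,
$$
\det(I_{\Phi_w}+t^{-1}u^{-1}\phi_w(g))^d=(tu)^{-ds_w}\det(tu\,I_{\Phi_w}+\phi_w(g))^d=(tu)^{-ds_w}\,q_{g,w}(tu)^d,
$$
and the analogous identity holds with $v$ in place of $u$. Multiplying by $(t^2uv)^{2ds_w}$, the prefactors combine as
$$
(t^2uv)^{2ds_w}(tu)^{-ds_w}(tv)^{-ds_w}=(t^2uv)^{ds_w}.
$$
This gives
$$
\mu_c(\mathcal{M}_A(T)^w/C(w))=\frac{(t^2uv)^{ds_w}}{|C(w)|}\sum_{g\in C(w)}n_w(g)^{2d}\,q_{g,w}(tu)^d\,q_{g,w}(tv)^d,
$$
where $n_w(g)^{2d}$ is a constant and is untouched by the substitution.

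Finally, I multiply each sector by its shift $(t^2uv)^{d\rk A_w}$. Relation (\ref{eq:ranks_lattices}) gives $\rk A_w+s_w=\rk\Lambda$, so the total prefactor of every sector becomes $(t^2uv)^{d\rk\Lambda}$, independent of $w$. Summing over $[w]\in\Conj(W)$ yields exactly the stated formula.
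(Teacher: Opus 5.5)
Your proof is correct and follows the paper's route. The paper only says the formula is deduced "as in Corollary \ref{cor:compact-support}", and you do exactly that: apply Poincar\'e duality (Remark \ref{rem:mu-and-muc}) sector by sector to Theorem \ref{thm:sector-Higgs}, then use $\rk A_w + s_w = \rk\Lambda$ to merge the prefactors into $(t^2uv)^{d\rk\Lambda}$. Your componentwise treatment of the disconnected, equidimensional sectors supplies a detail the paper leaves implicit.
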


\section[mirror]{Topological mirror symmetry for moduli spaces on abelian varieties}\label{sec:topological-mirror-symmetry}

In this section, using Theorem \ref{thm:algorithm_CV} we prove that the stringy invariants of the character varieties of an abelian variety coincide for every pair of Langlands dual groups. This should be seen as another instance of topological mirror symmetry for these moduli spaces (e.g. see \cite{FlorentinoNozadZamora2021}).

It is important to point out that this incarnation of mirror symmetry was previously discovered by Thaddeus in \cite{Th} for stringy $E$-polynomials. The proof provided in this work runs parallel to the one in \cite{Th} since both are based on studying the $C(w)$-module structure of $H^\bullet(T(\CC)^w/C(w))$. However, the explicit formula of Theorem \ref{thm:algorithm_CV} allows us to provide a more direct proof based on analyzing dual morphisms of dual abelian groups. These calculations on dual groups are standard and are provided in Appendix \ref{app:proof-mirror-symmetry} for completeness.

Let $G$ be a connected reductive group and $^LG$ its Langlands dual. This means that the character lattice of $^LG$ is the dual cocharacter lattice $\Lambda^\vee = \Hom(\Lambda, \ZZ)$ of the character lattice $\Lambda$ of $G$. Furthermore, for any $w \in W$, the action of $w$ on $\Lambda^\vee$, corresponding to $^LG$, is the dual of the action of $w^{-1}$ on $\Lambda$ corresponding to $G$, i.e.\ $w \cdot \alpha = (w^{-1})^\vee \alpha$ for $\alpha \in \Lambda^\vee$. Here, $(w^{-1})^\vee: \Lambda^\vee \to \Lambda^\vee$ denotes the map $((w^{-1})^\vee \alpha)(\lambda) =  \alpha(w^{-1} \cdot \lambda)$ for $\lambda \in \Lambda$.

Therefore, the $\CC$-points of the fixed points of the maximal tori, denoted by $T_{G}$ and $T_{^LG}$ respectively, are
$$
    T_G(\CC)^w = \Hom_{\Ab}(K_w, \CC^*)\quad , \quad T_{^LG}(\CC)^{w^{-1}} = \Hom_{\Ab}(\hat{K}_{w}, \CC^*)\; ,
$$
where $K_w = \coker(I-w) = \Lambda/\image(I-w)$ and $\hat{K}_w := \coker((I-w)^\vee) = \Lambda^\vee/\image((I-w)^\vee)$, with respective free parts $\Phi_w$ and $\hat{\Phi}_w$.

\begin{thm}\label{thm:equality-terms}
    For any connected reductive group $G$ and any $w \in W$, we have
    $$
        \mu(T^r_G(\CC)^w/C(w)) = \mu(T^r_{^LG}(\CC)^{w^{-1}}/C(w^{-1}))\; .
    $$
\end{thm}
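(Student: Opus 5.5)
By the proof of Theorem \ref{thm:algorithm_CV}, both sides are given by the averaged formula
$$
\mu(T^r_G(\CC)^w/C(w)) = \frac{1}{|C(w)|}\sum_{g\in C(w)} n_w(g)^r\det\left(I_{\Phi_w}+tuv\,\phi_w(g)\right)^r,
$$
and analogously for ${}^LG$ with $w^{-1}$, using $\hat K_w$, $\hat\Phi_w$, $\hat n_w$, $\hat\phi_w$. First I would match the index sets. Since $C(w)=C(w^{-1})$, the map $g\mapsto g$ identifies them. On $\Lambda^\vee$ an element $g$ acts by $(g^{-1})^\vee$, and $w^{-1}$ acts by $w^\vee$. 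Hence the relevant cokernel is $\coker(I-w^\vee)=\coker((I-w)^\vee)=\hat K_w$, as in the statement. Because $g\mapsto g^{-1}$ is a bijection of $C(w)$ preserving $|C(w)|$, it then suffices to show, term by term for each $g\in C(w)$,
$$
n_w(g)=\hat n_w(g^{-1})\quad\text{and}\quad \det(I+x\,\phi_w(g))=\det\left(I+x\,\hat\phi_w(g^{-1})\right),
$$
where $\hat\phi_w(g^{-1})$ and $\hat\tau_w(g^{-1})$ are the free and torsion parts of the map induced on $\hat K_w$ by $g^\vee$, the action of $g^{-1}$ on $\Lambda^\vee$.

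For the free part, I would use Remark \ref{rmk:rational-det} and work rationally. Over $\QQ$, $K_w\otimes\QQ=\ker(I-w)^*$ in the dual sense: $\hat K_w\otimes\QQ = \Lambda^\vee_\QQ/\image(I-w)^\vee$ is canonically the dual of $\ker(I-w)\subseteq\Lambda_\QQ$. In turn, $\ker(I-w)_\QQ\cong K_w\otimes\QQ$ via the projection, because $\Lambda_\QQ=\ker(I-w)\oplus\image(I-w)$ ($w$ has finite order, so it is semisimple). These identifications are $C(w)$-equivariant. So the map on $\hat K_w\otimes\QQ$ induced by $g^\vee$ is the transpose of the map induced by $g$ on $K_w\otimes \QQ$, and determinants of $I+x\,M$ and $I+x\,M^{T}$ agree.

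The torsion part is the main obstacle. I would use the standard duality $D(\coker f)\cong D(\coker f^\vee)^\wedge$: for an endomorphism $f$ of a lattice $L$, the torsion of $\coker(f^\vee)$ is canonically the Pontryagin dual $\Hom(D(\coker f),\QQ/\ZZ)$. One sees this via $\operatorname{Ext}^1(\coker f,\ZZ)$, or from Smith normal form together with the snake lemma applied to $0\to L\to L\otimes\QQ\to L\otimes\QQ/\ZZ\to 0$. I would check that this identification is natural in automorphisms commuting with $f$. It then sends the map induced by $g^\vee$ on $\hat D_w$ to the dual $\tau_w(g)^\wedge$. For a finite abelian group $D$ and endomorphism $\tau$, we have $|\coker(I-\tau^\wedge)|=|\ker(I-\tau)|=|\coker(I-\tau)|$, by duality and finiteness. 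This gives $n_w(g)=\hat n_w(g^{-1})$. I would put these standard verifications in Appendix \ref{app:proof-mirror-symmetry}, taking care that the naturality of the torsion duality really intertwines $g$ with $g^\vee$.
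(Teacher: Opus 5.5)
Your proposal is correct and follows essentially the same route as the paper: reindex the sum over $C(w)=C(w^{-1})$, identify the rational free part of $\hat K_w$ with the dual of $\ker(I-w)$ so that the determinant comes from a transposed map, and match $n_w(g)$ with its dual counterpart through a duality between $D_w$ and $\hat D_w$ under which the induced maps are adjoint. The difference is presentational: the paper's Appendix realizes your $\operatorname{Ext}^1(-,\ZZ)$/Pontryagin identification concretely through the pairing $\langle[\lambda],[\alpha]\rangle=\alpha(n\lambda)/n \bmod \ZZ$ on $D\otimes\hat D$, proves by hand that $\tau(h)$ and $\hat\tau(h^\vee)$ are adjoint, and applies this with $h=I-g$.
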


\begin{proof}
Let $\Phi_w$ and $\hat{\Phi}_w$ denote the free parts of $K_w = \coker(I-w)$ and $\hat{K}_w = \coker((I-w)^\vee)$, and $\phi_w(g): \Phi_w \to \Phi_w$ and $\hat{\phi}_w(g^\vee):\hat{\Phi}_w \to \hat{\Phi}_w$ are the free parts of $g: K_w \to K_w$ and $g^\vee: \hat{K}_w \to \hat{K}_w$, respectively. As proven in Theorem \ref{thm:algorithm_CV}, we have
\begin{align*}
    \mu(T^r_G(\CC)^w/C(w)) &= \frac{1}{|C(w)|}\sum_{g \in C(w)} n_{w}(g)^r \det\left(I_{\Phi_w} + tuv\,\phi_w(g)\right)^r\; ,\\
    \mu(T^r_{^LG}(\CC)^{w^{-1}}/C(w^{-1})) &= \frac{1}{|C(w^{-1})|}\sum_{g \in C(w^{-1})} \hat{n}_{w}((g^{-1})^\vee)^r \det\left(I_{\hat{\Phi}_w} + tuv\,\hat{\phi}_w((g^{-1})^\vee)\right)^r\; .
\end{align*}
Here, if $D_w$ and $\hat{D}_{w}$ are the torsion parts of $K_w$ and $\hat{K}_w$, then $n_w(g) = |\coker(I-\tau_w(g))|$ and $\hat{n}_{w}(g^\vee) = |\coker(I-\hat{\tau}_w(g^\vee))|$ are the number of points of the respective cokernels for the morphisms on the torsion parts $I-\tau_w(g): D_w \to D_w$ and $I-\hat{\tau}_w(g^\vee): \hat{D}_w \to \hat{D}_w$. Notice that, $C(w) = C(w^{-1})$ and the sum in $\mu(T^r_{^LG}(\CC)^{w^{-1}}/C(w^{-1}))$ is closed under inversion, so we can alternatively write 
\begin{align*}
    \mu(T^r_{^LG}(\CC)^{w^{-1}}/C(w^{-1})) &= \frac{1}{|C(w)|}\sum_{g \in C(w)} \hat{n}_{w}(g^\vee)^r \det\left(I_{\hat{\Phi}_w} + tuv\,\hat{\phi}_w(g^\vee)\right)^r.
\end{align*}

The torsion part is controlled by means of the calculations performed in Appendix \ref{app:proof-mirror-symmetry}. Concretely, Corollary \ref{cor:number-points} applied to $h = I-g$ implies that $n_w(g) = \hat{n}_{w}(g^\vee)$ for all $g \in C(w) = C(w^{-1})$.

Now, for the determinants, notice that by Remark \ref{rmk:rational-det}, we have 
\begin{align*}
\det\left(I_{\Phi_w} + tuv\,\phi_w(g)\right) &= \det\left(I_{K_{w} \otimes_\ZZ \QQ} + tuv\,(g_{w})_\QQ\right), \\
\det\left(I_{\hat{\Phi}_{w}} + tuv\,\hat{\phi}_w(g^\vee)\right) &= \det\left(I_{\hat{K}_{w} \otimes_\ZZ \QQ} + tuv\,(g_{w}^\vee)_\QQ\right),
\end{align*}
with $(g_w)_\QQ: K_w \otimes_\ZZ \QQ \to K_w \otimes_\ZZ \QQ $ and $(g_w^\vee)_\QQ: \hat{K}_w \otimes_\ZZ \QQ \to \hat{K}_w \otimes_\ZZ \QQ$ the associated $\QQ$-linear extensions. However, in this case, as shown in Appendix \ref{app:proof-mirror-symmetry}, we have that $\hat{K}_w \otimes_\ZZ \QQ = (K_w \otimes_\ZZ \QQ)^*$ and $(g_w^\vee)_\QQ$ is the transpose map to $(g_w)_\QQ$ so, in particular, we have $\det(I_{K_{w} \otimes_\ZZ \QQ} + tuv\,(g_{w})_\QQ) = \det(I_{\hat{K}_{w} \otimes_\ZZ \QQ} + tuv\,(g_{w}^\vee)_\QQ)$, completing the proof.
\end{proof}

\begin{cor}\label{cor:mirror-symmetry-CV}
For any connected reductive group $G$ with Langlands dual $^LG$ and any $r \geq 0$ we have 
$$
\mu^{\str}(T_G^r/W) = \mu^{\str}(T_{^LG}^r/W)\; ,
$$
where $T_G$ and $T_{^LG}$ are the maximal tori of $G$ and $^LG$, respectively.
\end{cor}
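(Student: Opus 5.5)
We combine the sector decomposition of Definition \ref{def:str_hodge_pol} with Theorem \ref{thm:equality-terms}, matching sectors of $T_G^r/W$ and $T_{^LG}^r/W$ through a bijection of conjugacy classes. First we note that both groups have the same Weyl group $W$. It acts on $\Lambda$ for $G$, and on $\Lambda^\vee$ for $^LG$ through $w\mapsto (w^{-1})^\vee$. Since $w\mapsto w^{-1}$ preserves $W$ and commutes with conjugation, it induces a bijection $\textup{Conj}(W)\to\textup{Conj}(W)$, $[w]\mapsto[w^{-1}]$, with $C(w)=C(w^{-1})$. We therefore write
$$
\mu^{\str}(T_{^LG}^r/W)=\sum_{[w]\in\textup{Conj}(W)}\mu\bigl(T^r_{^LG}(\CC)^{w^{-1}}/C(w^{-1})\bigr)\,(t^2uv)^{\age_{T^r_{^LG}}(w^{-1})},
$$
reindexing the sum over the sector of $[w^{-1}]$.

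Next we compare the sectors term by term. The untwisted factors agree by Theorem \ref{thm:equality-terms}. For the fermionic shifts, Lemma \ref{lem:fermionic-shift} together with (\ref{eq:age}) gives $\age_{T_G^r}(w)=\frac r2\rk\image(I_\Lambda-w)$. On the dual side the shift is $\frac r2\rk\image\bigl(I_{\Lambda^\vee}-(w)^\vee\bigr)$, since the element $w^{-1}$ acts on $\Lambda^\vee$ by $w^\vee$. As $I_{\Lambda^\vee}-w^\vee=(I_\Lambda-w)^\vee$ is the transpose of $I_\Lambda-w$, the two have the same rank: the rank of an integer matrix equals that of its transpose, which can be checked after tensoring with $\QQ$. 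Hence the exponents of $t^2uv$ coincide sector by sector. One could equally compare dimensions of fixed loci, since $\rk\hat K_w=\rk K_w$ by the duality of Appendix \ref{app:proof-mirror-symmetry}.

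Summing over $[w]$ then gives the equality of the stringy polynomials. The only delicate point is bookkeeping. The $^LG$ sector indexed by the class of the element $w^{-1}$ must be exactly the one Theorem \ref{thm:equality-terms} compares with the $G$ sector of $w$, so the action conventions have to be handled consistently, including that $\phi$ is applied to $(g^{-1})^\vee$ and the sum is over the centralizer. Once this matching is fixed, no further computation is needed. The case $r=0$ is trivial: every sector is a point, and both sides count $|\textup{Conj}(W)|$. The same argument, combined with Corollary \ref{cor:compact-support}, also yields the compactly supported version, since $\rk\Lambda=\rk\Lambda^\vee$.
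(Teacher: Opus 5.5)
Your proposal is correct and takes essentially the same route as the paper. You match the sector of $w$ for $G$ with the sector of $w^{-1}$ for $^LG$ using Theorem \ref{thm:equality-terms}, and you show the fermionic shifts coincide because $\rk \image(I-w) = \rk \image (I-w)^\vee$, which holds since a map and its dual have the same rank after tensoring with $\QQ$.
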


\begin{proof}
    By Theorem \ref{thm:equality-terms}, the associated formulas for the stringy mixed Hodge polynomial in Theorem \ref{thm:algorithm_CV} for $G$ and $^LG$ are in agreement where the term for $w$ in $G$ corresponds to the term for $w^{-1}$ in $^LG$. Hence, the global equality follows by observing that the ages of $w$ for $G$ and of $w^{-1}$ for $^LG$ coincide, and this follows from the fact that
    $$
        \rk \image (I-w) = \dim_\QQ \image (I-w)_\QQ =  \dim_\QQ \image (I-w)^*_\QQ  = \rk \image (I-w)^\vee\; ,
    $$
    where $(I-w)_\QQ: \Lambda \otimes_\ZZ \QQ \to \Lambda \otimes_\ZZ \QQ$ and $(I-w)_\QQ^*: \Lambda^\vee \otimes_\ZZ \QQ \to \Lambda^\vee \otimes_\ZZ \QQ$ are the associated $\QQ$-rational maps.
\end{proof}

\begin{rem}
    As pointed out in Remark \ref{rmk:w-1}, the case $w = 1$ of Theorem \ref{thm:equality-terms} implies an equality of the standard mixed Hodge polynomials for Langlands dual groups 
    $$
        \mu(T^r_G/W) = \mu(T^r_{^LG}/W)\; .
    $$
    In particular, recall that $T^r_G/W \to {\mathfrak{X}}^\circ_r(G)$ (resp.\ $T^r_{^LG}/W \to {\mathfrak{X}}^\circ_r(^LG)$) is the normalization of the identity component ${\mathfrak{X}}^\circ_r(G) \subseteq \mathfrak{X}_r(G)$ (resp.\ of ${\mathfrak{X}}^\circ_r(^LG) \subseteq \mathfrak{X}_r(^LG)$), and this normalization map preserves the mixed Hodge structure \cite[Section 5.1]{FS}. Therefore, we get an equality of mixed Hodge polynomials for the identity component of the character varieties $\mu({\mathfrak{X}}^\circ_r(G)) = \mu({\mathfrak{X}}^\circ_r(^LG))$ for all $r \geq 1$. However, it is not known whether such an equality holds for \emph{stringy} mixed Hodge polynomials of ${\mathfrak{X}}^\circ_r(G)$.
\end{rem}

It is worth mentioning that the same argument works verbatim for moduli spaces of Higgs bundles over an abelian variety. Therefore, we get the following result.

\begin{cor}\label{cor:mirror-symmetry-Higgs}
Let $G$ be a connected reductive group with Langlands dual $^LG$, with respective maximal tori $T_G$ and $T_{^LG}$ and Weyl group $W$. For any abelian variety $A$ and every $w \in W$, we have $
        \mu(\mathcal{M}_{A}(T_G)^w/C(w)) = \mu(\mathcal{M}_{A}(T_{^LG})^{w^{-1}}/C(w^{-1}))$.
In particular, we have
$$
\mu^{\str}(\mathcal{M}_{A}(T_G)/W)= \mu^{\str}(\mathcal{M}_{A}(T_{^LG})/W) \quad \textrm{ and } \quad \mu(\mathcal{M}_{A}(G))= \mu(\mathcal{M}_{A}(^LG))\; .
$$
\end{cor}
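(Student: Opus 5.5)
The plan is to mirror the proof of Theorem \ref{thm:equality-terms} and Corollary \ref{cor:mirror-symmetry-CV}, this time using the sector formula of Theorem \ref{thm:sector-Higgs} instead of Theorem \ref{thm:algorithm_CV}. For $G$, Theorem \ref{thm:sector-Higgs} gives
$$\mu(\mathcal{M}_A(T_G)^w/C(w)) = \frac{1}{|C(w)|}\sum_{g\in C(w)} n_w(g)^{2d}\, p_{g,w}(tu)^d\, p_{g,w}(tv)^d .$$
For $^LG$ and $w^{-1}$, the same theorem gives the analogous sum. It runs over $C(w^{-1})=C(w)$ and is built from the cokernel $\hat K_w$ of $(I-w)^\vee$ on $\Lambda^\vee$, with the maps $(g^{-1})^\vee$ in place of $g$. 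Since $C(w)$ is closed under inversion, I reindex $g\mapsto g^{-1}$ as in the proof of Theorem \ref{thm:equality-terms}. The sum then runs over the dual maps $g^\vee$ on $\hat K_w$.

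Next I compare the two sums term by term, for each $g\in C(w)$.

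\textbf{Torsion factor.} Corollary \ref{cor:number-points} of the Appendix, applied to $h=I-g$, gives $n_w(g)=\hat n_w(g^\vee)$. Raising to the power $2d$ matches the torsion factors.

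\textbf{Free factor.} By Remark \ref{rmk:rational-det}, $p_{g,w}(x)=\det(I+x\,(g_w)_\QQ)$ on $K_w\otimes\QQ$. The corresponding dual quantity is $\det(I+x\,(g_w^\vee)_\QQ)$ on $\hat K_w\otimes\QQ$. The Appendix identifies $\hat K_w\otimes\QQ$ with $(K_w\otimes\QQ)^*$ and $(g^\vee_w)_\QQ$ with the transpose of $(g_w)_\QQ$. The two determinants are therefore equal as polynomials in $x$. Substituting $x=tu$ and $x=tv$ and taking $d$-th powers matches the free factors.

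This proves the sector-wise equality $\mu(\mathcal{M}_A(T_G)^w/C(w))=\mu(\mathcal{M}_A(T_{^LG})^{w^{-1}}/C(w^{-1}))$.

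For the stringy statement I sum over conjugacy classes. The map $[w]\mapsto[w^{-1}]$ is a bijection of $\textup{Conj}(W)$. It remains to match the fermionic shifts. Theorem \ref{thm:sector-Higgs} gives $\age(w)=d\,\rk\image(I_\Lambda-w)$. The rank equality $\rk\image(I-w)=\rk\image(I-w)^\vee$ was already shown in the proof of Corollary \ref{cor:mirror-symmetry-CV}. So each term $(t^2uv)^{\age(w)}\mu(\cdots)$ for $G$ agrees with the term for $w^{-1}$ on the $^LG$ side, and the two stringy polynomials coincide.

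The final identity $\mu(\mathcal{M}_A(G))=\mu(\mathcal{M}_A(^LG))$ is the untwisted sector $w=1$, which gives $\mu(\mathcal{M}_A(T_G)/W)=\mu(\mathcal{M}_A(T_{^LG})/W)$. To pass to $\mathcal{M}_A(G)$ I use that the normalization map $\psi:\mathcal{M}_A(T)/W\to\mathcal{M}_A(G)$ induces an isomorphism of mixed Hodge structures, as recalled in Section \ref{sec:Betti-Dolbeault}.

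I expect no real obstacle, since the torsion and transpose facts come directly from the Appendix. The only point needing care is the bookkeeping of the reindexing $g\mapsto (g^{-1})^\vee$, namely checking that the $W$-action on $\Lambda^\vee$ is by inverse-duals. This is identical to the character variety case.
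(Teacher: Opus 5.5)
Your proposal is correct and follows the paper's route. The paper proves this corollary by noting that the argument of Theorem~\ref{thm:equality-terms} and Corollary~\ref{cor:mirror-symmetry-CV} carries over verbatim once the sector formula of Theorem~\ref{thm:sector-Higgs} replaces Theorem~\ref{thm:algorithm_CV}, and your term-by-term matching is exactly that adaptation: torsion counts via Corollary~\ref{cor:number-points} with $h=I-g$, free parts via the Appendix's transpose identification, ages via $\rk\image(I-w)=\rk\image(I-w)^\vee$, and the untwisted sector plus the MHS-preserving normalization $\psi$ for the final identity.
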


\subsection*{An alternative approach}
    The matrix description of the Weyl group described in Section \ref{sec:action_weyl} can be used to give a simpler proof of Theorem \ref{thm:equality-terms}. Suppose that $G$ is a connected reductive group of rank $n$ and semisimple rank $m \leq n$. Let $\alpha_1, \ldots, \alpha_m$ be a basis of the root lattice $Q$ of simple roots, with associated coroots $\alpha_1^\vee, \ldots, \alpha_m^\vee$ of the coroot lattice $Q^\vee$. Then, the Cartan matrix $A = (a_{ij})$ has entries
    $$
        a_{ij} = \langle \alpha_j, \alpha_i^\vee\rangle
    $$
    with respect to the natural pairing $\langle - , -\rangle: Q \otimes_\ZZ Q^\vee \to \ZZ$.

    However, for the Langlands dual group $^LG$, the role of roots and coroots is permuted, so a basis of roots for $^LG$ is precisely $\alpha_1^\vee, \ldots, \alpha_m^\vee \in Q^{\vee}$, whereas a basis of coroots is $\alpha_1, \ldots, \alpha_m\in Q$. This implies that the Cartan matrix $^L\widetilde{A} = (^La_{ij})$ associated to $^LG$ is given by
    $$
        ^La_{ij} = \langle \alpha_j^\vee, \alpha_i\rangle = a_{ji}\; .
    $$
    In this manner, we get that $^LA = A^t$ is the transpose of $A$. Equivalently, the Dynkin diagrams of the root systems of $G$ and $^LG$ are dual, so the corresponding Cartan matrices, which can be read from the diagram, are transposed.
    
    As a direct consequence of this, we get that the matrices $^LC_{s_i}$ representing the action of the reflection of the simple root $\alpha_i^\vee$ of $^LG$ on the basis $\alpha_1^\vee, \ldots, \alpha_m^\vee$, is also the transpose of the corresponding matrix $C_{s_i}$ of the action of $\alpha_i$ for $G$ (see (\ref{eq:matrix_Ck})).
    
    This can be extended to the character lattice $\Lambda$ as follows. Choose a basis of characters $\chi_1, \ldots, \chi_n$ with $\langle \chi_1, \ldots, \chi_m\rangle_\QQ = Q \otimes_\ZZ \QQ$ and $\chi_{m+1}, \ldots, \chi_n$ spanning the central part of the vector space $\Lambda_{z,\mathbb{Q}}$, as described in Section \ref{sec:action_weyl}. Then, the dual basis $\chi_1^\vee, \ldots, \chi_n^\vee$ of the cocharacter lattice $\Lambda^\vee$ is another basis with the same properties. In particular, the matrix $^LD_{s_i}$ for the action of the Weyl group on $\chi_1^\vee, \ldots, \chi_n^\vee$, corresponding to $^LG$, becomes also transpose to the matrix $D_{s_i}$ for the action on $\chi_1, \ldots, \chi_n$ corresponding to $G$ (see (\ref{eq:matrix-Dk})).

    Using these matrices, it is possible to compute the decomposition of the cokernel of $I-w: \Lambda \to \Lambda$ into its free and torsion part. Particularly, if $w$ is the product of the reflections associated to roots $\alpha_{i_1}, \ldots, \alpha_{i_\ell}$, then $B := B_{i_\ell}\cdots B_{i_1}$ is the corresponding matrix representing $w$, where $B_{i}$ is the matrix of the reflection corresponding to $\alpha_i$. In particular, $I-w$ is represented by the matrix $I-B$. Considering the Smith normal form of $I-w$, we can read the torsion part as the invariant factors of the normal form, and the rank of the free part as the number of vanishing entries in the diagonal. In particular, notice that for the action of $(I-w)^*$ on $\Lambda^\vee$, we have the associated matrix $(I-B)^t$ and thus the Smith normal form agrees with that of $I-w$ so, in particular, the torsion and free parts of $K_w$ and $\hat{K}_w$ are isomorphic.
    
    Arguing as above, given any $g \in C(w)$, it is possible to check that the matrices representing the action of $g$ on $K_w$ and $\hat{K}_w$ are also transpose. This implies, in particular, the equality of the determinants and the numbers $n_{w}(g)$ and $n_{w^*}(g^*)$ appearing in Theorem \ref{thm:algorithm_CV}, and thus \textit{a fortiori} the desired topological mirror symmetry.

\section{Examples and explicit calculations}
\label{sec:examples}

In this section, we shall perform some explicit calculations of the mixed Hodge polynomial as described in Section \ref{sec:stringy-mhpoly} for several groups. Concretely, we will calculate the compactly supported mixed Hodge polynomial for character varieties as provided by Corollary \ref{cor:compact-support}, as it will also allow us to straightforwardly compute the associated $E$-polynomial. Similar computations can be done with the data shown in this section for other variants, such as the regular mixed Hodge polynomial or the same calculation for moduli spaces of Higgs bundles.

\subsection{Action of the Weyl group on the character lattice}\label{sec:action_weyl}

 Let $G$ be a connected reductive group of rank $n$ with semisimple rank $m \le n$. Let us describe matricially the action of the Weyl group of $G$ in its character lattice $\Lambda$.

Recall that $\Lambda:=X^*(T)$ is the character lattice of a maximal torus $T \subseteq G$ and has rank $n$. Let us denote by $Q$ the lattice of roots of $G$ and by $P$ the lattice of weights of $G$, which satisfy $Q\subseteq P$. Furthermore, if $G$ is semisimple, then $Q \subseteq \Lambda \subseteq P$, in which case the fundamental group of $G$ is $\pi_1(G)=P/\Lambda$. In particular, $G$ is simply connected if and only if $\Lambda = P$. Analogously, we say that a semisimple group $G$ is of adjoint type if $\Lambda = Q$. Denote by $\Lambda^{\vee}=X_{\ast}(T)$ the cocharacter lattice and $\langle \cdot \; , \; \cdot \rangle: X^{\ast}(T)\times X_{\ast}(T)\rightarrow \mathbb{Z}$ the perfect pairing between characters and cocharacters. Similarly, if $G$ is semisimple, we find the relation $P^{\vee}\subseteq \Lambda^{\vee}\subseteq Q^{\vee}$ between the dual lattices, where $P^{\vee}$ is the coroot lattice and $Q^{\vee}$ is the coweight lattice. 

Consider the root lattice $Q$ and let $\alpha_1,\ldots, \alpha_m$ be a basis of 
simple roots for $Q$. Similarly, let $\alpha_1^\vee, \dots, \alpha_m^\vee \in Q^\vee$ 
be the basis of simple coroots for $Q^{\vee}$. These coroots define a basis for 
the weight lattice $P$ given by the fundamental weights $\omega_1, \ldots, \omega_m \in P$, 
dual to the coroot basis under the natural pairing:
\[
\langle \omega_i, \alpha_j^{\vee}\rangle=\delta_{ij}\; , \quad 1\leq i,j\leq m\; .
\] 
In particular, the simple roots can be uniquely expressed as integer combinations 
of the fundamental weights:
\[
\alpha_i = \sum_{j=1}^m \langle \alpha_i, \alpha_j^\vee \rangle\,\omega_j = \sum_{j=1}^m a_{ji}\,\omega_j\; ,
\]
whose coefficients are given by the transpose entries of the Cartan matrix 
$A = (a_{ij})_{1 \leq i,j \leq m} := \big(\langle \alpha_j, \alpha_i^\vee \rangle\big)_{i,j}$ 
of the semisimple part of $G$, following the convention of Kac \cite{Kac1990}. 
Recall that this Cartan matrix can be directly read from the Dynkin diagram of the root system of $G$ as follows: 
$a_{ii} = 2$ for all $i$;
$a_{ij} = 0$ if $\alpha_i$ is not connected with $\alpha_j$;
$a_{ij} = -1$ if $\alpha_i$ and $\alpha_j$ are connected by a simple edge;
$a_{ji} = -2$ and $a_{ij} = -1$ if there is a double edge pointing from $\alpha_i$ to $\alpha_j$ (i.e., $\alpha_j$ is the shorter root);
$a_{ji} = -3$ and $a_{ij} = -1$ if there is a triple edge pointing from $\alpha_i$ to $\alpha_j$ (i.e., $\alpha_j$ is the shorter root).

The Weyl group of $G$ is generated by simple reflections $s_k$ associated to the 
simple roots $\alpha_k$, for $1\leq k\leq m$. The action of $s_k$ on the simple root $\alpha_i$ is given by:
\[
s_k(\alpha_i) = \alpha_i - \langle \alpha_i, \alpha_k^\vee \rangle \alpha_k = \alpha_i - a_{ki}\alpha_k\; , \quad 1\leq i\leq m\; .
\]
In particular, using the column-vector representation for linear transformations, the 
matrix of $s_k$ with respect to the basis $\alpha_1, \ldots, \alpha_m$ is:
\begin{equation}
\label{eq:matrix_Ck}
    C_{k} := I_m - E_{kk}A\; ,
\end{equation}
where $I_m$ is the identity matrix of order $m$, $E_{kk}$ is the matrix with a $1$ 
in the $(k,k)$-entry and zero elsewhere, and $A = (a_{ij})_{1 \le i,j \le m}$ is the Cartan matrix.

Furthermore, in the non-semisimple case, let us extend the root basis $\alpha_1, 
\ldots, \alpha_m$ into an extended basis $\alpha_1, \ldots, \alpha_n$ of 
$\Lambda \otimes_{\mathbb{Z}} \mathbb{Q} = \Lambda_{ss,\mathbb{Q}} \oplus \Lambda_{z,\mathbb{Q}}$, 
where $\alpha_{m+1}, \ldots, \alpha_n$ span the central part $\Lambda_{z,\mathbb{Q}}$ 
as a vector space. Then, since the action of the Weyl group $W$ on the center is 
trivial, we have that:
\[
s_k(\alpha_i) = \alpha_i\; , \quad \text{for } m+1 \leq i \leq n\, , \; 1\leq k\leq m\, ,
\]
and thus the extended matrix of $s_k$ with respect to the basis $\alpha_1, \ldots, \alpha_n$ is:
\begin{equation}
\label{eq:matrix_Ck_extended}
    \widetilde{C}_{k} := I_n - E_{kk}\widetilde{A}\; ,
\end{equation}
where $\widetilde{A}$ is the matrix of order $n \times n$ obtained by padding the 
Cartan matrix $A$ on the right and below with zeros.

Now, for the character lattice $\Lambda$, let us choose a basis $\chi_1, \ldots, \chi_n \in \Lambda$. Let us write:
\[
\alpha_i = \sum_{j=1}^n l_{ji} \chi_j\, , \quad l_{ji} \in \mathbb{Z}\, ,
\]
so that $L = (l_{ij})_{1 \leq i,j \leq n}$ is the matrix of change of basis tracking the coordinates of the extended basis $\alpha_1, \ldots, \alpha_n$ in terms of the character basis $\chi_1, \ldots, \chi_n$. In this manner, the action of $W$ on $\chi_1, \ldots, \chi_n$ is given by:
\begin{equation}\label{eq:matrix-Dk}
    B_{k} = L\widetilde{C}_{k}L^{-1} = I_n - LE_{kk}\widetilde{A}L^{-1}\; .
\end{equation}

\begin{rem}
    If $G$ is simply connected (hence semisimple), the character lattice $\Lambda$ coincides with the weight lattice $P$. Choosing the fundamental weights as the character basis (i.e.\ $\chi_j = \omega_j$) yields $\alpha_i = \sum a_{ji} \omega_j$. Comparing this with $\alpha_i = \sum l_{ji} \chi_j$ immediately gives the identification $L = A$. Consequently, the action matrices $B_{s_k}$ simplify to 
\begin{equation}
  \label{eq:matrix-Dk-sc}  
B_{k} := L C_{k} L^{-1} = A (I_m - E_{kk} A) A^{-1} = I_m - A E_{kk}\; .
\end{equation}
\end{rem}

\subsubsection{Example: Matrix action of the Weyl group on the character lattice for $\GL_3$}

Let $G = \text{GL}_3$ of rank $n = 3$ and semisimple rank $m = 2$. We choose the standard diagonal torus $T \subseteq \text{GL}_3$. In the character lattice $\Lambda = X^*(T) \cong \mathbb{Z}^3$ we choose the natural basis given by the coordinate projections:
\[
\chi_1=(1,0,0)\; ,\quad \chi_2=(0,1,0)\; ,\quad \chi_3=(0,0,1)\; .
\]
The root lattice is $Q = \{(x_1, x_2, x_3) \in \ZZ^3\mid x_1+x_2+x_3=0\}$, so a basis consists of the two simple roots $\alpha_1 = \chi_1 - \chi_2, \alpha_2 = \chi_2 - \chi_3$. 
This basis $\alpha_1, \alpha_2$ of $Q$ can be completed to a rational basis of $\Lambda_{\QQ}$ by taking the character spanning the center
$
    \alpha_3 = \chi_1 + \chi_2 + \chi_3.
$

With respect to the Cartan matrix, observe that the Dynkin diagram of the root system of $\GL_3$ is $A_2$, which contains two vertices connected by a simple edge. In this manner, the Cartan matrix is
\[
A = \left(\begin{array}{cc} 2 & -1 \\ -1 & 2 \end{array}\right), 
\]
and note that it is symmetric, i.e.\ $A^t=A$, as it happens with a simply-laced Dynkin diagram. Hence, the extended Cartan matrix is
\[
\widetilde{A} = \left( \begin{array}{cc|c} 2 & -1 & 0 \\ -1 & 2 & 0 \\ \hline 0 & 0 & 0 \end{array} \right).
\]
The weight lattice basis consists of the two fundamental weights $\omega_1, \omega_2$, given by
\[
\omega_1 = \frac{1}{3}(2\chi_1 -\chi_2-\chi_3)=\left(\frac{2}{3}, -\frac{1}{3}, -\frac{1}{3}\right), \quad  \omega_2 = \frac{1}{3}(\chi_1 +\chi_2-2\chi_3)=\left(\frac{1}{3}, \frac{1}{3}, -\frac{2}{3}\right).
\]
In particular, we have
    $\alpha_1 = 2 \omega_1 -\omega_2, \alpha_2 = -\omega_1 + 2 \omega_2$, 
in agreement with the computed Cartan matrix.

Now we write the matrices corresponding to the two simple reflections for the 
extended basis $\alpha_1, \alpha_2, \alpha_3$. Let $s_1$ be the reflection 
corresponding to the first simple root $\alpha_1$ and let $E_{11}=\text{diag}(1,0,0)$. 
We compute by (\ref{eq:matrix_Ck_extended}):
\[
\widetilde{C}_{1} = I_3 - E_{11}\widetilde{A} = \begin{pmatrix} 1 & 0 & 0 \\ 0 & 1 & 0 \\ 0 & 0 & 1 \end{pmatrix} - \begin{pmatrix} 2 & -1 & 0 \\ 0 & 0 & 0 \\ 0 & 0 & 0 \end{pmatrix} = \begin{pmatrix} -1 & 1 & 0 \\ 0 & 1 & 0 \\ 0 & 0 & 1 \end{pmatrix},
\]
which matches the geometric action
$ s_1(\alpha_1)=-\alpha_1$, $s_1(\alpha_2)=\alpha_1+\alpha_2$, $s_1(\alpha_3)=\alpha_3$. 
For the reflection $s_2$ corresponding to the second simple root $\alpha_2$, we have $E_{22} = \text{diag}(0,1,0)$. We thus get the matrix:
\[
    \widetilde{C}_{2} = I_3 - E_{22}\widetilde{A} = \begin{pmatrix} 1 & 0 & 0 \\ 0 & 1 & 0 \\ 0 & 0 & 1 \end{pmatrix} - \begin{pmatrix} 0 & 0 & 0 \\ -1 & 2 & 0 \\ 0 & 0 & 0 \end{pmatrix} = \begin{pmatrix} 1 & 0 & 0 \\ 1 & -1 & 0 \\ 0 & 0 & 1 \end{pmatrix},
\]
which, again, matches the geometric action of this reflection.

To compute these matrices with respect to the basis $\chi_1, \chi_2, \chi_3$, observe 
that the change of basis matrix from the extended root basis $\alpha_1, \alpha_2, \alpha_3$ 
to the character basis $\chi_1, \chi_2, \chi_3$ is:
\[
L = \begin{pmatrix}
    1 & 0 & 1 \\
    -1 & 1 & 1 \\
    0 & -1 & 1
\end{pmatrix}.
\]
Therefore, the action of the Weyl group on the basis $\chi_1, \chi_2, \chi_3$ is given by (\ref{eq:matrix-Dk}):
\[
B_{1} = L\widetilde{C}_{1}L^{-1} = \begin{pmatrix} 0 & 1 & 0 \\ 1 & 0 & 0 \\ 0 & 0 & 1 \end{pmatrix}\quad , \quad B_{2} = L\widetilde{C}_{2}L^{-1} = \begin{pmatrix} 1 & 0 & 0 \\ 0 & 0 & 1 \\ 0 & 1 & 0 \end{pmatrix}.
\]

\subsection{Example: $G=\SO_7$}
\label{ssec:SO7}

Let us describe the computations in Theorem \ref{thm:algorithm_CV} for the case $G=\mathrm{SO}_7$. This will serve to exemplify how the formula of Theorem \ref{thm:algorithm_CV} is highly computable and only depends on combinatorial data of the root datum.

We follow the general description in Section \ref{sec:action_weyl}, where $\mathrm{SO}_7$ is a group of adjoint (centerless) type of rank $3$, with root system of type $B_3$. Let $T_{\SO_7} \subseteq \mathrm{SO}_7$ be the standard diagonal torus and let $\Lambda = X^*(T_{\SO_7}) = Q$ be the character lattice, which equals the root lattice in the adjoint case. Here we have $n=m=3$, and the Cartan matrix for $B_3$ under Kac's convention \cite{Kac1990} is:
\[
A = \begin{pmatrix} 2 & -1 & 0 \\ -1 & 2 & -1 \\ 0 & -2 & 2 \end{pmatrix}.
\]
Notice that $A$ is non-symmetric since $B_3$ is not simply laced.

Consider the standard orthonormal Euclidean basis $\{e_1, e_2, e_3\}$ of $\mathbb{R}^3$. In terms of this coordinate basis, the simple roots of $B_3$ are given by:
$
\alpha_1 = e_1 - e_2$, $\alpha_2 = e_2 - e_3$, $\alpha_3 = e_3$.
The Weyl group $W \cong S_3 \ltimes (\mathbb{Z}_2)^3$ is the hyperoctahedral group of order $|W| = 2^3 \cdot 3! = 48$, acting on $\mathbb{R}^3$ via signed permutations (permutations $\sigma \in S_3$ combined with coordinate sign changes $\boldsymbol{\epsilon} \in (\mathbb{Z}_2)^3$). 

An arbitrary element $w \in W$ is uniquely written as a pair $w = (\sigma, \boldsymbol{\epsilon})$, where $\sigma \in S_3$ and $\boldsymbol{\epsilon} = (\epsilon_1, \epsilon_2, \epsilon_3) \in \{\pm 1\}^3$. Under the convention where the action on the standard basis vectors is given by $w(e_i) = \epsilon_i e_{\sigma(i)}$ (the permutation $\sigma$ acts first, followed by the sign choice $\epsilon_i$), the induced canonical action of $w = (\sigma, \boldsymbol{\epsilon})$ on a coordinate vector $\mathbf{x} = (x_1, x_2, x_3) \in \mathbb{R}^3$ is given by:
\begin{equation*}
w \cdot (x_1, x_2, x_3) = \left( \epsilon_{\sigma^{-1}(1)} x_{\sigma^{-1}(1)}, \; \epsilon_{\sigma^{-1}(2)} x_{\sigma^{-1}(2)}, \; \epsilon_{\sigma^{-1}(3)} x_{\sigma^{-1}(3)} \right)
\end{equation*}

Since $\mathrm{SO}_7$ is of adjoint type, the character lattice $\Lambda$ is spanned over $\mathbb{Z}$ by the simple roots, so we choose our character basis to be $\chi_i = \alpha_i$ for $i=1,2,3$. Consequently, the change-of-basis matrix between simple roots and characters is $L = I_3$. While $W$ acts on $\{e_1, e_2, e_3\}$ by signed permutation matrices, its action on the character lattice $\Lambda$ relative to the character basis $\{\chi_1, \chi_2, \chi_3\} = \{\alpha_1, \alpha_2, \alpha_3\}$ is given by the matrices $B_{k} = C_k= I_3 - E_{kk} A$ (see (\ref{eq:matrix_Ck})). For the simple reflections $s_1, s_2, s_3$ corresponding to $\alpha_1, \alpha_2, \alpha_3$, these yield:
\[
B_{1} = \begin{pmatrix} -1 & 1 & 0 \\ 0 & 1 & 0 \\ 0 & 0 & 1 \end{pmatrix}, \quad 
B_{2} = \begin{pmatrix} 1 & 0 & 0 \\ 1 & -1 & 1 \\ 0 & 0 & 1 \end{pmatrix}, \quad 
B_{3} = \begin{pmatrix} 1 & 0 & 0 \\ 0 & 1 & 0 \\ 0 & 2 & -1 \end{pmatrix}.
\]
For an arbitrary word $w = s_{i_1} \cdots s_{i_k} \in W$, the representation matrix on $\Lambda$ is the product $B_w = B_{i_k} \cdots B_{i_1}$.

Conjugacy classes in $W$ are uniquely parameterized by signed cycle types, or equivalently by pairs of partitions $(\lambda^+, \lambda^-)$ tracking the lengths of positive cycles (containing an even number of sign flips) and negative cycles (containing an odd number of sign flips), such that $|\lambda^+| + |\lambda^-| = 3$. There are $10$ conjugacy classes in total. In Table \ref{tab:SO7_action}  we collect minimal Coxeter length representatives $w$ for each of the $10$ conjugacy classes, their minimal words\footnote{The minimal words for $w\in W$ have the Coxeter length of $w$, equal to the number of positive roots which transform to negative roots under the action of $w$ in the root system. Observe that the element $-I_3$ takes every positive root to its negative, therefore it transforms the $9$ positive roots of $B_3$ into negative ones and its minimal word has length $9$.} in the simple reflections (where $w = s_{i_1} \dots s_{i_k}$ indicates $s_{i_1}$ acts first), their signed cycle types $(\lambda^+, \lambda^-)$, and their matrix actions $B_w$.
\begin{table}[H]
\tiny
    \centering
    \setlength{\tabcolsep}{3pt}
    \begin{tabular}{|c|c|c|c|c|c|c|}
    \hline
        $[w]\in\Conj(W)$ & $|[w]|$ & $w = (\sigma, \boldsymbol{\epsilon})$ & \text{Signed cycle type } $(\lambda^+,\lambda^-)$& \text{Action on } $(x_1, x_2, x_3)$ & \text{Word} & $B_{w}$ \\\hline
        $(\text{i})$ & $1$ & $(e, (+++))$ & $((1)^+(2)^+(3)^+, \emptyset)$ & $(x_1, x_2, x_3)$ & $1$ & $\begin{pmatrix} 1 & 0 & 0 \\ 0 & 1 & 0 \\ 0 & 0 & 1 \end{pmatrix}$ \\\hline
        $(\text{ii})$ & $3$ & $(e, (++-))$ & $((1)^+(2)^+, (3)^-)$ & $(x_1, x_2, -x_3)$ & $s_3$ & $\begin{pmatrix} 1 & 0 & 0 \\ 0 & 1 & 0 \\ 0 & 2 & -1 \end{pmatrix}$ \\\hline
        $(\text{iii})$ & $3$ & $(e, (+--))$ & $((1)^+, (2)^-(3)^-)$ & $(x_1, -x_2, -x_3)$ & $s_2 s_3 s_2 s_3$ & $\begin{pmatrix} 1 & 0 & 0 \\ 2 & -1 & 0 \\ 2 & 0 & -1 \end{pmatrix}$ \\\hline
        $(\text{iv})$ & $1$ & $(e, (---))$ & $(\emptyset, (1)^-(2)^-(3)^-)$ & $(-x_1, -x_2, -x_3)$ & $s_1 s_2 s_1 s_3 s_2 s_1 s_3 s_2 s_3$ & $\begin{pmatrix} -1 & 0 & 0 \\ 0 & -1 & 0 \\ 0 & 0 & -1 \end{pmatrix}$ \\\hline
        $(\text{v})$ & $6$ & $((12), (+++))$ & $((12)^+(3)^+, \emptyset)$ & $(x_2, x_1, x_3)$ & $s_1$ & $\begin{pmatrix} -1 & 1 & 0 \\ 0 & 1 & 0 \\ 0 & 0 & 1 \end{pmatrix}$ \\\hline
        $(\text{vi})$ & $6$ & $((12), (++-))$ & $((12)^+, (3)^-)$ & $(x_2, x_1, -x_3)$ & $s_1 s_3$ & $\begin{pmatrix} -1 & 1 & 0 \\ 0 & 1 & 0 \\ 0 & 2 & -1 \end{pmatrix}$ \\\hline
        $(\text{vii})$ & $6$ & $((23), (+-+))$ & $((1)^+, (23)^-)$ & $(x_1, x_3, -x_2)$ & $s_2 s_3$ & $\begin{pmatrix} 1 & 0 & 0 \\ 1 & -1 & 1 \\ 2 & -2 & 1 \end{pmatrix}$ \\\hline
        $(\text{viii})$ & $6$ & $((12), (-+-))$ & $(\emptyset, (12)^-(3)^-)$ & $(x_2, -x_1, -x_3)$ & $s_1 s_2 s_3 s_2 s_3$ & $\begin{pmatrix} -1 & 1 & 0 \\ -2 & 1 & 0 \\ -2 & 2 & -1 \end{pmatrix}$ \\\hline
        $(\text{ix})$ & $8$ & $((123), (+++))$ & $((123)^+, \emptyset)$ & $(x_3, x_1, x_2)$ & $s_2 s_1$ & $\begin{pmatrix} 0 & -1 & 1 \\ 1 & -1 & 1 \\ 0 & 0 & 1 \end{pmatrix}$ \\\hline
        $(\text{x})$ & $8$ & $((123), (++-))$ & $(\emptyset, (123)^-)$ & $(-x_3, x_1, x_2)$ & $s_3 s_2 s_1$ & $\begin{pmatrix} 0 & 1 & -1 \\ 1 & 1 & -1 \\ 0 & 2 & -1 \end{pmatrix}$ \\\hline
    \end{tabular}
    \caption{Actions on $\Lambda$ of representatives of conjugacy classes in $W(B_3)$, for $\mathrm{SO}_7$.}
    \label{tab:SO7_action}
\end{table}
\vspace{-0.5cm}
With this information, we can directly compute the stringy mixed Hodge polynomials of $T_{\SO_7}^r/W$ using (\ref{eq:magic-formula}). Table \ref{tab:SO7_main} summarizes the computations involved in the calculation of the compactly supported stringy mixed Hodge polynomial $\mu^{\str}_c(T_{\SO_7}^r/W)$, for the $10$ conjugacy classes in $W \cong S_3 \ltimes (\mathbb{Z}_2)^3$.
\vspace{-0.5cm}
\begin{table}[H]
\tiny
\centering
\setlength{\tabcolsep}{4pt}
\renewcommand{\arraystretch}{1.25}
\begin{tabular}{|c|c|c|c|c|c|c|c|c|c|}
\hline
$[w]$ & $w = (\sigma, \boldsymbol{\epsilon})$ & $|C(w)|$ & $I-B_w$ & $A_w = \text{im}(I-B_w)$ & $\text{rk}\,A_w$ & $K_w = \Lambda / A_w$ & $s_w$ & $\Phi_w$ & $D_w$ \\ \hline

$(\text{i})$ & $(e, (+++))$ & $48$ & 
$\begin{pmatrix} 0 & 0 & 0 \\ 0 & 0 & 0 \\ 0 & 0 & 0 \end{pmatrix}$ & 
$0$ & $0$ & 
\begin{tabular}{@{}c@{}} $\mathbb{Z}\chi_1 \oplus \mathbb{Z}\chi_2$ \\ $\oplus\,\mathbb{Z}\chi_3 \cong \mathbb{Z}^3$ \end{tabular} & 
$3$ & 
\begin{tabular}{@{}c@{}} $\mathbb{Z}\chi_1 \oplus \mathbb{Z}\chi_2$ \\ $\oplus\,\mathbb{Z}\chi_3 \cong \mathbb{Z}^3$ \end{tabular} & 
$0$ \\ \hline

$(\text{ii})$ &$(e, (++-))$ & $16$ & 
$\begin{pmatrix} 0 & 0 & 0 \\ 0 & 0 & 0 \\ 0 & -2 & 2 \end{pmatrix}$ & 
\begin{tabular}{@{}c@{}} $\mathbb{Z}(2\chi_3) \cong \mathbb{Z}$ \end{tabular} & 
$1$ & 
\begin{tabular}{@{}c@{}} $\mathbb{Z}\chi_1 \oplus \mathbb{Z}\chi_2 \oplus \frac{\mathbb{Z}\chi_3}{\mathbb{Z}(2\chi_3)}$ \\ $\cong \mathbb{Z}^2 \oplus \mathbb{Z}_2$ \end{tabular} & 
$2$ & 
\begin{tabular}{@{}c@{}} $\mathbb{Z}\chi_1 \oplus \mathbb{Z}\chi_2$ \\ $\cong \mathbb{Z}^2$ \end{tabular} & 
$\frac{\mathbb{Z}\chi_3}{\mathbb{Z}(2\chi_3)} \cong \mathbb{Z}_2$ \\ \hline

$(\text{iii})$ &$(e, (+--))$ & $16$ & 
$\begin{pmatrix} 0 & 0 & 0 \\ -2 & 2 & 0 \\ -2 & 0 & 2 \end{pmatrix}$ & 
\begin{tabular}{@{}c@{}} $\mathbb{Z}(2\chi_2)\oplus\,\mathbb{Z}(2\chi_3) $ \\ $\cong \mathbb{Z}^2$ \end{tabular} & 
$2$ & 
\begin{tabular}{@{}c@{}} $\mathbb{Z}\chi_1 \oplus \frac{\mathbb{Z}\chi_2}{\mathbb{Z}(2\chi_2)}$ \\ $\oplus\,\frac{\mathbb{Z}\chi_3}{\mathbb{Z}(2\chi_3)} \cong \mathbb{Z} \oplus (\mathbb{Z}_2)^2$ \end{tabular} & 
$1$ & 
$\mathbb{Z}\chi_1 \cong \mathbb{Z}$ & 
\begin{tabular}{@{}c@{}} $\frac{\mathbb{Z}\chi_2}{\mathbb{Z}(2\chi_2)} \oplus \frac{\mathbb{Z}\chi_3}{\mathbb{Z}(2\chi_3)}$ \\ $\cong (\mathbb{Z}_2)^2$ \end{tabular} \\ \hline

$(\text{iv})$ &$(e, (---))$ & $48$ & 
$\begin{pmatrix} 2 & 0 & 0 \\ 0 & 2 & 0 \\ 0 & 0 & 2 \end{pmatrix}$ & 
\begin{tabular}{@{}c@{}} $\mathbb{Z}(2\chi_1) \oplus \mathbb{Z}(2\chi_2)$ \\ $\oplus\,\mathbb{Z}(2\chi_3) \cong \mathbb{Z}^3$ \end{tabular} & 
$3$ & 
\begin{tabular}{@{}c@{}} $\frac{\mathbb{Z}\chi_1}{\mathbb{Z}(2\chi_1)} \oplus \frac{\mathbb{Z}\chi_2}{\mathbb{Z}(2\chi_2)}$ \\ $\oplus\,\frac{\mathbb{Z}\chi_3}{\mathbb{Z}(2\chi_3)} \cong (\mathbb{Z}_2)^3$ \end{tabular} & 
$0$ & 
$0$ & 
\begin{tabular}{@{}c@{}} $\frac{\mathbb{Z}\chi_1}{\mathbb{Z}(2\chi_1)} \oplus \frac{\mathbb{Z}\chi_2}{\mathbb{Z}(2\chi_2)}$ \\ $\oplus\,\frac{\mathbb{Z}\chi_3}{\mathbb{Z}(2\chi_3)} \cong (\mathbb{Z}_2)^3$ \end{tabular} \\ \hline

$(\text{v})$ &$((12), (+++))$ & $8$ & 
$\begin{pmatrix} 2 & -1 & 0 \\ 0 & 0 & 0 \\ 0 & 0 & 0 \end{pmatrix}$ & $\mathbb{Z}\chi_1\cong \mathbb{Z}$ & 
$1$ & 
\begin{tabular}{@{}c@{}} $\mathbb{Z}\chi_2 \oplus \mathbb{Z}\chi_3$ \\ $\cong \mathbb{Z}^2$ \end{tabular} & 
$2$ & 
\begin{tabular}{@{}c@{}} $\mathbb{Z}\chi_2 \oplus \mathbb{Z}\chi_3$ \\ $\cong \mathbb{Z}^2$ \end{tabular} & 
$0$ \\ \hline

$(\text{vi})$ &$((12), (++-))$ & $8$ & 
$\begin{pmatrix} 2 & -1 & 0 \\ 0 & 0 & 0 \\ 0 & -2 & 2 \end{pmatrix}$ & 
$\mathbb{Z}\chi_1\oplus \mathbb{Z}(2\chi_3)\cong \mathbb{Z}^2$ & 
$2$ & 
\begin{tabular}{@{}c@{}} $\mathbb{Z}\chi_2 \oplus \frac{\mathbb{Z}\chi_3}{\mathbb{Z}(2\chi_3)}$ \\ $\cong \mathbb{Z} \oplus \mathbb{Z}_2$ \end{tabular} & 
$1$ & 
$\mathbb{Z}\chi_2 \cong \mathbb{Z}$ & 
$\frac{\mathbb{Z}\chi_3}{\mathbb{Z}(2\chi_3)} \cong \mathbb{Z}_2$ \\ \hline

$(\text{vii})$ &$((23), (+-+))$ & $8$ & 
$\begin{pmatrix} 0 & 0 & 0 \\ -1 & 2 & -1 \\ -2 & 2 & 0 \end{pmatrix}$ & 
$\mathbb{Z}\chi_2\oplus \mathbb{Z}(2\chi_3)\cong \mathbb{Z}^2$ & 
$2$ & 
\begin{tabular}{@{}c@{}} $\mathbb{Z}\chi_1 \oplus \frac{\mathbb{Z}\chi_3}{\mathbb{Z}(2\chi_3)}$ \\ $\cong \mathbb{Z} \oplus \mathbb{Z}_2$ \end{tabular} & 
$1$ & 
$\mathbb{Z}\chi_1 \cong \mathbb{Z}$ & 
$\frac{\mathbb{Z}\chi_3}{\mathbb{Z}(2\chi_3)} \cong \mathbb{Z}_2$ \\ \hline

$(\text{viii})$ &$((12), (-+-))$ & $8$ & 
$\begin{pmatrix} 2 & -1 & 0 \\ 2 & 0 & 0 \\ 2 & -2 & 2 \end{pmatrix}$ & 
\begin{tabular}{@{}c@{}} $\mathbb{Z}\chi_1\oplus\mathbb{Z}(2\chi_2)\oplus$ \\ $\mathbb{Z}(2\chi_3)\cong \mathbb{Z}^3$ \end{tabular} & 
$3$ & 
\begin{tabular}{@{}c@{}} $\frac{\mathbb{Z}\chi_2}{\mathbb{Z}(2\chi_2)} \oplus \frac{\mathbb{Z}\chi_3}{\mathbb{Z}(2\chi_3)}$ \\ $\cong (\mathbb{Z}_2)^2$ \end{tabular} & 
$0$ & 
$0$ & 
\begin{tabular}{@{}c@{}} $\frac{\mathbb{Z}\chi_2}{\mathbb{Z}(2\chi_2)} \oplus \frac{\mathbb{Z}\chi_3}{\mathbb{Z}(2\chi_3)}$ \\ $\cong (\mathbb{Z}_2)^2$ \end{tabular} \\ \hline

$(\text{ix})$ &$((123), (+++))$ & $6$ & 
$\begin{pmatrix} 1 & 1 & -1 \\ -1 & 2 & -1 \\ 0 & 0 & 0 \end{pmatrix}$ & 
$\mathbb{Z}\chi_1\oplus \mathbb{Z}\chi_2 \cong \mathbb{Z}^2$ & 
$2$ & 
$\mathbb{Z}\chi_3 \cong \mathbb{Z}$ & 
$1$ & 
$\mathbb{Z}\chi_3 \cong \mathbb{Z}$ & 
$0$ \\ \hline

$(\text{x})$ &$((123), (++-))$ & $6$ & 
$\begin{pmatrix} 1 & -1 & 1 \\ -1 & 0 & 1 \\ 0 & -2 & 2 \end{pmatrix}$ & 
\begin{tabular}{@{}c@{}} $\mathbb{Z}\chi_1\oplus\mathbb{Z}\chi_2\oplus$\\
$\mathbb{Z}(2\chi_3)\cong \mathbb{Z}^3$ \end{tabular} & 
$3$ & 
$\frac{\mathbb{Z}\chi_3}{\mathbb{Z}(2\chi_3)}\cong \mathbb{Z}_2$ & 
$0$ & 
$0$ & 
$\frac{\mathbb{Z}\chi_3}{\mathbb{Z}(2\chi_3)}\cong \mathbb{Z}_2$ \\ \hline
\end{tabular}
\caption{Invariants of each conjugacy class in Theorem \ref{thm:algorithm_CV} for $\mathrm{SO}_7$.}
\label{tab:SO7_main}
\end{table}

Now, let us compute the invariants we need for each conjugacy class separately.

\begin{itemize}
    \item[(i)] For the conjugacy class of $w=(e, (+++))$, the centralizer is $C(w) = W$, with $|C(w)| = 48$. We have $\Phi_w \cong \mathbb{Z}^3$ and $D_w = 0$. Computations in Table \ref{tab:SO7_i} yield
\begin{align*}
(i) &= \frac{t^{3r}}{48} \Big[ 
(tuv+1)^{3r} 
+ 9(tuv-1)^r (tuv+1)^{2r} 
+ 9(tuv-1)^{2r} (tuv+1)^r 
+ (tuv-1)^{3r} \\
&\quad + 6(tuv+1)^r (t^2 u^2 v^2 + 1)^r 
+ 6(tuv-1)^r (t^2 u^2 v^2 + 1)^r + 8(tuv+1)^r (t^2 u^2 v^2 - tuv + 1)^r 
+ 8(t^3 u^3 v^3 - 1)^r 
\Big]
\end{align*}
\begin{table}[H]
\tiny
    \centering
    \setlength{\tabcolsep}{3pt}
    \begin{tabular}{|c|c|c|c|c|c|c|}
    \hline
        $[g]\in\textup{Conj}(C(w))$ & $c_w(g)$ & $\tau_{w}(g):D_w\rightarrow D_w$ & \begin{tabular}{@{}c@{}} $n_w(g)=$\\ $\left|\frac{D_w}{\im(I_{D_w}-\tau_w(g))}\right|$\end{tabular} & $\phi_w(g):\Phi_w\rightarrow \Phi_w$ & $tuvI_{\Phi_w}+\phi_w(g)$ & $\det(tuvI_{\Phi_w}+\phi_w(g))$ \\\hline
        
        $[ (e, (+++)) ]$ & $1$ & $(1)$ & $1$ & 
        $\begin{pmatrix} 1 & 0 & 0 \\ 0 & 1 & 0 \\ 0 & 0 & 1 \end{pmatrix}$ & 
        $\begin{pmatrix} tuv+1 & 0 & 0 \\ 0 & tuv+1 & 0 \\ 0 & 0 & tuv+1 \end{pmatrix}$ & 
        $(tuv+1)^3$ \\\hline
        
        $[ (e, (++-)) ]$ & $3$ & $(1)$ & $1$ & 
        $\begin{pmatrix} 1 & 0 & 0 \\ 0 & 1 & 0 \\ 0 & 2 & -1 \end{pmatrix}$ & 
        $\begin{pmatrix} tuv+1 & 0 & 0 \\ 0 & tuv+1 & 0 \\ 0 & 2 & tuv-1 \end{pmatrix}$ & 
        $(tuv-1)(tuv+1)^2$ \\\hline
        
        $[ (e, (+--)) ]$ & $3$ & $(1)$ & $1$ & 
        $\begin{pmatrix} 1 & 0 & 0 \\ 2 & -1 & 0 \\ 2 & 0 & -1 \end{pmatrix}$ & 
        $\begin{pmatrix} tuv+1 & 0 & 0 \\ 2 & tuv-1 & 0 \\ 2 & 0 & tuv-1 \end{pmatrix}$ & 
        $(tuv-1)^2(tuv+1)$ \\\hline
        
        $[ (e, (---)) ]$ & $1$ & $(1)$ & $1$ & 
        $\begin{pmatrix} -1 & 0 & 0 \\ 0 & -1 & 0 \\ 0 & 0 & -1 \end{pmatrix}$ & 
        $\begin{pmatrix} tuv-1 & 0 & 0 \\ 0 & tuv-1 & 0 \\ 0 & 0 & tuv-1 \end{pmatrix}$ & 
        $(tuv-1)^3$ \\\hline
        
        $[ ((12), (+++)) ]$ & $6$ & $(1)$ & $1$ & 
        $\begin{pmatrix} -1 & 1 & 0 \\ 0 & 1 & 0 \\ 0 & 0 & 1 \end{pmatrix}$ & 
        $\begin{pmatrix} tuv-1 & 1 & 0 \\ 0 & tuv+1 & 0 \\ 0 & 0 & tuv+1 \end{pmatrix}$ & 
        $(tuv-1)(tuv+1)^2$ \\\hline
        
        $[ ((12), (++-)) ]$ & $6$ & $(1)$ & $1$ & 
        $\begin{pmatrix} -1 & 1 & 0 \\ 0 & 1 & 0 \\ 0 & 2 & -1 \end{pmatrix}$ & 
        $\begin{pmatrix} tuv-1 & 1 & 0 \\ 0 & tuv+1 & 0 \\ 0 & 2 & tuv-1 \end{pmatrix}$ & 
        $(tuv-1)^2(tuv+1)$ \\\hline
        
        $[ ((23), (+-+)) ]$ & $6$ & $(1)$ & $1$ & 
        $\begin{pmatrix} 1 & 0 & 0 \\ 1 & -1 & 1 \\ 2 & -2 & 1 \end{pmatrix}$ & 
        $\begin{pmatrix} tuv+1 & 0 & 0 \\ 1 & tuv-1 & 1 \\ 2 & -2 & tuv+1 \end{pmatrix}$ & 
        $(tuv+1)(t^2 u^2 v^2 + 1)$ \\\hline
        
        $[ ((12), (-+-)) ]$ & $6$ & $(1)$ & $1$ & 
        $\begin{pmatrix} -1 & 1 & 0 \\ -2 & 1 & 0 \\ -2 & 2 & -1 \end{pmatrix}$ & 
        $\begin{pmatrix} tuv-1 & 1 & 0 \\ -2 & tuv+1 & 0 \\ -2 & 2 & tuv-1 \end{pmatrix}$ & 
        $(tuv-1)(t^2 u^2 v^2 + 1)$ \\\hline
        
        $[ ((123), (+++)) ]$ & $8$ & $(1)$ & $1$ & 
        $\begin{pmatrix} 0 & -1 & 1 \\ 1 & -1 & 1 \\ 0 & 0 & 1 \end{pmatrix}$ & 
        $\begin{pmatrix} tuv & -1 & 1 \\ 1 & tuv-1 & 1 \\ 0 & 0 & tuv+1 \end{pmatrix}$ & 
        $(tuv+1)(t^2 u^2 v^2 - tuv + 1)$ \\\hline
        
        $[ ((123), (++-)) ]$ & $8$ & $(1)$ & $1$ & 
        $\begin{pmatrix} 0 & 1 & -1 \\ 1 & 1 & -1 \\ 0 & 2 & -1 \end{pmatrix}$ & 
        $\begin{pmatrix} tuv & 1 & -1 \\ 1 & tuv+1 & -1 \\ 0 & 2 & tuv-1 \end{pmatrix}$ & 
        $(t^3 u^3 v^3 - 1)$ \\\hline
    \end{tabular}
    \caption{Computations for $\mathrm{SO}_7$ (i), conjugacy class of $w=(e, (+++))$ in the basis $\{\chi_1,\chi_2,\chi_3\}$ of $\Phi_w\cong \mathbb{Z}^3$}
    \label{tab:SO7_i}
\end{table}
\item[(ii)] For the conjugacy class of $w=(e, (++-))$, the centralizer is $C(w) \cong W(B_2) \times W(B_1)\cong (S_2\ltimes \mathbb{Z}_2^2)\times \mathbb{Z}_2$, with $|C(w)| = 16$. We have $\Phi_w \cong \mathbb{Z}^2$ and $D_w \cong \mathbb{Z}_2$. Observe that in the direct product $C(w) \cong W(B_2) \times W(B_1)\cong (S_2\ltimes \mathbb{Z}_2^2)\times \mathbb{Z}_2$, conjugacy classes of $C(w)$ are given by cartesian products of conjugacy classes of each factor. Computations in Table \ref{tab:SO7_ii} yield
\begin{align*}
(ii)
&= \frac{t^{3r}(uv)^{\frac{r}{2}}2^r}{8} \Big[ 
(tuv+1)^{2r} 
+ 4(tuv-1)^r (tuv+1)^r 
+ (tuv-1)^{2r} 
+ 2(t^2 u^2 v^2 + 1)^r 
\Big]
\end{align*}
\begin{table}[H]
\tiny
    \centering
    \setlength{\tabcolsep}{3pt}
    \begin{tabular}{|c|c|c|c|c|c|c|}
    \hline
        $[g]\in\textup{Conj}(C(w))$ & $c_w(g)$ & $\tau_{w}(g):D_w\rightarrow D_w$ & \begin{tabular}{@{}c@{}} $n_w(g)=$\\ $\left|\frac{D_w}{\im(I_{D_w}-\tau_w(g))}\right|$\end{tabular} & $\phi_w(g):\Phi_w\rightarrow \Phi_w$ & $tuvI_{\Phi_w}+\phi_w(g)$ & $\det(tuvI_{\Phi_w}+\phi_w(g))$ \\\hline
        
        $[ (e, (+++)) ]$ & $1$ & $(1)\pmod 2$ & $2$ & 
        $\begin{pmatrix} 1 & 0 \\ 0 & 1 \end{pmatrix}$ & 
        $\begin{pmatrix} tuv+1 & 0 \\ 0 & tuv+1 \end{pmatrix}$ & 
        $(tuv+1)^2$ \\\hline
        
        $[ (e, (++-)) ]$ & $1$ & $(1)\pmod 2$ & $2$ & 
        $\begin{pmatrix} 1 & 0 \\ 0 & 1 \end{pmatrix}$ & 
        $\begin{pmatrix} tuv+1 & 0 \\ 0 & tuv+1 \end{pmatrix}$ & 
        $(tuv+1)^2$ \\\hline
        
        $[ (e, (+-+)) ]$ & $2$ & $(1)\pmod 2$ & $2$ & 
        $\begin{pmatrix} 1 & 0 \\ 2 & -1 \end{pmatrix}$ & 
        $\begin{pmatrix} tuv+1 & 0 \\ 2 & tuv-1 \end{pmatrix}$ & 
        $(tuv-1)(tuv+1)$ \\\hline
        
        $[ (e, (+--)) ]$ & $2$ & $(1)\pmod 2$ & $2$ & 
        $\begin{pmatrix} 1 & 0 \\ 2 & -1 \end{pmatrix}$ & 
        $\begin{pmatrix} tuv+1 & 0 \\ 2 & tuv-1 \end{pmatrix}$ & 
        $(tuv-1)(tuv+1)$ \\\hline
        
        $[ (e, (--+)) ]$ & $1$ & $(1)\pmod 2$ & $2$ & 
        $\begin{pmatrix} -1 & 0 \\ 0 & -1 \end{pmatrix}$ & 
        $\begin{pmatrix} tuv-1 & 0 \\ 0 & tuv-1 \end{pmatrix}$ & 
        $(tuv-1)^2$ \\\hline
        
        $[ (e, (---)) ]$ & $1$ & $(1)\pmod 2$ & $2$ & 
        $\begin{pmatrix} -1 & 0 \\ 0 & -1 \end{pmatrix}$ & 
        $\begin{pmatrix} tuv-1 & 0 \\ 0 & tuv-1 \end{pmatrix}$ & 
        $(tuv-1)^2$ \\\hline
        
        $[ ((12), (+++)) ]$ & $2$ & $(1)\pmod 2$ & $2$ & 
        $\begin{pmatrix} -1 & 1 \\ 0 & 1 \end{pmatrix}$ & 
        $\begin{pmatrix} tuv-1 & 1 \\ 0 & tuv+1 \end{pmatrix}$ & 
        $(tuv-1)(tuv+1)$ \\\hline
        
        $[ ((12), (++-)) ]$ & $2$ & $(1)\pmod 2$ & $2$ & 
        $\begin{pmatrix} -1 & 1 \\ 0 & 1 \end{pmatrix}$ & 
        $\begin{pmatrix} tuv-1 & 1 \\ 0 & tuv+1 \end{pmatrix}$ & 
        $(tuv-1)(tuv+1)$ \\\hline
        
        $[ ((12), (+-+)) ]$ & $2$ & $(1)\pmod 2$ & $2$ & 
        $\begin{pmatrix} 1 & -1 \\ 2 & -1 \end{pmatrix}$ & 
        $\begin{pmatrix} tuv+1 & -1 \\ 2 & tuv-1 \end{pmatrix}$ & 
        $t^2 u^2 v^2 + 1$ \\\hline
        
        $[ ((12), (+--)) ]$ & $2$ & $(1)\pmod 2$ & $2$ & 
        $\begin{pmatrix} 1 & -1 \\ 2 & -1 \end{pmatrix}$ & 
        $\begin{pmatrix} tuv+1 & -1 \\ 2 & tuv-1 \end{pmatrix}$ & 
        $t^2 u^2 v^2 + 1$ \\\hline
    \end{tabular}
    \caption{Computations for $\mathrm{SO}_7$ (ii), conjugacy class of $w=(e, (++-))$ in the basis $\{\chi_1, \chi_2\}$ of $\Phi_w\cong \mathbb{Z}^2$ and $\{\chi_3\}$ for $D_w\cong \mathbb{Z}^2$}
    \label{tab:SO7_ii}
\end{table}

\item[(iii)] For the conjugacy class of $w=(e, (+--))$, the centralizer is also $C(w) \cong W(B_1) \times W(B_2)\cong \mathbb{Z}_2\times (S_2\ltimes \mathbb{Z}_2^2)$, with $|C(w)| = 16$. We have $\Phi_w \cong \mathbb{Z}$, $D_w \cong (\mathbb{Z}_2)^2$ and, again, conjugacy classes of $C(w)$ are given by cartesian products of conjugacy classes of each factor. Computations in Table \ref{tab:SO7_iii} yield
\vspace{-0.2cm}
\begin{align*}
(iii) &= \frac{t^{3r}(uv)^{r}(4^r+2^r)}{4} \Big[ 
(tuv+1)^r +(tuv-1)^r 
\Big]
\end{align*}
\vspace{-0.5cm}
\begin{table}[H]
\tiny
    \centering
    \setlength{\tabcolsep}{3pt}
    \begin{tabular}{|c|c|c|c|c|c|c|}
    \hline
        $[g]\in\textup{Conj}(C(w))$ & $c_w(g)$ & $\tau_{w}(g):D_w\rightarrow D_w$ & \begin{tabular}{@{}c@{}} $n_w(g)=$\\ $\left|\frac{D_w}{\im(I_{D_w}-\tau_w(g))}\right|$\end{tabular} & $\phi_w(g):\Phi_w\rightarrow \Phi_w$ & $tuvI_{\Phi_w}+\phi_w(g)$ & $\det(tuvI_{\Phi_w}+\phi_w(g))$ \\\hline
        
        $[ (e, (+++)) ]$ & $1$ & $\begin{pmatrix} 1 & 0 \\ 0 & 1 \end{pmatrix}\pmod 2$ & $4$ & $(1)$ & $(tuv+1)$ & $tuv+1$ \\\hline
        
        $[ (e, (-++)) ]$ & $1$ & $\begin{pmatrix} 1 & 0 \\ 0 & 1 \end{pmatrix}\pmod 2$ & $4$ & $(-1)$ & $(tuv-1)$ & $tuv-1$ \\\hline
        
        $[ (e, (++-)) ]$ & $2$ & $\begin{pmatrix} 1 & 0 \\ 0 & 1 \end{pmatrix}\pmod 2$ & $4$ & $(1)$ & $(tuv+1)$ & $tuv+1$ \\\hline
        
        $[ (e, (-+-)) ]$ & $2$ & $\begin{pmatrix} 1 & 0 \\ 0 & 1 \end{pmatrix}\pmod 2$ & $4$ & $(-1)$ & $(tuv-1)$ & $tuv-1$ \\\hline
        
        $[ (e, (+--)) ]$ & $1$ & $\begin{pmatrix} 1 & 0 \\ 0 & 1 \end{pmatrix}\pmod 2$ & $4$ & $(1)$ & $(tuv+1)$ & $tuv+1$ \\\hline
        
        $[ (e, (---)) ]$ & $1$ & $\begin{pmatrix} 1 & 0 \\ 0 & 1 \end{pmatrix}\pmod 2$ & $4$ & $(-1)$ & $(tuv-1)$ & $tuv-1$ \\\hline
        
        $[ ((23), (+++)) ]$ & $2$ & $\begin{pmatrix} 1 & 1 \\ 0 & 1 \end{pmatrix}\pmod 2$ & $2$ & $(1)$ & $(tuv+1)$ & $tuv+1$ \\\hline
        
        $[ ((23), (-++)) ]$ & $2$ & $\begin{pmatrix} 1 & 1 \\ 0 & 1 \end{pmatrix}\pmod 2$ & $2$ & $(-1)$ & $(tuv-1)$ & $tuv-1$ \\\hline
        
        $[ ((23), (++-)) ]$ & $2$ & $\begin{pmatrix} 1 & 1 \\ 0 & 1 \end{pmatrix}\pmod 2$ & $2$ & $(1)$ & $(tuv+1)$ & $tuv+1$ \\\hline
        
        $[ ((23), (-+-)) ]$ & $2$ & $\begin{pmatrix} 1 & 1 \\ 0 & 1 \end{pmatrix}\pmod 2$ & $2$ & $(-1)$ & $(tuv-1)$ & $tuv-1$ \\\hline
    \end{tabular}
    \caption{Computations for $\mathrm{SO}_7$ (iii), conjugacy class of $w=(e, (+--))$ in the basis $\{\chi_1\}$ of $\Phi_w\cong \mathbb{Z}$ and $\{\chi_2, \chi_3\}$ for $D_w\cong (\mathbb{Z}_2)^2$}
    \label{tab:SO7_iii}
\end{table}
\item[(iv)] For the conjugacy class of the central element $w=(e, (---))$, the centralizer is the whole $C(w) = W$, with $|C(w)| = 48$. We have $\Phi_w = 0$ and $D_w \cong (\mathbb{Z}_2)^3$. Computations in Table \ref{tab:SO7_iv} yield
\vspace{-0.2cm}
\begin{align*}
(iv) &= \frac{t^{3r}(uv)^{\frac{3}{2}r}}{6} \Big[ 8^r+3\cdot 4^r+2\cdot 2^r
\Big] 
\end{align*}
\vspace{-0.5cm}
\begin{table}[H]
\tiny
    \centering
    \setlength{\tabcolsep}{3pt}
    \begin{tabular}{|c|c|c|c|c|c|c|}
    \hline
        $[g]\in\textup{Conj}(C(w))$ & $c_w(g)$ & $\tau_{w}(g):D_w\rightarrow D_w$ & \begin{tabular}{@{}c@{}} $n_w(g)=$\\ $\left|\frac{D_w}{\im(I_{D_w}-\tau_w(g))}\right|$\end{tabular} & $\phi_w(g):\Phi_w\rightarrow \Phi_w$ & $tuvI_{\Phi_w}+\phi_w(g)$ & $\det(tuvI_{\Phi_w}+\phi_w(g))$ \\\hline
        
        $[ (e, (+++)) ]$ & $1$ & $\begin{pmatrix} 1 & 0 & 0 \\ 0 & 1 & 0 \\ 0 & 0 & 1 \end{pmatrix}\pmod 2$ & $8$ & $-$ & $-$ & $1$ \\\hline
        
        $[ (e, (++-)) ]$ & $3$ & $\begin{pmatrix} 1 & 0 & 0 \\ 0 & 1 & 0 \\ 0 & 0 & 1 \end{pmatrix}\pmod 2$ & $8$ & $-$ & $-$ & $1$ \\\hline
        
        $[ (e, (+--)) ]$ & $3$ & $\begin{pmatrix} 1 & 0 & 0 \\ 0 & 1 & 0 \\ 0 & 0 & 1 \end{pmatrix}\pmod 2$ & $8$ & $-$ & $-$ & $1$ \\\hline
        
        $[ (e, (---)) ]$ & $1$ & $\begin{pmatrix} 1 & 0 & 0 \\ 0 & 1 & 0 \\ 0 & 0 & 1 \end{pmatrix}\pmod 2$ & $8$ & $-$ & $-$ & $1$ \\\hline
        
        $[ ((12), (+++)) ]$ & $6$ & $\begin{pmatrix} 1 & 1 & 0 \\ 0 & 1 & 0 \\ 0 & 0 & 1 \end{pmatrix}\pmod 2$ & $4$ & $-$ & $-$ & $1$ \\\hline
        
        $[ ((12), (++-)) ]$ & $6$ & $\begin{pmatrix} 1 & 1 & 0 \\ 0 & 1 & 0 \\ 0 & 0 & 1 \end{pmatrix}\pmod 2$ & $4$ & $-$ & $-$ & $1$ \\\hline
        
        $[ ((23), (+-+)) ]$ & $6$ & $\begin{pmatrix} 1 & 0 & 0 \\ 1 & 1 & 1 \\ 0 & 0 & 1 \end{pmatrix}\pmod 2$ & $4$ & $-$ & $-$ & $1$ \\\hline
        
        $[ ((12), (-+-)) ]$ & $6$ & $\begin{pmatrix} 1 & 1 & 0 \\ 0 & 1 & 0 \\ 0 & 0 & 1 \end{pmatrix}\pmod 2$ & $4$ & $-$ & $-$ & $1$ \\\hline
        
        $[ ((123), (+++)) ]$ & $8$ & $\begin{pmatrix} 0 & 1 & 1 \\ 1 & 1 & 1 \\ 0 & 0 & 1 \end{pmatrix}\pmod 2$ & $2$ & $-$ & $-$ & $1$ \\\hline
        
        $[ ((123), (++-)) ]$ & $8$ & $\begin{pmatrix} 0 & 1 & 1 \\ 1 & 1 & 1 \\ 0 & 0 & 1 \end{pmatrix}\pmod 2$ & $2$ & $-$ & $-$ & $1$ \\\hline
    \end{tabular}
    \caption{Computations for $\mathrm{SO}_7$ (iv), conjugacy class of $w=(e, (---))$ in the basis $\{\chi_1, \chi_2, \chi_3\}$ for $D_w\cong (\mathbb{Z}_2)^3$}
    \label{tab:SO7_iv}
\end{table}
\vspace{-0.5cm}
\item[(v)] For the conjugacy class of $w=((12), (+++))$, the centralizer is $C(w) \cong \mathbb{Z}_2 \times \mathbb{Z}_2 \times \mathbb{Z}_2$, with $|C(w)| = 8$, generated by the transposition $((12), (+++))$, the simultaneous sign flip for positions 1, 2, $(e, (--+))$, and the flip for position $3$, $(e, (++-))$. We have $\Phi_w \cong \mathbb{Z}^2$ and $D_w = 0$. Being $C(w)$ an abelian group its conjugacy classes are singletons. Computations in Table \ref{tab:SO7_v} yield
\vspace{-0.3cm}
\begin{align*}
(v) &= \frac{t^{3r}(uv)^{\frac{r}{2}}}{4} \Big[ 
(tuv+1)^{2r} 
+ 2(tuv-1)^r(tuv+1)^r 
+ (tuv-1)^{2r} 
\Big]
\end{align*}
\vspace{-0.5cm}
\begin{table}[H]
\tiny
    \centering
    \setlength{\tabcolsep}{3pt}
    \begin{tabular}{|c|c|c|c|c|c|c|}
    \hline
        $[g]\in\textup{Conj}(C(w))$ & $c_w(g)$ & $\tau_{w}(g):D_w\rightarrow D_w$ & \begin{tabular}{@{}c@{}} $n_w(g)=$\\ $\left|\frac{D_w}{\im(I_{D_w}-\tau_w(g))}\right|$\end{tabular} & $\phi_w(g):\Phi_w\rightarrow \Phi_w$ & $tuvI_{\Phi_w}+\phi_w(g)$ & $\det(tuvI_{\Phi_w}+\phi_w(g))$ \\\hline
        
        $[ (e, (+++)) ]$ & $1$ & $-$ & $1$ & 
        $\begin{pmatrix} 1 & 0 \\ 0 & 1 \end{pmatrix}$ & 
        $\begin{pmatrix} tuv+1 & 0 \\ 0 & tuv+1 \end{pmatrix}$ & 
        $(tuv+1)^2$ \\\hline
        
        $[ (e, (++-)) ]$ & $1$ & $-$ & $1$ & 
        $\begin{pmatrix} 1 & 0 \\ 2 & -1 \end{pmatrix}$ & 
        $\begin{pmatrix} tuv+1 & 0 \\ 2 & tuv-1 \end{pmatrix}$ & 
        $(tuv+1)(tuv-1)$ \\\hline
        
        $[ (e, (--+)) ]$ & $1$ & $-$ & $1$ & 
        $\begin{pmatrix} -1 & 0 \\ -2 & 1 \end{pmatrix}$ & 
        $\begin{pmatrix} tuv-1 & 0 \\ -2 & tuv+1 \end{pmatrix}$ & 
        $(tuv-1)(tuv+1)$ \\\hline
        
        $[ (e, (---)) ]$ & $1$ & $-$ & $1$ & 
        $\begin{pmatrix} -1 & 0 \\ 0 & -1 \end{pmatrix}$ & 
        $\begin{pmatrix} tuv-1 & 0 \\ 0 & tuv-1 \end{pmatrix}$ & 
        $(tuv-1)^2$ \\\hline
        
        $[ ((12), (+++)) ]$ & $1$ & $-$ & $1$ & 
        $\begin{pmatrix} 1 & 0 \\ 0 & 1 \end{pmatrix}$ & 
        $\begin{pmatrix} tuv+1 & 0 \\ 0 & tuv+1 \end{pmatrix}$ & 
        $(tuv+1)^2$ \\\hline
        
        $[ ((12), (++-)) ]$ & $1$ & $-$ & $1$ & 
        $\begin{pmatrix} 1 & 0 \\ 2 & -1 \end{pmatrix}$ & 
        $\begin{pmatrix} tuv+1 & 0 \\ 2 & tuv-1 \end{pmatrix}$ & 
        $(tuv+1)(tuv-1)$ \\\hline
        
        $[ ((12), (--+)) ]$ & $1$ & $-$ & $1$ & 
        $\begin{pmatrix} -1 & 0 \\ -2 & 1 \end{pmatrix}$ & 
        $\begin{pmatrix} tuv-1 & 0 \\ -2 & tuv+1 \end{pmatrix}$ & 
        $(tuv-1)(tuv+1)$ \\\hline
        
        $[ ((12), (---)) ]$ & $1$ & $-$ & $1$ & 
        $\begin{pmatrix} -1 & 0 \\ 0 & -1 \end{pmatrix}$ & 
        $\begin{pmatrix} tuv-1 & 0 \\ 0 & tuv-1 \end{pmatrix}$ & 
        $(tuv-1)^2$ \\\hline
    \end{tabular}
    \caption{Computations for $\mathrm{SO}_7$ (v), conjugacy class of $w=((12), (+++))$ in the basis $\{\chi_2, \chi_3\}$ for $\Phi_w\cong \mathbb{Z}^2$}
    \label{tab:SO7_v}
\end{table}
\vspace{-0.3cm}
\item[(vi)] For the conjugacy class of $w=((12), (++-))$, reasoning as in (v) the centralizer is $C(w) \cong \mathbb{Z}_2 \times \mathbb{Z}_2 \times \mathbb{Z}_2$, with $|C(w)| = 8$. We have $\Phi_w \cong \mathbb{Z}$ in the basis $\{\chi_2\}$ and $D_w \cong \mathbb{Z}_2$. Computations in Table \ref{tab:SO7_vi} yield
\vspace{-0.3cm}
\begin{align*}
(vi)
&= t^{3r}(uv)^r2^{r-1} \Big[ (tuv+1)^r + (tuv-1)^r \Big]
\end{align*}
\vspace{-0.5cm}
\begin{table}[H]
\tiny
    \centering
    \setlength{\tabcolsep}{3pt}
    \begin{tabular}{|c|c|c|c|c|c|c|}
    \hline
        $[g]\in\textup{Conj}(C(w))$ & $c_w(g)$ & $\tau_{w}(g):D_w\rightarrow D_w$ & \begin{tabular}{@{}c@{}} $n_w(g)=$\\ $\left|\frac{D_w}{\im(I_{D_w}-\tau_w(g))}\right|$\end{tabular} & $\phi_w(g):\Phi_w\rightarrow \Phi_w$ & $tuvI_{\Phi_w}+\phi_w(g)$ & $\det(tuvI_{\Phi_w}+\phi_w(g))$ \\\hline
        
        $[ (e, (+++)) ]$ & $1$ & $(1)\pmod 2$ & $2$ & 
        $(1)$ & 
        $(tuv+1)$ & 
        $(tuv+1)$ \\\hline
        
        $[ (e, (++-)) ]$ & $1$ & $(1)\pmod 2$ & $2$ & 
        $(1)$ & 
        $(tuv+1)$ & 
        $(tuv+1)$ \\\hline
        
        $[ (e, (--+)) ]$ & $1$ & $(1)\pmod 2$ & $2$ & 
        $(-1)$ & 
        $(tuv-1)$ & 
        $(tuv-1)$ \\\hline
        
        $[ (e, (---)) ]$ & $1$ & $(1)\pmod 2$ & $2$ & 
        $(-1)$ & 
        $(tuv-1)$ & 
        $(tuv-1)$ \\\hline
        
        $[ ((12), (+++)) ]$ & $1$ & $(1)\pmod 2$ & $2$ & 
        $(1)$ & 
        $(tuv+1)$ & 
        $(tuv+1)$ \\\hline
        
        $[ ((12), (++-)) ]$ & $1$ & $(1)\pmod 2$ & $2$ & 
        $(1)$ & 
        $(tuv+1)$ & 
        $(tuv+1)$ \\\hline
        
        $[ ((12), (--+)) ]$ & $1$ & $(1)\pmod 2$ & $2$ & 
        $(-1)$ & 
        $(tuv-1)$ & 
        $(tuv-1)$ \\\hline
        
        $[ ((12), (---)) ]$ & $1$ & $(1)\pmod 2$ & $2$ & 
        $(-1)$ & 
        $(tuv-1)$ & 
        $(tuv-1)$ \\\hline
    \end{tabular}
    \caption{Computations for $\mathrm{SO}_7$ (vi), conjugacy class of $w=((12), (++-))$ in the basis $\{\chi_2\}$ for $\Phi_w\cong \mathbb{Z}$ and $\{\chi_3\}$ for $D_w\cong \mathbb{Z}_2$}
    \label{tab:SO7_vi}
\end{table}
\vspace{-0.3cm}
\item[(vii)] For the conjugacy class of $w=((23), (+-+))$, the centralizer is $C(w) \cong \mathbb{Z}_2 \times \mathbb{Z}_4$, with $|C(w)| = 8$, generated by the first sign flip and the element $w$ of order $4$. We have $\Phi_w \cong \mathbb{Z}$ and $D_w = \mathbb{Z}_2$. Computations in Table \ref{tab:SO7_vii} yield
\begin{align*}
(vii) &= t^{3r}(uv)^r2^{r-1} \Big[ (tuv+1)^r + (tuv-1)^r \Big]
\end{align*}
\vspace{-0.5cm}
\begin{table}[H]
\tiny
    \centering
    \setlength{\tabcolsep}{3pt}
    \begin{tabular}{|c|c|c|c|c|c|c|}
    \hline
        $[g]\in\textup{Conj}(C(w))$ & $c_w(g)$ & $\tau_{w}(g):D_w\rightarrow D_w$ & \begin{tabular}{@{}c@{}} $n_w(g)=$\\ $\left|\frac{D_w}{\im(I_{D_w}-\tau_w(g))}\right|$\end{tabular} & $\phi_w(g):\Phi_w\rightarrow \Phi_w$ & $tuvI_{\Phi_w}+\phi_w(g)$ & $\det(tuvI_{\Phi_w}+\phi_w(g))$ \\\hline
        
        $[ (e, (+++)) ]$ & $1$ & $(1)\pmod 2$ & $2$ & 
        $(1)$ & 
        $(tuv+1)$ & 
        $(tuv+1)$ \\\hline
        
        $[ (e, (+--)) ]$ & $1$ & $(1)\pmod 2$ & $2$ & 
        $(1)$ & 
        $(tuv+1)$ & 
        $(tuv+1)$ \\\hline
        
        $[ (e, (-++)) ]$ & $1$ & $(1)\pmod 2$ & $2$ & 
        $(-1)$ & 
        $(tuv-1)$ & 
        $(tuv-1)$ \\\hline
        
        $[ (e, (---)) ]$ & $1$ & $(1)\pmod 2$ & $2$ & 
        $(-1)$ & 
        $(tuv-1)$ & 
        $(tuv-1)$ \\\hline
        
        $[ ((23), (++-)) ]$ & $1$ & $(1)\pmod 2$ & $2$ & 
        $(1)$ & 
        $(tuv+1)$ & 
        $(tuv+1)$ \\\hline
        
        $[ ((23), (+-+)) ]$ & $1$ & $(1)\pmod 2$ & $2$ & 
        $(1)$ & 
        $(tuv+1)$ & 
        $(tuv+1)$ \\\hline
        
        $[ ((23), (-+-)) ]$ & $1$ & $(1)\pmod 2$ & $2$ & 
        $(-1)$ & 
        $(tuv-1)$ & 
        $(tuv-1)$ \\\hline
        
        $[ ((23), (--+)) ]$ & $1$ & $(1)\pmod 2$ & $2$ & 
        $(-1)$ & 
        $(tuv-1)$ & 
        $(tuv-1)$ \\\hline
    \end{tabular}
    \caption{Computations for $\mathrm{SO}_7$ (vii), conjugacy class of $w=((23), (+-+))$ in the basis $\{\chi_1\}$ for $\Phi_w\cong \mathbb{Z}$ and $\{\chi_3\}$ for $D_w\cong \mathbb{Z}_2$}
    \label{tab:SO7_vii}
\end{table}
\vspace{-0.3cm}
\item[(viii)] For the conjugacy class of $w=((12), (-+-))$, similarly to (vii), the centralizer is $C(w) \cong \mathbb{Z}_4 \times \mathbb{Z}_2$, with $|C(w)| = 8$, generated by $w$ and the third sign flip. We have $\Phi_w = 0$ and $D_w \cong (\mathbb{Z}_2)^2$. Computations in Table \ref{tab:SO7_viii} yield
\begin{align*}
(viii) &= t^{3r}(uv)^{\frac{3}{2}r}4^r
\end{align*}
\begin{table}[H]
\tiny
    \centering
    \setlength{\tabcolsep}{3pt}
    \begin{tabular}{|c|c|c|c|c|c|c|}
    \hline
        $[g]\in\textup{Conj}(C(w))$ & $c_w(g)$ & $\tau_{w}(g):D_w\rightarrow D_w$ & \begin{tabular}{@{}c@{}} $n_w(g)=$\\ $\left|\frac{D_w}{\im(I_{D_w}-\tau_w(g))}\right|$\end{tabular} & $\phi_w(g):\Phi_w\rightarrow \Phi_w$ & $tuvI_{\Phi_w}+\phi_w(g)$ & $\det(tuvI_{\Phi_w}+\phi_w(g))$ \\\hline
        
        $[ (e, (+++)) ]$ & $1$ & $\begin{pmatrix} 1 & 0 \\ 0 & 1 \end{pmatrix}\pmod 2$ & $4$ & 
        $-$ & $-$ & $1$ \\\hline
        
        $[ (e, (--+)) ]$ & $1$ & $\begin{pmatrix} 1 & 0 \\ 0 & 1 \end{pmatrix}\pmod 2$ & $4$ & 
        $-$ & $-$ & $1$ \\\hline
        
        $[ (e, (++-)) ]$ & $1$ & $\begin{pmatrix} 1 & 0 \\ 0 & 1 \end{pmatrix}\pmod 2$ & $4$ & 
        $-$ & $-$ & $1$ \\\hline
        
        $[ (e, (---)) ]$ & $1$ & $\begin{pmatrix} 1 & 0 \\ 0 & 1 \end{pmatrix}\pmod 2$ & $4$ & 
        $-$ & $-$ & $1$ \\\hline
        
        $[ ((12), (+--)) ]$ & $1$ & $\begin{pmatrix} 1 & 0 \\ 0 & 1 \end{pmatrix}\pmod 2$ & $4$ & 
        $-$ & $-$ & $1$ \\\hline
        
        $[ ((12), (-+-)) ]$ & $1$ & $\begin{pmatrix} 1 & 0 \\ 0 & 1 \end{pmatrix}\pmod 2$ & $4$ & 
        $-$ & $-$ & $1$ \\\hline
        
        $[ ((12), (+-+)) ]$ & $1$ & $\begin{pmatrix} 1 & 0 \\ 0 & 1 \end{pmatrix}\pmod 2$ & $4$ & 
        $-$ & $-$ & $1$ \\\hline
        
        $[ ((12), (-++)) ]$ & $1$ & $\begin{pmatrix} 1 & 0 \\ 0 & 1 \end{pmatrix}\pmod 2$ & $4$ & 
        $-$ & $-$ & $1$ \\\hline
    \end{tabular}
    \caption{Computations for $\mathrm{SO}_7$ (viii), conjugacy class of $w=((12), (-+-))$ in the basis $\{\chi_2, \chi_3\}$ for $D_w\cong (\mathbb{Z}_2)^2$}
    \label{tab:SO7_viii}
\end{table}
\vspace{-0.3cm}
\item[(ix)] For the conjugacy class of $w=((123), (+++))$, the centralizer is $C(w) \cong \mathbb{Z}_3 \times \mathbb{Z}_2$, with $|C(w)| = 6$, generated by the $3$-cycle $w$ and the triple simultaneous sign change. We have $\Phi_w \cong \mathbb{Z}$ generated by $\chi_3$ and $D_w = 0$. Computations in Table \ref{tab:SO7_ix} yield
\begin{align*}
(ix) &= \frac{t^{3r}(uv)^r}{2} \Big[ (tuv+1)^r + (tuv-1)^r \Big]
\end{align*}
\vspace{-0.4cm}
\begin{table}[H]
\tiny
    \centering
    \setlength{\tabcolsep}{3pt}
    \begin{tabular}{|c|c|c|c|c|c|c|}
    \hline
        $[g]\in\textup{Conj}(C(w))$ & $c_w(g)$ & $\tau_{w}(g):D_w\rightarrow D_w$ & \begin{tabular}{@{}c@{}} $n_w(g)=$\\ $\left|\frac{D_w}{\im(I_{D_w}-\tau_w(g))}\right|$\end{tabular} & $\phi_w(g):\Phi_w\rightarrow \Phi_w$ & $tuvI_{\Phi_w}+\phi_w(g)$ & $\det(tuvI_{\Phi_w}+\phi_w(g))$ \\\hline
        
        $[ (e, (+++)) ]$ & $1$ & $-$ & $1$ & 
        $(1)$ & 
        $(tuv+1)$ & 
        $(tuv+1)$ \\\hline
        
        $[ ((123), (+++)) ]$ & $1$ & $-$ & $1$ & 
        $(1)$ & 
        $(tuv+1)$ & 
        $(tuv+1)$ \\\hline
        
        $[ ((132), (+++)) ]$ & $1$ & $-$ & $1$ & 
        $(1)$ & 
        $(tuv+1)$ & 
        $(tuv+1)$ \\\hline
        
        $[ (e, (---)) ]$ & $1$ & $-$ & $1$ & 
        $(-1)$ & 
        $(tuv-1)$ & 
        $(tuv-1)$ \\\hline
        
        $[ ((123), (---)) ]$ & $1$ & $-$ & $1$ & 
        $(-1)$ & 
        $(tuv-1)$ & 
        $(tuv-1)$ \\\hline
        
        $[ ((132), (---)) ]$ & $1$ & $-$ & $1$ & 
        $(-1)$ & 
        $(tuv-1)$ & 
        $(tuv-1)$ \\\hline
    \end{tabular}
    \caption{Computations for $\mathrm{SO}_7$ (ix), conjugacy class of $w=((123), (+++))$ in the basis $\{\chi_3\}$ for $\Phi_w\cong \mathbb{Z}$}
    \label{tab:SO7_ix}
\end{table}
\vspace{-0.3cm}
\item[(x)] For the conjugacy class of $w=((123), (++-))$, the centralizer is $C(w) = \langle w \rangle \cong \mathbb{Z}_6$, with $|C(w)| = 6$. We have $\Phi_w = 0$ and $D_w \cong \mathbb{Z}_2$ generated by $\chi_3$. Computations in Table \ref{tab:SO7_x} yield
\vspace{-0.1cm}
\begin{align*}
(x) &= t^{3r}(uv)^{\frac{3}{2}r}2^r
\end{align*}
\vspace{-0.5cm}
\begin{table}[H]
\tiny
    \centering
    \setlength{\tabcolsep}{3pt}
    \begin{tabular}{|c|c|c|c|c|c|c|}
    \hline
        $[g]\in\textup{Conj}(C(w))$ & $c_w(g)$ & $\tau_{w}(g):D_w\rightarrow D_w$ & \begin{tabular}{@{}c@{}} $n_w(g)=$\\ $\left|\frac{D_w}{\im(I_{D_w}-\tau_w(g))}\right|$\end{tabular} & $\phi_w(g):\Phi_w\rightarrow \Phi_w$ & $tuvI_{\Phi_w}+\phi_w(g)$ & $\det(tuvI_{\Phi_w}+\phi_w(g))$ \\\hline
        
        $[ (e, (+++)) ]$ & $1$ & $(1)\pmod 2$ & $2$ & 
        $-$ & $-$ & $1$ \\\hline
        
        $[ ((123), (++-)) ]$ & $1$ & $(1)\pmod 2$ & $2$ & 
        $-$ & $-$ & $1$ \\\hline
        
        $[ ((132), (+--)) ]$ & $1$ & $(1)\pmod 2$ & $2$ & 
        $-$ & $-$ & $1$ \\\hline
        
        $[ (e, (---)) ]$ & $1$ & $(1)\pmod 2$ & $2$ & 
        $-$ & $-$ & $1$ \\\hline
        
        $[ ((123), (--+)) ]$ & $1$ & $(1)\pmod 2$ & $2$ & 
        $-$ & $-$ & $1$ \\\hline
        
        $[ ((132), (-++)) ]$ & $1$ & $(1)\pmod 2$ & $2$ & 
        $-$ & $-$ & $1$ \\\hline
    \end{tabular}
    \caption{Computations for $\mathrm{SO}_7$ (x), conjugacy class of $w=((123), (++-))$ in the basis $\{\chi_3\}$ for $D_w\cong \mathbb{Z}_2$ }
    \label{tab:SO7_x}
\end{table}
\end{itemize}
\vspace{-0.3cm}
Finally, the stringy compactly supported mixed Hodge polynomial of $T_{\SO_7}^r/W$ is obtained by summing over all 10 conjugacy classes $(\textrm{i}) + (\textrm{ii}) + \dots + (\textrm{x})$:
\begin{align*}
\mu_c^{str}(T_{\SO_7}^r/W)&= \frac{t^{3r}}{48} \Big[ (tuv+1)^{3r} + 9(tuv-1)^r (tuv+1)^{2r} + 9(tuv-1)^{2r} (tuv+1)^r + (tuv-1)^{3r} \\
&\qquad\quad + 6(tuv+1)^r (t^2 u^2 v^2 + 1)^r + 6(tuv-1)^r (t^2 u^2 v^2 + 1)^r \\
&\qquad\quad + 8(tuv+1)^r (t^2 u^2 v^2 - tuv + 1)^r + 8(t^3 u^3 v^3 - 1)^r \Big] \\
&\quad + \frac{t^{3r}(uv)^{\frac{r}{2}} 2^r}{8} \Big[ (tuv+1)^{2r} + 4(tuv-1)^r(tuv+1)^r + (tuv-1)^{2r} + 2(t^2 u^2 v^2 + 1)^r \Big] \\
&\quad + \frac{t^{3r}(uv)^{\frac{r}{2}}}{4} \Big[ (tuv+1)^{2r} + 2(tuv-1)^r(tuv+1)^r + (tuv-1)^{2r} \Big] \\
&\quad + \frac{t^{3r}(uv)^r \big(4^r + 5 \cdot 2^r + 2\big)}{4} \Big[ (tuv+1)^r + (tuv-1)^r \Big] + \frac{t^{3r}(uv)^{\frac{3}{2}r} \big(8^r + 9 \cdot 4^r + 8 \cdot 2^r\big)}{6}.
\end{align*}
For $r=2$ and setting $uv=q$, the compactly supported mixed Hodge polynomial for the $\SO_7$-character variety of the elliptic curve is:
\begin{align*}
\mu_c^{str}(T_{\SO_7}^2/W) &= q^{6} t^{12} + 5 \, q^{5} t^{10} + q^{4} t^{10} + 19 \, q^{4} t^{8} + 6 \, q^{3} t^{8} + q^{2} t^{8} + 40 \, q^{3} t^{6} + 19 \, q^{2} t^{6} + 5 \, q t^{6} + t^{6}\; .
\end{align*}
Evaluating at $t = -1$ yields the stringy $E$-polynomial of the $\SO_7$-character variety of the elliptic curve:
\begin{align*}
E^\str(T_{\SO_7}^2/W) &= q^6 + 5q^5 + 20q^4 + 46q^3 + 20q^2 + 5q + 1\; .
\end{align*}

\subsection{Example: $G = \G_2$}\label{ssec:G2}
Let us now describe the computations in Theorem \ref{thm:algorithm_CV} for 
the case $G=\G_2$ of rank $2$. Notice that for this root system, the weight and 
the root lattices coincide; hence, the group is both simply connected and centerless. 
Let $T_{\G_2} \subseteq \G_2$ be a maximal torus, and let $\Lambda = X^*(T_{\G_2}) = Q = P \cong \mathbb{Z}^2$ 
be the character lattice, equal to the root and weight lattices, and thus generated 
by the two simple roots. Following the notation in Section \ref{sec:action_weyl}, 
we have that $n = m = 2$, and we can choose the simple roots $\alpha_1$ and 
$\alpha_2$ as a basis for $\Lambda = Q$, with $\alpha_1$ being the long root 
and $\alpha_2$ the short one. Since the simple roots are connected in the corresponding Dynkin diagram by a triple 
link pointing from the long root $\alpha_1$ to the short root $\alpha_2$, the Cartan 
matrix of the Lie algebra of $\G_2$ is:
\[
A = \begin{pmatrix} 2 & -1 \\ -3 & 2 \end{pmatrix}.
\]
Note that this Cartan matrix is not symmetric, as this is a non-simply laced Dynkin diagrams.

The Weyl group of $\G_2$ is $W = D_6$, the dihedral group of order $|D_6|=12$. It is 
generated by the simple reflections $s_1, s_2$ with respect to the hyperplanes 
orthogonal to the simple roots $\alpha_1, \alpha_2$, respectively. Since $\Lambda = Q$, 
the matrix representing the action of each simple reflection $s_k$ on $\Lambda$ is 
given by $B_k = C_k = I_2 - E_{kk}A$ (see (\ref{eq:matrix_Ck})), yielding: 
\[
B_1 = \begin{pmatrix} -1 & 1 \\ 0 & 1 \end{pmatrix}\quad , \quad 
B_2 = \begin{pmatrix} 1 & 0 \\ 3 & -1 \end{pmatrix}.
\]
These simple reflections generate the Weyl group $W=D_6$, where each element acts on $\Lambda=\mathbb{Z}\chi_1\oplus \mathbb{Z}\chi_2$ as the matrix in Table \ref{tab:G2_action}.

\begin{table}[H]
\scriptsize
\centering
\vspace{0.2cm}
\setlength{\tabcolsep}{4pt}
\begin{tabular}{|c|c|c||c|c|c||c|c|c||c|c|c|}
\hline
$w\in D_6$ & word & $B_w$ & $w\in D_6$ & word & $B_w$ & $w\in D_6$ & word & $B_w$ & $w\in D_6$ & word & $B_w$ \\ 
\hline
$1$ & $1$ & $\begin{pmatrix} 1 & 0 \\ 0 & 1 \end{pmatrix}$ & $\rho^3$ & $(s_1 s_2)^3$ & $\begin{pmatrix} -1 & 0 \\ 0 & -1 \end{pmatrix}$ & $\tau$ & $s_1$ & $\begin{pmatrix} -1 & 1 \\ 0 & 1 \end{pmatrix}$ & $\tau\rho^3$ & $s_2 (s_1 s_2)^2$ & $\begin{pmatrix} 1 & -1 \\ 0 & -1 \end{pmatrix}$ \\\hline
$\rho$ & $s_1 s_2$ & $\begin{pmatrix} -1 & 1 \\ -3 & 2 \end{pmatrix}$ & $\rho^4$ & $(s_2 s_1)^2$ & $\begin{pmatrix} 1 & -1 \\ 3 & -2 \end{pmatrix}$ & $\tau\rho$ & $s_2$ & $\begin{pmatrix} 1 & 0 \\ 3 & -1 \end{pmatrix}$ & $\tau\rho^4$ & $s_1 (s_2 s_1)^2$ & $\begin{pmatrix} -1 & 0 \\ -3 & 1 \end{pmatrix}$ \\\hline
$\rho^2$ & $(s_1 s_2)^2$ & $\begin{pmatrix} -2 & 1 \\ -3 & 1 \end{pmatrix}$ & $\rho^5$ & $s_2 s_1$ & $\begin{pmatrix} 2 & -1 \\ 3 & -1 \end{pmatrix}$ & $\tau\rho^2$ & $s_2 s_1 s_2$ & $\begin{pmatrix} 2 & -1 \\ 3 & -2 \end{pmatrix}$ & $\tau\rho^5$ & $s_1s_2s_1$ & $\begin{pmatrix} -2 & 1 \\ -3 & 2 \end{pmatrix}$ \\\hline
\end{tabular}
\caption{Action of elements of $W=D_6$ on $\Lambda$ for $\G_2$.}
\label{tab:G2_action}
\end{table}

Table \ref{tab:G2_main} summarizes the computations involved in (\ref{eq:magic-formula}) for the compactly supported stringy mixed Hodge polynomial $\mu^{\str}_c(T_{\G_2}^r/W)$, for the six conjugacy classes in $W=D_6$.

\begin{table}[H]
\footnotesize
    \centering
    \setlength{\tabcolsep}{3pt}
    \begin{tabular}{|c|c|c|c|c|c|c|c|c|c|}
    \hline
        $[w]\in\textup{Conj}(W)$ & $w$ & $|C(w)|$ & $I-B_w$ & $A_w=\im (I-B_w)$& $\rk A_w$ & $K_w=\Lambda / A_w$ & $s_w$ & $\Phi_w$ & $D_w$\\\hline
        (i) & $1$ & $12$ & $\begin{pmatrix} 0 & 0 \\ 0 & 0 \end{pmatrix}$ & $0$ & $0$ & $\mathbb{Z}\chi_1\oplus \mathbb{Z}\chi_2$ & $2$ & $\mathbb{Z}\chi_1\oplus \mathbb{Z}\chi_2$ & $0$\\\hline
        (ii) & $\rho^3$ & $12$ & $\begin{pmatrix} 2 & 0 \\ 0 & 2 \end{pmatrix}$ & $\mathbb{Z}2\chi_1\oplus \mathbb{Z}2\chi_2\cong \mathbb{Z}^2$ & $2$ & \begin{tabular}{@{}c@{}} $\frac{\mathbb{Z}\chi_1}{\mathbb{Z}(2\chi_1)}\oplus\frac{\mathbb{Z}\chi_2}{\mathbb{Z}(2\chi_2)}$\\ $\cong \mathbb{Z}_2\oplus\mathbb{Z}_2$\end{tabular} & $0$ & $0$ & \begin{tabular}{@{}c@{}} $\frac{\mathbb{Z}\chi_1}{\mathbb{Z}(2\chi_1)}\oplus\frac{\mathbb{Z}\chi_2}{\mathbb{Z}(2\chi_2)}$\\ $\cong \mathbb{Z}_2\oplus\mathbb{Z}_2$\end{tabular}\\\hline
        (iii) & $\rho$ & $6$ & $\begin{pmatrix} 2 & -1 \\ 3 & -1 \end{pmatrix}$ & $\mathbb{Z}\chi_1\oplus \mathbb{Z}\chi_2\cong\mathbb{Z}^2$ & $2$ & $0$ & $0$ & $0$  & $0$ \\\hline
        (iv) & $\rho^2$ & $6$ & $\begin{pmatrix} 3 & -1 \\ 3 & 0 \end{pmatrix}$ & $\mathbb{Z}\chi_1\oplus\mathbb{Z}3\chi_2\cong \mathbb{Z}^2$& $2$ & $\frac{\mathbb{Z}\chi_2}{\mathbb{Z}3\chi_2}\cong \mathbb{Z}_3$ & $0$ & $0$ & $\frac{\mathbb{Z}\chi_2}{\mathbb{Z}3\chi_2}\cong \mathbb{Z}_3$\\\hline
        (v) & $\tau$ & $4$ & $\begin{pmatrix} 2 & -1 \\ 0 & 0 \end{pmatrix}$ & $\mathbb{Z}\chi_1\cong \mathbb{Z}$& $1$ & $\mathbb{Z}\chi_2 \cong \mathbb{Z}$ & $1$ & $\mathbb{Z}\chi_2\cong \mathbb{Z}$ & $0$ \\\hline
        (vi) & $\tau\rho$ & $4$ & $\begin{pmatrix} 0 & 0 \\ -3 & 2 \end{pmatrix}$ & $\mathbb{Z}\chi_2\cong \mathbb{Z}$ & $1$ & $\mathbb{Z}\chi_1 \cong \mathbb{Z}$ & $1$ & $\mathbb{Z}\chi_1\cong \mathbb{Z}$ & $0$ \\\hline
    \end{tabular}
    \caption{Invariants of each conjugacy class in Theorem \ref{thm:algorithm_CV} for $\G_2$.}
    \label{tab:G2_main}
\end{table}
\vspace{-0.4cm}
Now let $w$ be the chosen representative in Table \ref{tab:G2_main} of each conjugacy class in $D_6$. For a representative of each conjugacy class in the centralizer $C(w)$, we compute the data we need. 
\begin{itemize}
    \item[(i)] For the conjugacy class of $w=1$, the centralizer is $C(1)=D_6$. The free and torsion parts are $\Phi_w \cong \mathbb{Z}^2$ in the basis $\{\chi_1, \chi_2\}$, and $D_w = 0$. We compute in Table \ref{tab:G2_i} the rest of the information required in (\ref{eq:magic-formula}), accounting for 
\begin{align*}
(i) &=\frac{t^{2r}}{12}\big[(tuv+1)^{2r}+(tuv-1)^{2r}+2(t^2u^2v^2+tuv+1)^r+2(t^2u^2v^2-tuv+1)^r+6(tuv+1)^r(tuv-1)^r\big]
\end{align*}
\vspace{-0.3cm}
\begin{table}[H]
\tiny
    \centering
    \begin{tabular}{|c|c|c|c|c|c|c|}
    \hline
        $[g]\in\textup{Conj}(C(w))$ & $c_1(g)$ & $\tau_{w}(g):D_w\rightarrow D_w$ & $n_w(g)=\left|\frac{D_{w}}{\im(I_{D_w}-\tau_{w}(g))}\right|$ & $\phi_w(g):\Phi_w\rightarrow \Phi_w$ & $tuvI_{\Phi_w}+\phi_w(g)$ & $\det(tuvI_{\Phi_w}+\phi_w(g))$ \\\hline
$1$ & $1$ & $-$ & $1$ & $\begin{pmatrix} 1 & 0 \\ 0 & 1 \end{pmatrix}$ & $\begin{pmatrix} tuv+1 & 0 \\ 0 & tuv+1 \end{pmatrix}$ & $(tuv+1)^2$\\\hline
$\rho^3$ & $1$ & $-$ & $1$ & $\begin{pmatrix} -1 & 0 \\ 0 & -1 \end{pmatrix}$ & $\begin{pmatrix} tuv-1 & 0 \\ 0 & tuv-1 \end{pmatrix}$ & $(tuv-1)^2$ \\\hline
$\rho$ & $2$ & $-$ & $1$ & $\begin{pmatrix} -1 & 1 \\ -3 & 2 \end{pmatrix}$ & $\begin{pmatrix} tuv-1 & 1 \\ -3 & tuv+2 \end{pmatrix}$ & $(tuv)^2+tuv+1$ \\\hline
$\rho^2$ & $2$ & $-$ & $1$ & $\begin{pmatrix} -2 & 1 \\ -3 & 1 \end{pmatrix}$ & $\begin{pmatrix} tuv-2 & 1 \\ -3 & tuv+1 \end{pmatrix}$ & $(tuv)^2-tuv+1$ \\\hline
$\tau$ & $3$ & $-$ & $1$ & $\begin{pmatrix} -1 & 1 \\ 0 & 1 \end{pmatrix}$ & $\begin{pmatrix} tuv-1 & 1 \\ 0 & tuv+1 \end{pmatrix}$ & $(tuv+1)(tuv-1)$ \\\hline
$\tau\rho$ & $3$ & $-$ & $1$ & $\begin{pmatrix} 1 & 0 \\ 3 & -1 \end{pmatrix}$ & $\begin{pmatrix} tuv+1 & 0 \\ 3 & tuv-1 \end{pmatrix}$ & $(tuv+1)(tuv-1)$ \\\hline
    \end{tabular}
    \caption{Computations for $\G_2$ (i), conjugacy class of $w=1$ in the basis $\{\chi_1,\chi_2\}$ of $\Phi_w\cong \mathbb{Z}^2$.}
    \label{tab:G2_i}
\end{table}
\vspace{-0.3cm}
\item[(ii)] For the conjugacy class of $w=\rho^3$, the centralizer is $C(\rho^3)=D_6$. The free and torsion parts are $\Phi_w = 0$ and $D_w = \mathbb{Z}_2\oplus \mathbb{Z}_2$, in the basis $\{\chi_1,\chi_2\}$. We compute in Table \ref{tab:G2_ii} the rest of the information required in (\ref{eq:magic-formula}), accounting for 
\begin{align*}
(ii) &=\frac{t^{2r}(uv)^{r}}{6}\big[4^r + 2+3\cdot 2^r \big]
\end{align*}
\vspace{-0.3cm}
\begin{table}[H]
\tiny
    \centering
    \begin{tabular}{|c|c|c|c|c|c|c|}
    \hline
        $[g]\in\textup{Conj}(C(w))$ & $c_w(g)$ & $\tau_{w}(g):D_w\rightarrow D_w$ & $n_w(g)=\left|\frac{D_{w}}{\im(I_{D_w}-\tau_{w}(g))}\right|$ & $\phi_w(g):\Phi_w\rightarrow \Phi_w$ & $tuvI_{\Phi_w}+\phi_w(g)$ & $\det(tuvI_{\Phi_w}+\phi_w(g))$ \\\hline
$1$ & $1$ & $\begin{pmatrix} 1 & 0 \\ 0 & 1 \end{pmatrix} \pmod 2$ & $4$ & $-$ & $-$ & $1$\\\hline
$\rho^3$ & $1$ & $\begin{pmatrix} 1 & 0 \\ 0 & 1 \end{pmatrix}\pmod 2$ & $4$ & $-$ & $-$ & $1$ \\\hline
$\rho$ & $2$ & $\begin{pmatrix} 1 & 1 \\ 1 & 0 \end{pmatrix}\pmod 2$ & $1$ & $-$ & $-$ & $1$ \\\hline
$\rho^2$ & $2$ & $\begin{pmatrix} 0 & 1 \\ 1 & 1 \end{pmatrix}\pmod 2$ & $1$ & $-$ & $-$ & $1$ \\\hline
$\tau$ & $3$ & $\begin{pmatrix} 1 & 1 \\ 0 & 1 \end{pmatrix}\pmod 2$ & $2$ & $-$ & $-$ & $1$ \\\hline
$\tau\rho$ & $3$ & $\begin{pmatrix} 1 & 0 \\ 1 & 1 \end{pmatrix}\pmod 2$ & $2$ & $-$ & $-$ & $1$ \\\hline
    \end{tabular}
    \caption{Computations for $\G_2$ (ii), conjugacy class of $w=\rho^3$, in the basis $\{\chi_1,\chi_2\}$ of $D_w\cong (\mathbb{Z}_2)^2$.}
    \label{tab:G2_ii}
\end{table}
\vspace{-0.3cm}
\item[(iii)] For the conjugacy class of $w=\rho$, the centralizer is $C(\rho)=\langle \rho\rangle \cong \mathbb{Z}_6$. The free and torsion parts are $\Phi_w = 0$ and $D_w = 0$, hence the corresponding term in (\ref{eq:magic-formula}) is (Table \ref{tab:G2_iii}):
\begin{align*}
(iii) &= t^{2r}(uv)^r
\end{align*}
\vspace{-0.3cm}
\begin{table}[H]
\tiny
    \centering
    \begin{tabular}{|c|c|c|c|c|c|c|}
    \hline
        $[g]\in\textup{Conj}(C(w))$ & $c_w(g)$ & $\tau_{w}(g):D_w\rightarrow D_w$ & $n_w(g)=\left|\frac{D_{w}}{\im(I_{D_w}-\tau_{w}(g))}\right|$ & $\phi_w(g):\Phi_w\rightarrow \Phi_w$ & $tuvI_{\Phi_w}+\phi_w(g)$ & $\det(tuvI_{\Phi_w}+\phi_w(g))$ \\\hline
$1$ & $1$ & $-$ & $1$ & $-$ & $-$ & $1$ \\\hline
$\rho$ & $1$ & $-$ & $1$ & $-$ & $-$ & $1$ \\\hline
$\rho^2$ & $1$ & $-$ & $1$ & $-$ & $-$ & $1$ \\\hline
$\rho^3$ & $1$ & $-$ & $1$ & $-$ & $-$ & $1$ \\\hline
$\rho^4$ & $1$ & $-$ & $1$ & $-$ & $-$ & $1$ \\\hline
$\rho^5$ & $1$ & $-$ & $1$ & $-$ & $-$ & $1$ \\\hline
    \end{tabular}
    \caption{Computations for $\G_2$ (iii), conjugacy class of $w=\rho$.}
    \label{tab:G2_iii}
\end{table}
\vspace{-0.3cm}
\item[(iv)] For the conjugacy class of $w=\rho^2$, the centralizer is $C(\rho^2)=\langle \rho\rangle \cong \mathbb{Z}_6$, abelian. The free and torsion parts are $\Phi_w = 0$ and $D_w = \mathbb{Z}_3$, where we consider the basis given by $\{\chi_2\}$. Table \ref{tab:G2_iv} gives
\begin{align*}
(iv) &= \frac{t^{2r}(uv)^r}{2} \left( 3^r + 1\right)
\end{align*}
\vspace{-0.3cm}
\begin{table}[H]
\tiny
    \centering
    \begin{tabular}{|c|c|c|c|c|c|c|}
    \hline
        $[g]\in\textup{Conj}(C(w))$ & $c_w(g)$ & $\tau_{w}(g):D_w\rightarrow D_w$ & $n_w(g)=\left|\frac{D_{w}}{\im(I_{D_w}-\tau_{w}(g))}\right|$ & $\phi_w(g):\Phi_w\rightarrow \Phi_w$ & $tuvI_{\Phi_w}+\phi_w(g)$ & $\det(tuvI_{\Phi_w}+\phi_w(g))$ \\\hline
$1$ & $1$ & $(1)$ & $3$ & $-$ & $-$ & $1$ \\\hline
$\rho$ & $1$ & $(-1)$ & $1$ & $-$ & $-$ & $1$ \\\hline
$\rho^2$ & $1$ & $(1)$ & $3$ & $-$ & $-$ & $1$ \\\hline
$\rho^3$ & $1$ & $(-1)$ & $1$ & $-$ & $-$ & $1$ \\\hline
$\rho^4$ & $1$ & $(1)$ & $3$ & $-$ & $-$ & $1$ \\\hline
$\rho^5$ & $1$ & $(-1)$ & $1$ & $-$ & $-$ & $1$ \\\hline
    \end{tabular}
    \caption{Computations for $\G_2$ (iv), conjugacy class of $w=\rho^2$, in the basis $\{\chi_2\}$ of $D_w\cong\mathbb{Z}_3$}
    \label{tab:G2_iv}
\end{table}
\vspace{-0.4cm}
\item[(v)] For the conjugacy class of $w=\tau$, the centralizer is $C(\tau)=\langle \tau, \rho^3\rangle \cong \mathbb{Z}_2 \times \mathbb{Z}_2$, abelian. The free and torsion parts are $\Phi_w = \mathbb{Z}$ in the basis $\{\chi_2\}$ and $D_w = 0$. Table \ref{tab:G2_v} gives
\begin{align*}
(v) &= \frac{t^{2r}(uv)^{\frac{r}{2}}}{2}\big[ (tuv+1)^r + (tuv-1)^r \big]
\end{align*}
\vspace{-0.4cm}
\begin{table}[H]
\tiny
    \centering
    \begin{tabular}{|c|c|c|c|c|c|c|}
    \hline
        $[g]\in\textup{Conj}(C(w))$ & $c_w(g)$ & $\tau_{w}(g):D_w\rightarrow D_w$ & $n_w(g)=\left|\frac{D_{w}}{\im(I_{D_w}-\tau_{w}(g))}\right|$ & $\phi_w(g):\Phi_w\rightarrow \Phi_w$ & $tuvI_{\Phi_w}+\phi_w(g)$ & $\det(tuvI_{\Phi_w}+\phi_w(g))$ \\\hline
$1$ & $1$ & $-$ & $1$ & $(1)$ & $(tuv+1)$ & $tuv+1$ \\\hline
$\rho^3$ & $1$ & $-$ & $1$ & $(-1)$ & $(tuv-1)$ & $tuv-1$ \\\hline
$\tau$ & $1$ & $-$ & $1$ & $(1)$ & $(tuv+1)$ & $tuv+1$ \\\hline
$\tau\rho^3$ & $1$ & $-$ & $1$ & $(-1)$ & $(tuv-1)$ & $tuv-1$ \\\hline
    \end{tabular}
    \caption{Computations for $\G_2$ (v), conjugacy class of $w=\tau$ in the basis $\{\chi_2\}$ of $\Phi_w\cong \mathbb{Z}$.}
    \label{tab:G2_v}
\end{table}
\vspace{-0.4cm}
\item[(vi)] For the conjugacy class of $w=\tau\rho$, the centralizer is $C(\tau\rho)=\langle \tau\rho, \rho^3\rangle \cong \mathbb{Z}_2 \times \mathbb{Z}_2$, abelian. The free and torsion parts are $\Phi_w = \mathbb{Z}$ in the basis $\{\chi_1\}$ and $D_w = 0$. Table \ref{tab:G2_vi} gives
\begin{align*}
(vi) &= \frac{t^{2r}(uv)^{\frac{r}{2}}}{2}\big[ (tuv+1)^r + (tuv-1)^r \big]
\end{align*}
\vspace{-0.4cm}
\begin{table}[h!]
\tiny
    \centering
    \begin{tabular}{|c|c|c|c|c|c|c|}
    \hline
        $[g]\in\textup{Conj}(C(w))$ & $c_w(g)$ & $\tau_{w}(g):D_w\rightarrow D_w$ & $n_w(g)=\left|\frac{D_{w}}{\im(I_{D_w}-\tau_{w}(g))}\right|$ & $\phi_w(g):\Phi_w\rightarrow \Phi_w$ & $tuvI_{\Phi_w}+\phi_w(g)$ & $\det(tuvI_{\Phi_w}+\phi_w(g))$ \\\hline
$1$ & $1$ & $-$ & $1$ & $(1)$ & $(tuv+1)$ & $tuv+1$ \\\hline
$\rho^3$ & $1$ & $-$ & $1$ & $(-1)$ & $(tuv-1)$ & $tuv-1$ \\\hline
$\tau\rho$ & $1$ & $-$ & $1$ & $(1)$ & $(tuv+1)$ & $tuv+1$ \\\hline
$\tau\rho^4$ & $1$ & $-$ & $1$ & $(-1)$ & $(tuv-1)$ & $tuv-1$ \\\hline
    \end{tabular}
    \caption{Computations for $\G_2$ (vi), conjugacy class of $w=\tau\rho$ in the basis $\{\chi_1\}$ of $\Phi_w\cong \mathbb{Z}$.}
    \label{tab:G2_vi}
\end{table}

\end{itemize}
\vspace{-0.4cm}
Finally, the stringy compactly supported mixed Hodge polynomial of $T_{\G_2}^r/W$ is obtained as 
\vspace{-0.1cm}
 \begin{align*}
    \mu^{\mathrm{str}}_c(T_{\G_2}^r/W) &= (\textrm{i}) + (\textrm{ii}) + (\textrm{iii}) + (\textrm{iv}) + (\textrm{v}) + (\textrm{vi}) \\
    &= \frac{t^{2r}}{12} \bigg[ (tuv+1)^{2r} + (tuv-1)^{2r} + 2(t^2u^2v^2+tuv+1)^r + 2(t^2u^2v^2-tuv+1)^r \\
    & \; + 6(t^2u^2v^2-1)^r + (uv)^r \left( 2 \cdot 4^r + 6 \cdot 3^r +6 \cdot 2^{r} + 22 \right) + 12(uv)^{\frac{r}{2}} \left( (tuv+1)^r + (tuv-1)^r \right) \bigg]
\end{align*}

For $r=2$ and setting $uv=q$, the compactly supported mixed Hodge polynomial for the $\G_2$-character variety of the elliptic curve is:
\begin{align*}
\mu_c^{str}(T_{\G_2}^2/W) &= q^{4} t^{8} + 2 \, q^{3} t^{6} + q^{2} t^{6} + 11 \, q^{2} t^{4} + 2 \, q t^{4} + t^{4}\; .
\end{align*}
Evaluating at $t = -1$ yields the stringy $E$-polynomial of the $G_2$-character variety of the elliptic curve:
\begin{align*}
E^\str(T_{\G_2}^2/W) &= q^4 + 2\,q^3 + 12\,q^2 + 2\,q + 1\; .
\end{align*}

\subsection{Code and tables}

The formula from Theorem \ref{thm:algorithm_CV} and its compactly supported version from Corollary \ref{cor:compact-support} have been implemented in SageMath \cite{sagemath}. Using it, it is possible to compute the stringy mixed Hodge polynomials for all the groups of rank at most $4$ within seconds, and all the exceptional groups within less than one minute. The code has been posted in a public repository accessible at \cite{code_repository}.

In Table \ref{tab:summary}, we collect some of the results obtained with this code for the case $r = 2$, corresponding to an elliptic curve, for the identity component of the (normal, in this case) character variety $\mathfrak{X}_{2}^\circ(G) = T^2/W$. Concretely, we present all the stringy mixed Hodge polynomials and stringy Poincar\'e polynomials (with and without compact support), the stringy $E$-polynomial and the stringy Euler characteristic for some classical and exceptional groups up to rank $4$. The mixed Hodge polynomials for all the exceptional groups and arbitrary $r$ are displayed in Appendix \ref{app:calculations}.

\begin{table}[H]
\scriptsize
\begin{tabular}{|c||cccccl|}
\hline
\footnotesize $\bm{G}$                       & \multicolumn{6}{c|}{\textbf{\footnotesize Stringy Invariants}}                                                                                                                                                                                                         \\ \hline\hline
 & \multicolumn{6}{|c|}{\textbf{Rank $\bm{1}$}} \\ \hline
                         
\multirow{4}{*}{\begin{tabular}{@{}c@{}}$\SL_2$,\\ $\PGL_2$\end{tabular}} & $\bm{\mu^{\str}(T_{G}^2/W)}$   & \multicolumn{5}{|l|}{$q^{2} t^{2} + 4  q t^{2} + 1$}  \\ \cline{2-7} 
                         & $\bm{P^{\str}(T_{G}^2/W)}$      & \multicolumn{3}{|l|}{$5  t^{2} + 1$} & $\bm{\chi^{\str}(T_{G}^2/W)}$  & \multicolumn{1}{|l|}{$6$} \\ \cline{2-7} 
                         & $\bm{\mu^{\str}_c(T_{G}^2/W)}$  & \multicolumn{5}{|l|}{$q^{2} t^{4} + 4  q t^{2} + t^{2}$} \\ \cline{2-7} 
                         & $\bm{P^{\str}_c(T_{G}^2/W)}$    & \multicolumn{2}{|l|}{$t^{4} + 5  t^{2}$} & $\bm{E^{\str}(T_{G}^2/W)}$ & \multicolumn{2}{|l|}{$q^{2} + 4  q + 1$} \\ \hline\hline

 & \multicolumn{6}{|c|}{\textbf{Rank $\bm{2}$}} \\ \hline 

\multirow{4}{*}{$\GL_2$} & $\bm{\mu^{\str}(T_{G}^2/W)}$    & \multicolumn{5}{|l|}{$q^{4} t^{4} + q^{3} t^{4} + 2  q^{3} t^{3} + 2  q^{2} t^{3} + 2  q^{2} t^{2} + q t^{2} + 2  q t + 1$}  \\ \cline{2-7} 
                         & $\bm{P^{\str}(T_{G}^2/W)}$      & \multicolumn{3}{|l|}{$2  t^{4} + 4  t^{3} + 3  t^{2} + 2  t + 1$} & $\bm{\chi^{\str}(T_{G}^2/W)}$  & \multicolumn{1}{|l|}{$0$} \\ \cline{2-7} 
                         & $\bm{\mu^{\str}_c(T_{G}^2/W)}$  & \multicolumn{5}{|l|}{$q^{4} t^{8} + 2  q^{3} t^{7} + q^{3} t^{6} + 2  q^{2} t^{6} + 2  q^{2} t^{5} + 2  q t^{5} + q t^{4} + t^{4}$} \\ \cline{2-7} 
                         & $\bm{P^{\str}_c(T_{G}^2/W)}$    & \multicolumn{2}{|l|}{$t^{8} + 2  t^{7} + 3  t^{6} + 4  t^{5} + 2  t^{4}$} & $\bm{E^{\str}(T_{G}^2/W)}$ & \multicolumn{2}{|l|}{$q^{4} - q^{3} - q + 1$} \\ \hline\hline
                         
\multirow{4}{*}{\begin{tabular}{@{}c@{}}$\SL_3$,\\ $\PGL_3$\end{tabular}} & $\bm{\mu^{\str}(T_{G}^2/W)}$    & \multicolumn{5}{|l|}{$q^{4} t^{4} + q^{3} t^{4} + 9  q^{2} t^{4} + 2  q^{2} t^{3} + q^{2} t^{2} + q t^{2} + 1$}  \\ \cline{2-7} 
                         & $\bm{P^{\str}(T_{G}^2/W)}$      & \multicolumn{3}{|l|}{$11  t^{4} + 2  t^{3} + 2  t^{2} + 1$} & $\bm{\chi^{\str}(T_{G}^2/W)}$  & \multicolumn{1}{|l|}{$12$} \\ \cline{2-7} 
                         & $\bm{\mu^{\str}_c(T_{G}^2/W)}$  & \multicolumn{5}{|l|}{$q^{4} t^{8} + q^{3} t^{6} + q^{2} t^{6} + 2  q^{2} t^{5} + 9  q^{2} t^{4} + q t^{4} + t^{4}$} \\ \cline{2-7} 
                         & $\bm{P^{\str}_c(T_{G}^2/W)}$    & \multicolumn{2}{|l|}{$t^{8} + 2  t^{6} + 2  t^{5} + 11  t^{4}$} & $\bm{E^{\str}(T_{G}^2/W)}$ & \multicolumn{2}{|l|}{$q^{4} + q^{3} + 8  q^{2} + q + 1$} \\ \hline\hline  

\multirow{4}{*}{\begin{tabular}{@{}c@{}}$\SO_5$,\\ $\Sp_4$\end{tabular}} & $\bm{\mu^{\str}(T_{G}^2/W)}$    & \multicolumn{5}{|l|}{$q^{4} t^{4} + 5  q^{3} t^{4} + 14  q^{2} t^{4} + q^{2} t^{2} + 5  q t^{2} + 1$}  \\ \cline{2-7} 
                         & $\bm{P^{\str}(T_{G}^2/W)}$  & \multicolumn{3}{|l|}{$20  t^{4} + 6  t^{2} + 1$} & $\bm{\chi^{\str}(T_{G}^2/W)}$  & \multicolumn{1}{|l|}{$27$} \\ \cline{2-7} 
                         & $\bm{\mu^{\str}_c(T_{G}^2/W)}$  & \multicolumn{5}{|l|}{$q^{4} t^{8} + 5  q^{3} t^{6} + q^{2} t^{6} + 14  q^{2} t^{4} + 5  q t^{4} + t^{4}$} \\ \cline{2-7} 
                         & $\bm{P^{\str}_c(T_{G}^2/W)}$    & \multicolumn{2}{|l|}{$t^{8} + 6  t^{6} + 20  t^{4}$} & $\bm{E^{\str}(T_{G}^2/W)}$ & \multicolumn{2}{|l|}{$q^{4} + 5  q^{3} + 15  q^{2} + 5  q + 1$} \\ \hline\hline                            

\multirow{4}{*}{$\G_2$} & $\bm{\mu^{\str}(T_{G}^2/W)}$   & \multicolumn{5}{|l|}{$q^{4} t^{4} + 2  q^{3} t^{4} + 11  q^{2} t^{4} + q^{2} t^{2} + 2  q t^{2} + 1$}  \\ \cline{2-7} 
                         & $\bm{P^{\str}(T_{G}^2/W)}$      & \multicolumn{3}{|l|}{$14  t^{4} + 3  t^{2} + 1$} & $\bm{\chi^{\str}(T_{G}^2/W)}$  & \multicolumn{1}{|l|}{$18$} \\ \cline{2-7} 
                         & $\bm{\mu^{\str}_c(T_{G}^2/W)}$  & \multicolumn{5}{|l|}{$q^{4} t^{8} + 2  q^{3} t^{6} + q^{2} t^{6} + 11  q^{2} t^{4} + 2  q t^{4} + t^{4}$} \\ \cline{2-7} 
                         & $\bm{P^{\str}_c(T_{G}^2/W)}$    & \multicolumn{2}{|l|}{$t^{8} + 3  t^{6} + 14  t^{4}$} & $\bm{E^{\str}(T_{G}^2/W)}$ & \multicolumn{2}{|l|}{$q^{4} + 2  q^{3} + 12  q^{2} + 2  q + 1$} \\ \hline\hline

 & \multicolumn{6}{|c|}{\textbf{Rank $\bm{3}$}} \\ \hline

\multirow{4}{*}{$\GL_3$} & $\bm{\mu^{\str}(T_{G}^2/W)}$    & \multicolumn{5}{|l|}{\scalebox{0.8}{$q^{6} t^{6} + q^{5} t^{6} + 2  q^{5} t^{5} + q^{4} t^{6} + 4  q^{4} t^{5} + 2  q^{4} t^{4} + 2  q^{3} t^{5} + 6  q^{3} t^{4} + 2  q^{3} t^{3} + q^{2} t^{4} + 4  q^{2} t^{3} + 2  q^{2} t^{2} + q t^{2} + 2  q t + 1$}}  \\ \cline{2-7} 
                         & $\bm{P^{\str}(T_{G}^2/W)}$      & \multicolumn{3}{|l|}{$3  t^{6} + 8  t^{5} + 9  t^{4} + 6  t^{3} + 3  t^{2} + 2  t + 1$} & $\bm{\chi^{\str}(T_{G}^2/W)}$  & \multicolumn{1}{|l|}{$0$} \\ \cline{2-7} 
                         & $\bm{\mu^{\str}_c(T_{G}^2/W)}$  & \multicolumn{5}{|l|}{\scalebox{0.8}{$q^{6} t^{12} + 2  q^{5} t^{11} + q^{5} t^{10} + 2  q^{4} t^{10} + 4  q^{4} t^{9} + q^{4} t^{8} + 2  q^{3} t^{9} + 6  q^{3} t^{8} + 2  q^{3} t^{7} + 2  q^{2} t^{8} + 4  q^{2} t^{7} + q^{2} t^{6} + 2  q t^{7} + q t^{6} + t^{6}$}} \\ \cline{2-7} 
                         & $\bm{P^{\str}_c(T_{G}^2/W)}$    & \multicolumn{2}{|l|}{\tiny $t^{12} + 2  t^{11} + 3  t^{10} + 6  t^{9} + 9  t^{8} + 8  t^{7} + 3  t^{6}$} & $\bm{E^{\str}(T_{G}^2/W)}$ & \multicolumn{2}{|l|}{$q^{6} - q^{5} - q^{4} + 2  q^{3} - q^{2} - q + 1$} \\ \hline\hline   
                         
\multirow{4}{*}{\begin{tabular}{@{}c@{}}$\SL_4$,\\ $\PGL_4$\end{tabular}} & $\bm{\mu^{\str}(T_{G}^2/W)}$    & \multicolumn{5}{|l|}{$q^{6} t^{6} + q^{5} t^{6} + 5  q^{4} t^{6} + 2  q^{4} t^{5} + 16  q^{3} t^{6} + q^{4} t^{4} + 2  q^{3} t^{5} + 2  q^{3} t^{4} + 5  q^{2} t^{4} + 2  q^{2} t^{3} + q^{2} t^{2} + q t^{2} + 1$}  \\ \cline{2-7} 
                         & $\bm{P^{\str}(T_{G}^2/W)}$      & \multicolumn{3}{|l|}{$23  t^{6} + 4  t^{5} + 8  t^{4} + 2  t^{3} + 2  t^{2} + 1$} & $\bm{\chi^{\str}(T_{G}^2/W)}$  & \multicolumn{1}{|l|}{$28$} \\ \cline{2-7} 
                         & $\bm{\mu^{\str}_c(T_{G}^2/W)}$  & \multicolumn{5}{|l|}{$q^{6} t^{12} + q^{5} t^{10} + q^{4} t^{10} + 2  q^{4} t^{9} + 5  q^{4} t^{8} + 2  q^{3} t^{8} + 2  q^{3} t^{7} + q^{2} t^{8} + 16  q^{3} t^{6} + 2  q^{2} t^{7} + 5  q^{2} t^{6} + q t^{6} + t^{6}$} \\ \cline{2-7} 
                         & $\bm{P^{\str}_c(T_{G}^2/W)}$    & \multicolumn{2}{|l|}{$t^{12} + 2  t^{10} + 2  t^{9} + 8  t^{8} + 4  t^{7} + 23  t^{6}$} & $\bm{E^{\str}(T_{G}^2/W)}$ & \multicolumn{2}{|l|}{$q^{6} + q^{5} + 4  q^{4} + 16  q^{3} + 4  q^{2} + q + 1$} \\ \hline\hline

\multirow{4}{*}{\begin{tabular}{@{}c@{}}$\SO_7$,\\ $\Sp_6$\end{tabular} } & $\bm{\mu^{\str}(T_{G}^2/W)}$   & \multicolumn{5}{|l|}{$q^{6} t^{6} + 5  q^{5} t^{6} + 19  q^{4} t^{6} + 40  q^{3} t^{6} + q^{4} t^{4} + 6  q^{3} t^{4} + 19  q^{2} t^{4} + q^{2} t^{2} + 5  q t^{2} + 1$}  \\ \cline{2-7} 
                         & $\bm{P^{\str}(T_{G}^2/W)}$      & \multicolumn{3}{|l|}{$65  t^{6} + 26  t^{4} + 6  t^{2} + 1$} & $\bm{\chi^{\str}(T_{G}^2/W)}$  & \multicolumn{1}{|l|}{$98$} \\ \cline{2-7} 
                         & $\bm{\mu^{\str}_c(T_{G}^2/W)}$  & \multicolumn{5}{|l|}{$q^{6} t^{12} + 5  q^{5} t^{10} + q^{4} t^{10} + 19  q^{4} t^{8} + 6  q^{3} t^{8} + q^{2} t^{8} + 40  q^{3} t^{6} + 19  q^{2} t^{6} + 5  q t^{6} + t^{6}$} \\ \cline{2-7} 
                         & $\bm{P^{\str}_c(T_{G}^2/W)}$    & \multicolumn{2}{|l|}{$t^{12} + 6  t^{10} + 26  t^{8} + 65  t^{6}$} & $\bm{E^{\str}(T_{G}^2/W)}$ & \multicolumn{2}{|l|}{$q^{6} + 5  q^{5} + 20  q^{4} + 46  q^{3} + 20  q^{2} + 5  q + 1$} \\ \hline\hline

 & \multicolumn{6}{|c|}{\textbf{Rank $\bm{4}$}} \\ \hline

\multirow{5}{*}{$\GL_4$} & $\bm{\mu^{\str}(T_{G}^2/W)}$    & \multicolumn{5}{|l|}{\tiny $\begin{array}{l}q^{8} t^{8} + q^{7} t^{8} + 2  q^{7} t^{7} + 2  q^{6} t^{8} + 4  q^{6} t^{7} + q^{5} t^{8} + 2  q^{6} t^{6} + 6  q^{5} t^{7} + 7  q^{5} t^{6} + 2  q^{4} t^{7} + 2  q^{5} t^{5} + 8  q^{4} t^{6} \\+ 8  q^{4} t^{5} + q^{3} t^{6} + 2  q^{4} t^{4} + 6  q^{3} t^{5} + 7  q^{3} t^{4} + 2  q^{3} t^{3} + 2  q^{2} t^{4} + 4  q^{2} t^{3} + 2  q^{2} t^{2} + q t^{2} + 2  q t + 1\end{array}$}  \\ \cline{2-7} 
                         & $\bm{P^{\str}(T_{G}^2/W)}$      & \multicolumn{3}{|l|}{$5  t^{8} + 14  t^{7} + 18  t^{6} + 16  t^{5} + 11  t^{4} + 6  t^{3} + 3  t^{2} + 2  t + 1$} & $\bm{\chi^{\str}(T_{G}^2/W)}$  & \multicolumn{1}{|l|}{$0$} \\ \cline{2-7} 
                         & $\bm{\mu^{\str}_c(T_{G}^2/W)}$  & \multicolumn{5}{|l|}{\tiny $\begin{array}{l}q^{8} t^{16} + 2  q^{7} t^{15} + q^{7} t^{14} + 2  q^{6} t^{14} + 4  q^{6} t^{13} + 2  q^{6} t^{12} + 2  q^{5} t^{13} + 7  q^{5} t^{12} + 6  q^{5} t^{11} + 2  q^{4} t^{12} + q^{5} t^{10} + 8  q^{4} t^{11} \\+ 8  q^{4} t^{10} + 2  q^{3} t^{11} + 2  q^{4} t^{9} + 7  q^{3} t^{10} + 6  q^{3} t^{9} + 2  q^{2} t^{10} + q^{3} t^{8} + 4  q^{2} t^{9} + 2  q^{2} t^{8} + 2  q t^{9} + q t^{8} + t^{8}\end{array}$} \\ \cline{2-7} 
                         & $\bm{P^{\str}_c(T_{G}^2/W)}$    & \multicolumn{2}{|l|}{\tiny $\begin{array}{l}t^{16} + 2  t^{15} + 3  t^{14} + 6  t^{13} + 11  t^{12} \\ + 16  t^{11} + 18  t^{10} + 14  t^{9} + 5  t^{8}\end{array}$} & $\bm{E^{\str}(T_{G}^2/W)}$ & \multicolumn{2}{|l|}{$q^{8} - q^{7} - q + 1$} \\ \hline\hline    

\multirow{5}{*}{\begin{tabular}{@{}c@{}}$\SL_5$,\\ $\PGL_5$\end{tabular}} & $\bm{\mu^{\str}(T_{G}^2/W)}$    & \multicolumn{5}{|l|}{\tiny $\begin{array}{l}q^{8} t^{8} + q^{7} t^{8} + 2  q^{6} t^{8} + 2  q^{6} t^{7} + 2  q^{5} t^{8} + q^{6} t^{6} + 4  q^{5} t^{7} + 25  q^{4} t^{8} + 2  q^{5} t^{6} + 4  q^{4} t^{7} + 4  q^{4} t^{6} + 2  q^{4} t^{5} \\
+ 2  q^{3} t^{6} + q^{4} t^{4} + 4  q^{3} t^{5} + 2  q^{3} t^{4} + 2  q^{2} t^{4} + 2  q^{2} t^{3} + q^{2} t^{2} + q t^{2} + 1\end{array}$}  \\ \cline{2-7} 
                         & $\bm{P^{\str}(T_{G}^2/W)}$      & \multicolumn{3}{|l|}{$31  t^{8} + 10  t^{7} + 9  t^{6} + 6  t^{5} + 5  t^{4} + 2  t^{3} + 2  t^{2} + 1$} & $\bm{\chi^{\str}(T_{G}^2/W)}$  & \multicolumn{1}{|l|}{$30$} \\ \cline{2-7} 
                         & $\bm{\mu^{\str}_c(T_{G}^2/W)}$  & \multicolumn{5}{|l|}{\tiny $\begin{array}{l}q^{8} t^{16} + q^{7} t^{14} + q^{6} t^{14} + 2  q^{6} t^{13} + 2  q^{6} t^{12} + 2  q^{5} t^{12} + 4  q^{5} t^{11} + q^{4} t^{12} + 2  q^{5} t^{10} + 2  q^{4} t^{11} + 4  q^{4} t^{10} + 4  q^{4} t^{9} \\
                         + 2  q^{3} t^{10} + 25  q^{4} t^{8} + 4  q^{3} t^{9} + q^{2} t^{10} + 2  q^{3} t^{8} + 2  q^{2} t^{9} + 2 q^{2} t^{8} + q t^{8} + t^{8}\end{array}$} \\ \cline{2-7} 
                         & $\bm{P^{\str}_c(T_{G}^2/W)}$    & \multicolumn{2}{|l|}{\tiny $\begin{array}{l}t^{16} + 2  t^{14} + 2  t^{13} + 5  t^{12} + 6  t^{11} \\+ 9  t^{10} + 10  t^{9} + 31 t^{8}\end{array}$} & $\bm{E^{\str}(T_{G}^2/W)}$ & \multicolumn{2}{|l|}{$q^{8} + q^{7} + q^{6} + 24  q^{4} + q^{2} + q + 1$} \\ \hline\hline 

\multirow{5}{*}{\begin{tabular}{@{}c@{}}$\SO_9$,\\ $\Sp_8$\end{tabular}} & $\bm{\mu^{\str}(T_{G}^2/W)}$    & \multicolumn{5}{|l|}{\tiny $\begin{array}{l}q^{8} t^{8} + 5 q^{7} t^{8} + 20 q^{6} t^{8} + 59 q^{5} t^{8} + q^{6} t^{6} + 105 q^{4} t^{8} + 6 q^{5} t^{6} + 25  q^{4} t^{6} + 59  q^{3} t^{6} \\ + q^{4} t^{4} + 6  q^{3} t^{4} + 20  q^{2} t^{4} + q^{2} t^{2} + 5  q t^{2} + 1
\end{array}$}  \\ \cline{2-7} 
                         & $\bm{P^{\str}(T_{G}^2/W)}$      & \multicolumn{3}{|l|}{ $190  t^{8} + 91  t^{6} + 27  t^{4} + 6  t^{2} + 1$} & $\bm{\chi^{\str}(T_{G}^2/W)}$  & \multicolumn{1}{|l|}{$315$} \\ \cline{2-7} 
                         & $\bm{\mu^{\str}_c(T_{G}^2/W)}$  & \multicolumn{5}{|l|}{\tiny $\begin{array}{l}q^{8} t^{16} + 5  q^{7} t^{14} + q^{6} t^{14} + 20  q^{6} t^{12} + 6  q^{5} t^{12} + q^{4} t^{12} + 59  q^{5} t^{10} + 25  q^{4} t^{10} \\ + 6  q^{3} t^{10} + 105  q^{4} t^{8} + q^{2} t^{10} + 59  q^{3} t^{8} + 20  q^{2} t^{8} + 5  q t^{8} + t^{8}\end{array}$} \\ \cline{2-7} 
                         & $\bm{P^{\str}_c(T_{G}^2/W)}$    & \multicolumn{2}{|l|}{ $t^{16} + 6  t^{14} + 27  t^{12} + 91  t^{10} + 190  t^{8}$} & $\bm{E^{\str}(T_{G}^2/W)}$ & \multicolumn{2}{|l|}{\tiny $\begin{array}{l}q^{8} + 5  q^{7} + 21  q^{6} + 65  q^{5} + 131  q^{4} \\+ 65  q^{3} + 21  q^{2} + 5  q + 1\end{array}$} \\ \hline\hline   

\multirow{5}{*}{$\SO_8$} & $\bm{\mu^{\str}(T_{G}^2/W)}$    & \multicolumn{5}{|l|}{\tiny $\begin{array}{l}q^{8} t^{8} + q^{7} t^{8} + 13  q^{6} t^{8} + 28  q^{5} t^{8} + q^{6} t^{6} + 61  q^{4} t^{8} \\ + 3  q^{5} t^{6} + 18  q^{4} t^{6} + 28  q^{3} t^{6} + q^{4} t^{4} + 3  q^{3} t^{4} + 13  q^{2} t^{4} + q^{2} t^{2} + q t^{2} + 1\end{array}$}  \\ \cline{2-7} 
                         & $\bm{P^{\str}(T_{G}^2/W)}$      & \multicolumn{3}{|l|}{$104  t^{8} + 50  t^{6} + 17 t^{4} + 2  t^{2} + 1$} & $\bm{\chi^{\str}(T_{G}^2/W)}$  & \multicolumn{1}{|l|}{$174$} \\ \cline{2-7} 
                         & $\bm{\mu^{\str}_c(T_{G}^2/W)}$  & \multicolumn{5}{|l|}{\tiny $\begin{array}{l}q^{8} t^{16} + q^{7} t^{14} + q^{6} t^{14} + 13  q^{6} t^{12} + 3  q^{5} t^{12} + q^{4} t^{12} + 28  q^{5} t^{10} + 18  q^{4} t^{10} \\+ 3  q^{3} t^{10} + 61  q^{4} t^{8} + q^{2} t^{10} + 28  q^{3} t^{8} + 13  q^{2} t^{8} + q t^{8} + t^{8}\end{array}$} \\ \cline{2-7} 
                         & $\bm{P^{\str}_c(T_{G}^2/W)}$    & \multicolumn{2}{|l|}{$t^{16} + 2  t^{14} + 17  t^{12} + 50  t^{10} + 104  t^{8}$} & $\bm{E^{\str}(T_{G}^2/W)}$ & \multicolumn{2}{|l|}{\scalebox{0.8}{ $q^{8} + q^{7} + 14  q^{6} + 31  q^{5} + 80  q^{4} + 31  q^{3} + 14  q^{2} + q + 1$}} \\ \hline\hline 

\multirow{4}{*}{$\F_4$} & $\bm{\mu^{\str}(T_{G}^2/W)}$   & \multicolumn{5}{|l|}{\tiny $\begin{array}{l}q^{8} t^{8} + 2  q^{7} t^{8} + 8  q^{6} t^{8} + 22  q^{5} t^{8} + q^{6} t^{6} + 59  q^{4} t^{8} + 2  q^{5} t^{6} \\+ 9  q^{4} t^{6} + 22  q^{3} t^{6} + q^{4} t^{4} + 2  q^{3} t^{4} + 8  q^{2} t^{4} + q^{2} t^{2} + 2  q t^{2} + 1\end{array}$}  \\ \cline{2-7} 
                         & $\bm{P^{\str}(T_{G}^2/W)}$      & \multicolumn{3}{|l|}{ $92  t^{8} + 34  t^{6} + 11  t^{4} + 3  t^{2} + 1$} & $\bm{\chi^{\str}(T_{G}^2/W)}$  & \multicolumn{1}{|l|}{$141$} \\ \cline{2-7} 
                         & $\bm{\mu^{\str}_c(T_{G}^2/W)}$  & \multicolumn{5}{|l|}{\tiny {$\begin{array}{l}q^{8} t^{16} + 2  q^{7} t^{14} + q^{6} t^{14} + 8  q^{6} t^{12} + 2  q^{5} t^{12} + q^{4} t^{12} + 22  q^{5} t^{10} \\+ 9  q^{4} t^{10} + 2  q^{3} t^{10} + 59  q^{4} t^{8} + q^{2} t^{10} + 22  q^{3} t^{8} + 8  q^{2} t^{8} + 2  q t^{8} + t^{8}\end{array}$}} \\ \cline{2-7} 
                         & $\bm{P^{\str}_c(T_{G}^2/W)}$    & \multicolumn{2}{|l|}{ $t^{16} + 3  t^{14} + 11  t^{12} + 34  t^{10} + 92  t^{8}$} & $\bm{E^{\str}(T_{G}^2/W)}$ & \multicolumn{2}{|l|}{\scalebox{0.8}{ $q^{8} + 2  q^{7} + 9  q^{6} + 24  q^{5} + 69  q^{4} + 24  q^{3} + 9  q^{2} + 2  q + 1$}} \\ \hline
                   
\end{tabular}
\caption{Stringy cohomological invariants of the elliptic curve, for various classical and exceptional groups $G$ up to rank $4$. For simplicity, we set the variable $q = uv$.}\label{tab:summary}
\end{table}

\section{On the Batyrev positivity conjecture} \label{sec:batyrev}

As mentioned in Section \ref{sec:stringy-MH-poly}, the stringy $E$-polynomial was introduced by Batyrev \cite{Ba} to generalize classical Hodge--Deligne polynomials to algebraic varieties with mild singularities. Motivated by the search for an intrinsic cohomology theory for general singular spaces, Batyrev's positivity conjecture \cite[Conjecture 3.10]{Ba} claims that whenever the stringy $E$-function of a projective Gorenstein canonical variety $X$ is actually a polynomial, the coefficients, i.e.\ the corresponding stringy Hodge numbers, must be non-negative.

It is worth pointing out that for quotient singularities, as the ones in this paper, a modified non-negativity property is automatic. Indeed, the stringy mixed Hodge numbers, i.e.\ the coefficients of the stringy mixed Hodge polynomial, are automatically non-negative because they coincide with the dimensions of Chen--Ruan orbifold cohomology groups. While Chen--Ruan cohomology guarantees positivity in the orbifold context, general stringy Hodge numbers are defined via motivic integrals over log-resolutions rather than intrinsic vector spaces, so the general Batyrev's positivity conjecture is highly non-trivial in this more general context. 

However, even on our quotient singularity context, the non-negativity of the coefficients of the stringy $E$-polynomial is not clear, since it corresponds to setting $t = -1$ in the stringy mixed Hodge polynomial with compact support,
$$
    E^\str(T^r/W)(u,v) = \mu_c^\str(T^r/W)(-1, u, v).
$$
Inspired by Batyrev, we propose the following positivity conjecture, noting that $T^r/W$ is not projective, so it does not follow from Batyrev's.

\begin{conj}[Rank 2 character variety positivity conjecture]\label{conj:positivity}
    For any connected semisimple group $G$ with maximal torus $T$ and Weyl group $W$, the stringy $E$-polynomial of $T^2/W$ has non-negative coefficients.
\end{conj}

The examples computed in this work, including all the semisimple ones in Table \ref{tab:summary}, satisfy this conjecture, providing strong empirical evidence for it. On the other hand, the hypotheses of Conjecture \ref{conj:positivity} are sharp for the following reasons:
\begin{itemize}
    \item For $T^r/W$ in general, for $r = 4$ (corresponding to an abelian surface) and $G = \SL_3$, we have
$$
    E^\str(T_{\SL_3}^4/W) = q^{8} + 7 \, q^{6} - 8 \, q^{5} + 108 \, q^{4} - 8 \, q^{3} + 7 \, q^{2} + 1\; ,
$$
as computed with \cite{code_repository}. A similar phenomenon occurs for odd $r$. This implies that the conjecture must focus on character varieties of elliptic curves, corresponding to $r = 2d = 2$.
    \item For $G$ reductive non-semisimple, for $G = \GL_2$, from Table \ref{tab:summary} we get
$$
    E^\str(T_{\GL_2}^2/W) = q^{4} - q^{3} - q + 1\; ,
$$
which has negative coefficients, even on an elliptic curve. This implies that we must require $G$ to be semisimple, so that the center does not modify the coefficients.
\end{itemize}

A curious feature is that, although Conjecture \ref{conj:positivity} does not hold in general for arbitrarily large even $r$, it \emph{does hold} for the two cases computed in subsections \ref{ssec:SO7} and \ref{ssec:G2}. 

\begin{lem}\label{lem:polynomial-t2}
Let $d \geq 1$. For $G = \SO_7$ and $G = \G_2$, the compactly supported stringy mixed Hodge polynomial of $T_{G}^{2d}/W$ is a polynomial in the variable $t^2$, in other words, $\mu_c^\str(T_{G}^{2d}/W) \in \ZZ[t^2, u, v]$.
\end{lem}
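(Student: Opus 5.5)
Throughout this plan, $r=2d$. We work from the explicit closed formulas for $\mu_c^\str(T_G^r/W)$ obtained in Subsections \ref{ssec:SO7} and \ref{ssec:G2}. Each is a sum over conjugacy classes (the terms (i)--(x) for $\SO_7$ and (i)--(vi) for $\G_2$). We examine the $t$-dependence of each summand separately and show that it involves only even powers of $t$ once $r$ is even.

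The global prefactor is $t^{3r}$ for $\SO_7$ and $t^{2r}$ for $\G_2$. With $r=2d$ these are $t^{6d}$ and $t^{4d}$, which are even powers. The powers of $uv$ do not involve $t$. Half-integer exponents such as $(uv)^{r/2}$ and $(uv)^{3r/2}$ become $(uv)^{d}$ and $(uv)^{3d}$, so they are integral, which places the result in $\ZZ[\ldots,u,v]$.

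It then remains to show that every bracketed expression, viewed as a polynomial in $x:=tuv$, is even in $x$ when $r$ is even. We do this by inspecting the brackets one by one.

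\begin{itemize}
\item Terms (iii), (vi), (vii), (ix) of $\SO_7$ and (v), (vi) of $\G_2$ have the form $(x+1)^r+(x-1)^r$, which is manifestly even.
\item The terms with $\Phi_w=0$ contain no $x$ at all.
\item Term (v) of $\SO_7$ is $((x+1)^r+(x-1)^r)^2$, which is even.
\item Term (ii) of $\SO_7$ equals $((x+1)^r+(x-1)^r)^2+2(x^2-1)^r+2(x^2+1)^r$, which is even.
\item In term (i) of $\G_2$, the pieces $(x+1)^{2r}+(x-1)^{2r}$, $6(x^2-1)^r$, and $2(x^2+x+1)^r+2(x^2-x+1)^r$ are each invariant under $x\mapsto -x$.
\end{itemize}

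The main obstacle is term (i) of $\SO_7$, i.e.\ the untwisted sector. Here the substitution $x\mapsto -x$ maps $(x+1)^{3r}\leftrightarrow (x-1)^{3r}$ (using that $r$ is even), swaps the two coefficient-$9$ terms, and swaps $6(x+1)^r(x^2+1)^r\leftrightarrow 6(x-1)^r(x^2+1)^r$. It also maps $8(x+1)^r(x^2-x+1)^r=8(x^3+1)^r$ to $8(x^3-1)^r$, which is exactly the last term. So the whole bracket is even. The key identity to check is $(x+1)(x^2-x+1)=x^3+1$. Conceptually, this parity follows because $-1\in W$ in both cases, and multiplication by $-I$ pairs $g$ with $-g$ inside each $C(w)$, with $\det(xI+\phi_w(-g))=(-1)^{s_w}\det(-xI+\phi_w(g))$. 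I would present this as the uniform argument and use the tables as a check.

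Once the brackets are known to be even, every monomial $t^k u^pv^q$ that appears has $k$ even, which proves the lemma.
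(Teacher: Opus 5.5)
Your term-by-term verification is correct and is essentially the paper's proof. The paper groups the brackets into pairs $(Q+tP)^{2m}+(Q-tP)^{2m}$ with $P,Q\in\ZZ[t^2,u,v]$ and expands binomially, which is the same thing as your invariance under $x=tuv\mapsto -x$. You also make explicit the identity $(x+1)(x^2-x+1)=x^3+1$ needed for the untwisted $\SO_7$ sector, which the paper's ``completely analogous'' leaves implicit.

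However, the ``uniform'' argument you say you would present as the main proof is incomplete as stated, because it only tracks the determinant factor. Write $S_w(x)=\sum_{g\in C(w)} n_w(g)^r\det(xI_{\Phi_w}+\phi_w(g))^r$ and reindex by $g\mapsto -g$, using $\phi_w(-g)=-\phi_w(g)$ and $r$ even. This gives
\[
S_w(-x)=\sum_{g\in C(w)} n_w(-g)^r\det\left(xI_{\Phi_w}+\phi_w(g)\right)^r .
\]
Since $\tau_w(-g)=-\tau_w(g)$, one has $n_w(-g)=|\coker(I_{D_w}+\tau_w(g))|$. This need not equal $n_w(g)=|\coker(I_{D_w}-\tau_w(g))|$. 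For instance, for $\G_2$ and $w=\rho^2$ (so $D_w\cong\ZZ_3$), $n_w(1)=3$ but $n_w(\rho^3)=1$.

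To close the argument you must add that, for these two groups, every $w$ with $\Phi_w\neq 0$ has $2D_w=0$. For $\SO_7$ all $D_w$ are elementary abelian $2$-groups. For $\G_2$ the $\ZZ_3$ occurs only for $w=\rho^2$, where $\Phi_w=0$ and $S_w$ is constant in $x$. Hence $\tau_w(-g)=\tau_w(g)$ and $n_w(-g)=n_w(g)$ whenever it matters. Without this observation, your uniform argument would seem to give the lemma for every $G$ with $-1\in W$. That would need more care whenever $D_w$ has odd torsion and $\Phi_w\neq 0$.
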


\begin{proof}
Let us consider the case of $G = \G_2$. By the results of Section \ref{ssec:G2}, we have
\begin{align*}
   \mu^{\mathrm{str}}_c(T_{\G_2}^{2d}/W) = & \frac{t^{4d}}{12} \bigg[ 6(t^2u^2v^2-1)^{2d} + (uv)^{2d} \left( 2 \cdot 4^{2d} + 6 \cdot 3^{2d} +6 \cdot 2^{2d} + 22 \right)  \\
   & + \underbrace{(tuv+1)^{4d} + (tuv-1)^{4d}}_{\textrm{(i)}} + \underbrace{2(t^2u^2v^2+tuv+1)^{2d} + 2(t^2u^2v^2-tuv+1)^{2d}}_{\textrm{(ii)}} \\
    & + 12(uv)^{d} ( \underbrace{(tuv+1)^{2d} + (tuv-1)^{2d}}_{\textrm{(iii)}} ) \bigg]\; .
\end{align*}
The first two terms in the sum are obviously polynomials in $t^2$. For the three pairs of terms labeled as (i), (ii) and (iii), notice that they all have the form $(Q + tP)^{2m} + (Q - tP)^{2m}$, with $P, Q \in \ZZ[t^2, u, v]$. Expanding both terms through the binomial theorem, we get
$$
(Q + tP)^{2m} + (Q - tP)^{2m} = 2\sum_{i = 0}^m \left(\begin{matrix}
2m \\ 2i 
\end{matrix}\right) (tP)^{2i} Q^{2(m-i)}\; ,
$$
which are, for all such $P,Q$, obviously polynomials in $t^2$. The case of $G = \SO_7$ is completely analogous, taking into account that $(tuv+1)(tuv-1) = (t^2u^2v^2-1)$ is a polynomial in $t^2$.
\end{proof}

\begin{cor}
\label{cor:G2_SO7_conj}
    Let $d \geq 1$. For $G = \SO_7$ and $G = \G_2$, the stringy $E$-polynomial of $T_{G}^{2d}/W$ has non-negative coefficients.
\end{cor}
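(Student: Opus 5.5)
The plan is to combine Lemma \ref{lem:polynomial-t2} with the fact that the compactly supported stringy mixed Hodge polynomial has non-negative coefficients. By definition (\ref{eq:str_compact_hodge_pol}), $\mu_c^\str(T_G^{2d}/W)$ is a sum over conjugacy classes of $\mu_c((T^{2d})^w/C(w))$ times the shift $(t^2uv)^{2d\,\age(w)}$. Each $\mu_c$ has non-negative integer coefficients, being a generating function of dimensions of graded pieces of compactly supported cohomology. The shift is a monomial. Hence every coefficient $h^{\str}_{c;k,p,q}$ of $\mu_c^\str(T_G^{2d}/W)$ is non-negative. Here $r=2d$ is even, so $2d\,\age(w)=d\,\rk A_w$ is an integer, and $\mu_c^\str$ is an honest polynomial in $t,u,v$.

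The key step is the specialization $t=-1$. Lemma \ref{lem:polynomial-t2} shows that, for $G=\SO_7$ and $G=\G_2$, $\mu_c^\str(T_G^{2d}/W)\in\ZZ[t^2,u,v]$. So only even powers of $t$ appear with nonzero coefficient, and
$$E^\str(T_G^{2d}/W)(u,v)=\mu_c^\str(T_G^{2d}/W)(-1,u,v)=\sum_{k\ \mathrm{even}}\ \sum_{p,q} h^{\str}_{c;k,p,q}\,u^pv^q .$$
No sign cancellations occur, so each coefficient of $u^pv^q$ is a sum of non-negative integers, hence non-negative.

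Two points need care. First, the formula in (\ref{eq:str_e_pol}) carries a sign $(-1)^{2\age(g)}$; I must check it is consistent with directly substituting $t=-1$ into the polynomial. Since $r=2d$ makes the total shift exponent of $t$ equal to $2d\,\rk A_w$, which is even, this is unproblematic. Second, I must make sure the explicit closed formulas from Sections \ref{ssec:SO7} and \ref{ssec:G2} used in the lemma are exactly $\mu_c^\str$, so that positivity of the coefficients is inherited from the cohomological interpretation rather than from the formula itself.

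I expect the main obstacle to be the first of these, stating the non-negativity of the coefficients of $\mu_c^\str$ cleanly. In that expression some summands have negative coefficients, for instance $(tuv-1)^r$. Non-negativity of the total nevertheless follows from the Burnside formula of Proposition \ref{prop:Burnside}, which identifies each sector contribution with $\dim H_c^{k,p,q}$ of a quotient. I would cite Corollary \ref{cor:compact-support} together with that interpretation.
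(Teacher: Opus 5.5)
Your proposal is correct and is essentially the paper's proof. Lemma \ref{lem:polynomial-t2} gives $\mu_c^\str(T_G^{2d}/W)\in\ZZ[t^2,u,v]$, so $E^\str(T_G^{2d}/W)(u,v)=\mu_c^\str(T_G^{2d}/W)(1,u,v)$, whose coefficients are non-negative because they are sums of dimensions of shifted compactly supported cohomology pieces; your extra checks on the integrality of the shift $d\,\rk A_w$ and on the sign $(-1)^{2\age}$ are consistent with the paper's definitions.
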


\begin{proof}
    By Lemma \ref{lem:polynomial-t2}, $\mu_c^\str(T_{G}^{2d}/W)$ is a polynomial in the variable $t^2$. Therefore, we have
    $$
        E^\str(T_{G}^{2d}/W)(u,v) = \mu_c^\str(T_{G}^{2d}/W)(-1, u, v) = \mu_c^\str(T_{G}^{2d}/W)(1, u, v)\; ,
    $$
    and the result follows from the non-negativity of the coefficients of the mixed Hodge polynomial $\mu_c^\str(T_{G}^{2d}/W)$.
\end{proof}

It is plausible that the same property of Lemma \ref{lem:polynomial-t2}, namely that the mixed Hodge polynomial is a polynomial in $t^2$, holds for other special orthogonal or semisimple exceptional ones, leading to a direct proof of Conjecture \ref{conj:positivity} for these groups. However, this feature does not hold for all semisimple groups, as can be seen in Table \ref{tab:summary}, with  $G = \SL_3$.

We finish this section by pointing out another curious property of the stringy $E$-polynomial that follows directly from Theorem \ref{thm:algorithm_CV}.

\begin{prop}\label{prop:palindromic}
For any connected reductive group $G$ with maximal torus $T$ and Weyl group $W$, the stringy $E$-polynomial of $T^{2d}/W$ is palindromic in the variable $q=uv$ for all $d \geq 1$.
\end{prop}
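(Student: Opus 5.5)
Set $r=2d$ and $N=\rk\Lambda$. The plan is to prove palindromicity term by term: for each $w$ and each $g\in C(w)$ we show that the contribution to $E^\str$ is palindromic of degree $rN$ in $q$.

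Start from Corollary \ref{cor:compact-support} and substitute $t=-1$, $q=uv$. Since $r=2d$ is even, the factor $t^{rN}$ becomes $1$. The fermionic factor becomes $(uv)^{r\age(w)}=q^{d\rk A_w}$, which is an integer power. Hence
$$
E^\str(T^{2d}/W)(q)=\sum_{[w]}\frac{q^{d\rk A_w}}{|C(w)|}\sum_{g\in C(w)} n_w(g)^{2d}\,\det\left(-q\,I_{\Phi_w}+\phi_w(g)\right)^{2d}.
$$
The even exponent $2d$ also lets us replace $\det(-qI+\phi_w(g))^{2d}$ by $\det(qI-\phi_w(g))^{2d}=\chi_g(q)^{2d}$, where $\chi_g$ is the characteristic polynomial of $\phi_w(g)$. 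This polynomial is monic of degree $s_w=\rk\Phi_w$.

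The key observation is that $\phi_w(g)$ has finite order, because $g$ lies in the finite group $W$. Its eigenvalues are therefore roots of unity, and since $\phi_w(g)$ is an integer matrix its eigenvalue multiset is closed under inversion. This gives the reciprocity
$$
q^{s_w}\chi_g(q^{-1})=\det(I-q\,\phi_w(g))=\det(-\phi_w(g))\,\chi_g(q),
$$
with $\det\phi_w(g)=\pm1$. Raising to the power $2d$ removes the sign, so $\chi_g(q)^{2d}$ is palindromic of degree $2ds_w$.

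Multiplying by $q^{d\rk A_w}$ shifts the center of symmetry. Palindromicity is then with respect to total degree $2d\rk A_w+2ds_w$. By (\ref{eq:ranks_lattices}) we have $\rk A_w+s_w=N$, so this total degree is $2dN$, which is independent of $w$ and $g$. A sum of polynomials all palindromic about the same degree $2dN$ is palindromic, and the constant weights $n_w(g)^{2d}/|C(w)|$ do not affect this. The proposition follows.

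The only delicate point is the sign bookkeeping in $\det(-qI+\phi)$ and $\det(\phi)=\pm1$. Both are absorbed by the even exponent $2d$, which is exactly why the statement is restricted to even $r$.
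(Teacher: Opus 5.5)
Your proof is correct and follows the paper's route. Both specialize Corollary~\ref{cor:compact-support} at $t=-1$, prove palindromicity term by term using $\rk A_w+s_w=\rk\Lambda$ so that every term is palindromic with respect to the same total degree $2d\rk\Lambda$, and use $q^{s_w}\det(\phi_w(g)-q^{-1}I_{\Phi_w})=\pm\det(\phi_w(g)^{-1}-qI_{\Phi_w})$ with the even exponent $2d$ absorbing the sign. The only difference is the last step: the paper reindexes the sum over $C(w)$ by $g\mapsto g^{-1}$, which needs $n_w(g)=n_w(g^{-1})$, whereas you use that $\phi_w(g)$ has finite order, so its characteristic polynomial is unchanged by inverting eigenvalues; this makes each individual $(w,g)$-summand palindromic and avoids the symmetry of $n_w$.
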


\begin{proof}
By Corollary \ref{cor:compact-support}, and writing the stringy $E$-polynomial of $T^{2d}/W$ as a polynomial in the variable $q := uv$, we have
    \[
        E^{\str}(T^{2d}/W)(q)=\sum_{[w] \in \textup{Conj}(W)}
        \frac{1}{|C(w)|}\left(\sum_{g \in C(w)} \, q^{2d\age(w)}n_{w}(g)^{2d} \det\left(\phi_w(g)-q\,I_{\Phi_w}\right)^{2d} \right),
    \]
where $2\age(w) = \rk(I_\Lambda - w)$.

Now, observe that a polynomial $P(q)$ of degree $n$ is palindromic if and only if $q^nP(q^{-1})= P(q)$. In the case of the stringy $E$-polynomial, its degree is $\dim T^{2d} = 2dr$, with $r = \dim T = \rk G=\rk \Lambda$. Therefore, if we show that such symmetry holds for each term of the form
$$
P_w(q) := \sum_{g \in C(w)} \, q^{2d\age(w)}n_{w}(g)^{2d} \det\left(\phi_w(g)-q\,I_{\Phi_w}\right)^{2d},
$$
for every $w \in W$, the result follows.

Indeed, recalling $s_w = \rk \coker(I_{\Lambda} - w) = \rk \Phi_w$, we have $r = \rk \coker(I_{\Lambda} - w) + \rk \image(I_{\Lambda} - w) = s_w + 2\age(w)$ or, equivalently, $2(r - \age(w)) = 2(\age(w) + s_w)$. Therefore, we have
\begin{align}
\begin{split}\label{eq:palindromic-term}
q^{2dr}P_w(q^{-1}) & = \sum_{g \in C(w)} \, q^{2dr-2d\age(w)}n_{w}(g)^{2d} \det\left(\phi_w(g)-q^{-1}\,I_{\Phi_w}\right)^{2d} \\
& = \sum_{g \in C(w)} \, q^{2d\age(w)}n_{w}(g)^{2d} \left(q^{s_w}\det\left(\phi_w(g)-q^{-1}\,I_{\Phi_w}\right)\right)^{2d}.
\end{split}
\end{align}
Now, since $s_w$ is exactly the dimension of the space on which the determinant is taken, we have
\begin{align*}
q^{s_w}\det\left(\phi_w(g)-q^{-1}\,I_{\Phi_w}\right) & = \det\left(q\phi_w(g)-I_{\Phi_w}\right) = \det\left(\phi_w(g)(qI_{\Phi_w}-\phi_w(g)^{-1})\right) \\
& = \det(\phi_w(g)) \det\left(qI_{\Phi_w}-\phi_w(g)^{-1}\right) = \pm \det\left(\phi_w(g^{-1}) - qI_{\Phi_w}\right),
\end{align*}
where in the last equality we have used that $\det(\phi_w(g)) = \pm1$ (since $g$ is an automorphism), and that $\phi_w(g)^{-1} = \phi_w(g^{-1})$. Additionally, recall that $n_w(g)$ can be understood as the number of fixed points of $g$ when acting on the torsion part of $\coker(I_\Lambda - w)$, and this number agrees with the number of fixed points of $g^{-1}$ on the same torsion part, so $n_w(g)= n_w(g^{-1})$.

Thus, returning to (\ref{eq:palindromic-term}), we get
\begin{align*}
q^{2dr}P_w(q^{-1}) & = \sum_{g \in C(w)} \, q^{2d\age(w)}n_{w}(g)^{2d} \left(q^{s_w}\det\left(\phi_w(g)-q^{-1}\,I_{\Phi_w}\right)\right)^{2d} \\
& = \sum_{g \in C(w)} \, q^{2d\age(w)}n_{w}(g^{-1})^{2d} \left(\det\left(\phi_w(g^{-1}) - qI_{\Phi_w}\right)\right)^{2d} = P_w(q)\; ,
\end{align*}
as we wanted to show.
\end{proof}

\begin{rem}
    By Poincar\'e duality (see Remark \ref{rem:mu-and-muc}), the previous proof of Proposition \ref{prop:palindromic} actually shows that $\mu_c^\str(T^{2d}/W)(-1,u,v) = \mu^\str(T^{2d}/W)(-1,u,v)$, so in this case the stringy $E$-polynomial can be computed by setting $t = -1$ in both the regular and the compactly supported stringy mixed Hodge polynomial.
\end{rem}

\begin{rem}
    Upon completion of this work, we noticed the papers \cite{SU} and \cite{dais2026unified} which study families of projective varieties and provide counterexamples to Batyrev's positivity conjecture. This highlights the subtleties of extending pure cohomological interpretations to stringy invariants on arbitrary Gorenstein singularities. These counterexamples do not include our character varieties, since $T^2/W$ is non-projective.
\end{rem}

\subsection*{AI Declaration} 
During the preparation of this work, models from OpenAI's GPT-5 family, including GPT-5.6, were used to suggest the use of the exponential map on formal completions as an ingredient in the proof of Theorem \ref{thm:resolution_ABC}, to generate an initial draft of the accompanying SageMath script, and to cross-check our methods against explicit examples of stringy mixed Hodge polynomials in the $\SL_n$ and $\GL_n$ cases, already present in the literature.
GPT-6 Astra was used in the final proofreading of the manuscript. 

The research design, formulation and proofs of the results, and writing of the manuscript were carried out by the authors, who take full responsibility for the content of the work. No funding from AI companies was received during the development of this work.

\bibliographystyle{amsalpha}
\bibliography{bibliography}

\newpage

\appendix

\section{Dual endomorphisms on dual lattices}\label{app:proof-mirror-symmetry}

The explicit description of the stringy mixed Hodge polynomial in terms of actions on the character lattice developed in this work will allow us to provide a more direct proof in terms of the elements appearing in Theorem \ref{thm:algorithm_CV}. Indeed, in this Appendix we reduce these calculations to some determinants and point-counting in the free and torsion parts of the character lattice, showing their agreement for an action and its dual.

Given a lattice $\Lambda$, we shall denote by $\Lambda^\vee = \Hom_{\Ab}(\Lambda, \ZZ)$ its dual lattice. A homomorphism $h: \Lambda_1 \to \Lambda_2$ induces a dual homomorphism $h^\vee: \Lambda_2^\vee \to \Lambda_1^\vee$ by $(h^\vee\alpha)(\lambda) = \alpha(h(\lambda))$, for $\alpha \in \Lambda_2^\vee$ and $\lambda \in \Lambda_1$.

Now, let us fix an endomorphism $f: \Lambda \to \Lambda$ and consider the respective cokernels
$$
    K:=\coker(f) = \Lambda/\image(f)\quad , \quad \hat{K}:=\coker(f^\vee) = \Lambda^\vee/\image(f^\vee)\; .
$$
Notice that, in general, there is no isomorphism between the dual lattice $K^\vee=\Hom(K, \ZZ)$ and $\hat{K}$, for instance, because the former is always torsion free.

Actually, the torsion and free parts of $\hat{K}$ can be identified explicitly. We recall here the procedure for completeness. Consider the morphism
$$
    \Psi: \hat{K} \to (\ker f)^\vee\; , 
$$
where a class $[\alpha] \in \Lambda^\vee/\image(f^\vee)$ is mapped to the restriction $\alpha|_{\ker f} \in (\ker f)^\vee$. Notice that $\Psi$ is well-defined since $\Psi([f^\vee\alpha]) = f^\vee\alpha|_{\ker f} = (\alpha \circ f)|_{\ker f} = 0$ for any $\alpha \in \Lambda^\vee$.

Actually, the kernel of $\Psi$ can be explicitly identified as the cokernel of the dual $\iota^\vee: \Lambda^\vee \to \image(f)^\vee$ of the inclusion map $\iota: \image(f) \hookrightarrow \Lambda$, that is $\ker(\Psi) = \coker(\iota^\vee) = \image(f)^\vee/\image(\iota^\vee)$.
This kernel is a finite group since, as $\QQ$-vector spaces, $\ker(\Psi) \otimes_\ZZ \QQ = \image(f_\QQ)^*/\image(\iota_\QQ^*) = 0$, where $f_\QQ: \Lambda \otimes_\ZZ \QQ \to \Lambda \otimes_\ZZ \QQ$ and $\iota_\QQ: \image(f_\QQ)\hookrightarrow \Lambda \otimes_\ZZ \QQ$ are the $\QQ$-extensions to a linear maps of rational vector spaces, and $\iota_\QQ^*: (\Lambda \otimes_\ZZ \QQ)^* \to \image(f_\QQ)^*$ its dual map, which is surjective.

In this manner, we get a short exact sequence
$$
    0 \longrightarrow \coker(\iota^\vee) \longrightarrow \hat{K} \stackrel{\Psi}{\longrightarrow} (\ker f)^\vee \longrightarrow 0\; .
$$
Furthermore, $(\ker f)^\vee$ is a free abelian group and $\hat{K} \otimes_\ZZ \QQ = \coker(f^*_\QQ) \cong (\ker f_\QQ)^*$, which shows that $\coker(\iota^\vee)$ is exactly the torsion part of $\hat{K}$, with free part $(\ker f)^\vee$, so we get a decomposition into free and torsion parts respectively
$$
    \hat{K} \cong (\ker f)^\vee \oplus \coker(\iota^\vee)\; .
$$

For the torsion part $\hat{D} :=\coker(\iota^\vee) = \image(f)^\vee/\image(\iota^\vee)$ of $\hat{K}$, there exists a pairing with the torsion part $D$ of $K$,
\begin{equation*}
    \langle-, -\rangle: D \otimes_\ZZ \hat{D} \to \QQ/\ZZ\; ,
\end{equation*}
given as follows. For $[\lambda] \in D$, let $n$ be its order, so that $n[\lambda] = 0$ in $K$, or equivalently, $n\lambda \in \image(f)$. In this manner, given $[\alpha] \in \coker(\iota^\vee)$ with $\alpha: \image(f) \to \ZZ$, we define
$$
    \langle[\lambda],[\alpha]\rangle := \frac{\alpha(n\lambda)}{n} \mod \ZZ\; .
$$

\begin{lem}\label{lem:pairing-well-defined}
The pairing $\langle-, -\rangle: D \otimes_\ZZ \hat{D} \to \QQ/\ZZ$ described above is well-defined and perfect.
\end{lem}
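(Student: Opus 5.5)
Well-definedness comes first. Independence of the order $n$: for any multiple $m = kn$ we have $\alpha(m\lambda)/m = k\alpha(n\lambda)/(kn) = \alpha(n\lambda)/n$, so any $m$ with $m\lambda\in\image(f)$ gives the same value. Independence of the representative $\lambda$: if $\lambda' = \lambda + f(\mu)$, then $\alpha(n\lambda') = \alpha(n\lambda) + n\,\alpha(f(\mu))$, and the second term is divisible by $n$ since $f(\mu) \in \image(f)$. Independence of the representative $\alpha$: if $\alpha$ is replaced by $\alpha + \beta|_{\image(f)}$ with $\beta\in\Lambda^\vee$, the value changes by $\beta(n\lambda)/n = \beta(\lambda) \in \ZZ$. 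Bilinearity follows by taking a common multiple of the orders.

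For perfectness, I would first reduce to a diagonal situation via Smith normal form. Choose bases of $\Lambda$, i.e.\ $P,Q\in \GL_n(\ZZ)$, with $PfQ = \textup{diag}(d_1,\dots,d_k,0,\dots,0)$ and $d_i\neq 0$. In the adapted basis $e_1,\dots,e_n$ of the target we have $\image(f) = \bigoplus_{i\le k} d_i\ZZ e_i$. Hence $D = \bigoplus_{i\le k}\ZZ/d_i$, generated by the classes $[e_i]$; the saturation of $\image(f)$ is $\bigoplus_{i\le k}\ZZ e_i$, so the torsion of $K$ is exactly this. Similarly, $\image(f)^\vee$ has the dual basis $\epsilon_i$ defined by $\epsilon_i(d_j e_j) = \delta_{ij}$. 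The image of $\iota^\vee$ is spanned by the restrictions $e_j^*|_{\image(f)} = d_j\epsilon_j$, so $\hat D = \bigoplus_{i\le k} \ZZ/d_i$, generated by the $[\epsilon_i]$.

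Next, compute the pairing on generators. Taking $n = d_i$ gives $\langle [e_i],[\epsilon_j]\rangle = \epsilon_j(d_i e_i)/d_i = \delta_{ij}/d_i \bmod \ZZ$. The pairing is therefore the orthogonal sum of the standard pairings $\ZZ/d_i\times \ZZ/d_i\to\QQ/\ZZ$, $(a,b)\mapsto ab/d_i$. Each of these is perfect, since $a\mapsto (b\mapsto ab/d_i)$ is an isomorphism $\ZZ/d_i\to \Hom(\ZZ/d_i,\QQ/\ZZ)$. Consequently the pairing induces an isomorphism $D\cong \Hom(\hat D,\QQ/\ZZ)$, and symmetrically in the other variable.

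The main subtlety is to check that the change of basis is compatible with the definition of the pairing. This holds because the pairing was defined intrinsically: everything above is computed for $\image(f)$ and $\Lambda$ themselves, not for $f$, so only the basis $P$ of the target matters. The remaining task is to confirm that $\hat D$ really equals $\coker(\iota^\vee)$ computed in this basis.
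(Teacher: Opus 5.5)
Your proposal is correct and follows the same route as the paper. For well-definedness, you check that changing $\alpha$ by an element of $\image(\iota^\vee)$, or $\lambda$ by an element of $\image(f)$, changes the value only by an integer; for perfectness, you pass to a Smith normal basis in which the pairing becomes $\sum_i \lambda_i\alpha_i/d_i \bmod \ZZ$, and your explicit dual generators $[e_i]\in D$, $[\epsilon_i]\in\hat D$ spell out the ``normal basis'' the paper leaves implicit. The ``remaining task'' you flag at the end is already done, since $\hat D$ is by definition $\coker(\iota^\vee)$ and you computed $\image(\iota^\vee)=\bigoplus_{i\le k}d_i\ZZ\,\epsilon_i$ directly.
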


\begin{proof}
To prove that the pairing is well-defined, we must show that $\langle -, [\alpha]\rangle = 0$ for any $\alpha \in \image(\iota^\vee)$, as well as $\langle [\lambda], -\rangle = 0$ for any $\lambda \in \image(f)$. We will show the former condition, the latter being analogous. Indeed, if $\alpha \in \image(\iota^\vee) \subseteq \image(f)^\vee$, then $\alpha$ is defined on the whole $\Lambda$ and thus $\alpha(n\lambda)=n\alpha(\lambda)$, so $\alpha(n\lambda)/n$ is an integer and therefore it vanishes in $\QQ/\ZZ$. Using this description, in a normal basis we have
\begin{equation}\label{eq:pairing-perfect}
    \left\langle [\lambda], [\alpha]\right\rangle = \sum_i \frac{\lambda_i \alpha_i}{n_i} \mod \ZZ\; ,
\end{equation}
where $\lambda_i$ and $\alpha_i$ are the coordinates of $[\lambda]$ and $[\alpha]$ in the normal basis respectively, and $D \cong \hat{D} \cong \prod_i \ZZ_{n_i}$ is the normal factor decomposition. From (\ref{eq:pairing-perfect}), we directly see that the pairing is perfect.
\end{proof}

\begin{rem}
In a similar spirit to the proof of Lemma \ref{lem:pairing-well-defined}, it can be proven that $\langle[\lambda],[\alpha]\rangle$ does not depend on the choice of the integer $n \neq 0$ such that $n\lambda \in \image{f}$. In fact, given $[\lambda] \in D$ of order $n$, let $m \in \NN$ be another nonzero integer with $m\lambda \in \image(f)$. Then $m = ns$ for some integer $s$, and thus $\alpha(m\lambda)/m=\alpha(ns\lambda)/ns = \alpha(n\lambda)/n$.
\end{rem}

Now, consider another endomorphism $h: \Lambda \to \Lambda$ with $hf=fh$. Then $h$ descends to a morphism $K\to K$, also denoted by $h$. Similarly, the dual morphism $h^\vee: \Lambda^\vee \to \Lambda^\vee$ descends to a morphism $h^\vee: \hat{K} \to \hat{K}$. Denote their respective torsion parts by
$$
    \tau(h) = h|_D: D \to D\quad , \quad
    \hat{\tau}(h^\vee) = h^\vee|_{\hat{D}}: \hat{D} \to \hat{D}\; .
$$

\begin{prop}
    The morphisms $\tau(h): D \to D$ and $\hat{\tau}(h^\vee): \hat{D} \to \hat{D}$ are adjoint with respect to the pairing $\langle-, -\rangle: D \otimes_\ZZ \hat{D} \to \QQ/\ZZ$, that is
    $$
        \langle \tau(h)[\lambda], [\alpha]\rangle = \langle [\lambda], \hat{\tau}(h^\vee)[\alpha]\rangle\; ,
    $$
    for all $[\lambda] \in D$ and $[\alpha] \in \hat{D}$.
\end{prop}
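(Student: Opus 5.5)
The proof is a direct computation from the definition of the pairing, after we check that both sides involve the same integer $n$.

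Fix $[\lambda]\in D$ of order $n$, so that $n\lambda\in\image(f)$, say $n\lambda = f(\mu)$ for some $\mu \in \Lambda$. Since $hf=fh$, we get $n\,h(\lambda) = h(f(\mu)) = f(h(\mu))\in\image(f)$. Hence $n$ kills $\tau(h)[\lambda]=[h\lambda]$. By the Remark following Lemma \ref{lem:pairing-well-defined}, the pairing may be computed with any nonzero multiple that lands in $\image(f)$, not only the exact order. So
$$
\langle \tau(h)[\lambda],[\alpha]\rangle = \frac{\alpha(n\,h\lambda)}{n}\mod\ZZ .
$$

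Next we make the action of $h^\vee$ on $\hat D=\image(f)^\vee/\image(\iota^\vee)$ explicit. The key observation is that $h$ preserves $\image(f)$, again because $hf=fh$. Write $h' := h|_{\image(f)}:\image(f)\to\image(f)$. Then $\alpha\mapsto \alpha\circ h'$ is an endomorphism of $\image(f)^\vee$, and it preserves $\image(\iota^\vee)$ since restriction commutes with precomposition by $h$. We must check that this is the torsion part of $h^\vee$ acting on $\hat K$. This is the step needing most care, because $\hat D$ was identified abstractly as $\ker\Psi\subseteq \hat K$.

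To do this, recall that the map $\coker(\iota^\vee)\to\hat K$ comes from the snake/exact-sequence construction. Take $[\alpha]$ with $\alpha\in\image(f)^\vee$. Since $\hat D$ is torsion, there is $m$ with $m\alpha = \beta|_{\image(f)}$ for some $\beta \in \Lambda^\vee$. The corresponding element of $\hat K$ is the class of the functional $\lambda\mapsto\alpha(f\lambda)$, i.e.\ $f^\vee$-type composition $\alpha\circ f\in\Lambda^\vee$ modulo $\image(f^\vee)$. I would verify that this is the concrete form of the inclusion $\coker(\iota^\vee)\hookrightarrow\hat K$. If instead the identification goes through $\beta$, I would adjust accordingly. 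In either description, $h^\vee$ acts by precomposition with $h$, and the identity
$$
(\alpha\circ f)\circ h = (\alpha\circ h')\circ f
$$
shows that $\hat\tau(h^\vee)[\alpha]=[\alpha\circ h']$.

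With this description the adjointness is a one-line computation:
$$
\langle [\lambda],\hat\tau(h^\vee)[\alpha]\rangle=\frac{(\alpha\circ h')(n\lambda)}{n}=\frac{\alpha(n\,h\lambda)}{n}=\langle\tau(h)[\lambda],[\alpha]\rangle \mod \ZZ,
$$
using $n\lambda\in\image(f)$. The main obstacle is the bookkeeping of the previous paragraph, namely making the embedding $\hat D\hookrightarrow\hat K$ explicit enough that $h^\vee$ visibly restricts to precomposition by $h'$. Everything else follows from well-definedness (Lemma \ref{lem:pairing-well-defined}) and the independence of $n$.
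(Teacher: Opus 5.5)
Your proposal is correct and is essentially the paper's proof: choose $n$ with $n\lambda\in\image(f)$, use $hf=fh$ to get $n\,h(\lambda)\in\image(f)$, and compare $\alpha(h(n\lambda))/n$ on both sides. Your appeal to the Remark on independence of $n$ is exactly what justifies using the same $n$ for $[h\lambda]$.

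Your extra bookkeeping is also right, and it makes explicit what the paper uses tacitly when it writes $h^\vee\alpha$ for $\alpha\in\image(f)^\vee$. The embedding $\coker(\iota^\vee)\cong\ker\Psi\subseteq\hat K$ is $[\alpha]\mapsto[\alpha\circ f]$, because a functional vanishing on $\ker f$ factors uniquely through $f$. So the hedge about going through $\beta$ is unnecessary, and $\hat\tau(h^\vee)[\alpha]=[\alpha\circ h|_{\image(f)}]$ as you claim.
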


\begin{proof}
Let $n \in \NN$ so that $n\lambda \in \image(f)$, say $n\lambda = f(\eta)$. Observe that, in that case, we also have $nh(\lambda) = h(n\lambda) = h(f(\eta)) = f(h(\eta))$, so $nh(\lambda)$ also belongs to $\image(f)$. In this manner, we have
$$
\langle [\lambda], h^\vee[\alpha]\rangle = \frac{h^\vee\alpha(n\lambda)}{n} = \frac{\alpha(h(n\lambda))}{n} = \frac{\alpha(nh(\lambda))}{n} = \langle h[\lambda], [\alpha]\rangle\; ,
$$
as we wanted to prove.
\end{proof}

\begin{cor}\label{cor:number-points}
    The number of points of $\coker(\tau(h)) = D/\image \tau(h)$ and $\coker(\hat{\tau}(h^\vee)) = \hat{D}/\image \hat{\tau}(h^\vee)$ agree.
\end{cor}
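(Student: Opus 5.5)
We derive the equality from the perfect pairing of Lemma \ref{lem:pairing-well-defined} and the adjointness of $\tau(h)$ and $\hat{\tau}(h^\vee)$ established in the preceding proposition. Write $\tau=\tau(h)$ and $\hat\tau=\hat\tau(h^\vee)$. Since $D$ and $\hat D$ are finite, $|D/\image\tau| = |D|/|\image\tau| = |\ker\tau|$, and similarly $|\hat D/\image\hat\tau| = |\ker\hat\tau|$. It therefore suffices to show that $|\image \tau| = |\image\hat\tau|$, or equivalently that $|\ker\tau| = |\ker\hat\tau|$. Note also that $|D| = |\hat D|$, since the pairing is perfect; this is also visible from the normal form in \eqref{eq:pairing-perfect}.

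The key step is to identify $\ker\hat\tau$ with the annihilator of $\image\tau$ under the pairing. For $[\alpha]\in\hat D$, adjointness gives
$$
\langle \tau[\lambda],[\alpha]\rangle = \langle [\lambda],\hat\tau[\alpha]\rangle \quad\text{for all } [\lambda]\in D.
$$
Hence $[\alpha]$ annihilates $\image\tau$ if and only if $\hat\tau[\alpha]$ pairs trivially with all of $D$. By perfectness, this happens if and only if $\hat\tau[\alpha]=0$. So $\ker\hat\tau = (\image\tau)^\perp$.

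Perfectness of a pairing of finite abelian groups into $\QQ/\ZZ$ means that $\hat D \to \Hom(D,\QQ/\ZZ)$ is an isomorphism. Restricting characters from $D$ to the subgroup $\image\tau$ is surjective, since $\QQ/\ZZ$ is injective. Its kernel is $(\image\tau)^\perp$, and so $|(\image\tau)^\perp| = |D|/|\image\tau| = |\coker\tau|$. Combining the two steps,
$$
|\coker\tau| = |\ker\hat\tau| = |\hat D|/|\image\hat\tau| = |\coker\hat\tau|,
$$
where the last equality uses $|\ker\hat\tau|\cdot|\image\hat\tau| = |\hat D|$.

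The only delicate point is making sure that "perfect" gives the isomorphism $\hat D\cong\Hom(D,\QQ/\ZZ)$, and not merely non-degeneracy on one side. For finite groups, non-degeneracy on both sides together with $|D|=|\hat D|$ already suffices. The application in Theorem \ref{thm:equality-terms} takes $h=I-g$ and $f=I-w$, where commutation holds because $g\in C(w)$. There one needs $\hat\tau(h^\vee)$ to be $I-\hat\tau_w(g^\vee)$, which is immediate since dualization is additive.
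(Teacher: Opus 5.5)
Your argument is correct and is essentially the paper's proof with the roles of $D$ and $\hat{D}$ exchanged. The paper uses adjointness and perfectness of the pairing to identify $\ker\tau(h)$ with the Pontryagin dual of $\coker\hat{\tau}(h^\vee)$, equivalently the annihilator of $\image\hat{\tau}(h^\vee)$; you instead identify $\ker\hat{\tau}(h^\vee)$ with the annihilator of $\image\tau(h)$, and both arguments then finish with $|\ker|=|\coker|$ for endomorphisms of finite abelian groups.
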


\begin{proof}
    Since $\tau(h)$ and $\hat{\tau}(h^\vee)$ are adjoint, the pairing between $D$ and $\hat{D}$ induces an isomorphism between $\ker(\tau(h))$ and the Pontryagin dual $ \coker(\hat{\tau}(h^\vee))^P := \Hom_{\Ab}(\coker(\hat{\tau}(h^\vee)), \QQ/\ZZ)$. Indeed, if $[\lambda] \in \ker(\tau(h))$, then $\langle [\lambda], -\rangle: \hat{D} \to \QQ/\ZZ$ is a functional vanishing on $\image(\hat{\tau}(h^\vee))$, so it descends to a morphism $\coker(\hat{\tau}(h^\vee))=\hat{D}/\image{\hat{\tau}(h^\vee)} \to \QQ/\ZZ$. Reciprocally, since the pairing is perfect, any $\varphi:  \hat{D} \to \QQ/\ZZ$ is of the form $\varphi = \langle [\lambda], -\rangle$ for some $[\lambda] \in D$, and if $\varphi$ vanishes on $\image(\hat{\tau}(h^\vee))$, then $[\lambda] \in \ker(\tau(h))$ since $\langle \tau(h)[\lambda], -\rangle = \langle [\lambda], \hat{\tau}(h^\vee)-\rangle = \varphi \circ \hat{\tau}(h^\vee) = 0$.

    Therefore, using that on any finite abelian group we have $|\coker(\hat{\tau}(h^\vee))^P|=|\coker(\hat{\tau}(h^\vee))|$ and $|\ker(\tau(h))| = |\coker(\tau(h))|$, the result follows.
\end{proof}

\section{Stringy mixed Hodge polynomials for exceptional groups}\label{app:calculations}

In this appendix, we show the stringy mixed Hodge polynomials of all the semisimple exceptional groups for the normalization of the identity component of the character variety $\hat{\mathfrak{X}}_{r}(G) = T_{G}^r/W$, for arbitrary $r$, as well as their $E$-polynomial and Euler characteristic. We also specialize these polynomials in the case $r = 2$, corresponding to an elliptic curve, where $\mathfrak{X}_{r}(G)$ is actually normal.

For simplicity, all the polynomials are written in the variable $q := uv$.

\subsection{Group $G = \G_2$} This case was already computed in full detail in Section \ref{ssec:G2}. We repeat it here for completeness. 

Applying Theorem \ref{thm:algorithm_CV}, the stringy mixed Hodge polynomial is the following.
\allowdisplaybreaks[4]
\begin{align*}
{\normalsize \mu^{\str}(T_{\G_2}^r/W)} =& \frac{1}{12} \, {\left(2 \cdot 4^{r} + 2 \cdot 3^{r + 1} + 3 \cdot 2^{r + 1} + 22\right)} q^{ r} t^{2r}+ \left(q t^{2}\right)^{\frac{1}{2} \, r} {\left({\left(q t + 1\right)}^{r} + {\left(-q t + 1\right)}^{r}\right)} \\
& + \frac{1}{12} \, {\left(q^{2} t^{2} + 2 \, q t + 1\right)}^{r} + \frac{1}{6} \, {\left(q^{2} t^{2} + q t + 1\right)}^{r} + \frac{1}{6} \, {\left(q^{2} t^{2} - q t + 1\right)}^{r} \\
& + \frac{1}{12} \, {\left(q^{2} t^{2} - 2 \, q t + 1\right)}^{r} + \frac{1}{2} \, {\left(-q^{2} t^{2} + 1\right)}^{r}.
\end{align*}
From this, setting $t = -1$ and $q = 1$, we can compute the stringy Euler characteristic to be
$$
\chi^{\str}(T_{\G_2}^r/W) = 4^{r - 1} + 2 \cdot 3^{r - 1} + 3 \cdot 2^{r - 1} + 2\; .
$$

Additionally, the stringy $E$-polynomial (with compact support) can be computed from \cite{code_repository} to give the following.
\begin{align*}
E^{\str}(T_{\G_2}^r/W) =& \frac{1}{6} \cdot 4^{r}  q^{r} + \frac{1}{2} \cdot 3^{r}  q^{r} + 2^{r - 1}  q^{r}  + \frac{1}{12} \, \big(12 \, q^{\frac{1}{2} \, r} {\left({\left(-q + 1\right)}^{r} + {\left(-q - 1\right)}^{r}\right)} + {\left(q^{2} + 2 \, q + 1\right)}^{r} \\
& + 2 \, {\left(q^{2} + q + 1\right)}^{r} + 2 \, {\left(q^{2} - q + 1\right)}^{r} + {\left(q^{2} - 2 \, q + 1\right)}^{r} + 6 \, {\left(q^{2} - 1\right)}^{r} + 22 \, q^{r}\big) \; .
\end{align*}

In the particular case that $r = 2$, corresponding to an elliptic curve, the mixed Hodge polynomial is
\begin{align*}
\mu^{\str}(T_{\G_2}^2/W) =& {\left(q^{4} + 2 \, q^{3} + 11 \, q^{2}\right)} t^{4} + {\left(q^{2} + 2 \, q\right)} t^{2} + 1\; .
\end{align*}

Additionally, the Poincar\'e polynomial is
\begin{align*}
    P^{\str}(T_{\G_2}^2/W) = 14 \, t^{4} + 3 \, t^{2} + 1\; ,
\end{align*}
whereas the $E$-polynomial is
\begin{align*}
E^{\str}(T_{\G_2}^2/W) =& q^{4} + 2 \, q^{3} + 12 \, q^{2} + 2 \, q + 1\; .
\end{align*}
This polynomial satisfies the character positivity conjecture \ref{conj:positivity} (case $d=1$ of Corollary \ref{cor:G2_SO7_conj}) and is palindromic (Proposition \ref{prop:palindromic}). 
The stringy Euler characteristic of the character variety is $\chi^{\str}(T_{\G_2}^2/W)=18$.

\subsection{Group $G = \F_4$}
Applying Theorem \ref{thm:algorithm_CV} and the script in \cite{code_repository}, the stringy mixed Hodge polynomial is the following.
\allowdisplaybreaks[4]
{\tiny
\begin{align*}
{\normalsize \mu^{\str}(T_{\F_4}^r/W)} =& \frac{1}{1152}  {\left(2 \cdot 16^{r} + 48 \cdot 9^{r} + 210 \cdot 8^{r} + 431 \cdot 4^{r + 1} + 128 \cdot 3^{r + 1} + 231 \cdot 2^{r + 4} + 4304\right)} \left(q t^{2}\right)^{2  r} \\
& + \frac{1}{24}  {\left(q^{3} t^{3} + 3  q^{2} t^{2} + 3  q t + 1\right)}^{r} q^{\frac{1}{2}  r} t^{r} + \frac{1}{4}  {\left(q^{3} t^{3} + q^{2} t^{2} + q t + 1\right)}^{r} q^{\frac{1}{2}  r} t^{r} + \frac{3}{8}  {\left(q^{3} t^{3} - q^{2} t^{2} - q t + 1\right)}^{r} q^{\frac{1}{2}  r} t^{r} \\
& + \frac{1}{3}  {\left(q^{3} t^{3} + 1\right)}^{r} q^{\frac{1}{2}  r} t^{r} + \frac{1}{24}  {\left(-q^{3} t^{3} + 3  q^{2} t^{2} - 3  q t + 1\right)}^{r} q^{\frac{1}{2}  r} t^{r} + \frac{1}{4}  {\left(-q^{3} t^{3} + q^{2} t^{2} - q t + 1\right)}^{r} q^{\frac{1}{2}  r} t^{r} \\
& + \frac{3}{8}  {\left(-q^{3} t^{3} - q^{2} t^{2} + q t + 1\right)}^{r} q^{\frac{1}{2}  r} t^{r} + \frac{1}{3}  {\left(-q^{3} t^{3} + 1\right)}^{r} q^{\frac{1}{2}  r} t^{r} + \frac{1}{8}  {\left(2  q^{2} t^{2} + 4  q t + 2\right)}^{r} q^{ r} t^{2r}\\
& + \frac{1}{8}  {\left(2  q^{2} t^{2} - 4  q t + 2\right)}^{r} q^{ r} t^{2r}+ \frac{1}{4}  {\left(2  q^{2} t^{2} + 2\right)}^{r} q^{ r} t^{2r}+ \frac{13}{24}  {\left(q^{2} t^{2} + 2  q t + 1\right)}^{r} q^{ r} t^{2r} + \frac{1}{3}  {\left(q^{2} t^{2} + q t + 1\right)}^{r} q^{ r} t^{2r}\\
& + \frac{1}{3}  {\left(q^{2} t^{2} - q t + 1\right)}^{r} q^{ r} t^{2r}+ \frac{13}{24}  {\left(q^{2} t^{2} - 2  q t + 1\right)}^{r} q^{ r} t^{2r}+ \frac{1}{4}  {\left(q^{2} t^{2} + 1\right)}^{r} q^{ r} t^{2r}+ 2  {\left(-q^{2} t^{2} + 1\right)}^{r} q^{ r} t^{2r}\\
& + \frac{1}{2}  {\left(-2  q^{2} t^{2} + 2\right)}^{r} q^{ r} t^{2r}+ \frac{1}{6}  q^{\frac{3}{2}  r} t^{3r} \big({\left(4  q t + 4\right)}^{r} + 9  {\left(2  q t + 2\right)}^{r} + 14  {\left(q t + 1\right)}^{r} + 14  {\left(-q t + 1\right)}^{r} \\
& + 9  {\left(-2  q t + 2\right)}^{r} + {\left(-4  q t + 4\right)}^{r}\big) + \frac{1}{1152}  {\left(q^{4} t^{4} + 4  q^{3} t^{3} + 6  q^{2} t^{2} + 4  q t + 1\right)}^{r} + \frac{1}{72}  {\left(q^{4} t^{4} + 2  q^{3} t^{3} + 3  q^{2} t^{2} + 2  q t + 1\right)}^{r} \\
& + \frac{1}{32}  {\left(q^{4} t^{4} + 2  q^{3} t^{3} + 2  q^{2} t^{2} + 2  q t + 1\right)}^{r} + \frac{1}{18}  {\left(q^{4} t^{4} + q^{3} t^{3} + q t + 1\right)}^{r} + \frac{1}{18}  {\left(q^{4} t^{4} - q^{3} t^{3} - q t + 1\right)}^{r} \\
& + \frac{1}{72}  {\left(q^{4} t^{4} - 2  q^{3} t^{3} + 3  q^{2} t^{2} - 2  q t + 1\right)}^{r} + \frac{1}{32}  {\left(q^{4} t^{4} - 2  q^{3} t^{3} + 2  q^{2} t^{2} - 2  q t + 1\right)}^{r} + \frac{1}{1152}  {\left(q^{4} t^{4} - 4  q^{3} t^{3} + 6  q^{2} t^{2} - 4  q t + 1\right)}^{r} \\
& + \frac{1}{96}  {\left(q^{4} t^{4} + 2  q^{2} t^{2} + 1\right)}^{r} + \frac{1}{12}  {\left(q^{4} t^{4} - q^{2} t^{2} + 1\right)}^{r} + \frac{5}{64}  {\left(q^{4} t^{4} - 2  q^{2} t^{2} + 1\right)}^{r} + \frac{1}{8}  {\left(q^{4} t^{4} + 1\right)}^{r} \\
& + \frac{1}{48}  {\left(-q^{4} t^{4} + 2  q^{3} t^{3} - 2  q t + 1\right)}^{r} + \frac{1}{6}  {\left(-q^{4} t^{4} + q^{3} t^{3} - q t + 1\right)}^{r} + \frac{1}{6}  {\left(-q^{4} t^{4} - q^{3} t^{3} + q t + 1\right)}^{r} \\
& + \frac{1}{48}  {\left(-q^{4} t^{4} - 2  q^{3} t^{3} + 2  q t + 1\right)}^{r} + \frac{1}{8}  {\left(-q^{4} t^{4} + 1\right)}^{r}.
\end{align*}}
From this, setting $t = -1$ and $q = 1$, we can compute the stringy Euler characteristic to be
$$
\chi^{\str}(T_{\F_4}^r/W) = \frac{1}{24} \cdot 16^{r - 1} + \frac{1}{2} \cdot 9^{r - 1} + 35 \cdot 8^{r - 2} + \frac{197}{3} \cdot 4^{r - 2} + 2 \cdot 3^{r - 1} + 25 \cdot 2^{r - 2} + \frac{25}{6}\; .
$$

Additionally, the stringy $E$-polynomial can be computed with the code \cite{code_repository} to give the following.
{\tiny
\begin{align*}
E^{\str}(T_{\F_4}^r/W) =& \frac{1}{36} \cdot 16^{r - 1}  q^{2  r} + \frac{1}{24} \cdot 9^{r}  q^{2  r} + \frac{35}{3} \cdot 8^{r - 2}  q^{2  r} + \frac{431}{18} \cdot 4^{r - 2}  q^{2  r} + 3^{r - 1}  q^{2  r}  + \frac{77}{3} \cdot 2^{r - 3}  q^{2  r} \\
& + \frac{1}{1152}  \big(48  {\left(-q^{3} + 3  q^{2} - 3  q + 1\right)}^{r} q^{\frac{1}{2}  r} + 432  {\left(-q^{3} + q^{2} + q - 1\right)}^{r} q^{\frac{1}{2}  r} + 288  {\left(-q^{3} + q^{2} - q + 1\right)}^{r} q^{\frac{1}{2}  r} \\
& + 432  {\left(-q^{3} - q^{2} + q + 1\right)}^{r} q^{\frac{1}{2}  r} + 288  {\left(-q^{3} - q^{2} - q - 1\right)}^{r} q^{\frac{1}{2}  r} + 48  {\left(-q^{3} - 3  q^{2} - 3  q - 1\right)}^{r} q^{\frac{1}{2}  r} \\
& + 384  {\left(-q^{3} + 1\right)}^{r} q^{\frac{1}{2}  r} + 384  {\left(-q^{3} - 1\right)}^{r} q^{\frac{1}{2}  r} + 144  {\left(2  q^{2} + 4  q + 2\right)}^{r} q^{r} + 144  {\left(2  q^{2} - 4  q + 2\right)}^{r} q^{r} + 288  {\left(2  q^{2} + 2\right)}^{r} q^{r} \\
& + 576  {\left(2  q^{2} - 2\right)}^{r} q^{r} + 624  {\left(q^{2} + 2  q + 1\right)}^{r} q^{r} + 384  {\left(q^{2} + q + 1\right)}^{r} q^{r} + 384  {\left(q^{2} - q + 1\right)}^{r} q^{r} + 624  {\left(q^{2} - 2  q + 1\right)}^{r} q^{r} \\
& + 288  {\left(q^{2} + 1\right)}^{r} q^{r} + 2304  {\left(q^{2} - 1\right)}^{r} q^{r} + {\left(q^{4} + 4  q^{3} + 6  q^{2} + 4  q + 1\right)}^{r} + 16  {\left(q^{4} + 2  q^{3} + 3  q^{2} + 2  q + 1\right)}^{r} \\
& + 192  q^{\frac{3}{2}  r} {\left(14  {\left(-q + 1\right)}^{r} + 14  {\left(-q - 1\right)}^{r} + 9  {\left(-2  q + 2\right)}^{r} + 9  {\left(-2  q - 2\right)}^{r} + {\left(-4  q + 4\right)}^{r} + {\left(-4  q - 4\right)}^{r}\right)} \\
& + 36  {\left(q^{4} + 2  q^{3} + 2  q^{2} + 2  q + 1\right)}^{r} + 24  {\left(q^{4} + 2  q^{3} - 2  q - 1\right)}^{r} + 64  {\left(q^{4} + q^{3} + q + 1\right)}^{r} + 192  {\left(q^{4} + q^{3} - q - 1\right)}^{r} \\
& + 192  {\left(q^{4} - q^{3} + q - 1\right)}^{r} + 64  {\left(q^{4} - q^{3} - q + 1\right)}^{r} + 16  {\left(q^{4} - 2  q^{3} + 3  q^{2} - 2  q + 1\right)}^{r} + 36  {\left(q^{4} - 2  q^{3} + 2  q^{2} - 2  q + 1\right)}^{r} \\
& + 24  {\left(q^{4} - 2  q^{3} + 2  q - 1\right)}^{r} + {\left(q^{4} - 4  q^{3} + 6  q^{2} - 4  q + 1\right)}^{r} + 12  {\left(q^{4} + 2  q^{2} + 1\right)}^{r} + 96  {\left(q^{4} - q^{2} + 1\right)}^{r} \\
& + 90  {\left(q^{4} - 2  q^{2} + 1\right)}^{r} + 144  {\left(q^{4} + 1\right)}^{r} + 144  {\left(q^{4} - 1\right)}^{r} + 4304  q^{2  r}\big) .
\end{align*}}

In the particular case that $r = 2$, corresponding to an elliptic curve, the mixed Hodge polynomial is
\begin{align*}
\mu^{\str}(T_{\F_4}^2/W) =& {\left(q^{8} + 2 \, q^{7} + 8 \, q^{6} + 22 \, q^{5} + 59 \, q^{4}\right)} t^{8} + {\left(q^{6} + 2 \, q^{5} + 9 \, q^{4} + 22 \, q^{3}\right)} t^{6} \\
& + {\left(q^{4} + 2 \, q^{3} + 8 \, q^{2}\right)} t^{4} + {\left(q^{2} + 2 \, q\right)} t^{2} + 1\; .
\end{align*}

Additionally, the Poincar\'e polynomial is
\begin{align*}
    P^{\str}(T_{\F_4}^2/W) = 92 \, t^{8} + 34 \, t^{6} + 11 \, t^{4} + 3 \, t^{2} + 1\; ,
\end{align*}
whereas the $E$-polynomial is
\begin{align*}
E^{\str}(T_{\F_4}^2/W) =& q^{8} + 2 \, q^{7} + 9 \, q^{6} + 24 \, q^{5} + 69 \, q^{4} + 24 \, q^{3} + 9 \, q^{2} + 2 \, q + 1\; .
\end{align*}
This polynomial satisfies the character positivity conjecture \ref{conj:positivity} and is palindromic (Proposition \ref{prop:palindromic}). 
The stringy Euler characteristic of the character variety is $\chi^{str}(T_{\F_4}^2/W)=141$.

\subsection{Group $G = \E_6$} In the case of root system $\E_6$, recall that the root lattice $Q$ and the weight lattice $P$ do not agree. Actually, their quotient is $P/Q = \ZZ_3$. Therefore, there exist two possible connected semisimple groups with root lattice $\E_6$, corresponding to character lattice $\Lambda = Q$ or $\Lambda = P$. However, these two groups are Langlands dual, so their stringy mixed Hodge polynomials are equal by Theorem \ref{thm:equality-terms}.
Applying Theorem \ref{thm:algorithm_CV}, the stringy mixed Hodge polynomial is the following.

\allowdisplaybreaks[4]
{\tiny
\begin{align*}
{\normalsize \mu^{\str}(T_{\E_6}^r/W)} =& \frac{1}{6}  {\left(4  q t + 4\right)}^{r} q^{\frac{5}{2}  r} t^{5  r} + {\left(3  q t + 3\right)}^{r} q^{\frac{5}{2}  r} t^{5  r} + \frac{1}{2}  {\left(2  q t + 2\right)}^{r} q^{\frac{5}{2}  r} t^{5  r} + {\left(-3  q t + 3\right)}^{r} q^{\frac{5}{2}  r} t^{5  r} + \frac{1}{18}  {\left(4  q^{2} t^{2} - 4  q t + 4\right)}^{r} q^{2  r} t^{4  r} \\
 & + \frac{1}{6}  {\left(3  q^{2} t^{2} + 3  q t + 3\right)}^{r} q^{2  r} t^{4  r} + \frac{1}{6}  {\left(3  q^{2} t^{2} - 3  q t + 3\right)}^{r} q^{2  r} t^{4  r} + \frac{1}{6}  {\left(2  q^{2} t^{2} - 2  q t + 2\right)}^{r} q^{2  r} t^{4  r} + \frac{3}{8}  {\left(-q^{2} t^{2} + 1\right)}^{r} q^{2  r} t^{4  r} \\
 & + \frac{1}{4}  {\left(q^{3} t^{3} + q^{2} t^{2} + q t + 1\right)}^{r} q^{\frac{3}{2}  r} t^{3  r} + \frac{1}{8}  {\left(q^{3} t^{3} - q^{2} t^{2} - q t + 1\right)}^{r} q^{\frac{3}{2}  r} t^{3  r} + \frac{1}{3}  {\left(q^{3} t^{3} + 1\right)}^{r} q^{\frac{3}{2}  r} t^{3  r} \\
 & + \frac{1}{6}  {\left(-q^{3} t^{3} + 2  q^{2} t^{2} - 2  q t + 1\right)}^{r} q^{\frac{3}{2}  r} t^{3  r} + \frac{3}{8}  {\left(q t + 1\right)}^{3  r} q^{\frac{3}{2}  r} t^{3  r} + \frac{5}{144}  {\left(q^{4} t^{4} + 4  q^{3} t^{3} + 6  q^{2} t^{2} + 4  q t + 1\right)}^{r} q^{r} t^{2  r} \\
 & + \frac{1}{8}  {\left(q^{4} t^{4} + 2  q^{3} t^{3} + 2  q^{2} t^{2} + 2  q t + 1\right)}^{r} q^{r} t^{2  r} + \frac{2}{9}  {\left(q^{4} t^{4} + q^{3} t^{3} + q t + 1\right)}^{r} q^{r} t^{2  r} + \frac{1}{18}  {\left(q^{4} t^{4} - 2  q^{3} t^{3} + 3  q^{2} t^{2} - 2  q t + 1\right)}^{r} q^{r} t^{2  r} \\
 & + \frac{1}{6}  {\left(q^{4} t^{4} + q^{2} t^{2} + 1\right)}^{r} q^{r} t^{2  r} + \frac{19}{48}  {\left(q^{4} t^{4} - 2  q^{2} t^{2} + 1\right)}^{r} q^{r} t^{2  r} + \frac{1}{48}  {\left(-q^{4} t^{4} + 2  q^{3} t^{3} - 2  q t + 1\right)}^{r} q^{r} t^{2  r} \\
 & + \frac{1}{6}  {\left(-q^{4} t^{4} + q^{3} t^{3} - q t + 1\right)}^{r} q^{r} t^{2  r} + \frac{1}{6}  {\left(-q^{4} t^{4} - q^{3} t^{3} + q t + 1\right)}^{r} q^{r} t^{2  r} + \frac{13}{48}  {\left(-q^{4} t^{4} - 2  q^{3} t^{3} + 2  q t + 1\right)}^{r} q^{r} t^{2  r} \\
 & + \frac{3}{8}  {\left(-q^{4} t^{4} + 1\right)}^{r} q^{r} t^{2  r} + \frac{1}{720}  {\left(q^{5} t^{5} + 5  q^{4} t^{4} + 10  q^{3} t^{3} + 10  q^{2} t^{2} + 5  q t + 1\right)}^{r} q^{\frac{1}{2}  r} t^{r} \\
 & + \frac{1}{18}  {\left(q^{5} t^{5} + 2  q^{4} t^{4} + q^{3} t^{3} + q^{2} t^{2} + 2  q t + 1\right)}^{r} q^{\frac{1}{2}  r} t^{r} + \frac{1}{16}  {\left(q^{5} t^{5} + q^{4} t^{4} - 2  q^{3} t^{3} - 2  q^{2} t^{2} + q t + 1\right)}^{r} q^{\frac{1}{2}  r} t^{r} \\
 & + \frac{1}{18}  {\left(q^{5} t^{5} - q^{4} t^{4} + q^{3} t^{3} + q^{2} t^{2} - q t + 1\right)}^{r} q^{\frac{1}{2}  r} t^{r} + \frac{1}{8}  {\left(q^{5} t^{5} - q^{4} t^{4} - q t + 1\right)}^{r} q^{\frac{1}{2}  r} t^{r} + \frac{1}{5}  {\left(q^{5} t^{5} + 1\right)}^{r} q^{\frac{1}{2}  r} t^{r} \\
 & + \frac{1}{48}  {\left(-q^{5} t^{5} + q^{4} t^{4} + 2  q^{3} t^{3} - 2  q^{2} t^{2} - q t + 1\right)}^{r} q^{\frac{1}{2}  r} t^{r} + \frac{1}{6}  {\left(-q^{5} t^{5} + q^{4} t^{4} - q^{3} t^{3} + q^{2} t^{2} - q t + 1\right)}^{r} q^{\frac{1}{2}  r} t^{r} \\
 & + \frac{1}{8}  {\left(-q^{5} t^{5} - q^{4} t^{4} + q t + 1\right)}^{r} q^{\frac{1}{2}  r} t^{r} + \frac{1}{48}  {\left(-q^{5} t^{5} - 3  q^{4} t^{4} - 2  q^{3} t^{3} + 2  q^{2} t^{2} + 3  q t + 1\right)}^{r} q^{\frac{1}{2}  r} t^{r} \\
 & + \frac{1}{6}  {\left(-q^{5} t^{5} + q^{3} t^{3} - q^{2} t^{2} + 1\right)}^{r} q^{\frac{1}{2}  r} t^{r} + \frac{1}{18}  {\left(3  {\left(q t + 1\right)}^{r} q^{\frac{3}{2}  r} t^{3  r} + 14  q^{2  r} t^{4  r}\right)} {\left(q^{2} t^{2} - q t + 1\right)}^{r} \\
 & + \frac{1}{324}  {\left(3^{r + 3} q^{2  r} t^{4  r} + 81  {\left(q t + 1\right)}^{r} q^{\frac{3}{2}  r} t^{3  r}\right)} {\left(q^{2} t^{2} - 2  q t + 1\right)}^{r} \\
 & + \frac{1}{10368}  {\left(13824  {\left(-q t + 1\right)}^{r} q^{\frac{3}{2}  r} t^{3  r} + {\left(18 \cdot 4^{r + 2} q^{2  r} + 32 \cdot 3^{r + 3} q^{2  r} + 27 \cdot 2^{r + 5} q^{2  r} + 19584  q^{2  r}\right)} t^{4  r}\right)} {\left(q t + 1\right)}^{2  r} \\
 & + \frac{1}{10368}  {\left({\left(54 \cdot 4^{r + 2} q^{2  r} + 64 \cdot 3^{r + 4} q^{2  r} + 81 \cdot 2^{r + 5} q^{2  r}\right)} {\left(1-q t\right)}^{r} t^{4  r} + 23760  {\left(1-q t\right)}^{r} q^{2  r} t^{4  r} + 34560  q^{\frac{5}{2}  r} t^{5  r}\right)} {\left(q t + 1\right)}^{r} \\
 & + \frac{1}{10368}  {\left(48 \cdot 27^{r} q^{3  r} + 12^{r + 3} q^{3  r} + 1536 \cdot 9^{r} q^{3  r} + 4 \cdot 6^{r + 4} q^{3  r} + 4816 \cdot 3^{r + 2} q^{3  r}\right)} t^{6  r} \\
 & + \frac{1}{51840}  {\left(q^{6} t^{6} + 6  q^{5} t^{5} + 15  q^{4} t^{4} + 20  q^{3} t^{3} + 15  q^{2} t^{2} + 6  q t + 1\right)}^{r} + \frac{1}{216}  {\left(q^{6} t^{6} + 3  q^{5} t^{5} + 3  q^{4} t^{4} + 2  q^{3} t^{3} + 3  q^{2} t^{2} + 3  q t + 1\right)}^{r} \\
 & + \frac{1}{96}  {\left(q^{6} t^{6} + 2  q^{5} t^{5} + 3  q^{4} t^{4} + 4  q^{3} t^{3} + 3  q^{2} t^{2} + 2  q t + 1\right)}^{r} + \frac{1}{192}  {\left(q^{6} t^{6} + 2  q^{5} t^{5} - q^{4} t^{4} - 4  q^{3} t^{3} - q^{2} t^{2} + 2  q t + 1\right)}^{r} \\
 & + \frac{1}{72}  {\left(q^{6} t^{6} + q^{5} t^{5} + 2  q^{4} t^{4} + q^{3} t^{3} + 2  q^{2} t^{2} + q t + 1\right)}^{r} + \frac{1}{36}  {\left(q^{6} t^{6} + q^{5} t^{5} - q^{4} t^{4} - 2  q^{3} t^{3} - q^{2} t^{2} + q t + 1\right)}^{r} \\
 & + \frac{1}{10}  {\left(q^{6} t^{6} + q^{5} t^{5} + q t + 1\right)}^{r} + \frac{1}{24}  {\left(q^{6} t^{6} - q^{5} t^{5} - q^{4} t^{4} + 2  q^{3} t^{3} - q^{2} t^{2} - q t + 1\right)}^{r} + \frac{1}{12}  {\left(q^{6} t^{6} - q^{5} t^{5} + q^{3} t^{3} - q t + 1\right)}^{r} \\
 & + \frac{1}{36}  {\left(q^{6} t^{6} - 2  q^{5} t^{5} + 2  q^{4} t^{4} - 2  q^{3} t^{3} + 2  q^{2} t^{2} - 2  q t + 1\right)}^{r} + \frac{1}{1152}  {\left(q^{6} t^{6} - 2  q^{5} t^{5} - q^{4} t^{4} + 4  q^{3} t^{3} - q^{2} t^{2} - 2  q t + 1\right)}^{r} \\
 & + \frac{1}{648}  {\left(q^{6} t^{6} - 3  q^{5} t^{5} + 6  q^{4} t^{4} - 7  q^{3} t^{3} + 6  q^{2} t^{2} - 3  q t + 1\right)}^{r} + \frac{1}{16}  {\left(q^{6} t^{6} - q^{4} t^{4} - q^{2} t^{2} + 1\right)}^{r} + \frac{1}{108}  {\left(q^{6} t^{6} + 2  q^{3} t^{3} + 1\right)}^{r} \\
 & + \frac{1}{9}  {\left(q^{6} t^{6} - q^{3} t^{3} + 1\right)}^{r} + \frac{1}{96}  {\left(-q^{6} t^{6} + 2  q^{5} t^{5} - q^{4} t^{4} + q^{2} t^{2} - 2  q t + 1\right)}^{r} + \frac{1}{36}  {\left(-q^{6} t^{6} + 2  q^{5} t^{5} - 2  q^{4} t^{4} + 2  q^{2} t^{2} - 2  q t + 1\right)}^{r} \\
 & + \frac{1}{10}  {\left(-q^{6} t^{6} + q^{5} t^{5} - q t + 1\right)}^{r} + \frac{1}{36}  {\left(-q^{6} t^{6} - q^{5} t^{5} + q^{4} t^{4} - q^{2} t^{2} + q t + 1\right)}^{r} + \frac{1}{12}  {\left(-q^{6} t^{6} - q^{5} t^{5} - q^{4} t^{4} + q^{2} t^{2} + q t + 1\right)}^{r} \\
 & + \frac{1}{32}  {\left(-q^{6} t^{6} - 2  q^{5} t^{5} - q^{4} t^{4} + q^{2} t^{2} + 2  q t + 1\right)}^{r} + \frac{1}{1440}  {\left(-q^{6} t^{6} - 4  q^{5} t^{5} - 5  q^{4} t^{4} + 5  q^{2} t^{2} + 4  q t + 1\right)}^{r} \\
 & + \frac{1}{96}  {\left(-q^{6} t^{6} + 3  q^{4} t^{4} - 3  q^{2} t^{2} + 1\right)}^{r} + \frac{1}{8}  {\left(-q^{6} t^{6} + q^{4} t^{4} - q^{2} t^{2} + 1\right)}^{r} + \frac{1}{12}  {\left(-q^{6} t^{6} + 1\right)}^{r}.
\end{align*}
}
From this, setting $t = -1$ and $q = 1$, we can compute the stringy Euler characteristic to be
$$
\chi^{\str}(T_{\E_6}^r/W) = \frac{1}{6} \cdot 27^{r - 1} + \frac{1}{3} \cdot 12^{r} + \frac{10}{3} \cdot 9^{r - 1} + \frac{1}{3} \cdot 6^{r + 1} + \frac{11}{2} \cdot 3^{r}\; .
$$

Additionally, the stringy $E$-polynomial can be computed from the code in \cite{code_repository} to give the following.
\allowdisplaybreaks[4]
{\tiny
\begin{align*}
E^{\str}(T_{\E_6}^r/W) =& \frac{1}{18}   {\left(4  q^{2} + 4  q + 4\right)}^{r} q^{2  r} + \frac{1}{36}   {\left(4  q^{2} - 8  q + 4\right)}^{r} q^{2  r} + \frac{1}{12}   {\left(4  q^{2} - 4\right)}^{r} q^{2  r} + \frac{1}{12}   {\left(3  q^{2} + 6  q + 3\right)}^{r} q^{2  r} + \frac{1}{6}   {\left(3  q^{2} + 3  q + 3\right)}^{r} q^{2  r} \\
& + \frac{1}{6}   {\left(3  q^{2} - 3  q + 3\right)}^{r} q^{2  r}  + \frac{1}{12}   {\left(3  q^{2} - 6  q + 3\right)}^{r} q^{2  r} + \frac{1}{2}   {\left(3  q^{2} - 3\right)}^{r} q^{2  r} + \frac{1}{6}   {\left(2  q^{2} + 2  q + 2\right)}^{r} q^{2  r}  + \frac{1}{12}   {\left(2  q^{2} - 4  q + 2\right)}^{r} q^{2  r} \\
& + \frac{1}{4}   {\left(2  q^{2} - 2\right)}^{r} q^{2  r} + \frac{7}{9}   {\left(q^{2} + q + 1\right)}^{r} q^{2  r}  + \frac{17}{9}   {\left(q^{2} - 2  q + 1\right)}^{r} q^{2  r} + \frac{8}{3}   {\left(q^{2} - 1\right)}^{r} q^{2  r} + \frac{3}{8}   {\left(-q^{3} + 3  q^{2} - 3  q + 1\right)}^{r} q^{\frac{3}{2}  r}  \\
& + \frac{4}{3}   {\left(-q^{3} + q^{2} + q - 1\right)}^{r} q^{\frac{3}{2}  r} + \frac{1}{4}   {\left(-q^{3} + q^{2} - q + 1\right)}^{r} q^{\frac{3}{2}  r} + \frac{3}{8}   {\left(-q^{3} - q^{2} + q + 1\right)}^{r} q^{\frac{3}{2}  r}  + \frac{1}{6}   {\left(-q^{3} - 2  q^{2} - 2  q - 1\right)}^{r} q^{\frac{3}{2}  r} \\
& + \frac{1}{2}   {\left(-q^{3} + 1\right)}^{r} q^{\frac{3}{2}  r} + \frac{1}{720}   {\left(-q^{5} + 5  q^{4} - 10  q^{3} + 10  q^{2} - 5  q + 1\right)}^{r} q^{\frac{1}{2}  r} + \frac{1}{48}   {\left(-q^{5} + 3  q^{4} - 2  q^{3} - 2  q^{2} + 3  q - 1\right)}^{r} q^{\frac{1}{2}  r} \\
& + \frac{1}{18}   {\left(-q^{5} + 2  q^{4} - q^{3} + q^{2} - 2  q + 1\right)}^{r} q^{\frac{1}{2}  r} + \frac{1}{16}   {\left(-q^{5} + q^{4} + 2  q^{3} - 2  q^{2} - q + 1\right)}^{r} q^{\frac{1}{2}  r} + \frac{1}{8}   {\left(-q^{5} + q^{4} + q - 1\right)}^{r} q^{\frac{1}{2}  r} \\
& + \frac{1}{48}   {\left(-q^{5} - q^{4} + 2  q^{3} + 2  q^{2} - q - 1\right)}^{r} q^{\frac{1}{2}  r} + \frac{1}{18}   {\left(-q^{5} - q^{4} - q^{3} + q^{2} + q + 1\right)}^{r} q^{\frac{1}{2}  r} + \frac{1}{6}   {\left(-q^{5} - q^{4} - q^{3} - q^{2} - q - 1\right)}^{r} q^{\frac{1}{2}  r}  \\
& + \frac{1}{8}   {\left(-q^{5} - q^{4} + q + 1\right)}^{r} q^{\frac{1}{2}  r} + \frac{1}{6}   {\left(-q^{5} + q^{3} + q^{2} - 1\right)}^{r} q^{\frac{1}{2}  r} + \frac{1}{5}   {\left(-q^{5} + 1\right)}^{r} q^{\frac{1}{2}  r} + \frac{1}{18}   {\left(q^{4} + 2  q^{3} + 3  q^{2} + 2  q + 1\right)}^{r} q^{r} \\
& + \frac{1}{48}   {\left(q^{4} + 2  q^{3} - 2  q - 1\right)}^{r} q^{r} + \frac{1}{6}   {\left(q^{4} + q^{3} - q - 1\right)}^{r} q^{r} + \frac{1}{6}   {\left(q^{4} - q^{3} + q - 1\right)}^{r} q^{r} + \frac{2}{9}   {\left(q^{4} - q^{3} - q + 1\right)}^{r} q^{r} \\
& + \frac{1}{8}   {\left(q^{4} - 2  q^{3} + 2  q^{2} - 2  q + 1\right)}^{r} q^{r} + \frac{13}{48}   {\left(q^{4} - 2  q^{3} + 2  q - 1\right)}^{r} q^{r} + \frac{5}{144}   {\left(q^{4} - 4  q^{3} + 6  q^{2} - 4  q + 1\right)}^{r} q^{r} + \frac{1}{6}   {\left(q^{4} + q^{2} + 1\right)}^{r} q^{r} \\
& + \frac{19}{48}   {\left(q^{4} - 2  q^{2} + 1\right)}^{r} q^{r} + \frac{3}{8}   {\left(q^{4} - 1\right)}^{r} q^{r} + \frac{10}{3}   q^{\frac{5}{2}  r} {\left(-q + 1\right)}^{r} + \frac{1}{2}   q^{\frac{5}{2}  r} {\left(-2  q + 2\right)}^{r} +  q^{\frac{5}{2}  r} {\left(-3  q + 3\right)}^{r} +  q^{\frac{5}{2}  r} {\left(-3  q - 3\right)}^{r} \\
& + \frac{1}{6}   q^{\frac{5}{2}  r} {\left(-4  q + 4\right)}^{r} + \frac{1}{10368}  {\left(48 \cdot 27^{r} q^{3  r} + 12^{r + 3} q^{3  r} + 1536 \cdot 9^{r} q^{3  r} + 4 \cdot 6^{r + 4} q^{3  r} + 4816 \cdot 3^{r + 2} q^{3  r}\right)}  \\
& + \frac{1}{648}   {\left(q^{6} + 3  q^{5} + 6  q^{4} + 7  q^{3} + 6  q^{2} + 3  q + 1\right)}^{r}  + \frac{1}{36}   {\left(q^{6} + 2  q^{5} + 2  q^{4} + 2  q^{3} + 2  q^{2} + 2  q + 1\right)}^{r} \\
& + \frac{1}{36}   {\left(q^{6} + 2  q^{5} + 2  q^{4} - 2  q^{2} - 2  q - 1\right)}^{r} + \frac{1}{96}   {\left(q^{6} + 2  q^{5} + q^{4} - q^{2} - 2  q - 1\right)}^{r} + \frac{1}{1152}   {\left(q^{6} + 2  q^{5} - q^{4} - 4  q^{3} - q^{2} + 2  q + 1\right)}^{r} \\
& + \frac{1}{24}   {\left(q^{6} + q^{5} - q^{4} - 2  q^{3} - q^{2} + q + 1\right)}^{r} + \frac{1}{12}   {\left(q^{6} + q^{5} - q^{3} + q + 1\right)}^{r} + \frac{1}{10}   {\left(q^{6} + q^{5} - q - 1\right)}^{r} \\
& + \frac{1}{72}   {\left(q^{6} - q^{5} + 2  q^{4} - q^{3} + 2  q^{2} - q + 1\right)}^{r} + \frac{1}{12}   {\left(q^{6} - q^{5} + q^{4} - q^{2} + q - 1\right)}^{r} + \frac{1}{36}   {\left(q^{6} - q^{5} - q^{4} + 2  q^{3} - q^{2} - q + 1\right)}^{r} \\
& + \frac{1}{36}   {\left(q^{6} - q^{5} - q^{4} + q^{2} + q - 1\right)}^{r} + \frac{1}{10}   {\left(q^{6} - q^{5} - q + 1\right)}^{r} + \frac{1}{96}   {\left(q^{6} - 2  q^{5} + 3  q^{4} - 4  q^{3} + 3  q^{2} - 2  q + 1\right)}^{r} \\
& + \frac{1}{32}   {\left(q^{6} - 2  q^{5} + q^{4} - q^{2} + 2  q - 1\right)}^{r} + \frac{1}{192}   {\left(q^{6} - 2  q^{5} - q^{4} + 4  q^{3} - q^{2} - 2  q + 1\right)}^{r} \\
& + \frac{1}{216}   {\left(q^{6} - 3  q^{5} + 3  q^{4} - 2  q^{3} + 3  q^{2} - 3  q + 1\right)}^{r} + \frac{1}{1440}   {\left(q^{6} - 4  q^{5} + 5  q^{4} - 5  q^{2} + 4  q - 1\right)}^{r} \\
& + \frac{1}{51840}   {\left(q^{6} - 6  q^{5} + 15  q^{4} - 20  q^{3} + 15  q^{2} - 6  q + 1\right)}^{r} + \frac{1}{8}   {\left(q^{6} - q^{4} + q^{2} - 1\right)}^{r} + \frac{1}{16}   {\left(q^{6} - q^{4} - q^{2} + 1\right)}^{r} \\
& + \frac{1}{96}   {\left(q^{6} - 3  q^{4} + 3  q^{2} - 1\right)}^{r} + \frac{1}{9}   {\left(q^{6} + q^{3} + 1\right)}^{r} + \frac{1}{108}   {\left(q^{6} - 2  q^{3} + 1\right)}^{r} + \frac{1}{12}   {\left(q^{6} - 1\right)}^{r}.
\end{align*}
}

In the particular case that $r = 2$, corresponding to an elliptic curve, the mixed Hodge polynomial is
\begin{align*}
\mu^{\str}(T_{\E_6}^2/W) =& {\left(q^{12} + q^{11} + 2 \, q^{10} + 3 \, q^{9} + 19 \, q^{8} + 26 \, q^{7} + 95 \, q^{6}\right)} t^{12} \\
& + 2 \, {\left(q^{9} + 2 \, q^{8} + 3 \, q^{7} + 8 \, q^{6}\right)} t^{11} + {\left(q^{10} + q^{9} + 3 \, q^{8} + 6 \, q^{7} + 22 \, q^{6} + 26 \, q^{5}\right)} t^{10} \\
& + 2 \, {\left(q^{7} + 2 \, q^{6} + 3 \, q^{5}\right)} t^{9} + {\left(q^{8} + q^{7} + 4 \, q^{6} + 6 \, q^{5} + 19 \, q^{4}\right)} t^{8} + 2 \, q^{3} t^{5} \\
& + 2 \, {\left(q^{5} + 2 \, q^{4}\right)} t^{7} + {\left(q^{6} + q^{5} + 3 \, q^{4} + 3 \, q^{3}\right)} t^{6} \\
& + {\left(q^{4} + q^{3} + 2 \, q^{2}\right)} t^{4} + {\left(q^{2} + q\right)} t^{2} + 1\; .
\end{align*}

Additionally, the Poincar\'e polynomial is
\begin{align*}
    P^{\str}(T_{\E_6}^2/W) = 147 \, t^{12} + 28 \, t^{11} + 59 \, t^{10} + 12 \, t^{9} + 31 \, t^{8} + 6 \, t^{7} + 8 \, t^{6} + 2 \, t^{5} + 4 \, t^{4} + 2 \, t^{2} + 1\; ,
\end{align*}
whereas the $E$-polynomial is
\begin{align*}
E^{\str}(T_{\E_6}^2/W) =& q^{12} + q^{11} + 3 \, q^{10} + 2 \, q^{9} + 19 \, q^{8} + 25 \, q^{7} + 102 \, q^{6} \\
& + 25 \, q^{5} + 19 \, q^{4} + 2 \, q^{3} + 3 \, q^{2} + q + 1\; .
\end{align*}
This polynomial satisfies the character positivity conjecture \ref{conj:positivity} and is palindromic (Proposition \ref{prop:palindromic}). The stringy Euler characteristic of the character variety is $\chi^{\str}(T_{\E_6}^2/W)=204$.

\subsection{Group $G = \E_7$}
In the case of root system $\E_7$, recall that the root lattice $Q$ and the weight lattice $P$ do not agree. Actually, their quotient is $P/Q = \ZZ_2$. Therefore, there exist two possible connected semisimple groups with root lattice $\E_7$, corresponding to character lattice $\Lambda = Q$ or $\Lambda = P$. However, these two groups are Langlands dual, so their stringy mixed Hodge polynomials are equal by Theorem \ref{thm:equality-terms}.

{\tiny
% [inline block 0: 1 envs, 20192 chars -> math_tex | \begin{align*} \mu^{\str}(T_{\E_7}^r/W) = & \frac{1}{2903040} (2 \cdot 128^{r} + 126 \cdot 64^{r} + 23016 \cdot 32^{r} +...]
}

From this, setting $t = -1$ and $q = 1$, we can compute the stringy Euler characteristic to be
\begin{align*}
\chi^{\str}(T_{\E_7}^r/W) &= \frac{1}{7560} \cdot 128^{r - 1} + \frac{1}{120} \cdot 64^{r - 1} + \frac{529}{720} \cdot 32^{r - 1} + \frac{1}{12} \cdot 18^{r} \\
& + \frac{427}{3} \cdot 16^{r - 2} + \frac{11929}{45} \cdot 8^{r - 2} + 6^{r} + \frac{1381}{30} \cdot 4^{r - 1} + \frac{44777}{945} \cdot 2^{r - 2}\; .
\end{align*}

Additionally, the stringy $E$-polynomial can be computed from the code in \cite{code_repository} to give the following.
{\tiny
\begin{align*}
E^{\str}(T_{\E_7}^r/W) = & \frac{1}{11340} \cdot 128^{r - 1} \left(-1\right)^{ r} q^{\frac{7}{2} r} + \frac{1}{360} \cdot 64^{r - 1} \left(-1\right)^{ r} q^{\frac{7}{2} r} + \frac{137}{540} \cdot 32^{r - 1} \left(-1\right)^{ r} q^{\frac{7}{2} r} + \frac{1}{24} \cdot 18^{r} \left(-1\right)^{ r} q^{\frac{7}{2} r} \\
& + \frac{101}{72} \cdot 16^{r - 1} \left(-1\right)^{ r} q^{\frac{7}{2} r} + \frac{4759}{540} \cdot 8^{r - 1} \left(-1\right)^{ r} q^{\frac{7}{2} r} + \frac{1}{3} \cdot 6^{r} \left(-1\right)^{ r} q^{\frac{7}{2} r} + \frac{533}{45} \cdot 4^{r - 1} \left(-1\right)^{ r} q^{\frac{7}{2} r} \\
& + \frac{169343}{2835} \cdot 2^{r - 3} \left(-1\right)^{ r} q^{\frac{7}{2} r} + \frac{1}{2903040} \Biggl(15120 {\left(8 q^{2} + 16 q + 8\right)}^{r} q^{\frac{5}{2} r} + 15120 {\left(8 q^{2} - 16 q + 8\right)}^{r} q^{\frac{5}{2} r} \\
& + 30240 {\left(8 q^{2} + 8\right)}^{r} q^{\frac{5}{2} r} + 60480 {\left(8 q^{2} - 8\right)}^{r} q^{\frac{5}{2} r} + 257040 {\left(4 q^{2} + 8 q + 4\right)}^{r} q^{\frac{5}{2} r} + 257040 {\left(4 q^{2} - 8 q + 4\right)}^{r} q^{\frac{5}{2} r} \\
& + 272160 {\left(4 q^{2} + 4\right)}^{r} q^{\frac{5}{2} r} + 786240 {\left(4 q^{2} - 4\right)}^{r} q^{\frac{5}{2} r} + 2509920 {\left(2 q^{2} + 4 q + 2\right)}^{r} q^{\frac{5}{2} r} + 967680 {\left(2 q^{2} + 2 q + 2\right)}^{r} q^{\frac{5}{2} r} \\
& + 967680 {\left(2 q^{2} - 2 q + 2\right)}^{r} q^{\frac{5}{2} r} + 2509920 {\left(2 q^{2} - 4 q + 2\right)}^{r} q^{\frac{5}{2} r} + 1874880 {\left(2 q^{2} + 2\right)}^{r} q^{\frac{5}{2} r} + 8830080 {\left(2 q^{2} - 2\right)}^{r} q^{\frac{5}{2} r} \\
& + 5322240 {\left(q^{2} + 2 q + 1\right)}^{r} q^{\frac{5}{2} r} + 5322240 {\left(q^{2} - 2 q + 1\right)}^{r} q^{\frac{5}{2} r} + 7741440 {\left(q^{2} - 1\right)}^{r} q^{\frac{5}{2} r} \\
& + 1229760 {\left(-q^{3} + 3 q^{2} - 3 q + 1\right)}^{r} q^{2 r} + 241920 {\left(-q^{3} + 2 q^{2} - 2 q + 1\right)}^{r} q^{2 r} + 5019840 {\left(-q^{3} + q^{2} + q - 1\right)}^{r} q^{2 r} \\
& + 846720 {\left(-q^{3} + q^{2} - q + 1\right)}^{r} q^{2 r} + 5019840 {\left(-q^{3} - q^{2} + q + 1\right)}^{r} q^{2 r} + 846720 {\left(-q^{3} - q^{2} - q - 1\right)}^{r} q^{2 r} \\
& + 241920 {\left(-q^{3} - 2 q^{2} - 2 q - 1\right)}^{r} q^{2 r} + 1229760 {\left(-q^{3} - 3 q^{2} - 3 q - 1\right)}^{r} q^{2 r} + 1854720 {\left(-q^{3} + 1\right)}^{r} q^{2 r} \\
& + 1854720 {\left(-q^{3} - 1\right)}^{r} q^{2 r} + 151200 {\left(-2 q^{3} + 6 q^{2} - 6 q + 2\right)}^{r} q^{2 r} + 1360800 {\left(-2 q^{3} + 2 q^{2} + 2 q - 2\right)}^{r} q^{2 r} \\
& + 907200 {\left(-2 q^{3} + 2 q^{2} - 2 q + 2\right)}^{r} q^{2 r} + 1360800 {\left(-2 q^{3} - 2 q^{2} + 2 q + 2\right)}^{r} q^{2 r} + 907200 {\left(-2 q^{3} - 2 q^{2} - 2 q - 2\right)}^{r} q^{2 r} \\
& + 151200 {\left(-2 q^{3} - 6 q^{2} - 6 q - 2\right)}^{r} q^{2 r} + 1209600 {\left(-2 q^{3} + 2\right)}^{r} q^{2 r} + 1209600 {\left(-2 q^{3} - 2\right)}^{r} q^{2 r} \\
& + 10080 {\left(-4 q^{3} + 12 q^{2} - 12 q + 4\right)}^{r} q^{2 r} + 90720 {\left(-4 q^{3} + 4 q^{2} + 4 q - 4\right)}^{r} q^{2 r} + 60480 {\left(-4 q^{3} + 4 q^{2} - 4 q + 4\right)}^{r} q^{2 r} \\
& + 90720 {\left(-4 q^{3} - 4 q^{2} + 4 q + 4\right)}^{r} q^{2 r} + 60480 {\left(-4 q^{3} - 4 q^{2} - 4 q - 4\right)}^{r} q^{2 r} + 10080 {\left(-4 q^{3} - 12 q^{2} - 12 q - 4\right)}^{r} q^{2 r} \\
& + 80640 {\left(-4 q^{3} + 4\right)}^{r} q^{2 r} + 80640 {\left(-4 q^{3} - 4\right)}^{r} q^{2 r} + 2520 {\left(2 q^{4} + 8 q^{3} + 12 q^{2} + 8 q + 2\right)}^{r} q^{\frac{3}{2} r} \\
& + 40320 {\left(2 q^{4} + 4 q^{3} + 6 q^{2} + 4 q + 2\right)}^{r} q^{\frac{3}{2} r} + 90720 {\left(2 q^{4} + 4 q^{3} + 4 q^{2} + 4 q + 2\right)}^{r} q^{\frac{3}{2} r} + 60480 {\left(2 q^{4} + 4 q^{3} - 4 q - 2\right)}^{r} q^{\frac{3}{2} r} \\
& + 161280 {\left(2 q^{4} + 2 q^{3} + 2 q + 2\right)}^{r} q^{\frac{3}{2} r} + 483840 {\left(2 q^{4} + 2 q^{3} - 2 q - 2\right)}^{r} q^{\frac{3}{2} r} + 483840 {\left(2 q^{4} - 2 q^{3} + 2 q - 2\right)}^{r} q^{\frac{3}{2} r} \\
& + 161280 {\left(2 q^{4} - 2 q^{3} - 2 q + 2\right)}^{r} q^{\frac{3}{2} r} + 40320 {\left(2 q^{4} - 4 q^{3} + 6 q^{2} - 4 q + 2\right)}^{r} q^{\frac{3}{2} r} + 90720 {\left(2 q^{4} - 4 q^{3} + 4 q^{2} - 4 q + 2\right)}^{r} q^{\frac{3}{2} r} \\
& + 60480 {\left(2 q^{4} - 4 q^{3} + 4 q - 2\right)}^{r} q^{\frac{3}{2} r} + 2520 {\left(2 q^{4} - 8 q^{3} + 12 q^{2} - 8 q + 2\right)}^{r} q^{\frac{3}{2} r} + 30240 {\left(2 q^{4} + 4 q^{2} + 2\right)}^{r} q^{\frac{3}{2} r} \\
& + 241920 {\left(2 q^{4} - 2 q^{2} + 2\right)}^{r} q^{\frac{3}{2} r} + 226800 {\left(2 q^{4} - 4 q^{2} + 2\right)}^{r} q^{\frac{3}{2} r} + 362880 {\left(2 q^{4} + 2\right)}^{r} q^{\frac{3}{2} r} + 362880 {\left(2 q^{4} - 2\right)}^{r} q^{\frac{3}{2} r} \\
& + 120960 {\left(q^{4} + 4 q^{3} + 6 q^{2} + 4 q + 1\right)}^{r} q^{\frac{3}{2} r} + 362880 {\left(q^{4} + 2 q^{3} + 2 q^{2} + 2 q + 1\right)}^{r} q^{\frac{3}{2} r} + 967680 {\left(q^{4} + 2 q^{3} - 2 q - 1\right)}^{r} q^{\frac{3}{2} r} \\
& + 967680 {\left(q^{4} + q^{3} + q + 1\right)}^{r} q^{\frac{3}{2} r} + 483840 {\left(q^{4} + q^{3} - q - 1\right)}^{r} q^{\frac{3}{2} r} + 483840 {\left(q^{4} - q^{3} + q - 1\right)}^{r} q^{\frac{3}{2} r} \\
& + 967680 {\left(q^{4} - q^{3} - q + 1\right)}^{r} q^{\frac{3}{2} r} + 362880 {\left(q^{4} - 2 q^{3} + 2 q^{2} - 2 q + 1\right)}^{r} q^{\frac{3}{2} r} + 967680 {\left(q^{4} - 2 q^{3} + 2 q - 1\right)}^{r} q^{\frac{3}{2} r} \\
& + 120960 {\left(q^{4} - 4 q^{3} + 6 q^{2} - 4 q + 1\right)}^{r} q^{\frac{3}{2} r} + 1451520 {\left(q^{4} - 2 q^{2} + 1\right)}^{r} q^{\frac{3}{2} r} + 1451520 {\left(q^{4} - 1\right)}^{r} q^{\frac{3}{2} r} \\
& + 126 {\left(q^{6} + 6 q^{5} + 15 q^{4} + 20 q^{3} + 15 q^{2} + 6 q + 1\right)}^{r} q^{\frac{1}{2} r} + 3780 {\left(q^{6} + 4 q^{5} + 5 q^{4} - 5 q^{2} - 4 q - 1\right)}^{r} q^{\frac{1}{2} r} \\
& + 20160 {\left(q^{6} + 3 q^{5} + 3 q^{4} + 2 q^{3} + 3 q^{2} + 3 q + 1\right)}^{r} q^{\frac{1}{2} r} + 22680 {\left(q^{6} + 2 q^{5} + 3 q^{4} + 4 q^{3} + 3 q^{2} + 2 q + 1\right)}^{r} q^{\frac{1}{2} r} \\
& + 105840 {\left(q^{6} + 2 q^{5} + q^{4} - q^{2} - 2 q - 1\right)}^{r} q^{\frac{1}{2} r} + 24570 {\left(q^{6} + 2 q^{5} - q^{4} - 4 q^{3} - q^{2} + 2 q + 1\right)}^{r} q^{\frac{1}{2} r} \\
& + 120960 {\left(q^{6} + q^{5} + q^{4} - q^{2} - q - 1\right)}^{r} q^{\frac{1}{2} r} + 60480 {\left(q^{6} + q^{5} - q^{4} - 2 q^{3} - q^{2} + q + 1\right)}^{r} q^{\frac{1}{2} r} \\
& + 120960 {\left(q^{6} + q^{5} - q^{4} + q^{2} - q - 1\right)}^{r} q^{\frac{1}{2} r} + 290304 {\left(q^{6} + q^{5} + q + 1\right)}^{r} q^{\frac{1}{2} r} + 120960 {\left(q^{6} - q^{5} + q^{4} - q^{2} + q - 1\right)}^{r} q^{\frac{1}{2} r} \\
& + 60480 {\left(q^{6} - q^{5} - q^{4} + 2 q^{3} - q^{2} - q + 1\right)}^{r} q^{\frac{1}{2} r} + 120960 {\left(q^{6} - q^{5} - q^{4} + q^{2} + q - 1\right)}^{r} q^{\frac{1}{2} r} \\
& + 290304 {\left(q^{6} - q^{5} - q + 1\right)}^{r} q^{\frac{1}{2} r} + 22680 {\left(q^{6} - 2 q^{5} + 3 q^{4} - 4 q^{3} + 3 q^{2} - 2 q + 1\right)}^{r} q^{\frac{1}{2} r} \\
& + 105840 {\left(q^{6} - 2 q^{5} + q^{4} - q^{2} + 2 q - 1\right)}^{r} q^{\frac{1}{2} r} + 24570 {\left(q^{6} - 2 q^{5} - q^{4} + 4 q^{3} - q^{2} - 2 q + 1\right)}^{r} q^{\frac{1}{2} r} \\
& + 20160 {\left(q^{6} - 3 q^{5} + 3 q^{4} - 2 q^{3} + 3 q^{2} - 3 q + 1\right)}^{r} q^{\frac{1}{2} r} + 3780 {\left(q^{6} - 4 q^{5} + 5 q^{4} - 5 q^{2} + 4 q - 1\right)}^{r} q^{\frac{1}{2} r} \\
& + 126 {\left(q^{6} - 6 q^{5} + 15 q^{4} - 20 q^{3} + 15 q^{2} - 6 q + 1\right)}^{r} q^{\frac{1}{2} r} + 181440 {\left(q^{6} + q^{4} + q^{2} + 1\right)}^{r} q^{\frac{1}{2} r} \\
& + 45360 {\left(q^{6} + q^{4} - q^{2} - 1\right)}^{r} q^{\frac{1}{2} r} + 181440 {\left(q^{6} - q^{4} + q^{2} - 1\right)}^{r} q^{\frac{1}{2} r} + 272160 {\left(q^{6} - q^{4} - q^{2} + 1\right)}^{r} q^{\frac{1}{2} r} \\
& + 37800 {\left(q^{6} - 3 q^{4} + 3 q^{2} - 1\right)}^{r} q^{\frac{1}{2} r} + 80640 {\left(q^{6} + 2 q^{3} + 1\right)}^{r} q^{\frac{1}{2} r} + 80640 {\left(q^{6} - 2 q^{3} + 1\right)}^{r} q^{\frac{1}{2} r} + 483840 {\left(q^{6} - 1\right)}^{r} q^{\frac{1}{2} r} \\
& + 5796 {\left(-q^{5} + 5 q^{4} - 10 q^{3} + 10 q^{2} - 5 q + 1\right)}^{r} q^{r} + 94500 {\left(-q^{5} + 3 q^{4} - 2 q^{3} - 2 q^{2} + 3 q - 1\right)}^{r} q^{r} \\
& + 45360 {\left(-q^{5} + 3 q^{4} - 4 q^{3} + 4 q^{2} - 3 q + 1\right)}^{r} q^{r} + 201600 {\left(-q^{5} + 2 q^{4} - q^{3} + q^{2} - 2 q + 1\right)}^{r} q^{r} \\
& + 120960 {\left(-q^{5} + 2 q^{4} - q^{3} - q^{2} + 2 q - 1\right)}^{r} q^{r} + 340200 {\left(-q^{5} + q^{4} + 2 q^{3} - 2 q^{2} - q + 1\right)}^{r} q^{r} \\
& + 241920 {\left(-q^{5} + q^{4} - q^{3} + q^{2} - q + 1\right)}^{r} q^{r} + 80640 {\left(-q^{5} + q^{4} - q^{3} - q^{2} + q - 1\right)}^{r} q^{r} \\
& + 45360 {\left(-q^{5} + q^{4} - 2 q^{3} + 2 q^{2} - q + 1\right)}^{r} q^{r} + 771120 {\left(-q^{5} + q^{4} + q - 1\right)}^{r} q^{r} + 181440 {\left(-q^{5} + q^{4} - q + 1\right)}^{r} q^{r} \\
& + 340200 {\left(-q^{5} - q^{4} + 2 q^{3} + 2 q^{2} - q - 1\right)}^{r} q^{r} + 80640 {\left(-q^{5} - q^{4} - q^{3} + q^{2} + q + 1\right)}^{r} q^{r} \\
& + 241920 {\left(-q^{5} - q^{4} - q^{3} - q^{2} - q - 1\right)}^{r} q^{r} + 45360 {\left(-q^{5} - q^{4} - 2 q^{3} - 2 q^{2} - q - 1\right)}^{r} q^{r} \\
& + 771120 {\left(-q^{5} - q^{4} + q + 1\right)}^{r} q^{r} + 181440 {\left(-q^{5} - q^{4} - q - 1\right)}^{r} q^{r} + 120960 {\left(-q^{5} - 2 q^{4} - q^{3} + q^{2} + 2 q + 1\right)}^{r} q^{r} \\
& + 201600 {\left(-q^{5} - 2 q^{4} - q^{3} - q^{2} - 2 q - 1\right)}^{r} q^{r} + 94500 {\left(-q^{5} - 3 q^{4} - 2 q^{3} + 2 q^{2} + 3 q + 1\right)}^{r} q^{r} \\
& + 45360 {\left(-q^{5} - 3 q^{4} - 4 q^{3} - 4 q^{2} - 3 q - 1\right)}^{r} q^{r} + 5796 {\left(-q^{5} - 5 q^{4} - 10 q^{3} - 10 q^{2} - 5 q - 1\right)}^{r} q^{r} \\
& + 483840 {\left(-q^{5} + q^{3} + q^{2} - 1\right)}^{r} q^{r} + 483840 {\left(-q^{5} + q^{3} - q^{2} + 1\right)}^{r} q^{r} + 290304 {\left(-q^{5} + 1\right)}^{r} q^{r} + 290304 {\left(-q^{5} - 1\right)}^{r} q^{r} \\
& + 126 q^{3 r} \Biggl(68640 {\left(-q + 1\right)}^{r} + 68640 {\left(-q - 1\right)}^{r} + 107104 {\left(-2 q + 2\right)}^{r} + 107104 {\left(-2 q - 2\right)}^{r} + 3840 {\left(-3 q + 3\right)}^{r} \\
& + 3840 {\left(-3 q - 3\right)}^{r} + 22800 {\left(-4 q + 4\right)}^{r} + 22800 {\left(-4 q - 4\right)}^{r} + 4450 {\left(-8 q + 8\right)}^{r} + 4450 {\left(-8 q - 8\right)}^{r} + 480 {\left(-9 q + 9\right)}^{r} \\
& + 480 {\left(-9 q - 9\right)}^{r} + 45 {\left(-16 q + 16\right)}^{r} + 45 {\left(-16 q - 16\right)}^{r} + {\left(-32 q + 32\right)}^{r} + {\left(-32 q - 32\right)}^{r}\Biggr) \\
& + {\left(-q^{7} + 7 q^{6} - 21 q^{5} + 35 q^{4} - 35 q^{3} + 21 q^{2} - 7 q + 1\right)}^{r} + 63 {\left(-q^{7} + 5 q^{6} - 9 q^{5} + 5 q^{4} + 5 q^{3} - 9 q^{2} + 5 q - 1\right)}^{r} \\
& + 672 {\left(-q^{7} + 4 q^{6} - 6 q^{5} + 5 q^{4} - 5 q^{3} + 6 q^{2} - 4 q + 1\right)}^{r} + 945 {\left(-q^{7} + 3 q^{6} - q^{5} - 5 q^{4} + 5 q^{3} + q^{2} - 3 q + 1\right)}^{r} \\
& + 7560 {\left(-q^{7} + 3 q^{6} - 3 q^{5} + q^{4} + q^{3} - 3 q^{2} + 3 q - 1\right)}^{r} + 3780 {\left(-q^{7} + 3 q^{6} - 5 q^{5} + 7 q^{4} - 7 q^{3} + 5 q^{2} - 3 q + 1\right)}^{r} \\
& + 48384 {\left(-q^{7} + 2 q^{6} - q^{5} + q^{2} - 2 q + 1\right)}^{r} + 60480 {\left(-q^{7} + 2 q^{6} - 2 q^{5} + q^{4} + q^{3} - 2 q^{2} + 2 q - 1\right)}^{r} \\
& + 20160 {\left(-q^{7} + 2 q^{6} - 3 q^{5} + 3 q^{4} - 3 q^{3} + 3 q^{2} - 2 q + 1\right)}^{r} + 2240 {\left(-q^{7} + 2 q^{6} - 3 q^{5} + q^{4} + q^{3} - 3 q^{2} + 2 q - 1\right)}^{r} \\
& + 10080 {\left(-q^{7} + 2 q^{6} - q^{4} - q^{3} + 2 q - 1\right)}^{r} + 10080 {\left(-q^{7} + 2 q^{6} - 3 q^{4} + 3 q^{3} - 2 q + 1\right)}^{r} \\
& + 4095 {\left(-q^{7} + q^{6} + 3 q^{5} - 3 q^{4} - 3 q^{3} + 3 q^{2} + q - 1\right)}^{r} + 52920 {\left(-q^{7} + q^{6} + q^{5} - q^{4} + q^{3} - q^{2} - q + 1\right)}^{r} \\
& + 90720 {\left(-q^{7} + q^{6} + q^{5} - q^{4} - q^{3} + q^{2} + q - 1\right)}^{r} + 11340 {\left(-q^{7} + q^{6} - q^{5} + q^{4} + q^{3} - q^{2} + q - 1\right)}^{r} \\
& + 90720 {\left(-q^{7} + q^{6} - q^{5} + q^{4} - q^{3} + q^{2} - q + 1\right)}^{r} + 96768 {\left(-q^{7} + q^{6} - q^{5} - q^{2} + q - 1\right)}^{r} \\
& + 13440 {\left(-q^{7} + q^{6} + 2 q^{4} - 2 q^{3} - q + 1\right)}^{r} + 161280 {\left(-q^{7} + q^{6} - q^{4} + q^{3} - q + 1\right)}^{r} \\
& + 40320 {\left(-q^{7} + q^{6} - 2 q^{4} + 2 q^{3} - q + 1\right)}^{r} + 161280 {\left(-q^{7} + q^{6} + q - 1\right)}^{r} \\
& + 4095 {\left(-q^{7} - q^{6} + 3 q^{5} + 3 q^{4} - 3 q^{3} - 3 q^{2} + q + 1\right)}^{r} + 52920 {\left(-q^{7} - q^{6} + q^{5} + q^{4} + q^{3} + q^{2} - q - 1\right)}^{r} \\
& + 90720 {\left(-q^{7} - q^{6} + q^{5} + q^{4} - q^{3} - q^{2} + q + 1\right)}^{r} + 11340 {\left(-q^{7} - q^{6} - q^{5} - q^{4} + q^{3} + q^{2} + q + 1\right)}^{r} \\
& + 90720 {\left(-q^{7} - q^{6} - q^{5} - q^{4} - q^{3} - q^{2} - q - 1\right)}^{r} + 96768 {\left(-q^{7} - q^{6} - q^{5} + q^{2} + q + 1\right)}^{r} \\
& + 40320 {\left(-q^{7} - q^{6} + 2 q^{4} + 2 q^{3} - q - 1\right)}^{r} + 161280 {\left(-q^{7} - q^{6} + q^{4} + q^{3} - q - 1\right)}^{r} \\
& + 13440 {\left(-q^{7} - q^{6} - 2 q^{4} - 2 q^{3} - q - 1\right)}^{r} + 161280 {\left(-q^{7} - q^{6} + q + 1\right)}^{r} + 48384 {\left(-q^{7} - 2 q^{6} - q^{5} - q^{2} - 2 q - 1\right)}^{r} \\
& + 60480 {\left(-q^{7} - 2 q^{6} - 2 q^{5} - q^{4} + q^{3} + 2 q^{2} + 2 q + 1\right)}^{r} + 2240 {\left(-q^{7} - 2 q^{6} - 3 q^{5} - q^{4} + q^{3} + 3 q^{2} + 2 q + 1\right)}^{r} \\
& + 20160 {\left(-q^{7} - 2 q^{6} - 3 q^{5} - 3 q^{4} - 3 q^{3} - 3 q^{2} - 2 q - 1\right)}^{r} + 10080 {\left(-q^{7} - 2 q^{6} + 3 q^{4} + 3 q^{3} - 2 q - 1\right)}^{r} \\
& + 10080 {\left(-q^{7} - 2 q^{6} + q^{4} - q^{3} + 2 q + 1\right)}^{r} + 945 {\left(-q^{7} - 3 q^{6} - q^{5} + 5 q^{4} + 5 q^{3} - q^{2} - 3 q - 1\right)}^{r} \\
& + 7560 {\left(-q^{7} - 3 q^{6} - 3 q^{5} - q^{4} + q^{3} + 3 q^{2} + 3 q + 1\right)}^{r} + 3780 {\left(-q^{7} - 3 q^{6} - 5 q^{5} - 7 q^{4} - 7 q^{3} - 5 q^{2} - 3 q - 1\right)}^{r} \\
& + 672 {\left(-q^{7} - 4 q^{6} - 6 q^{5} - 5 q^{4} - 5 q^{3} - 6 q^{2} - 4 q - 1\right)}^{r} + 63 {\left(-q^{7} - 5 q^{6} - 9 q^{5} - 5 q^{4} + 5 q^{3} + 9 q^{2} + 5 q + 1\right)}^{r} \\
& + {\left(-q^{7} - 7 q^{6} - 21 q^{5} - 35 q^{4} - 35 q^{3} - 21 q^{2} - 7 q - 1\right)}^{r} + 30240 {\left(-q^{7} + 2 q^{5} + q^{4} - q^{3} - 2 q^{2} + 1\right)}^{r} \\
& + 30240 {\left(-q^{7} + 2 q^{5} - q^{4} - q^{3} + 2 q^{2} - 1\right)}^{r} + 120960 {\left(-q^{7} + q^{5} + q^{4} - q^{3} - q^{2} + 1\right)}^{r} \\
& + 120960 {\left(-q^{7} + q^{5} - q^{4} - q^{3} + q^{2} - 1\right)}^{r} + 145152 {\left(-q^{7} + q^{5} + q^{2} - 1\right)}^{r} + 145152 {\left(-q^{7} + q^{5} - q^{2} + 1\right)}^{r} \\
& + 60480 {\left(-q^{7} + q^{4} + q^{3} - 1\right)}^{r} + 60480 {\left(-q^{7} - q^{4} + q^{3} + 1\right)}^{r} + 207360 {\left(-q^{7} + 1\right)}^{r} + 207360 {\left(-q^{7} - 1\right)}^{r}\Biggr) \left(-1\right)^{ r}.
\end{align*}}

In the particular case that $r = 2$, corresponding to an elliptic curve, the mixed Hodge polynomial is
\begin{align*}
\mu^{\str}(T_{\E_7}^2/W) =& {\left(q^{14} + q^{13} + 2 \, q^{12} + 7 \, q^{11} + 19 \, q^{10} + 42 \, q^{9} + 107 \, q^{8} + 204 \, q^{7}\right)} t^{14} \\
& + {\left(q^{12} + q^{11} + 3 \, q^{10} + 10 \, q^{9} + 25 \, q^{8} + 61 \, q^{7} + 107 \, q^{6}\right)} t^{12} \\
& + {\left(q^{10} + q^{9} + 3 \, q^{8} + 10 \, q^{7} + 25 \, q^{6} + 42 \, q^{5}\right)} t^{10} \\
& + {\left(q^{8} + q^{7} + 3 \, q^{6} + 10 \, q^{5} + 19 \, q^{4}\right)} t^{8} + {\left(q^{6} + q^{5} + 3 \, q^{4} + 7 \, q^{3}\right)} t^{6} \\
& + {\left(q^{4} + q^{3} + 2 \, q^{2}\right)} t^{4} + {\left(q^{2} + q\right)} t^{2} + 1\; .
\end{align*}

Additionally, the Poincar\'e polynomial is
\begin{align*}
    P^{\str}(T_{\E_7}^2/W) = 383 \, t^{14} + 208 \, t^{12} + 82 \, t^{10} + 34 \, t^{8} + 12 \, t^{6} + 4 \, t^{4} + 2 \, t^{2} + 1\; ,
\end{align*}
whereas the $E$-polynomial is
\begin{align*}
    E^{\str}(T_{\E_7}^2/W) =& q^{14} + q^{13} + 3 \, q^{12} + 8 \, q^{11} + 23 \, q^{10} + 53 \, q^{9} + 136 \, q^{8} + 276 \, q^{7} \\
    & + 136 \, q^{6} + 53 \, q^{5} + 23 \, q^{4} + 8 \, q^{3} + 3 \, q^{2} + q + 1\; .
\end{align*}
This polynomial satisfies the character positivity conjecture \ref{conj:positivity} and  is palindromic (Proposition \ref{prop:palindromic}). 
The stringy Euler characteristic of the character variety is $\chi^{\str}(T_{\E_7}^2/W)=726$.

\subsection{Group $G = \E_8$}

Applying Theorem \ref{thm:algorithm_CV}, the stringy mixed Hodge polynomial is the following.
\allowdisplaybreaks[4]
{\tiny
% [inline block 1: 1 envs, 47657 chars -> math_tex | \begin{align*} {\normalsize \mu^{\str}(T_{\E_8}^r/W)} = & \frac{1}{696729600}  \Biggl(2 \cdot 256^{r} + 240 \cdot 128^{r...]
}
From this, setting $t = -1$ and $q = 1$, we can compute the stringy Euler characteristic to be
\begin{align*}
\chi^{\str}(T_{\E_8}^r/W) = & \frac{1}{907200} \cdot 256^{r - 1} + \frac{1}{7560} \cdot 128^{r - 1} + \frac{1}{480} \cdot 81^{r - 1} + \frac{2279}{810} \cdot 64^{r - 2} + \frac{1}{2} \cdot 36^{r - 1} + \frac{1829}{45} \cdot 32^{r - 2} \\
& + \frac{1}{4} \cdot 27^{r - 1} + \frac{1}{4} \cdot 25^{r - 1} + \frac{1}{12} \cdot 18^{r} + \frac{828841}{5400} \cdot 16^{r - 2} + \frac{1}{6} \cdot 12^{r} + \frac{191}{48} \cdot 9^{r - 1} \\
& + \frac{44491}{180} \cdot 8^{r - 2} + 6^{r} + \frac{3}{2} \cdot 5^{r - 1} + \frac{4591463}{5670} \cdot 4^{r - 3} + \frac{11}{4} \cdot 3^{r} + \frac{323819}{945} \cdot 2^{r - 4} + \frac{1135319}{64800}.
\end{align*}

Additionally, the stringy $E$-polynomial can be computed from the code in \cite{code_repository} to give the following.
{\tiny
% [inline block 2: 1 envs, 32490 chars -> math_tex | \begin{align*} E^{\str}(T_{\E_8}^r/W) = & \frac{1}{1360800} \cdot 256^{r - 1} \left(-1\right)^{8  r} q^{4  r} + \frac{1}...]
}

In the particular case that $r = 2$, corresponding to an elliptic curve, the mixed Hodge polynomial is
\begin{align*}
\mu^{\str}(T_{\E_8}^2/W) =& q^{16} t^{16} + q^{15} t^{16} + 2 \, q^{14} t^{16} + 3 \, q^{13} t^{16} + q^{14} t^{14} + 12 \, q^{12} t^{16} + q^{13} t^{14} + 20 \, q^{11} t^{16} + 2 \, q^{12} t^{14} \\
& + 60 \, q^{10} t^{16} + 4 \, q^{11} t^{14} + 123 \, q^{9} t^{16} + q^{12} t^{12} + 14 \, q^{10} t^{14} + 286 \, q^{8} t^{16} + q^{11} t^{12} + 26 \, q^{9} t^{14} \\
& + 2 \, q^{10} t^{12} + 72 \, q^{8} t^{14} + 4 \, q^{9} t^{12} + 123 \, q^{7} t^{14} + q^{10} t^{10} + 15 \, q^{8} t^{12} + q^{9} t^{10} + 26 \, q^{7} t^{12} + 2 \, q^{8} t^{10} \\
& + 60 \, q^{6} t^{12} + 4 \, q^{7} t^{10} + q^{8} t^{8} + 14 \, q^{6} t^{10} + q^{7} t^{8} + 20 \, q^{5} t^{10} + 2 \, q^{6} t^{8} + 4 \, q^{5} t^{8} + q^{6} t^{6} + 12 \, q^{4} t^{8} \\
& + q^{5} t^{6} + 2 \, q^{4} t^{6} + 3 \, q^{3} t^{6} + q^{4} t^{4} + q^{3} t^{4} + 2 \, q^{2} t^{4} + q^{2} t^{2} + q t^{2} + 1\; .
\end{align*}

Additionally, the Poincar\'e polynomial is
\begin{align*}
    P^{\str}(T_{\E_8}^2/W) = 508 \, t^{16} + 243 \, t^{14} + 109 \, t^{12} + 42 \, t^{10} + 20 \, t^{8} + 7 \, t^{6} + 4 \, t^{4} + 2 \, t^{2} + 1\; ,
\end{align*}
whereas the $E$-polynomial is
\begin{align*}
E^{\str}(T_{\E_8}^2/W) =& q^{16} + q^{15} + 3 \, q^{14} + 4 \, q^{13} + 15 \, q^{12} + 25 \, q^{11} + 77 \, q^{10} + 154 \, q^{9} + 376 \, q^{8} + 154 \, q^{7} + 77 \, q^{6} \\
& + 25 \, q^{5} + 15 \, q^{4} + 4 \, q^{3} + 3 \, q^{2} + q + 1\; .
\end{align*}
This polynomial satisfies the character positivity conjecture \ref{conj:positivity} and is palindromic (Proposition \ref{prop:palindromic}). 
The stringy Euler characteristic of the character variety is $\chi^{\str}(T_{\E_8}^2/W)=936$.

\end{document}